\begin{document}
\theoremstyle{plain}
\newtheorem{theorem}{Theorem}[section]
\newtheorem{lemma}[theorem]{Lemma}
\newtheorem{corollary}[theorem]{Corollary}
\newtheorem{proposition}[theorem]{Proposition}

\newtheorem{question}[theorem]{Question}
\theoremstyle{definition}
\newtheorem{notations}[theorem]{Notations}
\newtheorem{notation}[theorem]{Notation}
\newtheorem{remark}[theorem]{Remark}
\newtheorem{remarks}[theorem]{Remarks}
\newtheorem{definition}[theorem]{Definition}
\newtheorem{claim}[theorem]{Claim}
\newtheorem{assumption}[theorem]{Assumption}
\numberwithin{equation}{section}
\newtheorem{examplerm}[theorem]{Example}
\newtheorem{propositionrm}[theorem]{Proposition}

\newtheorem{example}[theorem]{Example}
\newtheorem{examples}[theorem]{Examples}

\newcommand{\binomial}[2]{\left(\begin{array}{c}#1\\#2\end{array}\right)}
\newcommand{\zar}{{\rm zar}}
\newcommand{\an}{{\rm an}}
\newcommand{\red}{{\rm red}}
\newcommand{\codim}{{\rm codim}}
\newcommand{\rank}{{\rm rank}}
\newcommand{\Pic}{{\rm Pic}}
\newcommand{\Div}{{\rm Div}}
\newcommand{\Hom}{{\rm Hom}}
\newcommand{\im}{{\rm im}}
\newcommand{\Spec}{{\rm Spec}}
\newcommand{\sing}{{\rm sing}}
\newcommand{\reg}{{\rm reg}}
\newcommand{\Char}{{\rm char}}
\newcommand{\Tr}{{\rm Tr}}
\newcommand{\tr}{{\rm tr}}
\newcommand{\supp}{{\rm supp}}
\newcommand{\Gal}{{\rm Gal}}
\newcommand{\Min}{{\rm Min \ }}
\newcommand{\Max}{{\rm Max \ }}
\newcommand{\Span}{{\rm Span  }}

\newcommand{\Frob}{{\rm Frob}}
\newcommand{\lcm}{{\rm lcm}}

\newcommand{\soplus}[1]{\stackrel{#1}{\oplus}}
\newcommand{\dlog}{{\rm dlog}\,}    
\newcommand{\limdir}[1]{{\displaystyle{\mathop{\rm
lim}_{\buildrel\longrightarrow\over{#1}}}}\,}
\newcommand{\liminv}[1]{{\displaystyle{\mathop{\rm
lim}_{\buildrel\longleftarrow\over{#1}}}}\,}
\newcommand{\boxtensor}{{\Box\kern-9.03pt\raise1.42pt\hbox{$\times$}}}
\newcommand{\sext}{\mbox{${\mathcal E}xt\,$}}
\newcommand{\shom}{\mbox{${\mathcal H}om\,$}}
\newcommand{\coker}{{\rm coker}\,}
\renewcommand{\iff}{\mbox{ $\Longleftrightarrow$ }}
\newcommand{\onto}{\mbox{$\,\>>>\hspace{-.5cm}\to\hspace{.15cm}$}}

\newenvironment{pf}{\noindent\textbf{Proof.}\quad}{\hfill{$\Box$}}

\newcommand{\sA}{{\mathcal A}}
\newcommand{\sB}{{\mathcal B}}
\newcommand{\sC}{{\mathcal C}}
\newcommand{\sD}{{\mathcal D}}
\newcommand{\sE}{{\mathcal E}}
\newcommand{\sF}{{\mathcal F}}
\newcommand{\sG}{{\mathcal G}}
\newcommand{\sH}{{\mathcal H}}
\newcommand{\sI}{{\mathcal I}}
\newcommand{\sJ}{{\mathcal J}}
\newcommand{\sK}{{\mathcal K}}
\newcommand{\sL}{{\mathcal L}}
\newcommand{\sM}{{\mathcal M}}
\newcommand{\sN}{{\mathcal N}}
\newcommand{\sO}{{\mathcal O}}
\newcommand{\sP}{{\mathcal P}}
\newcommand{\sQ}{{\mathcal Q}}
\newcommand{\sR}{{\mathcal R}}
\newcommand{\sS}{{\mathcal S}}
\newcommand{\sT}{{\mathcal T}}
\newcommand{\sU}{{\mathcal U}}
\newcommand{\sV}{{\mathcal V}}
\newcommand{\sW}{{\mathcal W}}
\newcommand{\sX}{{\mathcal X}}
\newcommand{\sY}{{\mathcal Y}}
\newcommand{\sZ}{{\mathcal Z}}

\newcommand{\A}{{\mathbb A}}
\newcommand{\B}{{\mathbb B}}
\newcommand{\C}{{\mathbb C}}
\newcommand{\D}{{\mathbb D}}
\newcommand{\E}{{\mathbb E}}
\newcommand{\F}{{\mathbb F}}
\newcommand{\G}{{\mathbb G}}
\newcommand{\HH}{{\mathbb H}}
\newcommand{\I}{{\mathbb I}}
\newcommand{\J}{{\mathbb J}}
\newcommand{\M}{{\mathbb M}}
\newcommand{\N}{{\mathbb N}}
\renewcommand{\P}{{\mathbb P}}
\newcommand{\Q}{{\mathbb Q}}
\newcommand{\T}{{\mathbb T}}
\newcommand{\U}{{\mathbb U}}
\newcommand{\V}{{\mathbb V}}
\newcommand{\W}{{\mathbb W}}
\newcommand{\X}{{\mathbb X}}
\newcommand{\Y}{{\mathbb Y}}
\newcommand{\Z}{{\mathbb Z}}

\newcommand{\be}{\begin{eqnarray}}
\newcommand{\ee}{\end{eqnarray}}
\newcommand{\nn}{{\nonumber}}
\newcommand{\dd}{\displaystyle}
\newcommand{\ra}{\rightarrow}
\newcommand{\bigmid}[1][12]{\mathrel{\left| \rule{0pt}{#1pt}\right.}}
\newcommand{\cl}{${\rm \ell}$}
\newcommand{\clp}{${\rm \ell^\prime}$}


\newcommand{\myConstant}{\gamma}
\newcommand{\id}[1]{\left\langle #1 \right\rangle}
\newcommand{\oldver}[1]{ {\color{magenta}{Old Version: #1}} }
\newcommand{\Hakan}[1]{ {\color{blue}{\underline{\textbf{Hakan's Comment:}} #1}} }
\newcommand{\Change}[1]{ {\color{blue}{#1}} }
\newcommand{\Details}[1]{ {\color{Violet}{#1}} }

\renewcommand{\Details}[1]{ }

\newcommand{\Real}{\mathbb{R}}
\newcommand{\textiti}[1]{\textit{#1}\index{#1}}
\newcommand{\cent}[2]{\bold{C}_#1(#2)}



\newcommand{\nck}[2]{{#1 \choose #2}}
\renewcommand{\T}{\frac{GR(2^a,m)[x]}{\id{x^{2^s}-1}}}
\newcommand{\Tm}{\mathcal{T}_m}

\newcommand{\GR}{GR(p^a,m)}
\newcommand{\fR}{\frac{GR(p^a,m)[x]}{\id{f(x)}}}
\newcommand{\Rf}{\mathcal{R}}
\newcommand{\Tor}{\textrm{Tor}}
\newcommand{\Rtwo}{GR(p^2,m)}
\newcommand{\fRtwo}{\frac{\Rtwo[x]}{\id{f(x)}}}
\newcommand{\cRtwo}{\frac{\Rtwo[x]}{\id{x^{p^s}-1}}}
\newcommand{\Rtwof}{\mathcal{R}_2}

\newcommand{\R}{GR(p^a,m)[x]}
\renewcommand{\T}{GR(p,m)[x]}

\newcommand{\Code}{ \ensuremath{C \vartriangleleft \sR}}

\newcommand{\newn}{ {\eta} }

\title[Polycyclic codes with applications to repeated-root codes]{Polycyclic codes over Galois rings with applications to repeated-root
constacyclic codes}

\author[L\'opez-Permouth, \"{O}zadam, \"{O}zbudak, Szabo]{Sergio R. L\'opez-Permouth$^1$, Hakan \"{O}zadam$^2$, Ferruh \"Ozbudak$^2$, Steve Szabo$^1$}

\maketitle

\begin{center}
$^1$
Department of Mathematics\\
Ohio University, Athens, Ohio, 45701, USA\\
\{lopez,szabo\}@math.ohiou.edu
\\ \ \\
$^2$
Department of Mathematics and Institute of Applied Mathematics \\
Middle East Technical University, \.{I}n\"on\"u Bulvar{\i}, 06531, Ankara, Turkey \\
\{ozhakan,ozbudak\}@metu.edu.tr
\end{center}

\vspace{0.5cm}

\abstract

Cyclic, negacyclic and constacyclic codes are part of a larger class of codes 
called polycyclic codes; namely, those codes  which can be viewed as ideals of 
a factor ring of a polynomial ring. The structure of the ambient ring of polycyclic codes over $GR(p^a,m)$
and generating sets for its ideals are considered. 
It is shown that these generating sets are strong Groebner bases. 
A method for finding such sets in the case that $a=2$ is also given. 
The Hamming distance of certain constacyclic codes of length $\newn p^s$\ and $2\newn p^s$ over $\F_{p^m}$\ is computed. 
A method, which determines the Hamming distance of the constacyclic codes of length $\newn p^s$\ 
and $2\newn p^s$ over $GR(p^a,m)$, where $(\newn,p)=1$, is described. 
In particular, the Hamming distance of all cyclic codes of length $p^s$\ over $GR(p^2,m)$\ and 
all negacyclic codes of length $2p^s$\ over $\F_{p^m}$ is determined explicitly.\ \\

\noindent \textbf{Keywords:} Linear codes, cyclic codes, constacyclic codes, Galois rings, Groebner basis, repeated-root cyclic codes, torsion codes
    
\endabstract




\vspace{0.5cm}
\section{Introduction}
\label{Section.Introduction}

Important applications of modules over finite rings to error-correcting codes 
and sequences were introduced in \cite{Hammons_et_al_IEEE} and \cite{Kummar_et_al_IEEE_1996}.
In particular, \cite{Hammons_et_al_IEEE} motivated
the study of cyclic and negacyclic codes over Galois rings 
(see, for example, \cite{Abulrub_Oehmke_IEEE_2003,Blackford_Chaudri_IEEE_2000,Blackford_IEEE_2003,Dinh-Lopez_2004,Kanwar_Lopez_FFA_1997,Vega_Wolfmann_FFA_2004,Pless_IEEE_1996,Wolfmann_IEEE_1999,Wolfmann_IEEE_2001}). 
For a recent survey on this topic, we refer the reader to \cite{Dinh_Lopez_Szabo_Cimpa}.
Cyclic codes can be grouped into two classes: simple-root cyclic codes, where
the codeword length and the characteristic of the alphabet are
coprime, and repeated-root cyclic codes, where the codeword
length and the characteristic of the alphabet are not coprime.
The structure of simple-root cyclic codes over rings was studied throughly in
\cite{Pless_IEEE_1996,Kanwar_Lopez_FFA_1997,Calderbank_Sloane_Modular_Cyclic_Codes,Dinh-Lopez_2004}
and certain special generating sets for these codes were determined therein. 
On the other hand, repeated-root cyclic codes are also interesting
as they allow very simple syndrome-forming and decoding circuitry 
and because in some cases (see \cite{CMSS2,Roth_Seroussi_IEEE_1986}) they are maximum distance separable. 
A partial list of references for the theory of repeated root cyclic codes 
includes
\cite{CMSS1,D2005,D2007,D2008,Dinh_IEEE_2009,Dinh_Algebra_2010,Dougherty-Park_2007,Szabo-Lopez_2009_HW,CMSS2,Roth_Seroussi_IEEE_1986,SanLing-etal_2008,Salagean_Disc_2006,Tang_et_al_IEEE,Van_Lint_IEEE_Rep_Root,Zimmermann_IEEE}. 
Amongst these studies, generating sets that are similar to those in 
\cite{Pless_IEEE_1996,Kanwar_Lopez_FFA_1997,Calderbank_Sloane_Modular_Cyclic_Codes,Dinh-Lopez_2004} are studied in
\cite{Szabo-Lopez_2009_HW,Dougherty-Park_2007,SanLing-etal_2008} 
for cyclic codes of length $p^s$\ over an alphabet whose characteristic is a power $p$.
In \cite{Dougherty-Park_2007,SanLing-etal_2008}, the notion of torsional codes is used to study
generators of these codes.
The structural properties of cyclic codes are studied in a more general setting in
\cite{Salagean_AAECC,Salagean_IEEE,Salagean_Bulletin,Salagean_FFA,Salagean_Disc_2006} from a Groebner basis perspective.
Our study unifies
the two approaches above and generalizes them in the following
sense: we show that codes in a wider class of linear codes
called polycyclic codes have generating sets sharing the same
properties as those described in
\cite{Salagean_AAECC,Salagean_IEEE,Salagean_Bulletin,Salagean_FFA,Salagean_Disc_2006,Dougherty-Park_2007,SanLing-etal_2008}.
This allows us to study the ideal structure of cyclic codes
without the restriction that the codes must be simple-root.
In particular, we compute the Hamming distance of certain
constacyclic codes of length  $\newn p^s$\ and $2\newn p^s$, where $(\newn, p) =1$,
over a finite field of characteristic $p$. 
Then using this result together with
the above generating sets, we give a method to determine the
Hamming distance of certain constacyclic codes of length  $p^s$
and $2p^s$ over a Galois ring of characteristic a power of $p$.
As another particular case, we explicitly determine the Hamming
distance of all cyclic codes of length $p^s$ over $GR(p^2,m)$ which
generalizes the results of a recent study \cite{zhu_kai_2010}.

We study linear codes over Galois rings
that have the additional structure that they can be described as an
ideal of a quotient ring, specifically a quotient ring of a
polynomial ring over a Galois ring where the ideal being factored out is generated by a regular polynomial.
We begin with studying the structure of the ring $\frac{GR(p^a,m)[x]}{\id{g(x)}}$\ where $g(x)$ is a regular primary polynomial.
We show that $\frac{GR(p^a,m)[x]}{\id{g(x)}}$\ is a local ring  with a simple socle and we determine its maximal ideal and socle.
We give necessary and sufficient conditions for $\frac{GR(p^a,m)[x]}{\id{g(x)}}$\ to be a chain ring.
Next, we use the results on these rings to study the structure of $\fR$\ where $f(x)$ is a regular polynomial.
This work uses a factorization given by \cite{McDonald_Book} 
of regular polynomials into regular primary polynomials and also the Chinese Remainder Theorem.
Via this ring decomposition, we give details on the structure of $\fR$.
This provides information on the structure of the polycyclic codes, and in particular cyclic and constacyclic codes,
as their ambient spaces are of the form of $\fR$.
as their ambient spaces are of the form of $\fR$.

Some special generating sets, for cyclic codes of length $p^s$ over $GR(p^a,m)$, were studied in \cite{Dougherty-Park_2007}
by employing torsional degrees and torsional codes.
Later, in \cite{SanLing-etal_2008}, Kiah et. al. came up with a unique set of generators for such codes.
We generalize their results to polycyclic codes. 
More explicitly, we extend the notion of torsional degree and torsional code to polycyclic codes and
we show that polycyclic codes have generating sets with the same properties as in 
\cite{Dougherty-Park_2007} and \cite{SanLing-etal_2008}.
Furthermore, we observe that the unique generating set studied in \cite{SanLing-etal_2008} is actually
a strong Groebner basis which is studied in a series of papers 
\cite{Salagean_FFA,Salagean_IEEE,Salagean_AAECC,Salagean_Bulletin,Salagean_Disc_2006}  
by S\u{a}l\u{a}gean and Norton.
We show that a minimal strong Groebner basis actually gives us all the torsional degrees of a polycyclic code.
This allows us to describe how to obtain a generating set in standard form, which is a minimal strong Groebner basis,
from the unique generating set introduced in \cite{SanLing-etal_2008} and vice versa.
Also the torsional degrees, equivalently a minimal strong Groebner basis, can be used to
determine the Hamming distance of a polycyclic code when the Hamming distance of the residue code is known.

We use the above results to study
some constacyclic codes of length $\newn p^s$\ and $2\newn p^s$ over $GR(p^a,m)$.
First we compute the Hamming distance of these codes over the residue field.
Then, we give the ideal structure and the Hamming distance of these codes by using
a generating set in standard form. 
In some cases, our results give the Hamming distance of all such constacyclic codes.

As another application of our results, we generalize a recent result of \cite{zhu_kai_2010}
on the Hamming distance of cyclic codes of length $2^s$\ over $\Z_4$.
We classify all polycyclic codes over $GR(p^2,m)$ 
which gives us a classification of all cyclic codes of length $p^s$.
Then we determine the torsional degrees of these codes in each case
yielding the Hamming distance of all cyclic codes of length $p^s$\ over $GR(p^a,m)$.

This paper is organized as follows. 
In Section \ref{Section.Algebraic.Background},
we give some preliminaries and fix our notation.
In Section \ref{Local.Subambients},
we study the subambient rings of polycyclic codes along with their torsional degrees and strong Groebner bases.
We further study these subambients in characteristic $p^2$\ and determine their torsional degrees and Hamming distance in 
Section \ref{Subambiets.in.char.p2}.
We study the structure of the ambient ring of polycyclic codes in Section \ref{sect.main}.
We give some preliminaries for the computation of the Hamming distance of some constacyclic codes 
over a finite field in Section \ref{Preliminaries.For.Computations}.
Then we compute the Hamming distance of certain constacyclic codes of length $\newn p^s$\ over $\F_{p^m}$ 
and we describe how to determine the Hamming distance of these codes over $GR(p^a,m)$ 
in Section \ref{Section.Irreducible.Case.Finite.Fields}.
Finally, in Section \ref{Section.Reducible.Case.Finite.Fields}, we carry out similar
computations for certain constacyclic codes of length $2\newn p^s$.

\section{Algebraic Background}
\label{Section.Algebraic.Background}
In this section we state some basic facts about finite chain rings, polynomials over
Galois rings and we fix our notation on cyclic and polycyclic codes.
For a detailed treatment of the theory of Galois rings, we refer the reader to
\cite{Bini_Flamini_Book} or \cite{McDonald_Book}.

Let $p$\ be a prime number and $a,m \ge 1$ be integers. Then $\F_{p^m}$\ denotes the finite field with $p^m$\ elements and
$GR(p^a,m)$\ denotes the Galois ring of characteristic $p^a$\ with $p^{am}$\ elements.

Let $R$\ be a commutative ring with a unit.
$R$ is called a \textit{local ring} if it has a unique maximal ideal.
An element $r \in R$\ is said to be \textit{nilpotent} with \textit{nilpotency index} $t$\ if
$r^t = 0$\ and $t$\ is the least nonnegative integer with respect to this property.
The intersection of all maximal ideals of $R$\ is called the \textit{Jacobson} of $R$\ and is denoted by
$J(R)$. The \textit{socle} of $R$, denoted by $soc(R)$, is the sum of all ideals of $R$ containing only themselves and the
zero ideal.
$R$ is called a \textit{chain ring} if its ideals are linearly ordered under set inclusion.
In \cite{Dinh-Lopez_2004}, a useful characterization of finite chain rings is given.

\begin{lemma}[{\cite[Proposition 2.1]{Dinh-Lopez_2004}}]
 \label{Algebraic.Beackground.Finite.Chain.Ring.Criterion}
 Let $R$\ be a finite commutative ring.
 The following are equivalent.
 \begin{enumerate}
  \item $R$\ is a chain  ring.
  \item $R$\ is a local principal ideal ring.
  \item $R$\ is a local ring and the maximal ideal of $R$ is principal.
 \end{enumerate}
 Furthermore, if $R$\ is a finite commutative chain ring with the maximal ideal $\id{\nu}$,
  then the ideals of $R$\ are exactly $\id{\nu^i}$\ where $i \in \{ 0,1,\dots, t \}$\
  and $t$\ is the nilpotency index of $\nu$.
\end{lemma}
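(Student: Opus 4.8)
The plan is to prove the equivalences in a cycle, then establish the structural statement about the ideals separately using the nilpotency of a generator of the maximal ideal.

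First I would prove $(2) \Rightarrow (3)$, which is immediate: a principal ideal ring has all ideals principal, so in particular the maximal ideal is principal; and locality is assumed in both. For $(3) \Rightarrow (1)$, I would take the maximal ideal to be $\id{\nu}$ and argue that every ideal is a power $\id{\nu^i}$, whence the ideals are totally ordered. The mechanism here is that in a finite local ring the unique maximal ideal equals the Jacobson radical, which is nilpotent (a finite ring is Artinian, so $J(R)$ is nilpotent); thus $\nu$ is nilpotent, say with nilpotency index $t$, giving the descending chain $R = \id{\nu^0} \supseteq \id{\nu} \supseteq \cdots \supseteq \id{\nu^t} = 0$. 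The key lemma to extract is that \emph{every} element of $R$ can be written as $u\nu^j$ for a unit $u$ and some $j \in \{0,1,\dots,t\}$: given nonzero $r$, pick the largest $j$ with $r \in \id{\nu^j}$, write $r = s\nu^j$, and observe that $s \notin \id{\nu} = J(R)$ forces $s$ to be a unit (in a local ring, non-units are exactly the elements of the maximal ideal). From this it follows that any ideal $I$ is generated by $\nu^{j_0}$ where $j_0$ is the minimal $j$ occurring among its elements, so $I = \id{\nu^{j_0}}$ and the ideals are linearly ordered, proving $(1)$. Finally $(1) \Rightarrow (2)$: if the ideals are linearly ordered, I would show the maximal ideal is principal by picking $\nu \in J(R) \setminus J(R)^2$ (nonempty since otherwise $J(R) = J(R)^2$ and Nakayama forces $J(R)=0$, the trivial chain-ring case); linear ordering then pins $\id{\nu}$ between $J(R)^2$ and $J(R)$, and one checks $\id{\nu} = J(R)$, so $R$ is local with principal maximal ideal, and by the element-factorization argument every ideal is principal.

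For the \textbf{Furthermore} clause I would reuse the factorization $r = u\nu^j$ developed above. The chain $\id{\nu^0} \supsetneq \id{\nu^1} \supsetneq \cdots \supsetneq \id{\nu^t} = 0$ has strict inclusions by the minimality in the definition of the nilpotency index $t$: if $\id{\nu^i} = \id{\nu^{i+1}}$ for some $i < t$, then $\nu^i = r\nu^{i+1}$ for some $r$, so $\nu^i(1 - r\nu) = 0$; since $r\nu \in J(R)$ the element $1 - r\nu$ is a unit, forcing $\nu^i = 0$ and contradicting $i < t$. That every ideal equals some $\id{\nu^i}$ with $0 \le i \le t$ is exactly the conclusion reached in the $(3)\Rightarrow(1)$ step via the minimal-exponent argument.

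The main obstacle I anticipate is the careful handling of the step showing $\nu$ is nilpotent and that $J(R) = \id{\nu}$, i.e.\ bridging the purely ideal-theoretic hypothesis of $(1)$ or $(3)$ to the concrete power structure; the Nakayama lemma application (to rule out $J(R) = J(R)^2$ with $J(R) \ne 0$) and the finiteness of $R$ (guaranteeing Artinian, hence nilpotent Jacobson radical) are the two facts doing the real work, and they must be invoked cleanly rather than assumed. Everything else is routine bookkeeping with the unit-times-power normal form.
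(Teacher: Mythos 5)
The paper does not actually prove this lemma: it is imported verbatim as \cite[Proposition 2.1]{Dinh-Lopez_2004}, so there is no in-paper argument to compare against. Your proposal supplies a correct, self-contained proof, and it is essentially the standard one for this classical fact. The cycle $(2)\Rightarrow(3)\Rightarrow(1)\Rightarrow(2)$ is sound: finiteness gives Artinian, hence $J(R)$ nilpotent; the normal form $r=u\nu^{j}$ (unit times power) is correctly derived from maximality of $j$ plus locality; and the strictness of the chain $\id{\nu^{i}}\supsetneq\id{\nu^{i+1}}$ via $\nu^{i}(1-r\nu)=0$ with $1-r\nu$ a unit is exactly right. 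The only spot that is thinner than it should be is the step ``one checks $\id{\nu}=J(R)$'' in $(1)\Rightarrow(2)$: as written it is an assertion, not an argument. It does go through directly from the linear ordering, though, and you should record the check --- if $w\in J(R)\setminus\id{\nu}$, comparability forces $\id{\nu}\subsetneq\id{w}$, so $\nu=aw$ with $a$ a non-unit (else the two ideals coincide), hence $a\in J(R)$ and $\nu\in J(R)^{2}$, contradicting the choice of $\nu$. With that line inserted, the proof is complete; note also that $(1)$ gives locality for free since two distinct maximal ideals would be incomparable, which you use implicitly.
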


It is well-known that the Galois ring $GR(p^a,m)$\ is a local ring with the maximal ideal
$\id{p}$.
Moreover $GR(p^a,m)$\ is a finite chain ring and its ideals are
$\id{p^i}$\ where $i \in \{ 0,1 , \dots, a \}$.
Let $\zeta$\ be a generator of the multiplicative group $\F_{p^m}\setminus \{0\}$.
The fact that $\Z_{p^a}[\zeta] \cong GR(p^a,m)$\ is a classical result of finite ring theory.
We can express an element $z \in GR(p^a,m)$\ as
$z = \sum_{j=0}^{p^m-2}v_j\zeta^{j}$
where $v_j \in \Z_{p^a}$.
Let $\sT_m = \{ 0,1,\zeta,\dots, \zeta^{p^m-2} \}$.
The set $\sT_m$\ is called the Teichm\"uller set.
Alternatively, we can uniquely express $z \in GR(p^a,m)$\ as
\be
    z = z_0 + pz_1 + \cdots + p^{a-1}z_{a-1}, \quad z_i \in \sT_m,  \nn
\ee
which is called the $p$\textit{-adic expansion} of $z$.
The map $\mu:\quad GR(p^a,m) \rightarrow \F_{p^m}$\ defined by
$\mu (z) = z_0$\ is a ring epimorphism with the kernel $\id{p}$.
Hence
$\frac{GR(p^a,m)}{\id{p}} \cong \F_{p^m}$.
The finite field $\F_{p^m}$\ is called the \textit{residue field} of $GR(p^a,m)$.
The map $\mu$\ is called the \textit{canonical projection}
and extends to a homomorphism between the polynomial rings
$GR(p^a,m)[x]$\ and $\F_{p^m}[x]$
 in a natural way as
$\mu(a_0 + a_1x + \cdots + a_nx^n) = \mu(a_0) + \mu(a_1)x + \cdots + \mu(a_n)x^n.$
We denote  $\mu(f(x))$\ by $\bar{f}(x)$.
Note also that $\mu$\ maps the ideals of $GR(p^a,m)[x]$\ to the ideals of $\F_{p^m}[x]$\ and
we denote the canonical projection of the ideal $I$\ by $\bar{I}$.

A polynomial $f(x) \in GR(p^a,m)[x]$\ is called \textit{regular} if 
$f(x)$\ is not a zero divisor.
Moreover, by the characterization given in \cite[Theorem XIII.2]{McDonald_Book},
$f(x)$\ is regular if and only if one of its coefficients is a unit in $GR(p^a,m)$.
If $f(x)$\ can  not be expressed as a product of two nonconstant polynomials,
then $f(x)$\ is called \textit{irreducible} and if in addition $\bar{f}(x)$\
is irreducible then $f(x)$\ is called \textit{basic irreducible}.

An ideal $I \vartriangleleft R$\ is called a \textit{primary ideal} if
for all $uv \in I$, we have $u^n\in I$\ or $v \in I$\ for some positive  integer $n$.
A polynomial $f(x)$\ is called \textit{primary}\ if $\id{f(x)}$\ is a primary ideal.
Besides, $I \vartriangleleft R$\ is called a \textit{prime ideal} if
for all $uv \in I$, we have $u\in I$\ or $v \in I$.

\begin{theorem}[{\cite[Theorem XIII.11]{McDonald_Book}}]
  \label{Theorem.McDonald.Factorization.and.Uniqueness}
 Let $f(x) \in GR(p^a,m)[x]$\ be a regular polynomial. Then
 $f(x) = \delta g_1(x) \cdots g_r(x)$ where $\delta$\ is a unit and
 $g_1(x),\dots , g_r(x)$\ are regular primary coprime polynomials.
 Moreover, this factorization is unique up to reordering terms and multiplication by units.
\end{theorem}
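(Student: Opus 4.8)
The plan is to prove the factorization and uniqueness of regular polynomials over $GR(p^a,m)$ by reducing the problem to the residue field $\F_{p^m}[x]$ via the canonical projection $\mu$, and then lifting the factorization back up using the local structure of $GR(p^a,m)$. First I would note that since $f(x)$ is regular, one of its coefficients is a unit, so $\bar{f}(x) = \mu(f(x))$ is a nonzero polynomial in $\F_{p^m}[x]$. Because $\F_{p^m}[x]$ is a unique factorization domain, we may write $\bar{f}(x) = \bar{\delta}\, \bar{h}_1(x)^{e_1} \cdots \bar{h}_r(x)^{e_r}$ where the $\bar{h}_i(x)$ are distinct monic irreducible polynomials and $\bar{\delta}$ is a unit. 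The primary components will correspond to grouping each prime power $\bar{h}_i(x)^{e_i}$ into a single factor.

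The heart of the existence argument is a Hensel-type lifting. I would establish that if $\bar{f}(x) = \bar{g}(x)\bar{h}(x)$ is a factorization into coprime polynomials over $\F_{p^m}[x]$, then this lifts to a factorization $f(x) = \delta\, g(x) h(x)$ over $GR(p^a,m)[x]$ with $g(x), h(x)$ coprime and regular, and $\mu(g) = \bar{g}$, $\mu(h) = \bar{h}$. This is exactly Hensel's lemma for the complete (indeed finite) local ring $GR(p^a,m)$ with maximal ideal $\id{p}$; the coprimality of $\bar g$ and $\bar h$ guarantees that the lifted factors can be chosen coprime as well, since a Bezout relation $\bar a \bar g + \bar b \bar h = 1$ lifts to $GR(p^a,m)[x]$ modulo $\id{p}$ and one iterates modulo $\id{p^i}$ up to $i = a$. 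Applying this inductively to separate the pairwise-coprime prime powers $\bar{h}_i^{e_i}$ yields regular factors $g_i(x)$ with $\mu(g_i) = \bar{\delta}_i \bar{h}_i^{e_i}$ (absorbing units into $\delta$). Each such $g_i(x)$ is primary: since $\id{\mu(g_i)} = \id{\bar{h}_i^{e_i}}$ is $\id{\bar h_i}$-primary in $\F_{p^m}[x]$ and the nilpotents $\id{p}$ lie in every prime containing $g_i$, a direct check on the quotient ring $GR(p^a,m)[x]/\id{g_i}$ shows it is local, hence $\id{g_i}$ is primary.

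For uniqueness, suppose $f(x) = \delta g_1(x) \cdots g_r(x) = \delta' g_1'(x) \cdots g_s'(x)$ are two such factorizations into regular primary coprime factors. Projecting via $\mu$ gives two factorizations of $\bar f(x)$ in $\F_{p^m}[x]$; by unique factorization there, each $\mu(g_i)$ must be a unit times a power of a single irreducible $\bar{h}_{\sigma(i)}$, matching up the two lists so that $r = s$ and after reordering $\mu(g_i)$ and $\mu(g_i')$ are associates generating the same radical. The coprimality of the factors means the ideals $\id{g_i}$ are pairwise comaximal, so by the Chinese Remainder Theorem the factorization corresponds to the decomposition of $GR(p^a,m)[x]/\id{f}$ into its local direct summands; since this decomposition into local rings is intrinsic (the idempotents are unique), the factors $g_i$ are determined up to reordering and multiplication by units.

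The main obstacle I anticipate is the lifting step and the verification of coprimality after lifting: one must be careful that Hensel's lemma produces genuinely coprime lifts rather than merely lifts whose reductions are coprime, and that the primary property survives the lift. Concretely, the delicate point is controlling the iterated lifting modulo $\id{p^i}$ so that at each stage the factors remain regular (a unit coefficient persists) and their reductions remain coprime, which is what makes the Chinese Remainder Theorem decomposition clean. Once the correspondence between the algebraic factorization of $f$ and the idempotent decomposition of $GR(p^a,m)[x]/\id{f}$ is firmly in place, both existence and uniqueness follow from the uniqueness of the primary/local decomposition of an Artinian ring.
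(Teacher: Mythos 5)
Your proposal cannot be compared against a proof in the paper, because the paper gives none: Theorem \ref{Theorem.McDonald.Factorization.and.Uniqueness} is quoted verbatim from McDonald's book (Theorem XIII.11 there) and used as a black box. Judged on its own, your sketch assembles the standard ingredients correctly: reduce modulo $\id{p}$, factor $\bar{f}$ into pairwise coprime prime powers in the UFD $\F_{p^m}[x]$, lift by Hensel's lemma over the complete local ring $GR(p^a,m)$, observe that each lift $g_i$ has a unique maximal ideal $\id{p,h_i(x)}$ lying over it (so $GR(p^a,m)[x]/\id{g_i}$ is finite local, whence every zero divisor is nilpotent and $\id{g_i}$ is primary), and get uniqueness from the uniqueness of the decomposition of the Artinian ring $GR(p^a,m)[x]/\id{f}$ into local summands. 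This is essentially McDonald's route, organized around polynomial Hensel lifting rather than idempotent lifting; the two are interchangeable here.

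The one place your sketch is thin is the degree bookkeeping in the lifting step when $f$ is regular but not monic. Regularity only guarantees that \emph{some} coefficient is a unit, so $\deg\bar{f}$ can be strictly smaller than $\deg f$, and a naive lift of the factors of $\bar{f}$ cannot multiply out to $f$ on degree grounds. The standard fix, which the paper itself invokes at the start of Section \ref{Local.Subambients}, is McDonald's Theorem XIII.6: write $f=vf^*$ with $v$ a polynomial unit (unit constant term, nilpotent higher coefficients) and $f^*$ monic regular, then run Hensel on $f^*$, splitting off one monic lift of $\bar{h}_i^{e_i}$ at a time and absorbing $v$ into $\delta$. Your phrase ``absorbing units into $\delta$'' gestures at this but a complete write-up must make it explicit, since $\delta$ in the statement is genuinely a polynomial unit and not merely a scalar. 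With that point repaired, both existence and uniqueness go through as you describe.
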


Now we recall the division algorithm in $\F_{p^m}[x]$\ and $GR(p^a,m)[x]$.
Since $\F_{p^m} [x]$\ is a Euclidean domain, for any $v(x)$\ and $ 0\neq g(x) \in \F_{p^m}[x]$,
there exist unique polynomials $y(x), r(x) \in \F_{p^m}[x] $\ such that
\be\nn
    v(x) = g(x)y(x) + r(x)
\ee
where either $0 \le \deg (r(x)) < \deg (g(x))$\ or $r(x) = 0$.
We define
$
    v(x) \mod g(x) = r(x),
$
and we use the notation $v(x) \equiv r(x) \mod g(x)$\ in the usual sense.

There is also a division algorithm for polynomials in $GR(p^a,m)[x]$
(see, for example, \cite[Exercise XIII.6]{McDonald_Book} or
\cite[Proposition 3.4.4]{Bini_Flamini_Book}).
Let $f(x) \in GR(p^a,m)[x]$\ and let $h(x) \in GR(p^a,m)[x]$ be a regular polynomial.
Then there exist polynomials $z(x), b(x) \in GR(p^a,m)[x]$\ such that
\be
  f(x) = z(x)h(x) + b(x)\nn
\ee
and $\deg(b(x)) < \deg(h(x))$\ or $b(x) = 0$.

Throughout this paper, $C$\ stands for a linear code over $GR(p^a,m)$ and we identify
a codeword $c = (c_0,c_1,\dots , c_{N-1})\in C$\ with the polynomial
$c(x) = c_0 + c_1x + \cdots + c_{N-1}x^{N-1} \in GR(p^a,m)[x]$.
Let $\lambda \in GR(p^a,m) \setminus \{ 0 \}$\ and $I = \langle x^N - \lambda \rangle$.
The $\lambda$-\textit{shift} of a codeword $c$\ is defined
to be $(\lambda c_{N-1}, c_0, c_1,\cdots,c_{N-2})$. If a linear code $C$\ is closed under
$\lambda$-shifts, then $C$\ is called a $\lambda$-cyclic code and in general,
such codes are called \textit{constacyclic} codes (c.f. \cite[Section 13.2]{BRLKMP}).
It is well-known that
$\lambda$-cyclic codes, of length $N$, over $GR(p^a,m)$\ correspond
to the ideals of the finite ring
\be\nn
    \sR_c = \frac{GR(p^a,m)[x]}{I}.
\ee
In particular, cyclic (respectively negacyclic) codes, of length $N$,
over $GR(p^a,m)$ correspond to the ideals of the ring
$\sR_{\mathfrak{a}} = GR(p^a,m)[x] / \mathfrak{a}$
(respectively $\sR_{\mathfrak{b}} = GR(p^a,m)[x] / \mathfrak{b}$), where
$\mathfrak{a} = \langle x^N -1 \rangle$ (respectively $\mathfrak{b} = \langle x^N + 1 \rangle $).
Additionally
if $N$\ is not divisible by $p$, then $C$\ is called a \textit{simple-root} constacyclic code and
if $N$\ is divisible by $p$, then $C$\ is said to be a \textit{repeated-root} constacyclic code.

Now we define a family of linear codes which is a generalization of constacyclic codes.
Let $f(x) \in GR(p^a,m)[x]$\ be an arbitrary regular polynomial, $J = \id{f(x)}$ and let
\be
  \sR = \frac{GR(p^a,m)[x]}{J}.\nn
\ee
As done above, identifying the codewords with polynomials, we see that the ideals of
$\sR$\ are linear codes and they are called \textit{polycyclic} codes.
Obviously, although the elements of $\sR$\ are equivalence classes (cosets), 
they can be uniquely identified with polynomials
with degree strictly less than $\deg{f(x)}$.
Consequently, for the rest of this paper, unless otherwise stated, 
we focus on the ideals of $\sR$\ containing $J$ and identify  $I/J$ with
$\{g(x): \quad g(x)\in I\quad \mbox{and} \quad \deg(g(x) < \deg(f(x)))\}$\
and, for all $g(x)$\ such that  $\deg(g(x)) < \deg(f(x))$, we identify
the equivalence class $g(x) + J$ with $g(x)$.

Let $\bar{\sR} = \frac{\F_{p^m}[x]}{ \bar{J} }$. The map $\mu$, defined above,
extends to an onto ring homomorphism as $\mu: \sR \rightarrow \bar{\sR}$\ where
$\mu( g(x) + J) = \bar{g}(x) + \bar{J}$.
For $r \in \sR$\ and $w \in \bar{\sR} $,
we define the scalar multiplication by $rw\ (mod\ p)$\ where we consider the multiplication in
$\sR$. This makes $\bar{\sR}$\
an $\sR$-module.

The \textit{Hamming weight} of a word is defined to be the number of nonzero 
entries of the word
and the \textit{Hamming weight} of a polynomial is defined to be the number of nonzero coefficients
of the polynomial. Let $c$\ and $c(x)$\ be as above. We denote the Hamming weight of
$c$\ and $c(x)$\ by $w_H(c)$\ and $w_H(c(x))$, respectively.
Obviously, the Hamming weight of a codeword and the Hamming weight of the corresponding
polynomial are equal, i.e., $w_H(c) = w_H(c(x))$.

The \textit{Hamming distance} of a linear code $C$\ is defined as
\be\nn
    d_H(C) = \min \{w_H(v):\quad 0 \neq v \in C \}.
\ee
The following lemma gives us some useful information on $d_H(C)$.

\begin{lemma}
 \label{Preliminaries.Lemma.Hamming.Weight.Projection}
 Let $\{ 0 \} \neq \Code$ be a constacyclic code of length greater than 1 over $GR(p^a,m)$
 with $C \neq \{0\}$\ and $C \neq \id{1}$, and let $\bar{C}  \vartriangleleft \bar{\sR}$\ be its canonical projection.
 Then $d_H(C) = d_H(\bar{C})$ as
 the $\sR$-modules
 $ p^{a-1}\sR$\ and $\bar{\sR}$\
 are isomorphic. Moreover $d_H(\bar{C}),d_H(C) \ge 2$.
\end{lemma}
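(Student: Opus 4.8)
The statement has two assertions: first, that $d_H(C) = d_H(\bar{C})$, which the lemma tells us follows from an isomorphism of $\sR$-modules $p^{a-1}\sR \cong \bar{\sR}$; and second, that both distances are at least $2$. The plan is to establish the module isomorphism explicitly, use it to transport a minimal-weight codeword between $C$ and $\bar{C}$ in both directions, and then handle the lower bound by a separate short argument using the constacyclic structure.

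For the isomorphism, I would consider the multiplication-by-$p^{a-1}$ map $\sR \to \sR$. Its image is $p^{a-1}\sR$ and its kernel is $\{r \in \sR : p^{a-1} r = 0\} = p\sR$, since the coefficients live in $GR(p^a,m)$ where $p^{a-1}z = 0$ exactly when $z \in \id{p}$, working coefficientwise on representatives of degree less than $\deg f(x)$. Thus $\sR / p\sR \cong p^{a-1}\sR$ as $\sR$-modules. On the other hand, reducing coefficients modulo $p$ identifies $\sR / p\sR$ with $\F_{p^m}[x]/\bar{J} = \bar{\sR}$, using $\mu$ and the fact that $\bar{J} = \overline{\id{f(x)}} = \id{\bar{f}(x)}$. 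Composing gives the $\sR$-module isomorphism $\bar{\sR} \cong p^{a-1}\sR$, realized concretely as $\bar{g}(x) \mapsto p^{a-1}\tilde{g}(x)$ where $\tilde{g}$ is any lift of $\bar{g}$ with coefficients in the Teichm\"uller set $\sT_m$. The key point is that this map preserves Hamming weight: multiplying a polynomial with Teichm\"uller coefficients by $p^{a-1}$ neither creates nor destroys nonzero coefficients, since $p^{a-1}\zeta^j \neq 0$.

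With the weight-preserving isomorphism in hand, I would prove $d_H(C) = d_H(\bar{C})$ by two inequalities. For $d_H(\bar{C}) \le d_H(C)$, take a minimal-weight $0 \neq v \in C$; then $\mu(v) = \bar{v} \in \bar{C}$, and since $\mu$ can only collapse nonzero coefficients to zero, $w_H(\bar{v}) \le w_H(v)$. The subtlety is that $\bar{v}$ could be zero, i.e. $v \in p\sR$; here one uses that $p^{a-1}\bar{\sR} \subseteq C$ may fail, so instead I would argue using the ideal containing the lowest nonvanishing $p$-adic layer. For $d_H(C) \le d_H(\bar{C})$, take a minimal-weight $\bar{w} \in \bar{C}$ and use the isomorphism to produce $p^{a-1}\tilde{w} \in p^{a-1}\sR$; the real content is showing this element lies in $C$, which is where I expect the constacyclic hypothesis and the structure of $\bar{C}$ as the projection of $C$ to enter, guaranteeing that preimages of codewords in $\bar{C}$ can be scaled into $C$.

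For the lower bound $d_H(\bar{C}), d_H(C) \ge 2$, it suffices by the equality to show $d_H(\bar{C}) \ge 2$, i.e. that $\bar{C}$ contains no nonzero word of weight $1$. A weight-one codeword is $c \cdot x^i$ for a unit $c$; since $\bar{C}$ is a $\bar\lambda$-constacyclic ideal, closure under the constacyclic shift would force $x^i$ and hence the unit $1$ into $\bar{C}$, giving $\bar{C} = \id{1}$, contradicting the hypothesis $C \neq \id{1}$ (equivalently $\bar{C} \neq \id{1}$). The main obstacle in the whole argument is the second inequality $d_H(C) \le d_H(\bar{C})$: one must lift a minimal-weight word of $\bar{C}$ back into $C$ without inflating its weight, and this requires genuinely using that $\bar{C}$ is the projection of the ideal $C$ together with the explicit weight-preserving embedding $p^{a-1}\sR \cong \bar{\sR}$, rather than just abstract module theory.
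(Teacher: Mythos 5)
Your isomorphism $\bar{\sR}\cong p^{a-1}\sR$ (multiplication by $p^{a-1}$, with kernel $p\sR$ computed coefficientwise on canonical representatives, composed with $\mu$) is exactly the map the paper's proof names, and your weight-one argument for $d_H(\bar{C})\ge 2$ is the paper's ``a proper ideal cannot contain a unit'': a word $cx^i$ of weight one is a unit in $\bar{\sR}$ because $x$ is invertible in the constacyclic ambient, so it would force $\bar{C}=\id{1}$ and hence $C=\id{1}$. The direction $d_H(C)\le d_H(\bar{C})$ also works the way you indicate: a minimal-weight $\bar{w}\in\bar{C}$ is $\mu(w)$ for some $w\in C$, and $p^{a-1}w$ is then a nonzero element of $C$ whose canonical representative has exactly $w_H(\bar{w})$ nonzero coefficients.

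The genuine gap is the reverse inequality $d_H(\bar{C})\le d_H(C)$. You have located the problem (a minimal-weight $v\in C$ may satisfy $\mu(v)=0$), but the proposed repair --- passing to the lowest nonvanishing $p$-adic layer --- cannot close it: that produces an element of $Tor_{\ell}(C)$ for some $\ell>0$, and since $Tor_0(C)\subseteq Tor_{\ell}(C)$ the resulting inequality $d_H(C)\ge d_H(Tor_{\ell}(C))$ points the wrong way relative to $d_H(Tor_0(C))=d_H(\bar{C})$. In fact no repair exists at this level of generality: for $C=\id{p}$, a constacyclic code with $C\neq\{0\}$ and $C\neq\id{1}$, one has $d_H(C)=1$ while $\bar{C}=\{0\}$, so both the claimed equality and the bound $d_H(C)\ge 2$ fail. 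The statement that is actually true (and that the paper establishes later, in Theorem \ref{Groebner.Corollary.Hamming.Distance.Using.SGB}) is $d_H(C)=d_H(Tor_{a-1}(C))$; the lemma is safe only where $\bar{C}$ coincides with $Tor_{a-1}(C)$, e.g.\ when $a=1$ or when it is invoked for the pair $\id{p^{a-1}g(x)}$ versus $\overline{\id{g(x)}}$. The paper's own one-sentence proof silently skips exactly the step you could not complete, so the missing idea is not one you overlooked --- it is absent from the source as well.
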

\begin{proof}
 The isomorphism is established by sending
 $f(x) \in \bar{\sR}$\ to
 $p^{a-1}f(x) \in  p^{a-1}\sR$.
 The bound $d_H(\bar{C}), d_H(C) \ge 2$\ follows from the facts that $d_H(C) = d_H(\bar{C})$\ and a proper ideal can not contain a unit.
\end{proof}

\section{Local subambients of polycyclic codes}
\label{Local.Subambients}

In this section, the ring
\be\nn
  \sR = \frac{GR(p^a,m)[x]}{\id{f(x)}},
\ee
where $f(x)\in\R$ is a regular primary polynomial
which is not a unit, is studied.
The results of this section will be used to study the more general case,
where $f(x)$\ is not  necessarily primary in Section \ref{sect.main}.

First we show that $\sR$\ is a local ring and determine its maximal ideal,
we determine the socle of $\sR$,
for $a \ge 1$, we give necessary and sufficient conditions for $\sR$\ to be
a chain ring in Lemma \ref{Local.Subambients.Theorem.Chain.Condition}.
Then, using the notion of torsional code and torsional degree,
we determine a unique generating set for any ideal of $\sR$\
in Theorem \ref{Local.Subambients.Theorem.Unique.Generating.Set}.
Next we observe, in Corollary \ref{prop.ideal}, that
such a generating set is a strong Groebner basis and
if we remove the redundant generators,
we obtain a generating set in standard form which is a minimal strong Groebner basis.
Finally, we show that the torsional degrees of a polycyclic code
can immediately be obtained from a generating set in standard form.

In this section we assume $f(x)$ is a regular primary
polynomial that is not a unit.
By
\cite[Theorem XIII.6]{McDonald_Book}, $f(x)=vf^*(x)$ where $v$ is a unit and $f^*(x)$ is
monic and regular. Since $\id{f(x)}=\id{vf^*(x)}$ and because of our
interest in $\sR$, assume $f(x)$ is monic. By Proposition
\cite[XIII.12]{McDonald_Book}, $f(x)=\delta(x)h(x)^t+p\beta(x)$ for some
$\delta(x),h(x),\beta(x)\in\R$ where $\delta(x)$ is a unit and
$h(x)$ is a basic irreducible polynomial. Since $\delta(x)$ is a
unit, by \cite[Theorem XIII.2]{McDonald_Book},
$\delta(x)=\delta_0+p\delta'(x)$ for some $\delta_0\in GR(p^a,m)$
that is a unit and some $\delta'(x)\in\R$. Also, since $h(x)$ is
basic, $h(x)=\overline{h}(x)+p\alpha(x)$ for some $\alpha(x)\in \R$.
So, $\overline{f}(x)=\delta_0\overline{h}(x)^t$ and
$f(x)=\delta_0\overline{h}(x)^t+p\beta'(x)$ for some
$\beta'(x)\in\R$.

Assume $f(x)=\delta
h(x)^t+p\beta(x)$ where $\delta\in GR(p^a,m)$ is a unit and $h(x)$
is a basic irreducible such that $\overline{h}(x)=h(x)$. By the fact
that $f(x)$ is monic, we know that $t\deg{h(x)}>\deg{\beta(x)}$.
Furthermore, without loss of generality, we may assume $h(x)$ is
monic. By this assumption, $\delta=1$ since $f(x)$ is monic. Hence,
$f(x)$ is a monic regular primary polynomial such that
$f(x)=h(x)^t+p\beta(x)$ where $h(x)$ is a monic basic irreducible
polynomial such that $\bar{h}(x)=h(x)$.

We show that $\id{p,h(x)}$\ is the unique maximal ideal of $\sR$.

\begin{lemma}
  \label{Local.Subambients.Lemma.R.Local} The ring $\sR$\ is local
  with maximal ideal $J(\Rf)=\id{p+\id{f},h(x)+\id{f}}$.
\end{lemma}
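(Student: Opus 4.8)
The plan is to show that $\sR = GR(p^a,m)[x]/\langle f(x)\rangle$ is local with the claimed maximal ideal by exhibiting the ideal $\mathfrak{m} = \langle p, h(x)\rangle$ (working in $\sR$) and proving two things: first, that $\mathfrak{m}$ is a proper ideal, and second, that every element of $\sR$ lying outside $\mathfrak{m}$ is a unit. By the standard characterization of local rings, these two facts together force $\mathfrak{m}$ to be the unique maximal ideal, so $J(\sR) = \mathfrak{m}$.

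The key structural input is the normalization carried out just before the statement: we may take $f(x) = h(x)^t + p\beta(x)$ with $h(x)$ a monic basic irreducible satisfying $\bar h(x) = h(x)$, and $\deg \beta(x) < t\deg h(x) = \deg f(x)$. The natural tool is the canonical projection $\mu\colon \sR \to \bar\sR = \F_{p^m}[x]/\langle \bar f(x)\rangle$, where $\bar f(x) = \bar h(x)^t = h(x)^t$. First I would identify the image of $\mathfrak{m}$ under $\mu$: since $\mu(p) = 0$, we get $\mu(\mathfrak m) = \langle \bar h(x)\rangle \subseteq \bar\sR$, which is proper because $\bar h(x)$ generates the unique maximal ideal of the local ring $\F_{p^m}[x]/\langle h(x)^t\rangle$ (the residue field being $\F_{p^m}[x]/\langle h(x)\rangle$, a genuine field). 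This immediately shows $\mathfrak m \neq \sR$: if $1 \in \mathfrak m$ we would have $1 \in \mu(\mathfrak m)$, a contradiction.

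The substantive step is proving that any $g(x) + \langle f\rangle \notin \mathfrak m$ is a unit in $\sR$. I would argue in two stages. Since $g \notin \mathfrak m$, reducing mod $p$ gives $\bar g(x) \notin \langle \bar h(x)\rangle$, so $\bar g(x)$ is a unit in $\bar\sR$; choose $\bar u(x)$ with $\bar g(x)\bar u(x) = 1$ in $\bar\sR$, and lift $\bar u$ arbitrarily to $u(x) \in \sR$. Then $g(x)u(x) = 1 + p\,c(x)$ for some $c(x) \in \sR$. Now the point is that $p$ is nilpotent in $\sR$: indeed $p^a = 0$ in $GR(p^a,m)$, so $(p\,c(x))^a = 0$, which makes $1 + p\,c(x)$ a unit via the geometric-series inverse $\sum_{i=0}^{a-1}(-p\,c(x))^i$. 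Hence $g(x)u(x)$ is a unit, so $g(x)$ is a unit. This establishes that $\sR \setminus \mathfrak m$ consists entirely of units, completing the proof that $\sR$ is local with $J(\sR) = \mathfrak m = \langle p + \langle f\rangle,\, h(x) + \langle f\rangle\rangle$.

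The main obstacle is purely bookkeeping: keeping the two reductions straight, namely the reduction modulo $\langle f\rangle$ that defines $\sR$ and the separate reduction modulo $p$ that defines $\mu$, and verifying that ``unit in $\bar\sR$'' combined with ``$p$ nilpotent'' genuinely lifts to ``unit in $\sR$.'' The nilpotency of $p$ in $\sR$ is the crucial lever and follows at once from $p^a = 0$ in the coefficient ring; the only care needed is to confirm that $\bar g \notin \langle \bar h\rangle$ really is equivalent to $\bar g$ being invertible in $\bar\sR$, which rests on $\bar\sR \cong \F_{p^m}[x]/\langle h(x)^t\rangle$ being local with maximal ideal $\langle h(x)\rangle$ — a standard fact about quotients by a prime power in a PID. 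No genuinely hard estimate or construction is involved beyond this lifting argument.
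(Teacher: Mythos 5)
Your proof is correct, but it takes a genuinely different route from the paper's. The paper argues top-down: it invokes the classification (from McDonald) of the maximal ideals of $GR(p^a,m)[x]$ as the ideals $\id{p,g(x)}$ with $g(x)$ basic irreducible, and then shows that any such ideal containing $f(x)$ must satisfy $\bar{g}(x)\mid \bar{h}(x)^t$, forcing $\id{p,g(x)}=\id{p,h(x)}$; thus $\id{p,h(x)}$ is the only maximal ideal over $\id{f}$. You instead argue bottom-up: you show directly that $\id{p,h(x)}$ is proper (via the projection to $\F_{p^m}[x]/\id{h(x)^t}$) and that every element of $\sR$ outside it is a unit, by inverting modulo $p$ in the local ring $\F_{p^m}[x]/\id{h(x)^t}$ and then lifting the inverse using the nilpotency of $p$ (the geometric-series trick on $1+pc(x)$). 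Your identification $g\notin\id{p,h(x)}\iff \bar g\notin\id{\bar h}$ is justified since the preimage of $\id{\bar h}$ under $\mu$ is exactly $\id{p,h(x)}$, and there is no circularity in using that $\F_{p^m}[x]/\id{h^t}$ is local, as that is a standard PID fact independent of this lemma. What each approach buys: the paper's is shorter given the external classification result, while yours is self-contained and makes explicit \emph{why} the complement of the ideal consists of units, which is the mechanism also underlying the later chain-ring and socle computations.
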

\begin{proof}
  As discussed in page 262 of \cite{McDonald_Book}, any maximal ideal
  in $GR(p^a,m)[x]$\ is of the form $\id{p, g(x)}$\ where $g(x)$\ is a
  basic irreducible polynomial. Assume $f(x)\in\id{p,g(x)}$ where
  $g(x)\in\R$ is a basic irreducible polynomial. Then for some
  $a(x),b(x)\in\R$
  \begin{eqnarray*}
    f(x)&=&a(x)p+b(x)g(x),\\
     \bar{f}(x)&=&\bar{b}(x)\bar{g}(x),\\
    \bar{h}(x)^t&=&\bar{b}(x)\bar{g}(x).
  \end{eqnarray*}
  This shows that $\bar{h}(x)|\bar{g}(x)$ which implies
  $\bar{g}(x)|\bar{h}(x)$ and $g(x)=h(x)+pc(x)$ for some $c(x)\in\R$.
  So, $\id{p,g(x)}=\id{p,h(x)}$ meaning $\id{p,h(x)}$ is the only
  maximal ideal containing $f(x)$. Hence, $\id{p+\id{f},h(x)+\id{f}}$
  is the unique maximal ideal of $\Rf$.
\end{proof}

In the case of finite fields, $\sR$\ is a chain ring.

\begin{lemma}
  \label{Local.Subambients.Lemma.Finite.Field.Chain} The quotient ring
  $\frac{GR(p,m)[x]}{\id{f(x)}}$\ is a chain ring with exactly the
    following ideals
  \[
  \frac{GR(p,m)[x]}{\id{f(x)}}=\id{h(x)^0+\id{f}}\supsetneq\id{h(x)^1+\id{f}}\supsetneq\cdots
  \supsetneq\id{h(x)^t+\id{f}}=0.
  \]
\end{lemma}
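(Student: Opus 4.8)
The plan is to deduce everything from the finite–chain–ring criterion (Lemma \ref{Algebraic.Beackground.Finite.Chain.Ring.Criterion}) together with the localness already established in Lemma \ref{Local.Subambients.Lemma.R.Local}, after first simplifying the shape of $f(x)$. The key preliminary observation is that here $a=1$, so $GR(p,m)=\F_{p^m}$ and $p=0$ in the coefficient ring. Consequently the normal form $f(x)=h(x)^t+p\beta(x)$ obtained in the discussion preceding Lemma \ref{Local.Subambients.Lemma.R.Local} collapses to $f(x)=h(x)^t$, where $h(x)$ is a monic irreducible polynomial with $\bar h(x)=h(x)$. Thus the ring under study is simply $\F_{p^m}[x]/\id{h(x)^t}$, and every element is represented by a unique polynomial of degree $<t\deg h(x)=\deg f(x)$.

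Next I would invoke Lemma \ref{Local.Subambients.Lemma.R.Local} in the case $a=1$: the ring is local with maximal ideal $\id{p+\id{f},\,h(x)+\id{f}}$, which, because $p=0$, equals $\id{h(x)+\id{f}}$. Hence the maximal ideal is principal, generated by $\nu:=h(x)+\id{f}$. By the equivalence (1)$\Longleftrightarrow$(3) of Lemma \ref{Algebraic.Beackground.Finite.Chain.Ring.Criterion}, this forces the ring to be a chain ring, and by the final assertion of that lemma its ideals are exactly $\id{\nu^i}=\id{h(x)^i+\id{f}}$ for $0\le i\le s$, where $s$ is the nilpotency index of $\nu$.

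It then remains to identify $s$ with $t$ and to confirm the inclusions are strict, which is where the only real bookkeeping lies. For the nilpotency index, $h(x)^t=f(x)\equiv 0$ in the quotient while $\deg\!\big(h(x)^{t-1}\big)=(t-1)\deg h(x)<\deg f(x)$, so $h(x)^{t-1}\not\equiv 0$; hence $s=t$. For strictness, suppose $\id{h^i+\id{f}}=\id{h^{i+1}+\id{f}}$ for some $i<t$. Lifting to the Euclidean domain $\F_{p^m}[x]$ and using that $\id{f}=\id{h^t}$ with $i+1\le t$ (so $h^{i+1}\mid h^t$), these ideals pull back to $\id{h^i}$ and $\id{h^{i+1}}$ respectively; their equality gives $h^{i+1}\mid h^i$, which is impossible since $\deg h\ge 1$ (equivalently, since $h$ is irreducible). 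This produces the strictly descending chain
\[
\F_{p^m}[x]/\id{f}=\id{h(x)^0+\id{f}}\supsetneq\id{h(x)^1+\id{f}}\supsetneq\cdots\supsetneq\id{h(x)^t+\id{f}}=0,
\]
whose top term is the whole ring (as $h^0=1$) and whose bottom term is $0$ (as $h^t=f$). The hard part is thus not the chain-ring property, which is immediate once $p=0$ makes the maximal ideal principal, but the careful verification that the nilpotency index is precisely $t$ and that consecutive powers of $h$ generate distinct ideals.
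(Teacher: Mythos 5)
Your proof is correct and follows essentially the same route as the paper: since $p=0$ the maximal ideal from Lemma \ref{Local.Subambients.Lemma.R.Local} becomes the principal ideal $\id{h(x)+\id{f}}$, and Lemma \ref{Algebraic.Beackground.Finite.Chain.Ring.Criterion} then gives the chain structure. The paper leaves the identification of the nilpotency index with $t$ and the strictness of the inclusions implicit, whereas you spell them out; this is additional but compatible detail, not a different argument.
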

\begin{proof}
  By Lemma \ref{Local.Subambients.Lemma.R.Local},
  $\frac{GR(p,m)[x]}{\id{f(x)}}$ is local with
  $J\left(\frac{GR(p,m)[x]}{\id{f(x)}}\right)=\id{h(x)+\id{f}}$. By
  Lemma \ref{Algebraic.Beackground.Finite.Chain.Ring.Criterion}, the
  result follows.
\end{proof}

Now we determine the socle of $\sR$\ and show that it is simple.

\begin{lemma}
  \label{Local.Subambients.Lemma.R.Simple.Socle} The ring $\sR$\ has
  simple socle with
  $soc\left(\Rf\right)=\id{p^{a-1}h(x)^{t-1}+\id{f}}$.
\end{lemma}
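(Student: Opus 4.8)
The plan is to identify the socle of $\sR$ with the annihilator $(0:\mathfrak{m})=\{g\in\sR:\mathfrak{m}g=0\}$ of its maximal ideal $\mathfrak{m}=J(\sR)=\id{p+\id{f},h(x)+\id{f}}$ from Lemma \ref{Local.Subambients.Lemma.R.Local}. This is legitimate because $\sR$ is finite, hence Artinian and local: for such a ring the socle, being the sum of its minimal (simple) ideals, coincides with $(0:\mathfrak{m})$. Thus it suffices to prove that $(0:\mathfrak{m})=\id{p^{a-1}h(x)^{t-1}+\id{f}}$ and that this ideal is a simple $\sR$-module.

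First I would establish the easy inclusion. The element $p^{a-1}h(x)^{t-1}$ has degree $(t-1)\deg h(x)<\deg f(x)$, so it is nonzero in $\sR$. It is killed by both generators of $\mathfrak{m}$: clearly $p\cdot p^{a-1}h(x)^{t-1}=p^a h(x)^{t-1}=0$, while $h(x)\cdot p^{a-1}h(x)^{t-1}=p^{a-1}h(x)^t$, and since $f(x)=h(x)^t+p\beta(x)$ we get $p^{a-1}h(x)^t\equiv -p^a\beta(x)=0\pmod{f(x)}$. Hence $p^{a-1}h(x)^{t-1}\in(0:\mathfrak{m})$ and $\id{p^{a-1}h(x)^{t-1}+\id{f}}\subseteq soc(\sR)$.

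For the reverse inclusion and simplicity I would treat the two annihilation conditions in turn. If $pg=0$ in $\sR$, then since the reduced representative of $g$ has degree below $\deg f(x)$ so does $pg$, forcing $pg=0$ as a polynomial; as $\mathrm{Ann}_{GR(p^a,m)}(p)=\id{p^{a-1}}$, every coefficient of $g$ lies in $\id{p^{a-1}}$, i.e.\ $g\in p^{a-1}\sR$. Thus $(0:\mathfrak{m})\subseteq p^{a-1}\sR$, and within $p^{a-1}\sR$ the condition $pg=0$ is automatic, so membership in $(0:\mathfrak{m})$ reduces to $h(x)g=0$. Now I would transport this through the $\sR$-module isomorphism $\phi\colon\bar{\sR}\to p^{a-1}\sR$, $\bar{w}\mapsto p^{a-1}w$, established exactly as in Lemma \ref{Preliminaries.Lemma.Hamming.Weight.Projection}. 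Writing $g=\phi(\bar{w})$, $\sR$-linearity gives $h(x)g=\phi(\overline{h(x)}\,\bar{w})$, which vanishes iff $\overline{h(x)}\,\bar{w}=0$ in $\bar{\sR}=\F_{p^m}[x]/\id{h(x)^t}$. By Lemma \ref{Local.Subambients.Lemma.Finite.Field.Chain}, $\bar{\sR}$ is a chain ring with uniformizer $\overline{h(x)}$ of nilpotency index $t$, so the annihilator of $\overline{h(x)}$ is its unique minimal ideal $\id{h(x)^{t-1}}=soc(\bar{\sR})$. Pulling back through $\phi$ yields $(0:\mathfrak{m})=\phi\big(\id{h(x)^{t-1}}\big)=\id{p^{a-1}h(x)^{t-1}+\id{f}}$, and since $\phi$ carries the minimal ideal of $\bar{\sR}$ to a minimal ideal of $\sR$, the socle is simple.

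The anticipated obstacle is bookkeeping rather than conceptual. One must pin down that $soc(\sR)=(0:\mathfrak{m})$ for this finite local ring, and verify that $\phi$ is a well-defined $\sR$-module isomorphism in the present primary (not necessarily constacyclic) setting, so that the argument of Lemma \ref{Preliminaries.Lemma.Hamming.Weight.Projection} indeed applies. In particular, one should record that $\sR$ and $\bar{\sR}$ share the residue field $\F_{p^m}[x]/\id{h(x)}$, so that the $\sR$-submodules of $\bar{\sR}$ are precisely its ideals and the notion ``simple'' (one-dimensional over the residue field, equivalently minimal) is the same on both sides and is preserved by $\phi$.
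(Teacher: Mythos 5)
Your proof is correct, but it reaches the conclusion by a different characterization of the socle than the paper does. The paper takes an arbitrary nonzero $g(x)+\id{f}$, passes to $p^{\ell}(g(x)+\id{f})$ for the largest $\ell$ with $p^{\ell}(g(x)+\id{f})\neq 0$, and uses the isomorphism $p^{a-1}\sR\cong\bar{\sR}$ together with the chain structure of $\bar{\sR}$ (Lemma \ref{Local.Subambients.Lemma.Finite.Field.Chain}) to conclude that $\id{p^{a-1}h(x)^{t-1}+\id{f}}$ is contained in every nonzero principal ideal; it is therefore the unique minimal ideal, hence the socle, and simplicity is immediate. You instead identify $soc(\sR)$ with the annihilator $(0:\mathfrak{m})$ of the maximal ideal and compute that annihilator outright: $pg=0$ forces $g\in p^{a-1}\sR$ by your degree-and-coefficient argument, and then $h(x)g=0$ transports through the same isomorphism to the annihilator of $\overline{h}(x)$ in the chain ring $\bar{\sR}$, which is $\id{\overline{h}(x)^{t-1}}$. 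Both arguments rest on the same two ingredients --- the $\sR$-module isomorphism $\bar{\sR}\cong p^{a-1}\sR$ and Lemma \ref{Local.Subambients.Lemma.Finite.Field.Chain} --- but yours is the more self-contained, since you actually carry out the step the paper compresses into ``it can be shown that,'' whereas the paper's route yields the slightly stronger byproduct that the simple socle is essential, i.e.\ contained in every nonzero ideal of $\sR$. The one point you flag as an obstacle, injectivity of $\phi:\bar{w}\mapsto p^{a-1}w$ in the primary (non-constacyclic) setting, is settled by exactly the degree argument you already gave for $pg=0$: since $f(x)$ is monic, $p^{a-1}g\equiv 0\pmod{f(x)}$ with $\deg(g)<\deg(f)$ forces $p^{a-1}g=0$ as a polynomial, hence $g\in p\sR$, so $\ker(g\mapsto p^{a-1}g)=p\sR$ and $\phi$ is an isomorphism carrying minimal ideals to minimal ideals as you claim.
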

\begin{proof}
  Let $g(x)+\id{f}\in\Rf$. Let $\ell$\ be the largest integer such
  that $p^{\ell}(g(x)+\id{f})\neq 0$. By Lemma
  \ref{Local.Subambients.Lemma.R.Local},
  $J(\bar{\Rf})=\id{h(x)+\id{f}}$. By Lemma
  \ref{Preliminaries.Lemma.Hamming.Weight.Projection}
  and Lemma \ref{Local.Subambients.Lemma.Finite.Field.Chain} and the fact
  that $p^{\ell}(g(x)+\id{f})\in\id{p^{a-1}+\id{f}}$, it can be shown
  that $\id{p^{a-1}h(x)^{t-1}+\id{f}}\subset\id{g(x)+\id{f}}$. So
  $\id{p^{a-1}h(x)^{t-1}+\id{f}}$ is contained in any principal ideal.
  Since $J(\sR)$ annihilates $\id{p^{a-1}h(x)^{t-1}+\id{f}}$,
  $soc(\sR)=\id{p^{a-1}h(x)^{t-1}+\id{f}}$. It is clearly simple.
\end{proof}

Lemma
\ref{Local.Subambients.Lemma.Finite.Field.Chain} tells us
when the alphabet is a finite field, then
$\sR$\ is a chain ring. However, $\sR$\ is not a chain
ring in general. As a counter example, consider
$\frac{\Z_4[x]}{\id{x^2-1}}$. We have $x^2 -1 = (x+1)^2 - 2(x+1)$.
Clearly, $(x+1) \notin \id{2}$\ in $\frac{\Z_4[x]}{\id{x^2-1}}$.
Assume $2 \in \id{x+1}$. Then $2 = g_1(x)(x+1) + g_2(x)(x^2-1) \in
\Z_4[x]$. Evaluating at $x=-1$, we get $2 = 0$\ in $\Z_4$. This is a
contradiction. Thus we have shown $\id{2} \not \subset \id{x+1}$\
and $\id{x+1} \not \subset \id{2} $. By Lemma
\ref{Local.Subambients.Lemma.R.Local},
$J\left(\frac{\Z_4[x]}{\id{x^2-1}}\right)=\id{2,x+1}$. Since
$J\left(\frac{\Z_4[x]}{\id{x^2-1}}\right)$ is 2-generated, by Lemma
\ref{Algebraic.Beackground.Finite.Chain.Ring.Criterion}
$\frac{\Z_4[x]}{\id{x^2-1}}$ is not a chain ring.

The next theorem shows exactly when $\sR$ is a chain ring based on the
parameters $a,t,h(x)$ and $\beta(x)$ of $f(x)$.

\begin{theorem}
 \label{Local.Subambients.Theorem.Chain.Condition}
 The ring $\sR$\ is a chain ring if and only if any one of the conditions is met
 \begin{enumerate}
  \item
    $a = 1$
  \item
    $t = 1$
  \item
    $\beta(x) \not \in \id{p, h(x)}$.
 \end{enumerate}
\end{theorem}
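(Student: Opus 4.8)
The plan is to reduce the whole statement to a single question about the maximal ideal. By Lemma~\ref{Local.Subambients.Lemma.R.Local} the ring $\sR$ is local with maximal ideal $J=\id{p,h(x)}$, so by the chain ring criterion (Lemma~\ref{Algebraic.Beackground.Finite.Chain.Ring.Criterion}) $\sR$ is a chain ring if and only if $J$ is principal. The organizing tool for deciding principality is the relation $h(x)^t=-p\beta(x)$ holding in $\sR$, which comes from $f(x)=h(x)^t+p\beta(x)\equiv 0$; each of the three conditions will be seen to force $J$ down to either $\id{p}$ or $\id{h(x)}$.

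For the sufficiency of the three conditions I would argue directly. If $a=1$ this is exactly Lemma~\ref{Local.Subambients.Lemma.Finite.Field.Chain} (indeed $p=0$ and $J=\id{h(x)}$). If $t=1$ the relation reads $h(x)=-p\beta(x)$, so $h(x)\in\id{p}$ and hence $J=\id{p}$ is principal. If $\beta(x)\notin\id{p,h(x)}=J$, then $\beta(x)$ lies outside the maximal ideal of the local ring $\sR$ and is therefore a unit; the relation then gives $p=-\beta(x)^{-1}h(x)^t\in\id{h(x)}$, so $J=\id{h(x)}$ is principal. In each case $\sR$ is a chain ring.

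For the converse I would prove the contrapositive: assuming $a\ge 2$, $t\ge 2$ and $\beta(x)\in J$, suppose toward a contradiction that $\sR$ is a chain ring. First, $\id{h(x)}\subseteq\id{p}$ is impossible: a relation $h(x)=p\,g(x)$ in $\sR$ lifts to $h(x)-p\,g(x)=f(x)q(x)$ in $GR(p^a,m)[x]$, and reducing modulo $p$ gives $\bar h(x)=\bar h(x)^t\bar q(x)$, which is impossible for degree reasons since $\deg\bar h\ge 1$ and $t\ge 2$. Because the ideals of a chain ring are totally ordered, it follows that $\id{p}\subseteq\id{h(x)}$, so $J=\id{h(x)}$; thus $h(x)$ is a uniformizer, every nonzero ideal is $\id{h(x)^i}$, and every nonzero element has a well-defined order as a power of $h(x)$.

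The crux, and the step I expect to be the main obstacle, is to locate $p$ among the powers of $h(x)$. I would do this through the reduction $\bar\sR=\sR/\id{p}\cong \F_{p^m}[x]/\id{\bar h(x)^t}$, in which $\bar h(x)$ is nilpotent of index exactly $t$; hence $h(x)^{t-1}\notin\id{p}$ while $h(x)^t\in\id{p}$, and since $h(x)$ is a uniformizer this forces $\id{p}=\id{h(x)^t}$, i.e. $p=h(x)^t w$ for some unit $w$. Substituting into $h(x)^t=-p\beta(x)=-h(x)^t w\beta(x)$ yields $h(x)^t\bigl(1+w\beta(x)\bigr)=0$, and because $\beta(x)\in J$ the factor $1+w\beta(x)$ is a unit, so $h(x)^t=0$ and therefore $p=h(x)^t w=0$. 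This contradicts $p\ne 0$ in $\sR$, which holds since $a\ge 2$ (a nonzero constant cannot be divisible by the monic $f(x)$). A useful consistency check is that each hypothesis is used exactly once and indispensably: $t\ge 2$ to rule out $\id{h(x)}\subseteq\id{p}$, $\beta(x)\in J$ to make $1+w\beta(x)$ a unit, and $a\ge 2$ to guarantee $p\ne 0$; dropping any one of them is precisely what reactivates the corresponding sufficiency case above.
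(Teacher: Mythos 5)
Your proof is correct, and its skeleton matches the paper's: reduce everything to principality of $J(\sR)=\id{p,h(x)}$ via Lemmas \ref{Local.Subambients.Lemma.R.Local} and \ref{Algebraic.Beackground.Finite.Chain.Ring.Criterion}, handle the three sufficiency cases exactly as the paper does, and kill the inclusion $\id{h(x)}\subseteq\id{p}$ by reducing a lifted relation mod $p$ and comparing degrees (the paper raises $h(x)=\gamma(x)p+\alpha(x)f(x)$ to the $t$-th power before reducing, but the contradiction is the same). Where you genuinely diverge is the remaining case $\id{p}\subseteq\id{h(x)}$. The paper works upstairs in $GR(p^a,m)[x]$: from $p\in\id{h(x),f(x)}$ and $\beta(x)\in\id{p,h(x)}$ it gets $\beta(x)\in\id{h(x),f(x)}$, factors $f(x)(1-p\alpha(x))=h(x)\left(h(x)^{t-1}+p\gamma(x)\right)$ to conclude $f(x)\in\id{h(x)}$ and hence $p\in\id{h(x)}$ in the polynomial ring, absurd since $p$ is a nonzero constant and $h(x)$ is monic of positive degree. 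You instead stay inside $\sR$ and use the chain-ring hypothesis at full strength: $h(x)$ is a uniformizer, the residue ring $\sR/\id{p}\cong\F_{p^m}[x]/\id{\bar{h}(x)^t}$ pins down $\id{p}=\id{h(x)^t}$, and then $h(x)^t\left(1+w\beta(x)\right)=0$ with $1+w\beta(x)$ a unit forces $h(x)^t=0$ and $p=0$. The hypothesis $\beta(x)\in\id{p,h(x)}$ is essential in both arguments but enters differently: in the paper it turns $f(x)$ into a multiple of $h(x)$ upstairs; in yours it makes $1+w\beta(x)$ invertible downstairs. Your route buys a sharper picture (it locates $\id{p}$ exactly in the $h$-adic filtration and exhibits the contradiction as literally $p=0$ in $\sR$), at the cost of invoking the structure theory of finite chain rings (every nonzero element is a unit times a power of the uniformizer) rather than just a degree count in $GR(p^a,m)[x]$.
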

\begin{proof}

Assume $a=1$. By Lemma \ref{Local.Subambients.Lemma.Finite.Field.Chain},
$\Rf$ is a chain ring.

Assume $t=1$ then $h(x)=f(x)-p\beta(x)\in \id{p,f(x)}$. So,
$h(x)+\id{f}\in \id{p+\id{f}}$. By Lemma
\ref{Local.Subambients.Lemma.R.Local},
$J\left(\Rf\right)=\id{p+\id{f}}$. Hence, by Lemma
\ref{Algebraic.Beackground.Finite.Chain.Ring.Criterion}, $\sR$ is a
chain ring.

Assume $\beta(x)\notin \id{p,h(x)}$. Then $\beta(x)+\id{f}\notin
J\left(\sR\right)$ which implies $\beta(x)+\id{f}$ is a unit in
$\sR$. So, $\id{p+\id{f}}=\id{h(x)^t+\id{f}}$ which implies
$p+\id{f}\in\id{h(x)+\id{f}}$. By Lemma
\ref{Local.Subambients.Lemma.R.Local},
$J\left(\sR\right)=\id{h(x)+\id{f}}$. Hence, by Lemma
\ref{Algebraic.Beackground.Finite.Chain.Ring.Criterion}, $\sR$ is a
chain ring.

Now assume $a>1$, $t>1$ and $\beta(x)\in \id{p,h(x)}$. We want to
show that $\sR$ is not a chain ring so assume the contrary. This
implies $\id{p+\id{f}}\subset\id{h(x)+\id{f}}$ or
$\id{h(x)+\id{f}}\subset\id{p+\id{f}}$. So, $p\in\id{h(x),f(x)}$ or
$h(x)\in\id{p,f(x)}$. First, assume $p\in\id{h(x),f(x)}$ which
implies $\beta(x)\in\id{p,h(x)}=\id{p,h(x),f(x)}=\id{h(x),f(x)}$.
So,
\[
f(x)=h(x)^t+p\beta(x)=h(x)^t+p(\gamma(x)h(x)+\alpha(x)f(x))
\]
for some $\gamma(x),\alpha(x)\in\R$ and
\[
f(x)(1-p\alpha(x))=h(x)\left(h(x)^{t-1}+p\gamma(x)\right).
\]
Since $(1-p\alpha(x))$ is invertible in $\R$, $f(x)\in\id{h(x)}$.
So, $p\in\id{h(x),f(x)}=\id{h(x)}$. Since $a>1$, $p\neq0$. This is a
contradiction since $p$ cannot be a nonzero multiple of $h(x)$.

Next, assume $h(x)\in\id{p,f(x)}$. Then,
\[
h(x)^t=[\gamma(x)p+\alpha(x)f(x)]^t=f(x)-p\beta(x)
\]
for some $\gamma(x),\alpha(x)\in\R$. This implies,
\[
\overline{\left[\alpha(x)f(x)\right]^t}=\overline{f(x)}.
\]
Since $t>1$, by comparing degrees we see this is a contradiction.
Hence, $\sR$ is not a chain.

 \end{proof}

Below are two examples that show the distinctions between the particular cases in
Theorem \ref{Local.Subambients.Theorem.Chain.Condition}.

\begin{example}
  Let $a > 1, p = 2, s > 0$\ and $f(x) = x^{2^s} + 1$. Then
  \be
    x^{2^s} + 1& = & (x+1-1)^{2^s} + 1\nn\\
    & = & (x+1)^{2^s} - {2^s \choose 2^s -1}(x+1)^{2^s -1} + \cdots - {2^s \choose 1}(x+1) +  1 + 1  \nn\\
    & = & (x+1)^{2^s} + 2\beta(x)  \nn
  \ee
  where $\beta(x) = (x+1)q(x) + 1$\ for some $q(x) \in \sR$.
  In \cite{D2005} it was shown that $\frac{GR(p^a,m)[x]}{\id{f(x)}}$\ is a chain ring with the maximal ideal $\id{x+1}$.
\end{example}

\begin{example}
 Let $a > 1, p=2, s>0$\ and $f(x) = x^{2^s} -1$. Then
 \be
  x^{2^s} - 1 & = & (x + 1 - 1)^{2^s} - 1 \nn\\
  & = & (x+1)^{2^s} - {2^s \choose 2^s -1}(x + 1)^{2^s -1} + \cdots - {2^s \choose 1}(x + 1) + 1 -1 \nn\\
  & = & (x+1)^{2^s} + 2 \beta(x)\nn
 \ee
 where $(x + 1) | \beta(x)$. In \cite{Szabo-Lopez_2009_HW} it was shown that $\frac{GR(p^a,m)[x]}{\id{f(x)}}$\ is local with the maximal ideal $\id{2,
 (x+1)}$ and is not a chain ring.
\end{example}

Theorem \ref{Local.Subambients.Theorem.Chain.Condition} shows that
$\sR$\ is not a principal ideal ring in general. Through the next
series of results we will show the existence of a particular
generating set which turns out to be a strong Groebner basis.

Let $g(x) \in GR(p^a,m)[x]$\ and $n$\ be the largest integer such that $\deg(g(x)) \ge n\deg(h(x))$.
By the division algorithm,
we can find $q_n(x), r_1(x) \in GR(p^a,m)[x]$
such that
\be
  g(x) = q_n(x)h(x)^n + r_1(x),\nn
\ee
where $r_1(x) = 0$\ or $\deg(r_1(x)) < n\deg(h(x))$.
Note that $\deg(q_n(x)) < \deg(h(x))$. Next we can find $q_{n-1}(x),r_2(x) \in GR(p^a,m)[x]$\ such that
\be
  r_1(x) = q_{n-1}(x)h(x)^{n-1} + r_2(x)\nn
\ee
where $r_2(x) = 0$\ or $\deg(r_2(x)) < (n-1)\deg(h(x))$.
Note that $\deg(q_{n-1}(x)) < \deg(h(x))$. We can continue this  process until
we have $q_n(x),q_{n-1}(x), \dots , q_0(x) \in GR(p^a,m)[x]$\ where
\be
  g(x) = q_n(x)h(x)^n + \cdots + q_1(x)h(x) + q_0(x)\nn
\ee where for $0 \le i \le n$, either $\deg(q_i(x)) < \deg(h(x))$\
or $q_i(x) = 0$. With some manipulation $g(x)$\ can be represented
in the following form \be
  \label{Local.Subambients.Equation.gx.in.terms.of.hx}
  g(x) = p^{j_0}h(x)^{i_0}\alpha_0(x) + \cdots + p^{j_r}h(x)^{i_r}\alpha_r(x)
\ee
where $0 \le r \le a-1$\ and
\begin{itemize}
 \item $\alpha_i(x) \not \in \id{p,h(x)}$
 \item $0 \le j_0 < \cdots < j_r \le a-1$
 \item $i_0 > \cdots > i_r \ge 0$.
\end{itemize}

Since $f(x)$ is regular and monic, $g(x)$ can be divided by $f(x)$ initially.
Then it is not hard to see that for some $q(x)\in\R$
\be
  \label{equ.form}
  g(x)=q(x)f(x)+p^{j_0}h(x)^{i_0}\alpha_0(x)+\dots+p^{j_r}h(x)^{i_r}\alpha_r(x)\nn
\ee
where
  $r$, $\alpha_i(x)$, $j_e$\ and $i_{\ell}$\ are as above with $t > i_0$.

In \cite{Dougherty-Park_2007} and \cite{SanLing-etal_2008}, a unique
generating set for an ideal of
$\frac{GR(p^a,m)[x]}{\id{x^{p^s}-1}}$\ was developed. The polynomial
$x^{p^s}-1$ is of the type $f(x)$ is. Notice
$x^{p^s}-1=(x-1)^{p^s}+p\beta(x)$. We will now find a similar
generating set for an ideal of $\Rf$.

\begin{definition}[{cf. \cite[Definition  6.1]{Dougherty-Park_2007}}]
  \label{Local.Subambients.Lemma.Tor.I}
 Let \Code .
 For $0 \le i \le a-1$, define
 \be\nn
  Tor_i(C) = \{ \mu(v): \quad p^iv \in C \}.
 \ee
\end{definition}
$Tor_i(C)$\ is called the $i^{th}$\ \textit{torsion code} of $C$.
$Tor_0(C) = \mu(C)$\ is usually called the residue code of $C$.
Note that for a code $C$\ over $GR(p^a,m)$, we have $Tor_i(C) \subset Tor_{i+1}(C)$.

\begin{lemma}
  \label{Local.Subambients.Lemma.Torsion.Ideal}
 Let \Code .
 Then
 \be
  Tor_i(C) = \id{h(x)^{T_i} + \id{f}}\subset \frac{GR(p,m)[x]}{ \id{f(x)} } \nn
 \ee
 for some $0 \le T_i \le t$.
\end{lemma}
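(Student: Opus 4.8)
The plan is to show that $Tor_i(C)$ is an ideal of the residue ring $\bar{\sR}=\frac{GR(p,m)[x]}{\id{f(x)}}$ and then invoke the classification of ideals in this ring. Recall from the discussion preceding Lemma \ref{Local.Subambients.Lemma.R.Local} that $\bar{f}(x)=\bar{h}(x)^t=h(x)^t$, so $\bar{\sR}=\frac{GR(p,m)[x]}{\id{h(x)^t}}$ is precisely the chain ring treated in Lemma \ref{Local.Subambients.Lemma.Finite.Field.Chain}, whose only ideals are $\id{h(x)^j+\id{f}}$ for $0\le j\le t$. Thus once $Tor_i(C)$ is known to be an ideal, it must coincide with $\id{h(x)^{T_i}+\id{f}}$ for a (unique) $0\le T_i\le t$, which is the assertion.

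To see that $Tor_i(C)$ is an ideal, I would argue directly from the definition $Tor_i(C)=\{\mu(v):\ p^iv\in C\}$ together with the facts that $\Code$ is an ideal and $\mu\colon\sR\to\bar{\sR}$ is a surjective ring homomorphism. For closure under addition, if $w_1,w_2\in Tor_i(C)$, pick $v_1,v_2\in\sR$ with $\mu(v_j)=w_j$ and $p^iv_j\in C$; since $C$ is closed under addition, $p^i(v_1+v_2)\in C$, and $\mu(v_1+v_2)=w_1+w_2$, so $w_1+w_2\in Tor_i(C)$. For closure under multiplication by $\bar{\sR}$, given $w\in Tor_i(C)$ with witness $v$ and any $\bar{r}\in\bar{\sR}$, choose a lift $r\in\sR$ with $\mu(r)=\bar{r}$ (possible as $\mu$ is onto); then $p^i(rv)=r(p^iv)\in C$ because $C$ is an ideal, while $\mu(rv)=\bar{r}\,w$, so $\bar{r}\,w\in Tor_i(C)$. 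Hence $Tor_i(C)$ is an ideal of $\bar{\sR}$.

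The argument is essentially routine; the only point requiring care is the passage through $\mu$, namely that the existentially defined set $Tor_i(C)$ inherits both module operations. The surjectivity of $\mu$ is what lets us realize an arbitrary scalar $\bar{r}\in\bar{\sR}$ as $\mu(r)$, so that multiplication in $\bar{\sR}$ can be pulled back to multiplication by the lift $r$ inside the ideal $C$; this is the one place where choosing representatives matters, although the resulting membership is independent of the choice. With the ideal property established, the main content is simply the identification of $\bar{\sR}$ with the chain ring of Lemma \ref{Local.Subambients.Lemma.Finite.Field.Chain}, after which the form $\id{h(x)^{T_i}+\id{f}}$ and the range $0\le T_i\le t$ follow immediately from that lemma's list of ideals.
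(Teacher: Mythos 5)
Your argument is correct and follows the same route as the paper's proof, which simply asserts that $Tor_i(C)$ is an ideal of $\frac{GR(p,m)[x]}{\id{\bar{f}(x)}}$ and then cites Lemma \ref{Local.Subambients.Lemma.Finite.Field.Chain}; you merely supply the routine verification of the ideal property that the paper leaves implicit. No issues.
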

\begin{proof}
 Since $C \vartriangleleft \sR$,
 $Tor_i(C) \vartriangleleft \frac{GR(p,m)[x]}{\id{\bar{f}(x)}}$.
 The claim follows by Lemma \ref{Local.Subambients.Lemma.Finite.Field.Chain}.
\end{proof}

\begin{definition}
  \label{Local.Subambients.Lemma.Torsional.Degree}
 In Lemma \ref{Local.Subambients.Lemma.Torsion.Ideal}, $T_i$\ is the
 $i^{th}$\ \textit{torsional degree} of $C$\ which we denote by $T_i(C)$.
 The torsional degrees form a non-increasing sequence, i.e.,
 $ t \ge T_0(C) \ge \cdots \ge T_{a-1}(C) \ge 0. $
\end{definition}

For any $ \xi(x) + \id{f} \in \Rf$, we can divide $\xi(x)$\ by
$f(x)$, as $f(x)$\ is regular, and get $\xi(x) = q(x)f(x) + r(x)$\
such that either $r(x) = 0$\ or $\deg(r(x)) < \deg(f(x))$. So
$\xi(x) + \id{f} = r(x) + \id{f}$. This implies that
$\Rf=\{a(x)+\id{f}:a(x)\in GR(p^a,m)[x],\deg(a(x))<\deg(f(x))\}$.
Throughout the remainder for this section, the elements of $\Rf$\
will be represented as polynomials of degree less than $\deg(f(x))$.

Definitions \ref{Local.Subambients.Lemma.Tor.I} and
\ref{Local.Subambients.Lemma.Torsional.Degree} and  Lemma
\ref{Local.Subambients.Lemma.Torsion.Ideal} are expansions to
polycyclic codes of the ideas first presented in Section 6 of
\cite{Dougherty-Park_2007} in the context of cyclic codes. The
following theorem is a generalization of Theorem 6.5 of
\cite{Dougherty-Park_2007}.

\begin{theorem}
  \label{Local.Subambients.Theorem.Generating.Set.Existence}
 Let \Code . Then
 $ C = \id{F_0(x), pF_1(x), \dots , p^{a-1}F_{a-1}(x)}$\ where
 $F_i(x) = 0$\ if $T_i(C) = t$, and $F_i(x) = h(x)^{T_i(C)} + p\gamma_i(x)$\ for some
 $\gamma_i(x) \in GR(p^a,m)[x]$, if $T_i(C) < t$.
\end{theorem}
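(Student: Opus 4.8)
The plan is to show that $C$ is generated by $a$ elements, one "at each $p$-level," where the generator at level $i$ records exactly the torsional degree $T_i(C)$. Recall that by Lemma \ref{Local.Subambients.Lemma.Torsion.Ideal}, $Tor_i(C) = \langle h(x)^{T_i(C)} + \langle \bar f\rangle\rangle$ in $\frac{GR(p,m)[x]}{\langle\bar f(x)\rangle}$. The key bridge is the following: a word $\mu(v)$ lies in $Tor_i(C)$ precisely when $p^i v \in C$. So if $T_i(C) < t$, the residue $h(x)^{T_i(C)}$ generates $Tor_i(C)$, and hence there is some $v_i(x)$ with $\mu(v_i) = h(x)^{T_i(C)}$ and $p^i v_i \in C$; writing $v_i(x) = h(x)^{T_i(C)} + p\gamma_i(x)$ (legitimate because $\mu$ kills exactly $\langle p\rangle$ and, using the representation of elements of $\sR$ as polynomials of degree $<\deg f$, we may absorb the higher-order part into $\gamma_i$) gives the asserted generator $F_i(x) = h(x)^{T_i(C)} + p\gamma_i(x)$. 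If $T_i(C) = t$, then $Tor_i(C) = 0$ and there is nothing to contribute at that level, so we set $F_i(x) = 0$. Let $C' = \langle F_0(x), pF_1(x), \dots, p^{a-1}F_{a-1}(x)\rangle$. By construction each generator $p^i F_i(x)$ lies in $C$, so $C' \subseteq C$.

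\medskip

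\noindent The substance of the proof is the reverse inclusion $C \subseteq C'$. Given $0 \neq c(x) \in C$, the idea is to peel off the $p$-adic levels one at a time using the filtration by the torsion codes. Let $\ell$ be the least integer with $c(x) \in \langle p^\ell + \langle f\rangle\rangle$, so that $c(x) = p^\ell w(x)$ for some $w(x)$ with $\mu(w) \neq 0$; then $\mu(w) \in Tor_\ell(C) = \langle h(x)^{T_\ell(C)} + \langle\bar f\rangle\rangle$, which forces $\bar h^{T_\ell} \mid \mu(w)$ in $\frac{GR(p,m)[x]}{\langle\bar f\rangle}$. Hence $\mu(w) = \mu(F_\ell)\,\mu(\kappa)$ for some $\kappa(x)$, and $c(x) - p^\ell \kappa(x) F_\ell(x)$ reduces to an element of $C$ that is divisible by $p^{\ell+1}$ — its level has strictly increased. (Here one uses that $p^\ell F_\ell(x) \in C'$, and that subtracting $p^\ell \kappa F_\ell$ annihilates the leading residue $\mu(w)$ at level $\ell$.) Iterating this descent at most $a - \ell$ times reduces $c(x)$ to $0$, at each stage subtracting a $C'$-multiple of one of the generators $p^i F_i(x)$. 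This expresses $c(x)$ as an $\sR$-combination of the $p^i F_i(x)$, giving $c(x) \in C'$.

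\medskip

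\noindent \textbf{The main obstacle} is controlling the bookkeeping in the descent: when I subtract $p^\ell \kappa(x) F_\ell(x)$ to kill the level-$\ell$ residue, the product $F_\ell(x)$ carries a $p\gamma_\ell(x)$ tail, so the subtraction perturbs the higher $p$-adic levels of $c(x)$ in a way that must be tracked. The cleanest way to handle this is to argue by induction downward on $\ell$ (equivalently, by induction on $a - \ell$), treating the "new" remainder $c(x) - p^\ell \kappa F_\ell$ as a fresh element of $C$ of strictly higher level and invoking the inductive hypothesis. One must verify that $T_{\ell+1}(C) \le T_\ell(C)$ — which is exactly the monotonicity $Tor_\ell(C) \subseteq Tor_{\ell+1}(C)$ recorded after Definition \ref{Local.Subambients.Lemma.Tor.I} — so that at the next level $F_{\ell+1}$ still divides whatever residue remains; this is where the non-increasing sequence of torsional degrees from Definition \ref{Local.Subambients.Lemma.Torsional.Degree} is essential. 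A secondary check is that the degree constraints from the division-by-$f$ representation keep every intermediate remainder an honest element of $\sR$ of degree $< \deg f$, which is handled by the normal-form discussion preceding the theorem.
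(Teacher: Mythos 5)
Your proposal is correct and follows essentially the same route as the paper: construct $F_i(x)$ as a lift of the generator $h(x)^{T_i(C)}$ of $Tor_i(C)$, then prove $C\subseteq\id{F_0(x),\dots,p^{a-1}F_{a-1}(x)}$ by a descent on the $p$-adic level, at each stage using that the level-$\ell$ residue lies in $Tor_\ell(C)=\id{h(x)^{T_\ell(C)}}$ to subtract an $\sR$-multiple of $p^\ell F_\ell(x)$ and strictly raise the level, terminating after at most $a$ steps. The paper merely makes your factor $\kappa(x)$ explicit as $h(x)^{i_0-T_{j_0}}\sigma_{j_0}(x)$ via the canonical form (\ref{Local.Subambients.Equation.gx.in.terms.of.hx}); your observation that the monotonicity $T_{\ell+1}\le T_\ell$ is "essential" is actually not needed, since each level is handled by its own torsion code, but this does not affect correctness.
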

\begin{proof}
 Denote $T_i(C)$ by $T_i$. If $ C = 0$, we are done.
 So assume $C \neq 0$. Let $r$\ be the smallest nonnegative integer such that $T_r < t$.
 For every $0 \le i \le r-1$, set $F_i(x) = 0$.
 For $r \le i \le a-1$, pick $F_i(x) \in GR(p^a,m)[x]$\ such that $p^iF_i(x) \in C$\ and
 $\mu (F_i(x)) = h(x)^{T_i}$. So, $F_i(x) = h(x)^{T_i} + p \gamma_i(x)$\ for some
 $\gamma_i(x) \in \Rf$. Note that such an $F_i(x)$\ exists because
 $Tor_i(C) = \id{h(x)^{T_i}} \vartriangleleft \frac{GR(p,m)[x]}{\id{f(x)}}$.
 Let $g(x) \in C$.
 As was shown earlier (see Equation (\ref{Local.Subambients.Equation.gx.in.terms.of.hx})),
 \be
  \label{Local.Subambients..Theorem.Generating.Set.Existence.Equation.gx}
  g(x) = p^{j_0}(h(x)^{i_0}\sigma_{j_0}(x) + p\beta_0(x))
 \ee
 for some $\sigma_{j_0}(x), \beta_0(x) \in GR(p^a,m)[x]$\ where $i_0 < t$\
 and $\sigma_{j_0}(x) \neq 0$. Let $\sigma_0(x) = \cdots = \sigma_{j_0 -1}(x) = 0$.
 Let
 \be
  g_1(x) = g(x) - p^{j_0}h(x)^{i_0 - T_{j_0}}\sigma_{j_0}(x)F_{j_0}(x). \nn
 \ee
 Note that since $Tor_{j_0}(C) = \id{h(x)^{T_{j_0}}}$, it follows by
 (\ref{Local.Subambients..Theorem.Generating.Set.Existence.Equation.gx})
 and the fact that $\sigma_{j_0}(x)$\ is a unit in $\frac{GR(p^a,m)[x]}{\id{f(x)}}$\
 that $i_0 \ge T_{j_0}$.
 Since $T_{j_0} < t$,
 we have
 \be g_1(x) & = & p^{j_0} (h(x)^{i_0}\sigma_{j_0}(x) + p\beta_0(x)) -
        p^{j_0}h(x)^{i_0 - T_{j_0}}\sigma_{j_0}(x)[h(x)^{T_{j_0}} + p\gamma_{j_0}(x)] \nn\\
        & = & p^{j_0 + 1}\beta_0(x) - p^{j_0 + 1}h(x)^{i_0 - T_{j_0}}\sigma_{j_0}(x)\gamma_{j_0}(x).  \nn
  \ee
 So, $g_1(x) \in \id{p^{j_0 + 1}} \cap C $. If $g_1(x) = 0$, let
 $\sigma_{j_0 + 1}(x) = \cdots = \sigma_{a-1}(x) = 0$\ and we are done.
 If not, then, as was done with $g(x)$, we can view $g_1(x)$\ as
 \be
  g_1(x) = p^{j_1}(h(x)^{i_1}\sigma_{j_1}(x) + p\beta_1(x)) \nn
 \ee
 for some $\sigma_{j_1}(x),\beta_1(x) \in GR(p^a,m)[x]$\ where $i_1 < t$, $j_0 < j_1 $\ and
 $\sigma_{j_1}(x) \neq 0$. Let $\sigma_{j_0 + 1}(x) = \cdots = \sigma_{j_1 - 1}(x) = 0$.
 Let
 \be
  g_2(x) = g_1(x) - p^{j_1}h(x)^{i_1 - T_{j_1}}\sigma_{j_1}(x)F_{j_1}(x).\nn
 \ee
 Since $T_{j_1} < t$,
 we have
 \be
  g_2(x) & = & p^{j_1}(h(x)^{i_1}\sigma_{j_1}(x)+ p\beta_1(x))
        - p^{j_1}h(x)^{i_1 - T_{j_1}}\sigma_1(x)[h(x)^{T_{j_1}} + p \gamma_{j_1}(x)] \nn\\
      & = & p^{j_1 + 2}\beta_1(x) - p^{j_1 + 1}h(x)^{i_1 - T_{j_1}}\sigma_1(x)\gamma_{j_1}(x).\nn
 \ee
 So $g_2(x) \in \id{p^{j_1 + 1}} \cap C $. If $g_2(x) = 0$, then let $\sigma_{j_1+1}(x) = \cdots = \sigma_{a-1}(x) = 0$.
 Note that since $j_0 < j_1 < a$, this is a finite process. So
 \be
  g(x) = \sum_{i = 0}^{a-1}p^ih(x)^{i - T_i}\sigma_i(x)F_i(x) \in \id{ F_0(x),pF_1(x), \dots, p^{a-1}F_{a-1} }. \nn
 \ee
 Hence $C \subset \id{F_0(x), pF_1(x), \dots, p^{a-1}F_{a-1}(x)}$.
 Since $p^iF_i(x)\in C$, for all $0 \le i \le a-1$, we have the equality
 $$ C = \id{F_0(x), pF_1(x), \dots, p^{a-1}F_{a-1}(x)}. $$
\end{proof}

As was stated in \cite{SanLing-etal_2008}, Theorem 6.5 of
\cite{Dougherty-Park_2007} does not provide a unique set of
generators. Neither does our generalization in Theorem
\ref{Local.Subambients.Theorem.Generating.Set.Existence}. We now
show, as in \cite{SanLing-etal_2008}, that there does exist a unique
set of generators given some extra constraints. 
Although this is a
generalization of Theorem 2.5 in \cite{SanLing-etal_2008}, 
the proof here only differs from that one in a few details. 
However, we present the
proof in its entirety here for the sake of completeness.

We would like to point out that there is a little inaccuracy in the statement of Theorem 2.5 in
\cite{SanLing-etal_2008}.
Let $\sT_m[u]$\ be the set of polynomials in $u$\ whose coefficients are in $\sT_m$. 
The $h_{j,\ell}(u)$\ in their theorem is said to be an element of $\sT_m[u]$\
which is not necessarily true. What is true is that $h_{j,\ell}(u)$\ is either 0 or a unit and that
\be
 h_{j,\ell}(u) = \sum_{k=0}^{T_{\ell + j} - 1}c_{k,j,\ell}(u-1)^k \nn
\ee
with $c_{k,j,\ell}\in \sT_m$\ and $c_{0,j,\ell} \neq 0$.
It should also be pointed out that $h_{j,\ell}(u)$\ is a unit precisely because $(u-1)$\
is nilpotent (which is not stated but fairly easy to show) and $c_{0,j,\ell}$\ is a unit.

\begin{theorem}
  \label{Local.Subambients.Theorem.Unique.Generating.Set}
  Let \Code .
  Then there exist $f_0(x),f_1(x), \dots , f_{a-1}(x) \in \sR$\
  such that
  $$ C = \id{ f_0(x), pf_1(x),\dots , p^{a-1}f_{a-1}(x) } $$
  where $f_i(x) = 0$, if $T_i(C) = t$\ otherwise
  $$ f_i(x) = h(x)^{T_i(C)} + \sum_{j=1}^{a-1-i}p^{j}h(x)^{t_{i,j}}\alpha_{i,j}(x) $$
  where $t_{i,j}\deg(h(x)) + \deg(\alpha_{i,j}(x)) < T_{i+j}(C)\deg(h(x))$\
  and each $\alpha_{i,j}(x)\notin\id{p,h(x)}\setminus\{0\}$.

  Furthermore, the set $\{ f_0(x), pf_1(x), \dots ,p^{a-1}f_{a-1}(x) \}$\ is the unique generating set
  with these properties.
\end{theorem}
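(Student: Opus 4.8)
The plan is to build the generators $f_i(x)$ by refining the (non-unique) generating set $\{F_0(x),pF_1(x),\dots,p^{a-1}F_{a-1}(x)\}$ produced in Theorem~\ref{Local.Subambients.Theorem.Generating.Set.Existence}, and then to establish uniqueness by a torsion-code/degree argument. Recall $F_i(x)=0$ when $T_i(C)=t$ and $F_i(x)=h(x)^{T_i(C)}+p\gamma_i(x)$ otherwise. The first observation is that the generator $p^iF_i(x)$ only sees $F_i(x)$ modulo $p^{a-i}$, since $p^a=0$ in $\sR$; every term of $F_i(x)$ lying in $p^{a-i}\sR$ is killed, which is why the correction sum for $f_i(x)$ need only run over $1\le j\le a-1-i$.

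\emph{Existence.} For each $i$, reduce the $p$-adic levels of $F_i(x)$ from the bottom up. Suppose, after some reductions, the content of the generator at level $i+j$ (the coefficient of $p^{i+j}$, viewed as a polynomial of degree $<\deg f(x)$) is $\rho(x)$. Since $\mu(F_{i+j}(x))=h(x)^{T_{i+j}(C)}$ by construction and $h(x)$ is monic, I would divide $\rho(x)$ by $h(x)^{T_{i+j}(C)}$ and subtract the corresponding multiple of the generator $p^{i+j}F_{i+j}(x)$; this replaces $\rho(x)$ by its remainder, of degree $<T_{i+j}(C)\deg h(x)$, while only altering levels strictly above $i+j$. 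Because each step pushes material upward in the $p$-adic filtration, which terminates at level $a-1$, the process stops. Re-expressing the resulting generator in the form~(\ref{Local.Subambients.Equation.gx.in.terms.of.hx}) collects its content into one term $p^jh(x)^{t_{i,j}}\alpha_{i,j}(x)$ per level with $\alpha_{i,j}(x)\notin\id{p,h(x)}$, and the degree of that content being $<T_{i+j}(C)\deg h(x)$ is exactly the asserted bound $t_{i,j}\deg h(x)+\deg\alpha_{i,j}(x)<T_{i+j}(C)\deg h(x)$. Since each $f_i(x)$ differs from $F_i(x)$ only by $\sR$-multiples of $F_{i'}(x)$ with $i'>i$, the change of generators is unitriangular, so $\id{f_0(x),\dots,p^{a-1}f_{a-1}(x)}=\id{F_0(x),\dots,p^{a-1}F_{a-1}(x)}=C$.

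\emph{Uniqueness.} Let $\{f_i(x)\}$ and $\{f_i'(x)\}$ both satisfy the stated conditions. The torsional degrees $T_i(C)$ are intrinsic to $C$ by Lemma~\ref{Local.Subambients.Lemma.Torsion.Ideal}, so the level-$0$ parts $h(x)^{T_i(C)}$ agree and $f_i(x)-f_i'(x)\in p\sR$. Fix $i$, suppose the two differ, and let $j_0\ge1$ be the least level at which they do. Then $p^i\big(f_i(x)-f_i'(x)\big)=p^{i+j_0}w(x)\in C$ for some $w(x)$ with $\mu(w(x))=h(x)^{t_{i,j_0}}\mu(\alpha_{i,j_0}(x))-h(x)^{t_{i,j_0}'}\mu(\alpha_{i,j_0}'(x))$. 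By the definition of the torsion code together with Lemma~\ref{Local.Subambients.Lemma.Torsion.Ideal}, $\mu(w(x))\in Tor_{i+j_0}(C)=\id{h(x)^{T_{i+j_0}(C)}}$, hence a multiple of $h(x)^{T_{i+j_0}(C)}$. But the degree bound forces $\deg\mu(w(x))<T_{i+j_0}(C)\deg h(x)$, and the only such multiple is $0$; so $h(x)^{t_{i,j_0}}\mu(\alpha_{i,j_0}(x))=h(x)^{t_{i,j_0}'}\mu(\alpha_{i,j_0}'(x))$ in $\F_{p^m}[x]$. As $h(x)$ is irreducible and coprime to each $\mu(\alpha)$, unique factorization gives $t_{i,j_0}=t_{i,j_0}'$ and $\mu(\alpha_{i,j_0}(x))=\mu(\alpha_{i,j_0}'(x))$; fixing canonical (Teichm\"uller) representatives for the $\alpha$'s, as in the corrected form of \cite[Theorem 2.5]{SanLing-etal_2008} discussed above, upgrades this to $\alpha_{i,j_0}(x)=\alpha_{i,j_0}'(x)$, contradicting the choice of $j_0$. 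Thus $f_i(x)=f_i'(x)$ for every $i$.

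The routine parts are the divisions by $h(x)^{T_{i+j}(C)}$ and the unitriangular bookkeeping that preserves the ideal. The point I expect to be hardest is making the canonical form precise enough that uniqueness actually holds: the $\alpha_{i,j}(x)$ must be normalized (Teichm\"uller coefficients) so that agreement modulo $p$ upgrades to genuine equality, and one must verify that the reduction leaves exactly one irreducible term per $p$-adic level meeting the \emph{sharp} bound $t_{i,j}\deg h(x)+\deg\alpha_{i,j}(x)<T_{i+j}(C)\deg h(x)$, rather than merely $t_{i,j}<T_{i+j}(C)$. Controlling the interaction between the $h$-adic valuation and the $p$-adic filtration is where the care lies.
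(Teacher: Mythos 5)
Your proposal is correct and follows essentially the same route as the paper: existence by reducing the generators of Theorem \ref{Local.Subambients.Theorem.Generating.Set.Existence} level by level, subtracting multiples of higher-index generators to enforce the bound $t_{i,j}\deg h(x)+\deg\alpha_{i,j}(x)<T_{i+j}(C)\deg h(x)$, and uniqueness by Teichm\"uller-normalizing the $p$-adic digits so that the least level $j_0$ of disagreement produces a nonzero element of $C\cap\id{p^{i+j_0}}\setminus\id{p^{i+j_0+1}}$ of degree less than $T_{i+j_0}(C)\deg h(x)$, contradicting the torsional degree. The only cosmetic difference is that the paper carries out the reduction by downward induction on $i$ using the already-normalized $f_{i+1}(x),\dots,f_{a-1}(x)$ rather than the raw $F_{i+j}(x)$, which changes nothing.
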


\begin{proof}
Denote $T_i(C)$ by $T_i$. When $C=0$, the result holds. Assume
$C\neq 0$. By Theorem
\ref{Local.Subambients.Theorem.Generating.Set.Existence},
$C = \id{F_0(x),pF_1(x),\dots,p^{a-1}F_{a-1}(x)}$ where $F_i(x)=0$
when $T_i=t$, otherwise $F_i(x)=h(x)^{T_i}+p\gamma_i(x)$ for some
$\gamma_i(x)\in \R$. The torsional degrees of $C$ form the
non-increasing sequence $t\geq T_0\geq\dots \geq T_{a-1}\geq 0$.
Since $C\neq \{0\}$ there is a least positive integer $r$ such that
$t>T_r\geq\dots\geq T_{a-1}\geq 0$. For $0\leq i \leq r-1$,
$F_i(x)=0$. Let $f_i(x)=0$ for $0\leq i \leq r-1$. For $r\leq i \leq
a-1$, $F_i(x)\neq 0$. Since we are considering $p^iF_i(x)$ and
$F_i(x)$ can be put in the form as shown in equation
(\ref{Local.Subambients.Equation.gx.in.terms.of.hx}), without loss
of generality we can write
\[
F_i(x)=h(x)^{T_i}+\sum_{j=1}^{a-1-i}p^j\sum_{k=0}^{t-1}h(x)^kq_{i,j,k}(x)
\]
where $q_{i,j,k}(x)=\sum_{l=0}^{\deg{h}-1}b_{i,j,k,l}x^l$ with
$b_{i,j,k,l}\in \Tm$.

Let
\[
f_{a-1}(x)=F_{a-1}(x)=h(x)^{T_{a-1}}.
\]

Now,
\begin{eqnarray*}
  F_{a-2}(x)&=&h(x)^{T_{a-2}}+p\sum_{k=0}^{t-1}h(x)^kq_{a-2,1,k}(x)\\
  &=&h(x)^{T_{a-2}}\\
  &&+p\left[\sum_{k=0}^{T_{a-1}-1}h(x)^kq_{a-2,1,k}(x)+h(x)^{T_{a-1}}
      \sum_{k=T_{a-1}}^{t-1}h(x)^{k-T_{a-1}}q_{a-2,1,k}(x)\right].
\end{eqnarray*}
Let
\begin{eqnarray*}
f_{a-2}(x)&=&F_{a-2}(x)-pf_{a-1}(x)\sum_{k=T_{a-1}}^{t-1}h(x)^{k-T_{a-1}}q_{a-2,1,k}(x)\\
&=&F_{a-2}(x)-ph(x)^{T_{a-1}}\sum_{k=T_{a-1}}^{t-1}h(x)^{k-T_{a-1}}q_{a-2,1,k}(x)\\
&=&h(x)^{T_{a-2}}+p\sum_{k=0}^{T_{a-1}-1}h(x)^kq_{a-2,1,k}(x)\\
&=&h(x)^{T_{a-2}}+ph(x)^{t_{a-2,1}}\sum_{k=t_{a-2,1}}^{T_{a-1}-1}h(x)^{k-t_{a-2,1}}q_{a-2,1,k}(x)
\end{eqnarray*}
where $t_{a-2,1}$ is the smallest $k$ such that $q_{a-2,1,k}(x)\neq
0$ if such a $k$ exists, otherwise\\
$\sum_{k=t_{a-2,1}}^{T_{a-1}-1}h(x)^{k-t_{a-2,a-1}}q_{a-2,1,k}(x)=0$
and $t_{a-2,1}$ can be arbitrary. It is easy to see
\[
C=\id{F_0(x),pF_1(x),\dots,p^{a-3}F_{a-3}(x),p^{a-2}f_{a-2}(x),p^{a-1}f_{a-1}(x)}
\]
and that $f_{a-2}(x)$ and $f_{a-1}(x)$ satisfy the conditions in the
theorem.

We proceed by induction. Assume $f_{i+1}(x),\dots,f_{a-1}(x)$ satisfy the
conditions of the theorem and that
\[
C=\id{F_0(x),pF_1(x),\dots,p^iF_i(x),p^{i+1}f_{i+1}(x),\dots,p^{a-1}f_{a-1}(x)}.
\]
After subtracting appropriate multiples of
$p^{i+1}f_{i+1}(x),\dots,p^{a-1}f_{a-1}(x)$ from $F_i(x)$ we can find an
element $f_i(x)$ such that
\begin{eqnarray*}
f_i(x)&=&h(x)^{T_i}+\sum_{j=1}^{a-1-i}p^{j}\sum_{k=0}^{T_{i+j}-1}h(x)^{k}g_{i,j,k}(x)\\
&=&h(x)^{T_i}+\sum_{j=1}^{a-1-i}p^{j}h(x)^{t_{i,j}}\sum_{k=t_{i,j}}^{T_{i+j}-1}h(x)^{k-t_{i,j}}g_{i,j,k}(x)
\end{eqnarray*}
where $g_{i,j,k}(x)=\sum_{l=0}^{\deg{h}-1}c_{i,j,k,l}x^l$ for some
$c_{i,j,k,l}\in \Tm$ and for fixed $j$, $t_{i,j}$ is the smallest
$k$ such that $g_{i,j,k}(x) \neq 0$ if such a $k$ exists, otherwise
$\sum_{k=t_{i,j}}^{T_{i+j}-1}h(x)^{k-t_{i,j}}g_{i,j,k}(x)=0$ and
$t_{i,j}$ can be arbitrary. Let
$\alpha_{i,j}(x)=\sum_{k=t_{i,j}}^{T_{i+j}-1}h(x)^{k-t_{i,j}}g_{i,j,k}(x)$.
If $\alpha_{i,j}(x)\neq 0$, $\alpha_{i,j}(x)$ is a unit since
$\alpha_{i,j}(x)\notin\id{p,h(x)}$. It is easy to see that
\[
C=\id{F_0(x),pF_1(x),\dots,p^{i-1}F_{i-1}(x),p^{i}f_{i}(x),\dots,p^{a-1}f_{a-1}(x)}
\]
and $f_{i}(x),\dots,f_{a-1}(x)$ satisfy the conditions in the
theorem. Hence, we have $f_{0}(x),\dots,f_{a-1}(x)$ such that
\[
C = \id{f_{0}(x),pf_{1}(x),\dots,p^{a-1}f_{a-1}(x)}.
\]

Now we show the uniqueness of such a generating set. Assume that
$f^{'}_0(x),\cdots,f^{'}_{a-1}(x)$\ also satisfy the conditions in
the theorem. Say
\begin{equation*}
 f_i(x) =  h(x)^{T_i} + \sum_{j=1}^{a-1-i}p^j\sum_{k=0}^{T_{i+j} - 1}h(x)^k g_{i,j,k}(x)
\end{equation*}
and
\begin{equation*}
f_i^{'}(x)  =  h(x)^{T_i} + \sum_{j=1}^{a-1-i}p^j\sum_{k =
0}^{T_{i+j} - 1}h(x)^k g^{'}_{i,j,k}(x)
\end{equation*}
where $g_{i,j,k}(x),g'_{i,j,k}(x)\in\Tm[x]$ of degree less than
$h(x)$. Assume $f_i(x)-f_i^{'}(x)\neq 0$. Then for some $j,k$,
$g_{i,j,k}(x)-g^{'}_{i,j,k}(x)\neq 0$. Let $j_0$ be the smallest $j$
in the above sum such that $g_{i,j,k}(x)-g^{'}_{i,j,k}(x)\neq 0$.
Then
\begin{equation*}
p^{i}(f_i(x)-f_i^{'}(x))  =
p^{i+j_0}\sum_{j=j_0}^{a-1-i}p^{j-j_0}\sum_{k=0}^{T_{i+j} -
  1}h(x)^k(g_{i,j,k}(x)-g^{'}_{i,j,k}(x)).
\end{equation*}
Since the difference of two distinct elements of $\Tm$ is not
divisible by $p$, for all $j,k$ in the above sum, either
$g_{i,j,k}(x)-g^{'}_{i,j,k}(x)$ is $0$ or not divisible by $p$. By
the assumption on $j_0$ then, $p^{i}(f_i(x)-f_i^{'}(x))\in
C\bigcap\id{p^{i+j_0}}\setminus\id{p^{i+j_0+1}}$. Since this is a
nonzero element of $C$ with degree less than $T_{i+j_0}\deg(h)$,
this contradicts the definition of $T_{i+j_0}$. Hence
$f_i(x)=f_i^{'}(x)$.
\end{proof}

Now, in Corollary \ref{prop.ideal}, we show that
if we remove the redundant generators in
Theorem \ref{Local.Subambients.Theorem.Unique.Generating.Set},
then we obtain a result similar to \cite[Theorem 4.1]{Salagean_Disc_2006}.
There they prove it in a slightly
different setting namely $GR(p^a,m)$ is replaced by an arbitrary finite
chain ring and $f(x)$ is either $x^n-1$ or $x^n+1$ (i.e., cyclic and
negacyclic codes over a finite chain ring). We will also prove this
result later in the case that $f(x)$ is an arbitrary regular
polynomial.

\begin{definition}[{adapted from \cite[Definition 4.1]{Salagean_AAECC}}]
  \label{def.gen_set} Let $G=\{p^{j_0}f_{j_0}(x),\dots,p^{j_r}f_{j_r}(x)\}\subset
  \Rf$, for some $0\leq r\leq a-1$, such that
  \begin{enumerate}
    \item $0\leq j_0<\dots<j_{r}\leq a-1$ \label{def.gen_set.one},
    \item $t>k_{j_0}>\dots>k_{j_{r}}\geq 0$ \label{def.gen_set.two},
    \item $f_{j_i}(x)=h(x)^{k_{j_i}}+\sum_{{\ell}=1}^{a-1-{j_i}}p^{{\ell}}h(x)^{t_{{j_i},{\ell}}}\alpha_{{j_i},{\ell}}(x)$
    where
    $t_{{j_i},{\ell}}\deg(h(x))+\deg(\alpha_{{j_i},{\ell}}(x))<k_{j_i}\deg(h(x))$\
    and each $\alpha_{{j_i},{\ell}}(x)\notin\id{p,h(x)}\setminus\{0\}$,
    \label{def.gen_set.three}
    \item
    $p^{j_{i+1}}f_{j_i}(x)\in\id{p^{j_{i+1}}f_{j_{i+1}}(x),\dots,p^{j_{r}}f_{j_r}(x)}$,
    \label{def.gen_set.four}
    \item $p^{j_0}f(x)\in\id{p^{j_0}f_{j_0}(x),\dots,p^{j_r}f_{j_r}(x)}$ in
    $GR(p^a,m)[x]$. \label{def.gen_set.five}
  \end{enumerate}
  The set $G$ is called a \textit{generating set in standard form}.
  Moreover, by \cite[Theorem 5.4]{Salagean_FFA}, the set $G$\ is a minimal strong Groebner basis.
\end{definition}

\begin{corollary}
  \label{prop.ideal}
 Let \Code. There exists a generating set in standard form for $C$.
 \end{corollary}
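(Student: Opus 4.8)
The plan is to start from the unique generating set $\{f_0(x), pf_1(x), \dots, p^{a-1}f_{a-1}(x)\}$ produced by Theorem \ref{Local.Subambients.Theorem.Unique.Generating.Set} and to extract from it a sub-collection that meets all five conditions of Definition \ref{def.gen_set}. Recall that by Theorem \ref{Local.Subambients.Theorem.Generating.Set.Existence}, $f_i(x) = 0$ precisely when $T_i(C) = t$, and otherwise $f_i(x) = h(x)^{T_i(C)} + (\text{higher } p\text{-order terms})$. So the natural candidate is to keep only those indices $i$ for which $f_i(x) \neq 0$, and among those to discard any that are redundant. Concretely, I would let $j_0 < j_1 < \cdots$ enumerate the indices $i$ at which the torsional degree $T_i(C)$ strictly drops, setting $k_{j_i} = T_{j_i}(C)$; this is exactly the mechanism that will force conditions \eqref{def.gen_set.one} and \eqref{def.gen_set.two} to hold.

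The heart of the argument is verifying the two strictness conditions together with the redundancy conditions. Condition \eqref{def.gen_set.two}, namely $t > k_{j_0} > \cdots > k_{j_r} \geq 0$, follows because I only select indices where the non-increasing torsional sequence $t \geq T_0(C) \geq \cdots \geq T_{a-1}(C) \geq 0$ strictly decreases; the first selected index already has $T_{j_0}(C) < t$ since $C \neq \langle 1 \rangle$, and distinct selected indices give distinct values. Condition \eqref{def.gen_set.three} is immediate from the explicit form of $f_{j_i}(x)$ in Theorem \ref{Local.Subambients.Theorem.Unique.Generating.Set}, merely renaming $T_{j_i}(C)$ as $k_{j_i}$. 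For condition \eqref{def.gen_set.four}, the point is that whenever an index $i$ between $j_e$ and $j_{e+1}$ has the \emph{same} torsional degree as $j_{e+1}$ (so $f_i$ was dropped), the generator $p^i f_i(x)$ can be recovered from the retained generators; more precisely, $p^{j_{e+1}} f_{j_e}(x)$ should be expressible via the higher generators because the torsional degree has not yet dropped at level $j_{e+1}$. I would establish this by multiplying $f_{j_e}(x)$ by $p^{j_{e+1}}$ and checking, using $T_{j_{e+1}}(C) = T_{j_e}(C)$ when no drop has occurred, that the resulting element already lies in $\langle p^{j_{e+1}} f_{j_{e+1}}(x), \dots \rangle$, since its leading term $p^{j_{e+1}} h(x)^{k_{j_e}}$ is divisible (at the field level, via Lemma \ref{Local.Subambients.Lemma.Finite.Field.Chain}) by the appropriate $h(x)$-power.

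Condition \eqref{def.gen_set.five}, that $p^{j_0} f(x)$ lies in the ideal generated by the retained generators inside $GR(p^a,m)[x]$, is where I expect the main obstacle. The issue is that $f(x) = h(x)^t + p\beta(x)$ and $h(x)^t \equiv 0$ in $\sR$, so $f(x)$ itself is $p\beta(x)$ modulo $\langle f \rangle$; one must show that the relation $p^{j_0} f(x) \in \langle p^{j_0} f_{j_0}(x), \dots, p^{j_r} f_{j_r}(x)\rangle$ holds as an honest identity in the polynomial ring (not merely modulo $\langle f \rangle$), which is what makes $G$ a genuine strong Groebner basis rather than just an ideal-generating set. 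The strategy here is to divide $f(x)$ by the highest-order generator and descend: write $f(x) = h(x)^t + p\beta(x)$, observe $k_{j_0} \leq t$, and use the explicit leading terms of the $f_{j_i}(x)$ to successively cancel $h(x)$-powers, tracking that the $p$-adic order increases at each cancellation so that after at most $a - j_0$ steps the remainder vanishes in $GR(p^a,m)[x]$. The non-increasing and strict-drop structure of the torsional degrees is exactly what guarantees enough generators at each $p$-level to carry out this reduction.

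Once all five conditions are verified for the retained sub-collection $G = \{p^{j_0} f_{j_0}(x), \dots, p^{j_r} f_{j_r}(x)\}$, the definition applies verbatim and $G$ is a generating set in standard form, which by the final sentence of Definition \ref{def.gen_set} (citing \cite[Theorem 5.4]{Salagean_FFA}) is automatically a minimal strong Groebner basis. Thus I would conclude that such a generating set exists, completing the proof of the corollary.
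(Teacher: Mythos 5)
Your overall route is the same as the paper's: prune the unique generating set of Theorem \ref{Local.Subambients.Theorem.Unique.Generating.Set} down to the indices where the torsional degree strictly drops, note that conditions (\ref{def.gen_set.one})--(\ref{def.gen_set.three}) of Definition \ref{def.gen_set} are then immediate, and verify (\ref{def.gen_set.four}) and (\ref{def.gen_set.five}) by a division/descent on $h(x)$-powers. The problem is in your justification of condition (\ref{def.gen_set.five}). You claim the descent terminates because ``the $p$-adic order increases at each cancellation so that after at most $a-j_0$ steps the remainder vanishes.'' That termination mechanism only works if each cancellation can actually be performed, i.e.\ if at each $p$-level $e$ the exact power $h(x)^{z}$ dividing the level-$e$ part of the partial remainder satisfies $z\geq k_{j_q}$, where $p^{j_q}f_{j_q}(x)$ is the generator available at that level ($j_q\leq e<j_{q+1}$). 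This is \emph{not} a consequence of the ``non-increasing and strict-drop structure of the torsional degrees'' in the abstract; it is a statement about the particular polynomial $p^{j_0}f(x)-\sum m_i(x)p^{j_i}f_{j_i}(x)$, and the only way to get it is to observe that this partial remainder, read in $\sR$, is an element of $C$ (because $f(x)\equiv 0$ there and the subtracted terms lie in $C$), so its level-$e$ torsion image lies in $Tor_e(C)=\id{h(x)^{T_e(C)}}$ with $T_e(C)=k_{j_q}$. You never invoke this. Without it the descent can stall: at level $e\geq j_r$ you could be left with a nonzero term $p^{e}h(x)^{z}\alpha(x)$ with $z<k_{j_r}$ that no retained generator can cancel, and $p^a=0$ does not rescue you since $e\leq a-1$. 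The paper's proof is organized precisely around this point --- the leftover remainder $g(x)$ of degree less than $T_{j_r}\deg h(x)$ would be a nonzero element of $C$, contradicting the definition of the torsional degree, and that contradiction is what forces $g(x)=0$ and hence (\ref{def.gen_set.five}).

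A smaller but real confusion occurs in your treatment of condition (\ref{def.gen_set.four}): an index $i$ with $j_e<i<j_{e+1}$ has $T_i(C)=T_{j_e}(C)$, not $T_{j_{e+1}}(C)$, and for two consecutive \emph{retained} indices a drop \emph{has} occurred, so the identity ``$T_{j_{e+1}}(C)=T_{j_e}(C)$ when no drop has occurred'' that you lean on does not apply to the pair $(j_e,j_{e+1})$. What you actually need is that $p^{j_{e+1}}f_{j_e}(x)$, whose residue is $h(x)^{k_{j_e}}$ with $k_{j_e}>k_{j_{e+1}}$, reduces to zero against $p^{j_{e+1}}f_{j_{e+1}}(x),\dots,p^{j_r}f_{j_r}(x)$ by the same kind of descent as above. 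So the gap in (\ref{def.gen_set.five}) --- failing to use that the partial remainders lie in $C$ and are therefore constrained by the torsion codes of Lemma \ref{Local.Subambients.Lemma.Torsion.Ideal} --- is the step you must repair before the proof is complete.
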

\begin{proof}
Let $\{f_0(x),\dots,p^{a-1}f_{a-1}(x)\}$ be a generating set for $C$
as in Theorem \ref{Local.Subambients.Theorem.Unique.Generating.Set}.
Let $j_0=\min\{i|f_i(x)\neq 0\}$ and set $k_i=T_i(C)$. Then
\[
C=\id{p^{j_0}f_{j_0}(x),\dots,p^{a-1}f_{a-1}(x)}.
\]
Assume there exist Torsional degrees of $C$, $T_i,T_{i+1}$, such
that
  $T_i=T_{i+1}$ for some $i\geq j_0$. It should be clear that
  $p^{i+1}f_{i+1}(x)\in\id{p^if_i(x),p^{i+2}f_{i+2}(x),\dots,p^{a-1}f_{a-1}(x)}$.
  So after removing these unnecessary generators we have, for some $r$
  such that ${1\leq r \leq a-1}$, 
  \[
    C=\id{p^{j_0}f_{j_0}(x),\dots,p^{j_r}f_{j_r}(x)}.
  \]
 Then the properties (\ref{def.gen_set.one})-(\ref{def.gen_set.four}) of Definition \ref{def.gen_set} are satisfied.

  Now, assume $p^{j_0}f(x)\notin\id{p^{j_0}f_0(x),\dots,p^{j_r}f_{r}(x)}$ in
  $GR(p^a,m)[x]$.
    We consider
    \be
      g_{j_0}(x) & = & p^{j_0}f(x) - h(x)^{t-T_{j_0}}p^{j_0}f_{j_0}(x)\nn\\
          & = & p^{k_0}h(x)^{z_{k_0}}\alpha_{z_{k_0}}(x) + \cdots + p^{k_{e}}h(x)^{z_{k_e}}\alpha_{z_{k_e}}(x)
          \label{prop.ideal.gj0.canonical.form}
    \ee
  where the representation (\ref{prop.ideal.gj0.canonical.form})
  is as in (\ref{Local.Subambients.Equation.gx.in.terms.of.hx}).
  Note that $g_{j_0}(x)\in C$\ when we consider $g_{j_0}(x)$\ as an element of  $\sR$.
  If $k_0 < j_r$, say $j_{q-1} \le k_0 < j_q$\ for some $q \le r$,
  then $z_{k_0} \ge T_{j_{q-1}}$\ otherwise we get a contradiction to the torsional degree.
  Now, for an appropriate polynomial, say $\upsilon(x)$, we get
  \be
      g_{j_{q-1}} & = & g_{j_0}(x) - \upsilon(x)p^{j_{q-1}}f_{j_{q-1}}(x) \nn \\
          & = &  p^{\ell_0}h(x)^{y_{\ell_0}}\alpha_{y_{\ell_0}}(x) + \cdots +
              p^{\ell_{e^{'}}}h(x)^{y_{\ell_{e^{'}}}}\alpha_{y_{\ell_{e^{'}}}}(x)
                \label{prop.ideal.gjq1.canonical.form}
  \ee
  where the representation (\ref{prop.ideal.gjq1.canonical.form})
  is as in (\ref{Local.Subambients.Equation.gx.in.terms.of.hx}) and
  $\ell_0 > k_0$. Continuing like this, we obtain  a non-zero polynomial
  $g(x)\in\id{p^{j_r}}$\ such that
  \[
    p^{j_0}f(x)=\sum_{i=0}^{r}p^{j_i}f_i(x)\beta_i(x)+g(x),
  \]
  where $\deg{g(x)}<\deg{f_r(x)}$. Now, in $\Rf$
  \[
    g(x)=-\sum_{i=0}^{r}p^{j_i}f_i(x)\beta_i(x).
   \]
  So, $g(x)\in C$. But, $T_{j_r}\deg{h(x)}>\deg{g(x)}$ which is a
  contradiction of the torsional degree. Hence
  (\ref{def.gen_set.five}) of Definition \ref{def.gen_set}
  holds.
\end{proof}

\begin{corollary}
  Let \Code . Then $C$ is at most $\min\{a,t\}$-generated.
 \end{corollary}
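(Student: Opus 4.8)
The plan is to read the bound straight off the generating set in standard form furnished by Corollary \ref{prop.ideal}, so the argument is purely a counting argument on the length of that set. First I would dispose of the degenerate cases: if $C=0$ it is $0$-generated and if $C=\sR$ it is $1$-generated, so the bound holds trivially; assume henceforth that $C$ is a nonzero proper ideal.

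By Corollary \ref{prop.ideal}, $C$ admits a generating set in standard form, say $G=\{p^{j_0}f_{j_0}(x),\dots,p^{j_r}f_{j_r}(x)\}$ in the sense of Definition \ref{def.gen_set}, and the number of generators is $r+1=|G|$. Property (\ref{def.gen_set.one}) of Definition \ref{def.gen_set} says the powers of $p$ satisfy $0\le j_0<j_1<\cdots<j_r\le a-1$; these are $r+1$ distinct integers drawn from the set $\{0,1,\dots,a-1\}$, which forces $r+1\le a$. Property (\ref{def.gen_set.two}) says the exponents $k_{j_i}=T_{j_i}(C)$ satisfy $t>k_{j_0}>k_{j_1}>\cdots>k_{j_r}\ge 0$; these are $r+1$ distinct integers drawn from $\{0,1,\dots,t-1\}$, which forces $r+1\le t$.

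Combining the two inequalities yields $r+1\le\min\{a,t\}$, so $C$ is generated by at most $\min\{a,t\}$ elements, as claimed. There is essentially no obstacle once the standard form of Corollary \ref{prop.ideal} is available: the two strictly monotone chains, one of $p$-powers indexed in $\{0,\dots,a-1\}$ and one of torsional degrees valued in $\{0,\dots,t-1\}$, independently cap the number of generators by $a$ and by $t$, and taking the tighter of the two bounds is exactly what produces the minimum. The only point worth stating with care is that these two chains genuinely live in index sets of sizes $a$ and $t$ respectively, so that each bound applies without reference to the other.
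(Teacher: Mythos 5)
Your proof is correct and follows essentially the same route as the paper: both read the bound off the standard-form generating set of Corollary \ref{prop.ideal}, using the strict chain $0\le j_0<\cdots<j_r\le a-1$ to get at most $a$ generators and the strict chain $t>k_{j_0}>\cdots>k_{j_r}\ge 0$ to get at most $t$. Your version merely spells out the counting more explicitly than the paper's one-line justification.
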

\begin{proof}
  Follows from the facts that the number of distinct torsional degrees
  that are degrees of generators in the generating set in Corollary
  \ref{prop.ideal} is less than $t$ and that the number of generators
  there does not exceed $a$.
\end{proof}

Now we observe a relation
between the generating sets introduced in
\cite[Theorem 2.5]{SanLing-etal_2008} and 
generating sets in standard form
for cyclic codes studied in \cite{Salagean_FFA}.

\begin{remark}
  \label{remark.from.torsional.to.groebner}
  By \cite[Theorem 3.2]{Salagean_FFA} and Corollary \ref{prop.ideal},
  a generating set as in Theorem
  \ref{Local.Subambients.Theorem.Unique.Generating.Set} (and in
  particular, in \cite[Theorem 2.5]{SanLing-etal_2008}) for $C\lhd\Rf$
  is actually a strong Groebner basis (see \cite[Definition 3.8]{Salagean_Bulletin} for a definition). 
  Moreover, given a generating
  set $G$ as in Theorem
  \ref{Local.Subambients.Theorem.Unique.Generating.Set}, if we remove
  the redundant elements from $G$, as described in the proof of
  Corollary \ref{prop.ideal}, we obtain a generating set as in
  Corollary \ref{prop.ideal}, i.e., a generating  set in standard form
  which is a minimal strong Groebner basis, for $C$.
\end{remark}

Our final result of this section shows that if one can produce a
generating set in standard form, the torsional degrees can easily be
found.

\begin{theorem}
  \label{theorem.gen.set.std.form.gives.tor.degrees}
  Let $\{p^{j_0}f_{j_0}(x),\dots,p^{j_r}f_{j_r}(x)\}$ be a generating
  set in standard form for $C\lhd \Rf$\ where $ f_{j_i}(x) = h(x)^{k_{j_i}} + p\beta_{j_i}(x) $\ for some $\beta_{j_i}(x) \in \sR$. 
  Then for $e<j_0$, $T_e(C)=t$;
   for $j_i\leq e<j_{i+1}$, $T_{e}(C) = k_{j_i}$ and for $e \ge j_r $, $T_e(C) = k_{j_r}$.
\end{theorem}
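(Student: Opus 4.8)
The plan is to compute $Tor_e(C)$ for each $e$ and then read off $T_e(C)$ as the exponent for which $Tor_e(C)=\id{h(x)^{T_e(C)}}$ in the chain ring $\frac{GR(p,m)[x]}{\id{\bar f(x)}}$, using Lemma \ref{Local.Subambients.Lemma.Torsion.Ideal}. I would obtain the claimed values by proving matching upper and lower bounds for $T_e(C)$ in each of the three ranges.

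For the upper bounds, whenever $j_i\le e$ the element $p^e f_{j_i}(x)=p^{e-j_i}\bigl(p^{j_i}f_{j_i}(x)\bigr)$ lies in $C$, and since $f_{j_i}(x)=h(x)^{k_{j_i}}+p\beta_{j_i}(x)$ we have $\mu(f_{j_i}(x))=h(x)^{k_{j_i}}\in Tor_e(C)$, whence $T_e(C)\le k_{j_i}$. As $k_{j_0}>\dots>k_{j_r}$, the sharpest such bound in the range $j_i\le e<j_{i+1}$ comes from the largest active index, giving $T_e(C)\le k_{j_i}$ (and $T_e(C)\le k_{j_r}$ for $e\ge j_r$); for $e<j_0$ only the trivial bound $T_e(C)\le t$ is asserted. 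For the lower bound in the case $e<j_0$ one argues elementarily: every generator lies in $\id{p^{j_0}}\subseteq\id{p^{e+1}}$, so any $p^e v(x)\in C$ satisfies $p^e v(x)=p^{e+1}u(x)$ for some $u(x)$; then $p^e\bigl(v(x)-pu(x)\bigr)=0$, and because $f(x)$ is monic the annihilator of $p^e$ in $\sR$ is $\id{p^{a-e}}\subseteq\id p$, so $v(x)\in\id p$ and $\mu(v(x))=0$. Hence $Tor_e(C)=\{0\}=\id{h(x)^t}$ and $T_e(C)=t$.

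The remaining and decisive point is the lower bound $T_e(C)\ge k_{j_i}$ for $j_i\le e$. Here I would invoke that $\{p^{j_0}f_{j_0}(x),\dots,p^{j_r}f_{j_r}(x)\}$ is a minimal strong Groebner basis (Definition \ref{def.gen_set}, Remark \ref{remark.from.torsional.to.groebner}), whose leading term at level $i'$ is $p^{j_{i'}}x^{k_{j_{i'}}\deg(h(x))}$, coming from the monic term $h(x)^{k_{j_{i'}}}$. If $p^e v(x)\in C$ and $\mu(v(x))$ has $h$-order $s$, then, choosing the reduced representative of $v(x)$, the leading term of $p^e v(x)$ is $p^e x^{s\deg(h(x))}$ up to a unit and lower terms, and it must be divisible by some basis leading term $p^{j_{i'}}x^{k_{j_{i'}}\deg(h(x))}$. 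Divisibility of these monomials forces both $j_{i'}\le e$ and $k_{j_{i'}}\le s$; since the smallest $k$ among active indices is $k_{j_i}$, we conclude $s\ge k_{j_i}$, i.e. $T_e(C)\ge k_{j_i}$. Combining with the upper bound gives $T_e(C)=k_{j_i}$, and the three ranges assemble into the stated formula.

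The main obstacle is precisely this last lower bound, and the reason is the relation $h(x)^t\equiv -p\beta(x)\pmod{f(x)}$ in $\sR$: writing an arbitrary element as $p^e v(x)=\sum_{i'} b_{i'}(x)\,p^{j_{i'}}f_{j_{i'}}(x)$ mixes the $p$-adic levels, because a summand whose cofactor is divisible by $h(x)^{t-k_{j_{i'}}}$ silently acquires an extra factor of $p$. One therefore cannot simply read off an $h$-divisibility level by level. Encoding the element through its leading term, controlled by the strong Groebner basis property, sidesteps this bookkeeping; the alternative fully elementary route is an induction that repeatedly reduces $v(x)$ using conditions (\ref{def.gen_set.four}) and (\ref{def.gen_set.five}) of Definition \ref{def.gen_set}, which are exactly what guarantee the reduction terminates at the correct $h$-order. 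I would present whichever is shorter, but I expect the Groebner-basis phrasing to be cleanest.
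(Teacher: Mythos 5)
Your proof is correct and follows essentially the same route as the paper: the upper bound $T_e(C)\le k_{j_i}$ from $p^{e-j_i}\cdot p^{j_i}f_{j_i}(x)\in C$, and the lower bound from the minimal strong Groebner basis structure, with condition (\ref{def.gen_set.five}) of Definition \ref{def.gen_set} used (exactly as the paper does) to absorb the $f(x)$-multiple when lifting $p^{e}v(x)$ into the polynomial ring $GR(p^a,m)[x]$. The paper phrases the lower bound as a contradiction --- if $T_e(C)<k_{j_i}$ then the degree comparison $\deg f_{j_0}(x)>\dots>\deg f_{j_i}(x)>\deg f_e(x)$ forces $p^{e}f_e(x)\in\id{p^{j_{i+1}}f_{j_{i+1}}(x),\dots,p^{j_r}f_{j_r}(x)}$, impossible since $e<j_{i+1}$ --- which is the same leading-term divisibility argument you state directly.
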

\begin{proof}
  For $e<j_0$, $Tor_e(C)=0$ so $T_e(C)=t$. Clearly, $T_{j_i}(C)\leq
  k_{j_i}$ and $T_{j_0}(C)=k_{j_0}$. Now, let $j_i\leq e<j_{i+1}$ for
  some $i$. There exists a polynomial $f_e(x)=h(x)^{T_e(C)}+p\rho(x)$
  where $\deg(\rho(x))<\deg(h(x))T_e(C)$ such that $p^ef_e(x)\in C$.
  In the following we are working in $\R$. Since $e \ge j_0$, we have
  \[
  p^ef_e(x)\in
  \id{p^{j_0}f_{j_0}(x),\dots,p^{j_r}f_{j_r}(x),p^{j_0}f(x)}.
  \]
  By \ref{def.gen_set}(\ref{def.gen_set.five}),
  \[
  p^ef_e(x)\in\id{p^{j_0}f_{j_0}(x),\dots,p^{j_r}f_{j_r}(x)}.
  \]
  We know $T_e(C)\leq k_{j_i}$. Assume $T_e(C)<k_{j_i}$. By the
  properties in \ref{def.gen_set}(\ref{def.gen_set.two}) and
  \ref{def.gen_set}(\ref{def.gen_set.three}),
  $\deg{f_{j_0}(x)}>\dots>\deg{f_{j_i}(x)>\deg{f_e(x)}}$ which implies
  \[
  p^ef_e(x)\in\id{p^{j_{i+1}}f_{j_{i+1}}(x),\dots,p^{j_r}f_{j_r}(x)}.
  \]
  This is a contradiction since by the property
  \ref{def.gen_set}(\ref{def.gen_set.one}), $e<j_{i+1}<\dots<j_{r}\leq
  a-1$ which implies
  \[
  p^ef_e(x)\notin\id{p^{j_{i+1}}f_{j_{i+1}}(x),\dots,p^{j_r}f_{j_r}(x)}.
  \]
  So, $T_e(C)=k_{j_i}$. For $e \ge j_r$, the proof is similar.
\end{proof}

\begin{remark}
 Remark \ref{remark.from.torsional.to.groebner} and Theorem \ref{theorem.gen.set.std.form.gives.tor.degrees}
 imply that we can go back and forth between a generating set as in 
 Theorem \ref{Local.Subambients.Theorem.Unique.Generating.Set} and a generating set in standard form.
 Given a generating set as in  Theorem \ref{Local.Subambients.Theorem.Unique.Generating.Set},
 we can obtain a generating set in standard form as explained in Remark \ref{remark.from.torsional.to.groebner}.
 Conversely, suppose that we are given a generating set $G = \{p^{j_0}f_{j_0}(x),\dots,p^{j_r}f_{j_r}(x)\}$ in standard form.
 We know, by Theorem \ref{theorem.gen.set.std.form.gives.tor.degrees}, that 
 $f_{j_i}(x) = h(x)^{T_{j_i}} + p\beta_{j_i}(x)$.
 Define $F_e(x) = 0$ for $0 \le e < j_0$, $F_e(x) = p^{e}f_{j_i}(x)$ for $j_i \le e < j_{i+1}$\ and
 $F_e(x) = p^{j_r}f_{j_r}(x)$ for $j_r \le e < a$.
 Then, by Theorem \ref{theorem.gen.set.std.form.gives.tor.degrees}, the set
 $G^{'} = \{F_0(x),pF_1(x),\dots , p^{a-1}F_{a-1}(x)\}$\ is as in Theorem \ref{Local.Subambients.Theorem.Generating.Set.Existence}.
 Now applying the operations in the proof of Theorem \ref{Local.Subambients.Theorem.Unique.Generating.Set} to $G^{'}$,
 we obtain a generating set as in Theorem \ref{Local.Subambients.Theorem.Unique.Generating.Set}.
\end{remark}

\section{Subambients in characteristic $p^2$}
\label{Subambiets.in.char.p2}

Throughout this section, we work in characteristic $p^2$ and we assume $f(x)\in \Rtwo[x]$ is a regular
primary polynomial and let $\Rtwof=\fRtwo$.

Recently, the Hamming distance of cyclic codes of length $2^s$\ over 
$GR(4,1)$\ has been determined in \cite{zhu_kai_2010}.
Applying  the results of Section \ref{Local.Subambients}, we extend this result in two ways.
First, we consider the problem for a more general class of linear codes which are called polycyclic codes.
We show how to obtain the torsional degrees of polycyclic codes over
a Galois ring of characteristic $p^2$.
This gives us the Hamming distance if the Hamming distance of the residue code is known.
Second, we generalize this result of \cite{zhu_kai_2010}
to cyclic codes of length $p^s$ over any Galois ring of characteristic $p^2$.
We explicitly determine the Hamming distance of all cyclic codes of length $p^s$ over $GR(p^2,n)$.

First, in Lemma \ref{Lemma.Ideals.Polycyclic.Characteristic.p2}, 
we classify all polycyclic codes in characteristic $p^s$\ where $f(x)$\ is a regular primary polynomial.
This also gives us a classification of all cyclic codes of length $p^s$.
Then, in Lemma \ref{lemma.tors.degrees.1} and Lemma \ref{lemma.tors.degrees.2}, 
we determine the torsional degrees of polycyclic codes.
Using this together with some observations on the polynomial $x^{p^s} -1$,
we determine the Hamming distance of all cyclic codes of length $p^s$\
in characteristic $p^2$ in Lemma \ref{lemma.tors.degrees.cyc}.

As was explained in
Section \ref{Local.Subambients}, without loss of generality, we can
assume $f(x)$ is monic, $f(x) = h(x)^t + p\beta(x)$ where $\beta(x)
\in \Rtwo[x]$ and either $\beta(x)=0$ or
$\deg{\beta(x)}<t\deg{h(x)}$. Also, we may assume $h(x)$ is a monic
basic irreducible polynomial. Moreover, if $\beta(x)\neq 0$ we can
express $\beta(x)$ as $\beta(x)=h(x)^v\beta'(x)$ such that
$\beta'(x)=\sum_{j=0}^{t-1-v}\gamma_j(x)h^{j}(x)$ where $v<t$,
$\gamma_0(x)\neq 0$,$\gamma_0(x) \not \in \id{p}$, $\gamma_j(x)\in\Rtwo[x]$ and
$\deg(\gamma_j(x))<\deg(h(x))$ (see the explanation in Section \ref{Local.Subambients}). 
Since we are working in characteristic $p^2$ we may also assume that
$\gamma_j(x)\in\Tm[x]$. This can be seen by noting that
$p\gamma_j(x)=p\overline{\gamma_j}(x)$.

Assume $C\lhd\Rtwof$. Since $C$ is finite we have that
$C=\id{f_1(x),\dots,f_n(x)}$ for $f_i(x)\in\Rtwof$ where $\deg(f_i(x))<\deg(f(x))$,
i.e. $C$ is finitely generated. Without loss of generality we can
assume that if $p\nmid f_i(x)$ then $f_i(x)$ is monic and if $p|f_i(x)$ that
the leading coefficient of $f_i(x)$ is p. We consider two cases here,
when $C\nsubseteq\id{p}$ and $C\subseteq\id{p}$. First assume
$C\nsubseteq\id{p}$. In this case, it can be shown by looking at the
representation (\ref{Local.Subambients.Equation.gx.in.terms.of.hx})
that if $p\nmid f_i(x)$ then $f_i(x)=h(x)^{k_i} +
ph(x)^{\ell_i}\delta_i(x)$ and that if $p|f_i(x)$,
$f_i(x) = ph(x)^{\ell_i}\delta_i(x)$ where $\delta_i(x)$ is a unit with
$\ell_i\deg(h(x))+\deg(\delta_i(x))<k_i\deg(h(x))$ where at least one
generator is not divisible by $p$. Let $k_i=\infty$ if not defined.
Let $j$ be such that $k_j=min\{k_i\}_{i=1}^n$. Let
$g_i(x)=f_i(x)-f_j(x)h(x)^{k_i-k_j}$ if $p\nmid f_i(x)$ and $g_i(x)=f_i(x)$ if
$p|f_i(x)$. Now, we see that
$C=\id{g_1(x),\dots,g_{j-1}(x),f_j(x),g_{j+1}(x),\dots,g_n(x)}$. Notice
$g_i(x) \in \Rtwof \cap \id{p}$ for $i\neq j$. 
Again, without loss of generality we may assume for $i\neq j$
that $g_i(x)=ph(x)^{\ell_i'}$. Let $j'$ be such that
$\ell_{j'}=min\{\ell_i'\}_{i=1}^n$. So,
$g_i(x)-g_{j'}(x)h(x)^{\ell_i'-\ell_{j}'}=0$. Hence, $C=\id{f_j(x), g_{j'}(x)}$.
Finally, if $k_j\leq \ell_{j'}$ then $f_j(x)|g_{j'}(x)$ and $C=\id{f_j(x)}$.
Now, assume $C\subseteq\id{p}$. Then $f_i(x)=ph(x)^{\ell_i}\delta_i(x)$
is a unit. Without loss of generality, we can assume
$f_i(x)=ph(x)^{\ell_i}$. As above let $j$ be such that
$\ell_j=min\{\ell_i\}_{i=1}^n$. So, $f_i(x)-f_j(x)h(x)^{\ell_k-\ell_j}=0$.
Hence, $C=\id{f_j(x)}$. From this discussion we have the following
lemma.
\begin{lemma}
  \label{Lemma.Ideals.Polycyclic.Characteristic.p2}
  Let $C\lhd\Rtwof$. Then $C$ can be expressed in one of the following
  forms.
 \begin{enumerate}
 \item
    \label{Lemma.Ideals.Polycyclic.Characteristic.p2.Zero.Space}
    $\id{0}$,
 \item
    \label{Lemma.Ideals.Polycyclic.Characteristic.p2.Whole.Space}
    $\id{1}$,
 \item
    \label{Lemma.Ideals.Polycyclic.Characteristic.p2.Zero.Space.p.Power.x-1}
    $\id{ph(x)^n}$,
 \item $\id{h(x)^k}$,
     \label{Lemma.Ideals.Polycyclic.Characteristic.p2.Single.Monic.Generator.0}
 \item $\id{h(x)^k+ph(x)^\ell\delta(x)}$,
     \label{Lemma.Ideals.Polycyclic.Characteristic.p2.Single.Monic.Generator.1}
 \item
      \label{Lemma.Ideals.Polycyclic.Characteristic.p2.Two.Generators.0}
      $\id{h(x)^k, ph(x)^n}$,
 \item
      \label{Lemma.Ideals.Polycyclic.Characteristic.p2.Two.Generators.1}
      $\id{h(x)^k+ph(x)^\ell\delta(x), ph(x)^n}$
\end{enumerate}
where in any case $k,\ell,n<t$, $\ell < n<k$ and
$\delta(x)=\sum_{j=0}^{k-1-\ell}{\eta_{j}(x)}h(x)^j$ where
$\eta_j(x)\in\Tm[x]$, $\eta_0(x)\neq 0$  and
$\deg(\eta_j(x))<\deg(h(x))$.
\end{lemma}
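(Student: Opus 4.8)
The plan is to reduce an arbitrary finite generating set of $C$ to one of the seven normal forms by a Gaussian-elimination argument, using throughout that $a=2$ so that $p^2=0$ in $\Rtwo$. First I would record that, since $\Rtwof$ is finite, $C=\id{f_1(x),\dots,f_n(x)}$ for finitely many generators, and that each generator may be normalized: by the representation (\ref{Local.Subambients.Equation.gx.in.terms.of.hx}) specialized to $a=2$, any $g(x)$ not divisible by $p$ can be written $h(x)^{k}+ph(x)^{\ell}\delta(x)$ with $\delta(x)$ a unit (or $\delta(x)=0$), while any $g(x)$ divisible by $p$ reduces, since $p\gamma(x)=p\overline{\gamma}(x)$ and $p\cdot p=0$, to the form $ph(x)^{\ell}$ with coefficients in $\Tm$. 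This bookkeeping is exactly what pins down the shape of $\delta(x)$ asserted in the lemma.

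Next I would split into the two cases $C\subseteq\id{p}$ and $C\not\subseteq\id{p}$. When $C\subseteq\id{p}$, all normalized generators have the form $ph(x)^{\ell_i}$; since these are totally ordered by the chain structure of the residue ring in Lemma \ref{Local.Subambients.Lemma.Finite.Field.Chain}, the one with smallest $\ell_i$ divides all the others, giving form (3), namely $\id{ph(x)^n}$ (or $\id{0}$ when the power reaches $t$). When $C\not\subseteq\id{p}$, I would pick a generator $f_j$ not divisible by $p$ with minimal $h$-power $k_j$; subtracting an appropriate $h$-power multiple of $f_j$ from every other $p$-free generator makes those differences divisible by $p$, so after this step $f_j$ is the only $p$-free generator and all the rest lie in $\id{p}$. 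Applying the previous paragraph to the $p$-divisible part collapses them to a single $ph(x)^n$, leaving $C=\id{f_j,\,ph(x)^n}$.

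The final step is to analyze this two-generator ideal and extract the inequalities. Using $pf_j=ph(x)^{k_j}$ (again because $p^2=0$), one sees: if $k_j\le n$ then $ph(x)^n\in\id{f_j}$ and $C=\id{f_j}$, yielding forms (4) and (5); if instead $k_j>n$, then whenever the $p$-part exponent $\ell_j$ of $f_j$ satisfies $\ell_j\ge n$ the term $ph(x)^{\ell_j}\delta_j(x)$ is absorbed into $\id{ph(x)^n}$, so $f_j$ may be replaced by $h(x)^{k_j}$, giving form (6); and when $\ell_j<n<k_j$ no further simplification is possible, giving form (7). The boundary observations that a unit forces $C=\id{1}$ (the case $k=0$, since $h(x)^0+p(\cdots)$ lies outside the maximal ideal of Lemma \ref{Local.Subambients.Lemma.R.Local}) and that $h(x)^t\equiv-p\beta(x)$ while $p h(x)^t=0$ in $\Rtwof$ account for the constraints $k,\ell,n<t$.

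I expect the main obstacle to be the bookkeeping in this last step: one must check carefully that every subtraction keeps the generating set inside $C$, that no generator is silently lost, and—most delicately—that the inequalities $\ell<n<k$ genuinely hold in form (7) rather than being an artifact of a poor choice of representatives. Establishing that $\ell_j<n$ is actually forced (otherwise the $p$-part is redundant and we fall back to form (6)) is the crux, and it is precisely the point where the chain structure of the residue ring and the identity $p^2=0$ must be used in tandem.
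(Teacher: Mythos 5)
Your proposal is correct and follows essentially the same route as the paper: normalize each generator via the canonical representation (\ref{Local.Subambients.Equation.gx.in.terms.of.hx}) with $a=2$, split on whether $C\subseteq\id{p}$, eliminate down to at most one $p$-free generator $f_j$ (minimal $k_j$) plus one generator $ph(x)^n$ using the chain structure of the residue ring and $p^2=0$, and then decide among forms (3)--(7) by comparing $k_j$, $\ell_j$ and $n$. Your explicit verification that $\ell<n<k$ is forced (absorbing the $p$-part of $f_j$ into $\id{ph(x)^n}$ when $\ell_j\ge n$) is in fact slightly more careful than the paper's own discussion, which leaves that reduction implicit.
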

\begin{proof}
The only thing that needs justification is the fact that
$\delta(x)=\sum_{j=0}^{k-1-\ell}{\eta_{j}(x)}h(x)^j$ where
$\eta_j(x)\in\Tm[x]$, $\eta_0(x)\neq 0$ and
$\deg(\eta_j(x))<\deg(h(x))$. By the discussion before this lemma,
$\delta(x)$ is a unit so, $\delta(x)\notin\id{p,h(x)}$. 
By the discussion in Section \ref{Local.Subambients},
$\delta(x)=\sum_{j=0}^{k-1-\ell}{\eta_{j}(x)}h(x)^j$ where
$\eta_j(x)\in\Rtwo[x]$, $\eta_0(x)\neq 0$ and
$\deg(\eta_j(x))<\deg(h(x))$. Finally, $\eta_j(x)\in\Tm[x]$ since we
are working in characteristic $p^2$ which means
$p\eta_j(x)=p\overline{\eta_j}(x)$.
\end{proof}

The results of Section \ref{Local.Subambients} assume the torsional degrees of a code
are known. The next three lemmas will focus on finding the torsional
degrees of a code so we can apply the results of Section \ref{Local.Subambients} with
the ultimate goal of this section being the determination of the
Hamming distance of a code. 
For the following recall form the beginning of this section that
$t,v,h(x),\beta(x),\beta'(x),\gamma_j(x)$ are parameters of $f(x)$.


\begin{lemma}
\label{lemma.tors.degrees.1} Let $C\lhd \Rtwof$ and $n<t$. If
$C=\id{ph(x)^n}$ then $T_0(C)=t$ and $T_1(C)=n$.
\end{lemma}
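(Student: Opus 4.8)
The plan is to directly compute the two torsion codes $Tor_0(C)$ and $Tor_1(C)$ for $C = \id{ph(x)^n}$ using their definitions, and then read off the torsional degrees $T_0(C), T_1(C)$ from Lemma \ref{Local.Subambients.Lemma.Torsion.Ideal}. Recall that $Tor_i(C) = \{\mu(w) : p^i w \in C\}$, and since we are working in characteristic $p^2$ (so $a=2$), we only need to treat $i=0$ and $i=1$.

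For $T_0(C)$, I would compute $Tor_0(C) = \mu(C)$, the residue code. Every element of $C = \id{ph(x)^n}$ is of the form $ph(x)^n \xi(x)$ for some $\xi(x) \in \Rtwof$, and applying $\mu$ kills the factor $p$, so $\mu(C) = \{0\}$. By Lemma \ref{Local.Subambients.Lemma.Torsion.Ideal}, $Tor_0(C) = \id{h(x)^{T_0(C)}}$ in $\frac{GR(p,m)[x]}{\id{\bar f(x)}}$, and the zero ideal corresponds precisely to $h(x)^t = 0$ in that chain ring (by Lemma \ref{Local.Subambients.Lemma.Finite.Field.Chain}, which lists $\id{h(x)^t + \id{f}} = 0$). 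Hence $T_0(C) = t$.

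For $T_1(C)$, I would compute $Tor_1(C) = \{\mu(w) : pw \in C\}$. The condition $pw \in \id{ph(x)^n}$ means $pw = ph(x)^n \xi(x)$ for some $\xi(x)$, i.e. $p(w - h(x)^n \xi(x)) \equiv 0$, equivalently $w \equiv h(x)^n \xi(x) \pmod{p}$. Thus $\mu(w) = \mu(h(x)^n)\mu(\xi(x)) = \bar h(x)^n \mu(\xi(x))$, so $Tor_1(C) = \id{\bar h(x)^n}$ in $\frac{GR(p,m)[x]}{\id{\bar f(x)}}$. The one point requiring care is that $n < t$, so this ideal is genuinely $\id{h(x)^n}$ with $n$ being exactly the torsional degree and not some smaller value forced by the relation $\bar f(x) = \bar h(x)^t = 0$; since $n < t$, no collapse occurs and $\bar h(x)^n \neq 0$. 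Therefore $T_1(C) = n$.

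I expect the main (though modest) obstacle to be the reverse inclusion in the $T_1$ computation, namely checking that every $\mu(w)$ with $pw \in C$ indeed lies in $\id{\bar h(x)^n}$ and conversely that every element of $\id{\bar h(x)^n}$ is realized; both directions follow cleanly once one lifts along $\mu$, but one must be careful that the identification of $\Rtwof$-elements with polynomials of degree less than $\deg f(x)$ (as set up in Section \ref{Local.Subambients}) is used consistently so that the torsional degree is read off correctly via the unique representation in the chain ring $\frac{GR(p,m)[x]}{\id{\bar f(x)}}$. The condition $n < t$ guaranteed in the hypothesis is exactly what ensures the degree bound is respected and $T_1(C) = n$ rather than $T_1(C) = t$.
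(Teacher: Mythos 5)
Your proof is correct and follows essentially the same route as the paper, whose entire proof is the two-sentence observation that $T_0(C)=t$ is obvious and that every codeword divisible by $p$ lies in $\id{ph(x)^n}$, forcing $T_1(C)=n$. You have simply filled in the details (both inclusions for $Tor_1(C)$ and the role of the hypothesis $n<t$ via the strict chain of ideals in $\frac{GR(p,m)[x]}{\id{\bar f(x)}}$), which is a faithful elaboration rather than a different argument.
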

\begin{proof}
The result on $T_0(C)$ is obvious. Since every codeword is divisible
by $p$ and $h(x)^n$, clearly $T_1(C)=n$.
\end{proof}

\begin{lemma}
\label{lemma.tors.degrees.2} Assume $\beta(x)=0$. Let $C\lhd
\Rtwof$, $k,\ell,n<t$, $n<k$, $\delta(x)\notin\id{p,h(x)}$ and
$\deg(\delta(x))<(k-\ell)\deg(h(x))$.
\begin{enumerate}
\item If $C=\id{h(x)^k}$ then $T_0(C)=k$ and $T_1(C)=k$.
\label{lemma.tors.degrees.2.two}
\item If $C=\id{h(x)^k+ph(x)^\ell\delta(x)}$ then $T_0(C)=k$ and $T_1(C)=\min(k,t-k+\ell)$.
\label{lemma.tors.degrees.2.three}
\item If $C=\id{h(x)^k, ph(x)^n}$ then $T_0(C)=k$ and $T_1(C)=\min(k,n)$.
\label{lemma.tors.degrees.2.four}
\item If $C=\id{h(x)^k+ph(x)^\ell\delta(x), ph(x)^n}$ then $T_0(C)=k$ and $T_1(C)=\min(k,t-k+\ell,n)$.
\label{lemma.tors.degrees.2.five}
\end{enumerate}
\end{lemma}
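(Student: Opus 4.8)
The plan is to treat $T_0(C)$ and $T_1(C)$ separately, with one uniform argument covering all four cases. Throughout I use that the hypothesis $\beta(x)=0$ forces $f(x)=h(x)^t$, so that $h(x)^t=0$ in $\Rtwof$, and that working in characteristic $p^2$ gives $p^2=0$. The torsional degrees are read off from the residue chain ring $\overline{\Rtwof}=GR(p,m)[x]/\id{h(x)^t}$, which is a chain ring by Lemma \ref{Local.Subambients.Lemma.Finite.Field.Chain} with ideals exactly the $\id{h(x)^i}$.

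For $T_0(C)$ I would simply reduce the listed generators modulo $p$. In each case the only generator not divisible by $p$ is $h(x)^k$, respectively $h(x)^k+ph(x)^\ell\delta(x)$ whose image under $\mu$ is again $h(x)^k$, while every $p$-divisible generator maps to $0$. Hence $Tor_0(C)=\mu(C)=\id{h(x)^k}$ in $\overline{\Rtwof}$, so $T_0(C)=k$ by Lemma \ref{Local.Subambients.Lemma.Torsion.Ideal}.

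The substance is $T_1(C)$. By Lemma \ref{Preliminaries.Lemma.Hamming.Weight.Projection} (with $a=2$) the map $\bar v\mapsto pv$ is an isomorphism $\overline{\Rtwof}\cong p\Rtwof$, and since in characteristic $p^2$ one has $pv=p\,\mu(v)$, this identifies $Tor_1(C)$ with $C\cap p\Rtwof$ as ideals of the chain ring $\overline{\Rtwof}$. Thus $T_1(C)$ is the least $s$ with $ph(x)^s\in C$, which I would pin down by a two-sided inclusion. For the upper bound (exhibiting small powers of $h$), I would multiply the generators by convenient elements: multiplying $h(x)^k+ph(x)^\ell\delta(x)$ by $p$ kills the second term and gives $ph(x)^k\in C$, while multiplying it instead by $h(x)^{t-k}$ gives $h(x)^t+ph(x)^{t-k+\ell}\delta(x)=ph(x)^{t-k+\ell}\delta(x)$, so $ph(x)^{t-k+\ell}\in C$ because $\delta(x)$ is a unit (it lies outside $\id{p,h(x)}$, so is invertible by Lemma \ref{Local.Subambients.Lemma.R.Local}). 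Together with the generator $ph(x)^n$ where present, and recalling that in $\overline{\Rtwof}$ a sum of ideals $\id{h^i}$ is the one with smallest exponent, this yields the claimed minima: $k$ in case (1), $\min(k,t-k+\ell)$ in case (2), $\min(k,n)=n$ in case (3), and $\min(k,t-k+\ell,n)$ in case (4).

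The reverse inclusion is where the real work lies, and it is the step I expect to be the main obstacle. Given $\eta\in C\cap p\Rtwof$, I would write $\eta=c_1(x)(h(x)^k+ph(x)^\ell\delta(x))+c_2(x)\,ph(x)^n$, dropping the absent terms in each case. Reducing modulo $p$ and using that $\overline{\Rtwof}$ is the chain ring $GR(p,m)[x]/\id{h(x)^t}$, the relation $\overline{c_1}(x)h(x)^k\equiv 0$ forces $\overline{c_1}(x)\in\id{h(x)^{t-k}}$, hence $c_1(x)=h(x)^{t-k}c_1'(x)+pc_1''(x)$. Substituting this back and repeatedly clearing $h(x)^t=0$ and $p^2=0$, every surviving term of $\eta$ carries a factor $p$ together with a power of $h(x)$ of exponent at least $t-k+\ell$, $k$, or $n$ as appropriate; thus the image of $\eta$ in $\overline{\Rtwof}$ lies in $\id{h(x)^{\min(\cdots)}}$. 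This gives $T_1(C)\ge\min(\cdots)$ and closes the argument. The delicate point is the bookkeeping in this substitution — correctly tracking which monomials $ph(x)^j$ remain after clearing $h(x)^t$ and $p^2$ — but it becomes entirely mechanical once the constraint $\overline{c_1}(x)\in\id{h(x)^{t-k}}$ is established.
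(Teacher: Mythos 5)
Your proposal is correct and follows essentially the same route as the paper: read $T_0$ off the residue code, get the upper bound on $T_1$ by multiplying the generators by $p$ and by $h(x)^{t-k}$ (using that $\delta(x)$ is a unit), and get the lower bound by showing no codeword of the form $ph(x)^s$ with smaller $s$ can arise. Your reverse inclusion, via reducing a general combination modulo $p$ to force $\overline{c_1}(x)\in\id{h(x)^{t-k}}$ and then substituting back, is just a more explicit write-up of the step the paper states informally as ``multiplying by any smaller multiple of $h(x)$ will not produce a polynomial divisible by $p$.''
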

\begin{proof}
The results on $T_0(C)$ are obvious. We concentrate on $T_1(C)$.

\noindent (\ref{lemma.tors.degrees.2.two}) The only way to create a
codeword divisible by $p$ is to multiply the generator by $p$ or by
a large enough power of $h(x)$. Since $h(x)^t=f(x)=0$ in $\Rtwof$,
$h(x)^kh(x)^{t-k}=h(x)^t=f(x)=0$. Multiplying by any smaller
multiple of $h(x)$ will not produce a polynomial divisible by $p$.
Hence any codeword divisible by $p$ is divisible by $ph(x)^k$ and so
$T_1(C)=k$.

\noindent (\ref{lemma.tors.degrees.2.three}) Noting that
$(h(x)^k+ph(x)^\ell\delta(x))h(x)^{t-k}=h(x)^t+ph(x)^{t-k+\ell}\delta(x)=ph(x)^{t-k+\ell}\delta(x)$
and $p\left(h(x)^k+ph(x)^\ell\delta(x)\right)=p(h(x)^k$ we see that
$T_1(C)=\min(k,t-k+\ell)$ following similar arguments as in
(\ref{lemma.tors.degrees.2.two}).

\noindent (\ref{lemma.tors.degrees.2.four}) This can be argued
similar to (\ref{lemma.tors.degrees.2.two}).

\noindent (\ref{lemma.tors.degrees.2.five}) This can be argued
similar to (\ref{lemma.tors.degrees.2.three}).
\end{proof}

\begin{lemma}
\label{lemma.tors.degrees.3} Assume $\beta(x)\neq 0$. Let $C\lhd
\Rtwof$, $k,\ell,n<t$, $n<k$ and
$\delta(x)=\sum_{j=0}^{k-1-\ell}{\eta_{j}(x)}h(x)^j$ where
 $\eta_j(x)\in\Tm[x]$, $\eta_0(x)\neq 0$ and $\deg(\eta_j(x))<\deg(h(x))$.
\begin{enumerate}
\item If $C=\id{h(x)^k}$ then $T_0(C)=k$ and $T_1(C)=\min(k,v)$.
\label{lemma.tors.degrees.3.two}
\item If $C=\id{h(x)^k+ph(x)^\ell\delta(x)}$ then $T_0(C)=k$ and
\[
T_1(C)=\left\{
\begin{array}{lll}
\min(k,v,t-k+\ell)&\textrm{ if }v\neq t-k+\ell\\
\min(k,v+z)&\textrm{ if }v= t-k+\ell\\
\end{array} \right.
\]
where $z=\min\left(\{j|\gamma_j(x)\neq\eta_j(x)\}\cup\{t\}\right)$.
\label{lemma.tors.degrees.3.three}
\item If $C=\id{h(x)^k, ph(x)^n}$ then $T_0(C)=k$ and $T_1(C)=\min(k,v,n)$.
\label{lemma.tors.degrees.3.four}
\item If $C=\id{h(x)^k+ph(x)^\ell\delta(x), ph(x)^n}$ then $T_0(C)=k$ and
\[
T_1(C)=\left\{
\begin{array}{lll}
\min(k,v,t-k+\ell,n)&\textrm{ if }v\neq t-k+\ell\\
\min(k,v+z,n)&\textrm{ if }v= t-k+\ell\\
\end{array} \right.
\]
where $z=\min\left(\{j|\gamma_j(x)\neq\eta_j(x)\}\cup\{t\}\right)$.
\label{lemma.tors.degrees.3.five}
\end{enumerate}
\end{lemma}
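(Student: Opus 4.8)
The plan is to handle all four cases uniformly by first reducing the computation of $T_1(C)$ to a membership question. Since $a=2$, the only nontrivial torsional degrees are $T_0(C)$ and $T_1(C)$; as in Lemma \ref{lemma.tors.degrees.2}, $T_0(C)=k$ is immediate because the residue of each monic generator is $h(x)^k$ while $ph(x)^n$ and $ph(x)^\ell\delta(x)$ reduce to $0$, so $Tor_0(C)=\mu(C)=\id{h(x)^k}$. For $T_1$ I would use that, since $p^2=0$, the product $pw$ depends only on $\mu(w)$; hence $Tor_1(C)=\id{h(x)^{T_1}}$ is equivalent to the clean characterization
\[
T_1(C)=\min\{\,T:\ ph(x)^T\in C\ \text{in}\ \Rtwof\,\}.
\]
The engine driving every computation is the identity $h(x)^t=-p\,h(x)^v\beta'(x)$ in $\Rtwof$, obtained from $f(x)=h(x)^t+ph(x)^v\beta'(x)=0$; this is exactly where the hypothesis $\beta(x)\neq 0$ enters and is what distinguishes this lemma from Lemma \ref{lemma.tors.degrees.2}.

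For the upper bounds I would exhibit explicit $p$-multiples lying in $C$. Multiplying any monic generator by $p$ kills the $p$-part and gives $ph(x)^k\in C$, so $T_1\le k$ always; when the generator $ph(x)^n$ is present this also gives $T_1\le n$. The crucial element is $h(x)^{t-k}$ times the mixed generator: using the key identity,
\[
h(x)^{t-k}\bigl(h(x)^k+ph(x)^\ell\delta(x)\bigr)=h(x)^t+ph(x)^{t-k+\ell}\delta(x)=p\bigl(h(x)^{t-k+\ell}\delta(x)-h(x)^v\beta'(x)\bigr).
\]
In case (\ref{lemma.tors.degrees.3.two}) with $\delta=0$ this is just $h(x)^{t-k}h(x)^k=-ph(x)^v\beta'(x)$, giving $T_1\le v$ since $\beta'(x)$ is a unit in the residue chain ring $\frac{GR(p,m)[x]}{\id{h(x)^t}}$ because $\gamma_0(x)\neq 0$. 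Thus the problem reduces to computing the $h(x)$-adic valuation $\rho$ of $h(x)^{t-k+\ell}\delta(x)-h(x)^v\beta'(x)$, where $\delta(x)=\sum_j\eta_j(x)h(x)^j$ and $\beta'(x)=\sum_j\gamma_j(x)h(x)^j$ have Teichm\"uller coefficients. The two summands have leading powers $h(x)^{t-k+\ell}$ and $h(x)^v$, so when $v\neq t-k+\ell$ no cancellation occurs and $\rho=\min(v,\,t-k+\ell)$; when $v=t-k+\ell$ the difference equals $h(x)^v\sum_j\bigl(\eta_j(x)-\gamma_j(x)\bigr)h(x)^j$, whose valuation is $v+z$ with $z=\min(\{j:\gamma_j(x)\neq\eta_j(x)\}\cup\{t\})$.

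For the lower bounds I would show that every $p$-multiple of $C$ has residue of valuation at least the claimed minimum. Writing a general element as $a(x)g(x)+b(x)ph(x)^n$ with $a=a_0+pa_1$, the term $b\,ph(x)^n$ contributes valuation $\ge n$, while for $a_0 g$ to be divisible by $p$ the non-$p$ part $a_0 h(x)^k \bmod f$ must vanish modulo $p$. Expanding $a_0$ $h(x)$-adically as $\sum_i d_i(x)h(x)^i$ with $d_i\in\Tm[x]$, the surviving low powers force $d_i=0$ for $i<t-k$, i.e. $a_0=h(x)^{t-k}\tilde a_0(x)$; substituting and dividing by $p$, every such $p$-multiple reduces in the residue chain ring to
\[
\tilde a_0(x)\bigl(h(x)^{t-k+\ell}\delta(x)-h(x)^v\beta'(x)\bigr)+\bar a_1(x)h(x)^k+\bar b(x)h(x)^n,
\]
each summand of which has valuation $\ge\min(k,\rho,n)$ (omitting the last term in the one-generator cases). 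Matching this against $h(x)^T$ yields $T\ge\min(k,\rho,n)$, and combined with the explicit elements this pins down $T_1$ exactly, giving $\min(k,v)$ and $\min(k,v,n)$ in cases (\ref{lemma.tors.degrees.3.two}), (\ref{lemma.tors.degrees.3.four}) and the two piecewise formulas in cases (\ref{lemma.tors.degrees.3.three}) and (\ref{lemma.tors.degrees.3.five}).

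I expect the main obstacle to be the boundary case $v=t-k+\ell$, where the two leading terms cancel and the answer is governed by the first discrepancy $z$ between the coefficient sequences $(\gamma_j)$ and $(\eta_j)$. Correctly justifying the exact valuation $v+z$, together with the capping convention $\cup\{t\}$ when the two truncated sequences agree throughout their common range, requires carefully comparing the ranges $0\le j\le t-1-v$ and $0\le j\le k-1-\ell$ over which $\gamma_j$ and $\eta_j$ are respectively defined, and checking that no higher-order reductions modulo $f$ reintroduce a lower-valuation term. Once that is settled, the verification that the two-generator cases (\ref{lemma.tors.degrees.3.four}) and (\ref{lemma.tors.degrees.3.five}) simply adjoin the value $n$ to the one-generator answers of (\ref{lemma.tors.degrees.3.two}) and (\ref{lemma.tors.degrees.3.three}) is routine.
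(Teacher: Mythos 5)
Your proposal is correct and follows essentially the same route as the paper: the same key identity $h(x)^t=-ph(x)^v\beta'(x)$, the same multiplication of the mixed generator by $h(x)^{t-k}$, and the same case split on $v$ versus $t-k+\ell$ with the Teichm\"uller discrepancy index $z$ yielding the unit $\gamma_z(x)-\eta_z(x)$ (note that when $v=t-k+\ell$ the two index ranges $0\le j\le t-1-v$ and $0\le j\le k-1-\ell$ coincide, so the range comparison you flag as a worry is automatic). The only real difference is that you make the lower bound rigorous via the decomposition $a(x)=a_0(x)+pa_1(x)$ and the $h$-adic expansion of $a_0(x)$, where the paper simply asserts that multiplying a generator by $p$ or by a sufficiently large power of $h(x)$ is the only way to produce codewords divisible by $p$.
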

\begin{proof}
The results on $T_0(C)$ are obvious. We concentrate on $T_1(C)$.

\noindent (\ref{lemma.tors.degrees.3.two}) The only way to create a
codeword divisible by $p$ is to multiply the generator by $p$ or by
a large enough power of $h(x)$. Now,
$h(x)^{t-k}h(x)^k=h(x)^t=-ph(x)^v\beta'(x)$. We know $\beta'(x)$ is
a unit since $\gamma_0(x)\neq 0$ so, $T_1(C)=\min(k,v)$.

\noindent (\ref{lemma.tors.degrees.3.three}) First,
\begin{eqnarray*}
h(x)^{t-k}\left(h(x)^k+ph(x)^\ell\delta(x)\right)&=&h(x)^t+ph(x)^{t-k+\ell}\delta(x)\\
&=&-ph(x)^v\beta'(x)+ph(x)^{t-k+\ell}\delta(x).\\
\end{eqnarray*}
If $v<{t-k+\ell}$ then
\[
-ph(x)^v\beta'(x)+ph(x)^{t-k+\ell}\delta(x)=-ph(x)^v\left(\gamma_0(x)+\sum_{j=1}^{t-1-v}\gamma_j(x)h^{j}(x)-h(x)^{t-k+\ell-v}\sum_{j=0}^{k-1-\ell}{\eta_{j}(x)}h(x)^j\right).
\]
In this case $T_1(C)=\min(k,v)$. If $v>{t-k+\ell}$ then
\[
-ph(x)^v\beta'(x)+ph(x)^{t-k+\ell}\delta(x)=ph(x)^{t-k+\ell}\left(\eta_0(x)+\sum_{j=1}^{k-1-\ell}{\eta_{j}(x)}h(x)^j-h(x)^{v-(t-k+\ell)}\sum_{j=0}^{t-1-v}\gamma_j(x)h^{j}(x)\right).
\]
In this case $T_1(C)=\min(k,t-k+\ell)$. Next, consider the case
$v={t-k+\ell}$. Here, if $\beta'(x)=\delta(x)$ then
$-ph(x)^v\beta'(x)+ph(x)^{t-k+\ell}\delta(x)=0$ so $T_1(C)=k$.
Finally, if $\beta'(x)\neq\delta(x)$ then for some $0\leq j'<t$,
$\gamma_{j'}(x)\neq\eta_{j'}(x)$. Since
$\gamma_j(x),\eta_j(x)\in\Tm[x]$ we have that
$\gamma_{z}(x)-\eta_{z}(x)$ is not divisible by $p$ and is therefore
a unit. Then
\[
-ph(x)^v\beta'(x)+ph(x)^{t-k+\ell}\delta(x)=-ph(x)^{v+z}\left(\gamma_z(x)-\eta_z(x)+\sum_{j={z+1}}^{t-1-v}\gamma_j(x)h^{j-z}(x)-\sum_{j=z+1}^{k-1-\ell}\eta_j(x)h^{j-z}(x)\right).
\]
Since $z\leq t-1-v$, in this final case, $T_1(C)=\min(k,v+z)$.

\noindent (\ref{lemma.tors.degrees.3.four}) This can be argued
similar to (\ref{lemma.tors.degrees.3.two}).

\noindent (\ref{lemma.tors.degrees.3.five}) This can be argued
similar to (\ref{lemma.tors.degrees.3.three}).
\end{proof}

Now that the torsional degrees of any code can be computed, the
techniques in Section \ref{Local.Subambients} can be applied to produce a generating set
as in Theorem \ref{Local.Subambients.Theorem.Unique.Generating.Set} or 
Definition \ref{def.gen_set}. Our goal here is to show how the hamming
distance can be computed. Notice in Section \ref{Local.Subambients} that ultimately
$T_{a-1}(C)$ will determine the Hamming distance of $C$, i.e.,
$d_H(C)=d_H\left( \overline{ \id{h(x)^{T_1(C)}} }\right)$.

In the remaining part of this section, we study cyclic codes of
length $p^s$\ over $\Rtwo$ and show how to determine their
Hamming distances. To do so we apply the results from the beginning
of this section. The following two lemmas are  immediate
consequences of Kummer's Theorem (see \cite{Granville_1997} for the
statement) which we will need for our calculations.

\begin{lemma}
\label{Lemma.P.Power.Divisor.of.Binomial.Coefficient} Let $k<p^{e}$
and let $\ell$ be the largest integer such that $p^{\ell}|k$. Then
$p^{e-\ell}|{ p^e\choose k}$.
\end{lemma}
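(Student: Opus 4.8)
The plan is to invoke Kummer's theorem directly, as the lemma is stated to be an immediate consequence of it. Kummer's theorem asserts that the exact power of $p$ dividing $\binom{N}{k}$ equals the number of carries that occur when $k$ and $N-k$ are added in base $p$. Applying this with $N = p^e$, I would reduce the statement to counting the carries in the base-$p$ addition $k + (p^e - k) = p^e$, and show that this count is exactly $e - \ell$, which is more than enough to conclude $p^{e-\ell} \mid \binom{p^e}{k}$.

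First I would record the relevant base-$p$ digits. Writing $k = \sum_{i=0}^{e-1} k_i p^i$ with $0 \le k_i \le p-1$ (legitimate since $k < p^e$), the hypothesis that $p^\ell$ exactly divides $k$ means $k_0 = \cdots = k_{\ell-1} = 0$ and $k_\ell \neq 0$; assuming $k \ge 1$ so that $\ell$ is finite, we have $p^\ell \le k < p^e$ and hence $\ell \le e-1$. The sum $p^e$ has base-$p$ digits that are all $0$ except for a single $1$ in position $e$. Setting $m = p^e - k$ with digits $m_i$, I would track the carry sequence $c_0, c_1, \dots$ position by position.

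The key computation is the carry propagation. For positions $0 \le i < \ell$, both $k_i = 0$ and the incoming carry is $0$, which forces $m_i = 0$ with no carry out, so $c_\ell = 0$. At position $\ell$, since $k_\ell \ge 1$ and the target digit of $p^e$ is $0$, the only possibility is $k_\ell + m_\ell = p$, producing a carry $c_{\ell+1} = 1$. For each position $\ell < i \le e-1$ the incoming carry is $1$ and the target digit is again $0$, which forces $k_i + m_i + 1 = p$ and once more a carry out. Thus a carry occurs at every one of the positions $\ell, \ell+1, \dots, e-1$, giving exactly $e - \ell$ carries; at position $e$ the final carry $1$ produces the digit $1$ of $p^e$ and the process terminates. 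By Kummer's theorem the $p$-adic valuation of $\binom{p^e}{k}$ is therefore exactly $e-\ell$, so in particular $p^{e-\ell} \mid \binom{p^e}{k}$.

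The main obstacle is purely bookkeeping: verifying that the carry, once created at position $\ell$, cannot be absorbed before position $e$. This is settled by the observation that every intermediate target digit of $p^e$ is $0$, so an incoming carry of $1$ can only be balanced by producing a carry out, never by dissipating. As an alternative route avoiding carries entirely, one may use the factorial identity $k\binom{p^e}{k} = p^e\binom{p^e-1}{k-1}$ together with the fact that $\binom{p^e-1}{k-1}$ is a unit modulo $p$ (by Lucas' theorem, since every base-$p$ digit of $p^e-1$ equals $p-1$); taking $p$-adic valuations $v_p(\cdot)$ then gives $v_p\binom{p^e}{k} = e - v_p(k) = e - \ell$ immediately.
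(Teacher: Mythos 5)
Your proof is correct and takes essentially the same route as the paper, which states this lemma without proof as an ``immediate consequence of Kummer's Theorem'': your carry count --- exactly $e-\ell$ carries, occurring at positions $\ell,\ell+1,\dots,e-1$ in the base-$p$ addition $k+(p^e-k)$ --- supplies precisely the bookkeeping the paper omits, and in fact yields the exact valuation $v_p\binom{p^e}{k}=e-\ell$ rather than just divisibility. The alternative argument via $k\binom{p^e}{k}=p^e\binom{p^e-1}{k-1}$ and Lucas' theorem is also valid and arguably more elementary.
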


\begin{lemma}
\label{Lemma.Binomial.Coefficient.of.bx} Let $0<i<p$. We have
${p^s\choose ip^{s-1}}=pu\in\Rtwo$ where $p\nmid u$.
\end{lemma}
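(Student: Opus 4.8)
The plan is to reduce the statement to the claim that the $p$-adic valuation of the integer $\binom{p^s}{ip^{s-1}}$ is exactly $1$. Once this is established, the conclusion is immediate: an integer with $v_p = 1$ is of the form $pu$ with $p \nmid u$, and viewing it inside the ring $\Rtwo$, the element $u$ lies outside the maximal ideal $\id{p}$ and is therefore a unit. Thus $\binom{p^s}{ip^{s-1}} = pu \in \Rtwo$ with $u$ a unit, which is exactly what the lemma asserts.

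First I would establish the lower bound $p \mid \binom{p^s}{ip^{s-1}}$ by applying Lemma \ref{Lemma.P.Power.Divisor.of.Binomial.Coefficient} with $e = s$ and $k = ip^{s-1}$. Since $0 < i < p$, we have $\gcd(i,p)=1$, so the largest power of $p$ dividing $k$ is exactly $p^{s-1}$; that is, $\ell = s-1$, and hence $p^{\,e-\ell} = p$ divides $\binom{p^s}{ip^{s-1}}$.

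For the matching upper bound I would invoke Kummer's theorem, which states that $v_p\binom{m+n}{m}$ equals the number of carries occurring when $m$ and $n$ are added in base $p$. Here I write $p^s = ip^{s-1} + (p-i)p^{s-1}$, so each summand has a single nonzero base-$p$ digit, namely $i$ and $p-i$, both occupying position $s-1$. Adding them produces $i + (p-i) = p$ in that position, which generates exactly one carry into position $s$ and no subsequent carries, since all higher digits vanish. Therefore the number of carries is exactly one, giving $v_p\binom{p^s}{ip^{s-1}} = 1$.

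There is no genuine obstacle here; the only point requiring care is recognizing that Lemma \ref{Lemma.P.Power.Divisor.of.Binomial.Coefficient} supplies only a divisibility statement (a lower bound on the valuation), so a separate carry count via Kummer's theorem is needed to rule out divisibility by $p^2$. Combining the two bounds yields $\binom{p^s}{ip^{s-1}} = pu$ with $p \nmid u$, completing the argument. Alternatively, one could bypass the carry count and compute the valuation directly from Legendre's formula $v_p(n!) = (n - s_p(n))/(p-1)$: since $s_p(ip^{s-1}) = i$ and $s_p((p-i)p^{s-1}) = p-i$, the digit sums collapse to give $v_p\binom{p^s}{ip^{s-1}} = (1 + p + \cdots + p^{s-1}) - p(1 + \cdots + p^{s-2}) = 1$.
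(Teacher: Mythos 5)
Your proof is correct and follows essentially the same route as the paper, which states this lemma (together with Lemma \ref{Lemma.P.Power.Divisor.of.Binomial.Coefficient}) as an immediate consequence of Kummer's theorem without writing out details; your carry count at the single nonzero digit position $s-1$ is exactly the intended computation, and the passage from the integer valuation $v_p=1$ to the statement in $\Rtwo$ is handled correctly. The appeal to Lemma \ref{Lemma.P.Power.Divisor.of.Binomial.Coefficient} for the lower bound is harmless but redundant, since the carry count already gives the exact valuation.
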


To apply the results of this section, we need to show that
the ambient ring is of the correct type. To do so, we only need to show that an
appropriate polynomial is used for the generator of the ideal being factored out .
For cyclic codes of length
$p^s$, this polynomial is $x^{p^s}-1$ of course. We now show why this
is an appropriate polynomial. By Lemma
\ref{Lemma.P.Power.Divisor.of.Binomial.Coefficient} and Lemma
\ref{Lemma.Binomial.Coefficient.of.bx} and the fact that we are
working in $\Rtwo$,
\begin{eqnarray*}
x^{p^s} - 1&=&((x-1)+1)^{p^s}-1\\
&=&(x-1)^{p^s}+{p^s\choose p^s-1}(x-1)^{p^s-1}+\cdots+{p^s\choose
1}(x-1)\\
&=&(x-1)^{p^s}+{p^s \choose (p-1)p^{s-1} }(x-1)^{(p-1)p^{s-1}} + \cdots + {p^s \choose p^{s-1}}(x-1)^{p^{s-1}}\\
&=&(x-1)^{p^s}+p(x-1)^{p^{s-1}}\sum_{i=0}^{p-2}\frac{{p^s\choose
(i+1)p^{s-1}}}{p}(x-1)^{ip^{s-1}}\\
\end{eqnarray*}

We want to show that we can express $x^{p^s}-1$ in the form needed
to use the results form this section. Let $t=p^s$, $v=p^{s-1}$,
$h(x)=x-1$ and
$\beta'(x)=\sum_{i=0}^{p-2}\gamma_{ip^{s-1}}(x-1)^{ip^{s-1}}$ where
$\gamma_{ip^{s-1}}=\frac{{p^s\choose (i+1)p^{s-1}}}{p}\pmod{p}$ for
$0\leq i<p-1$ and $\gamma_j=0$ for all other $j$. Note,
$\gamma_j\in\Tm$. This shows that $x^{p^s}-1$ is the type of
polynomial we need.

The following is a special case of Lemma
\ref{Lemma.Ideals.Polycyclic.Characteristic.p2}.

\begin{lemma}
Let $C\lhd\cRtwo$. Then $C$ can be expressed in one of the following
forms.
\begin{enumerate}
\label{lemma.cyc}
\item $\id{0}$,
\label{lemma.cyc.one}
\item $\id{1}$,
\label{lemma.cyc.two}
\item $\id{p(x-1)^n}$,
\label{lemma.cyc.three}
\item $\id{(x-1)^k}$,
\label{lemma.cyc.four}
\item $\id{(x-1)^k+p(x-1)^\ell\delta(x)}$,
\label{lemma.cyc.five}
\item $\id{(x-1)^k, p(x-1)^n}$,
\label{lemma.cyc.six}
\item $\id{(x-1)^k+p(x-1)^\ell\delta(x), p(x-1)^n}$
\label{lemma.cyc.seven}
\end{enumerate}
where in any case $k,\ell,n<p^s$, $n<k$ and
$\delta(x)=\sum_{j=0}^{k-1-\ell}\eta_{j}(x-1)^j$ where
$\eta_j\in\Tm$ and $\eta_0\neq 0$.
\end{lemma}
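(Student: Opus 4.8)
The plan is to recognize this statement as a direct specialization of Lemma~\ref{Lemma.Ideals.Polycyclic.Characteristic.p2}. Once I verify that the ambient ring $\cRtwo$ is of the form $\Rtwof$ with $f(x)=x^{p^s}-1$ a monic regular primary polynomial satisfying the standing hypotheses of this section, the seven forms and their constraints follow immediately by substituting the explicit data for $f(x)$. So the only real content is checking that $x^{p^s}-1$ is a polynomial of the required type, and then tracking how the general hypotheses specialize.

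First I would invoke the displayed computation preceding the statement, which, using Lemma~\ref{Lemma.P.Power.Divisor.of.Binomial.Coefficient} and Lemma~\ref{Lemma.Binomial.Coefficient.of.bx}, expresses
\[
x^{p^s}-1=(x-1)^{p^s}+p(x-1)^{p^{s-1}}\sum_{i=0}^{p-2}\frac{{p^s\choose (i+1)p^{s-1}}}{p}(x-1)^{ip^{s-1}}.
\]
Setting $h(x)=x-1$ and $t=p^s$, this has exactly the shape $f(x)=h(x)^t+p\beta(x)$ with $\beta(x)=(x-1)^{p^{s-1}}\beta'(x)$. Here $h(x)=x-1$ is monic and basic irreducible, since its reduction $\overline{h}(x)=x-1$ is linear, hence irreducible, over $\F_{p^m}$; in particular $\overline{h}(x)=h(x)$. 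As $x^{p^s}-1$ is monic its leading coefficient is a unit, so it is regular, and because $\overline{x^{p^s}-1}=(x-1)^{p^s}$ is a power of a single irreducible, Theorem~\ref{Theorem.McDonald.Factorization.and.Uniqueness} forces its primary factorization to consist of a single factor, so it is primary. Thus $f(x)=x^{p^s}-1$ is a monic regular primary polynomial of precisely the form assumed throughout this section, and $\cRtwo=\Rtwof$.

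With this identification in hand, I would simply apply Lemma~\ref{Lemma.Ideals.Polycyclic.Characteristic.p2} to $C\lhd\cRtwo$. Substituting $h(x)=x-1$ and $t=p^s$ turns its seven cases into the seven forms listed here, and the bound $k,\ell,n<t$ becomes $k,\ell,n<p^s$ with $n<k$. The last point to check is the form of $\delta(x)$: in the general lemma $\delta(x)=\sum_{j=0}^{k-1-\ell}\eta_j(x)\,h(x)^j$ with $\eta_j(x)\in\Tm[x]$, $\eta_0(x)\neq0$ and $\deg(\eta_j(x))<\deg(h(x))$. Since $\deg(h(x))=\deg(x-1)=1$, the degree constraint forces each $\eta_j(x)$ to be a constant in $\Tm$, so $\delta(x)=\sum_{j=0}^{k-1-\ell}\eta_j(x-1)^j$ with $\eta_j\in\Tm$ and $\eta_0\neq0$, exactly as claimed. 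The proof carries no genuine difficulty; the only thing requiring any care is this final bookkeeping step---confirming that the general lemma's polynomial coefficients collapse to scalars precisely because $\deg(x-1)=1$---together with the routine verification that $x^{p^s}-1$ meets the primary-polynomial hypotheses under which Lemma~\ref{Lemma.Ideals.Polycyclic.Characteristic.p2} was established.
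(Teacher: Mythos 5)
Your proof is correct and follows the same route as the paper: the paper introduces this lemma with the words ``The following is a special case of Lemma \ref{Lemma.Ideals.Polycyclic.Characteristic.p2}'' and relies on exactly the displayed expansion of $x^{p^s}-1=(x-1)^{p^s}+p\beta(x)$ that you cite to verify the standing hypotheses. Your additional bookkeeping (primariness of $x^{p^s}-1$ and the collapse of $\eta_j(x)$ to scalars because $\deg(x-1)=1$) just makes explicit what the paper leaves implicit.
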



Now, restating Lemma \ref{lemma.tors.degrees.1} and Lemma \ref{lemma.tors.degrees.3} for cyclic codes of length $p^s$
and using the fact that $d_H(C) =  d_H( \overline{\id{(x-1)^{T_1(C)}}} )$, we determine the
Hamming distance of all cyclic codes of length $p^s$\ over $GR(p^2,m)$\ in the following lemma.
Note that $ \overline{\id{(x-1)^{T_1(C)}}} $\ is a cyclic code of length $p^s$\ over $\F_{p^m}$\ and its Hamming distance
is given in Theorem \ref{Irreducible.Theorem.Main}.

\begin{lemma}
  \label{lemma.tors.degrees.cyc} Let $C\lhd \cRtwo$, $k,\ell,n<p^s$,
  $n<k$ and $\delta(x)=\sum_{j=0}^{k-1-\ell}\eta_{j}(x-1)^j$ where
  $\eta_j\in\Tm$ and $\eta_0\neq 0$. 
    Then $d_H(C) = d_H(\overline{ \id{(x-1)^{T_1(C)}} })$\ where $T_0(C)$\ and $T_1(C)$\ are as follows.
  
  \begin{enumerate}
    \item If $C=\id{(x-1)^k}$ then $T_0(C)=k$ and $T_1(C)=\min(k,p^{s-1})$.
      \label{lemma.tors.degrees.cyc.two}
    \item If $C=\id{(x-1)^k+p(x-1)^\ell\delta(x)}$ then $T_0(C)=k$ and
      \[
	T_1(C)=\left\{
	\begin{array}{lll}
	\min(k,p^{s-1},p^s-k+\ell)&\textrm{ if }p^{s-1}\neq p^s-k+\ell\\
	\min(k,p^{s-1}+z)&\textrm{ if }p^{s-1}= p^s-k+\ell\\
	\end{array} \right.
      \]
  where $z=\min\left(\{j|\gamma_j\neq\eta_j\}\cup\{p^s\}\right)$.
  \label{lemma.tors.degrees.cyc.three}
  \item If $C=\id{(x-1)^k, p(x-1)^n}$ then $T_0(C)=k$ and $T_1(C)=\min(k,p^{s-1},n)$.
  \label{lemma.tors.degrees.cyc.four}
  \item If $C=\id{(x-1)^k+p(x-1)^\ell\delta(x), p(x-1)^n}$ then $T_0(C)=k$ and
    \[
    T_1(C)=\left\{
    \begin{array}{lll}
    \min(k,p^{s-1},p^s-k+\ell,n)&\textrm{ if }p^{s-1}\neq p^s-k+\ell\\
    \min(k,p^{s-1}+z,n)&\textrm{ if }p^{s-1}= p^s-k+\ell\\
    \end{array} \right.
    \]
    where $z=\min\left(\{j|\gamma_j\neq\eta_j\}\cup\{p^s\}\right)$.
  \label{lemma.tors.degrees.cyc.five}
  \item If
    $C=\id{p(x-1)^n}$ then $T_0(C)=p^s$ and $T_1(C)=n$.
  \end{enumerate}
\end{lemma}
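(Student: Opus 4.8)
The plan is to obtain this lemma as a direct specialization of Lemma \ref{lemma.tors.degrees.1} and Lemma \ref{lemma.tors.degrees.3} to the single polynomial $f(x)=x^{p^s}-1$, for which the classification of ideals $C$ is already furnished by Lemma \ref{lemma.cyc}. First I would record the parameters: by the binomial expansion carried out just before the lemma (which rests on the Kummer-type Lemmas \ref{Lemma.P.Power.Divisor.of.Binomial.Coefficient} and \ref{Lemma.Binomial.Coefficient.of.bx}), $x^{p^s}-1$ is exactly of the admissible shape $h(x)^t+p\beta(x)$ with $h(x)=x-1$, $t=p^s$, $v=p^{s-1}$, and $\beta(x)=(x-1)^{p^{s-1}}\beta'(x)$, where $\beta'(x)=\sum_{i=0}^{p-2}\gamma_{ip^{s-1}}(x-1)^{ip^{s-1}}$ and $\gamma_{ip^{s-1}}\equiv\binom{p^s}{(i+1)p^{s-1}}/p\pmod p$. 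Since the seven possible forms of $C$ are already listed in Lemma \ref{lemma.cyc}, the only work is to compute $T_0(C)$ and $T_1(C)$ in each form and then convert $T_1(C)$ into a Hamming distance.

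Before invoking Lemma \ref{lemma.tors.degrees.3} I must check its standing hypothesis that $\beta(x)\neq 0$ and that $v=p^{s-1}$ is the exact $(x-1)$-adic order of $\beta(x)$, equivalently that $\beta'(x)$ is a unit, i.e. $\gamma_0\neq 0$. This is precisely Lemma \ref{Lemma.Binomial.Coefficient.of.bx}: it gives $\binom{p^s}{p^{s-1}}=pu$ with $p\nmid u$, so $\gamma_0\equiv u\not\equiv 0\pmod p$. Thus $\beta(x)\neq 0$ and we are always in the regime of Lemma \ref{lemma.tors.degrees.3} (never of Lemma \ref{lemma.tors.degrees.2}). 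With this in hand each case is a transcription: the form $C=\id{p(x-1)^n}$ is Lemma \ref{lemma.tors.degrees.1} with $t=p^s$; and the forms $\id{(x-1)^k}$, $\id{(x-1)^k+p(x-1)^\ell\delta(x)}$, $\id{(x-1)^k,p(x-1)^n}$ and $\id{(x-1)^k+p(x-1)^\ell\delta(x),p(x-1)^n}$ are the four parts of Lemma \ref{lemma.tors.degrees.3} after the substitutions $v\mapsto p^{s-1}$ and $t\mapsto p^s$. In particular the dichotomy $v\neq t-k+\ell$ versus $v=t-k+\ell$ becomes $p^{s-1}\neq p^s-k+\ell$ versus $p^{s-1}=p^s-k+\ell$, and the integer $z=\min(\{j:\gamma_j\neq\eta_j\}\cup\{t\})$ becomes $\min(\{j:\gamma_j\neq\eta_j\}\cup\{p^s\})$, with the $\gamma_j$ the binomial residues above and the $\eta_j$ the Teichm\"uller coefficients of $\delta(x)$.

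It remains to pass from $T_1(C)$ to $d_H(C)$. Since we are in characteristic $p^2$ we have $a=2$, so the lowest torsional degree is $T_{a-1}(C)=T_1(C)$, and $Tor_1(C)=\overline{\id{(x-1)^{T_1(C)}}}\lhd\bar{\sR}$ by Lemma \ref{Local.Subambients.Lemma.Torsion.Ideal} and the definition of $T_1(C)$. As noted in Section \ref{Local.Subambients}, the lowest torsion code controls the minimum weight: the weight-preserving $\sR$-module isomorphism $\bar{\sR}\cong p^{a-1}\sR$ of Lemma \ref{Preliminaries.Lemma.Hamming.Weight.Projection} identifies $Tor_{a-1}(C)$ with $C\cap p^{a-1}\sR$, giving $d_H(C)\leq d_H(Tor_{a-1}(C))$; conversely, writing any nonzero $c\in C$ as $p^i c'$ with $c'\notin\id{p}$ yields $\supp(\mu(c'))\subseteq\supp(c)$ and $\mu(c')\in Tor_i(C)\subseteq Tor_{a-1}(C)$, whence $w_H(c)\geq d_H(Tor_{a-1}(C))$. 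Therefore $d_H(C)=d_H(\overline{\id{(x-1)^{T_1(C)}}})$, and the right-hand side is the Hamming distance of a cyclic code of length $p^s$ over $\F_{p^m}$, supplied by Theorem \ref{Irreducible.Theorem.Main}.

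The genuine content is thus entirely in the already-proven Lemmas \ref{lemma.tors.degrees.1} and \ref{lemma.tors.degrees.3}; the main obstacle is merely the bookkeeping of matching the abstract parameters $(t,v,\beta',\gamma_j)$ to the concrete ones coming from $x^{p^s}-1$, the only nontrivial mathematical input being the nonvanishing $\gamma_0\neq 0$ of Lemma \ref{Lemma.Binomial.Coefficient.of.bx}, which certifies $v=p^{s-1}$ as the exact order and so legitimizes the use of Lemma \ref{lemma.tors.degrees.3}.
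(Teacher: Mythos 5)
Your proposal is correct and follows essentially the same route as the paper: the lemma is obtained by specializing Lemma \ref{lemma.tors.degrees.1} and Lemma \ref{lemma.tors.degrees.3} to $f(x)=x^{p^s}-1$ with $h(x)=x-1$, $t=p^s$, $v=p^{s-1}$ (the nonvanishing of $\gamma_0$ coming from Lemma \ref{Lemma.Binomial.Coefficient.of.bx}), and then invoking $d_H(C)=d_H(\overline{\id{(x-1)^{T_1(C)}}})$. Your explicit justification of that last identity is a welcome addition, since the paper only asserts it (it is the $a=2$ instance of the argument behind Theorem \ref{Groebner.Corollary.Hamming.Distance.Using.SGB} and Lemma \ref{Preliminaries.Lemma.Hamming.Weight.Projection}), but it does not change the approach.
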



\section{Structure of polycyclic code ambients}
\label{sect.main}
In this section, we study the structure of the
code ambient for polycyclic codes over a Galois ring which is the
ring $\fR$ where $f(x)$ is a regular monic polynomial. Throughout
this section assume that $f(x)\in\R$ is regular. 
By Theorem \ref{Theorem.McDonald.Factorization.and.Uniqueness}, 
$f(x)=\delta(x)f_1(x)\cdots f_s(x)$ where
the $\delta(x)\in\R$ is a unit and $\{f_i(x)\in\R\}_{i=1}^s$ is a
set of regular primary co-prime polynomials that are not units.
By the fact that $\delta(x)$ is a unit, we may assume without loss
of generality that $f_i(x)=h_i(x)^{t_i}+p\beta_i(x)$ where $h_i(x)$
is a monic basic irreducible polynomial such that
$\overline{h_i}(x)=h_i(x)$. We know that
$t_i\deg{h_i(x)}>\deg{\beta_i(x)}$. Since we are interested in $\fR$
and $\id{f(x)}=\id{\delta(x)^{-1}f(x)}$, we assume $\delta(x)=1$, so
$f(x)=f_1(x)\cdots f_s(x)$. Additionally, throughout this section
let $\Rf=\fR$ and let $\hat{f_i}(x)=\prod_{j=1,j\neq i}^{s}f_j(x)$
for $1\leq i\leq s$.

\begin{theorem}
  \label{prop.princ_code}
    For $\sR$, we have the following
  \begin{enumerate}
    \item $\Rf=\bigoplus_{i=1}^{s}\id{\hat{f_i}(x)+\id{f}}$ and $\id{\hat{f_i}(x)+\id{f}}\cong \frac{\R}{\id{f_i(x)}}$,
      \label{prop.princ_code.one}
    \item Any maximal ideal of $\Rf$ is of the form
      $\id{p\hat{f_i}(x)+f_i(x)+\id{f},h_i\hat{f_i}(x)+f_i(x)+\id{f}}
       = \id{p + \id{f}, h_i(x) + \id{f}} $
      for some $1\leq i\leq s$, \label{prop.princ_code.three}
    \item
      $J\left(\Rf\right)=\bigcap_{i=1}^s\id{p+\id{f},h_i(x)+\id{f}}=\id{p+\id{f},\prod_{i=1}^sh_i(x)+\id{f}}$,
      \label{prop.princ_code.four}
     \item
    $soc\left(\Rf\right)=\bigoplus_{i=1}^s\id{p^{a-1}h_i(x)^{t_i-1}\hat{f_i}(x)+\id{f}}=\id{p^{a-1}\prod_{i=1}^sh_i(x)^{t_i-1}+\id{f}}$.
    \label{prop.princ_code.five}
  \end{enumerate}
\end{theorem}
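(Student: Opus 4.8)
The plan is to reduce everything to the local factors analyzed in Section~\ref{Local.Subambients} by means of the Chinese Remainder Theorem, and then transport the conclusions back through the standard behaviour of ideals, maximal ideals, Jacobson radical and socle under a finite direct product. Since $f=f_1\cdots f_s$ with the $f_i(x)$ pairwise coprime, the natural map $\Rf\to\prod_{i=1}^s \frac{\R}{\id{f_i(x)}}$ given by reduction modulo each $f_i(x)$ is an isomorphism. The fact I will use repeatedly is that in a finite product $\prod_i R_i$ every ideal is a product $\prod_i I_i$ of ideals of the factors; consequently the maximal ideals are exactly $\mathfrak m_i\times\prod_{j\ne i}R_j$ with $\mathfrak m_i$ maximal in $R_i$, the Jacobson radical is $\prod_i J(R_i)$, and the socle is $\prod_i soc(R_i)$. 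Each factor $\frac{\R}{\id{f_i(x)}}$ is local by Lemma~\ref{Local.Subambients.Lemma.R.Local} with maximal ideal $\id{p,h_i(x)}$ and has simple socle $\id{p^{a-1}h_i(x)^{t_i-1}}$ by Lemma~\ref{Local.Subambients.Lemma.R.Simple.Socle}; feeding these into the product facts yields the four assertions, once the abstract components are matched with the stated polynomial generators.

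For (\ref{prop.princ_code.one}) I will identify the $i$-th summand of the product, namely the ideal that is $\frac{\R}{\id{f_i(x)}}$ in slot $i$ and zero elsewhere, with $\id{\hat{f_i}(x)+\id{f}}$. Under the isomorphism this summand is the set of classes divisible by $f_j(x)$ for all $j\ne i$, i.e.\ by $\hat{f_i}(x)$. Letting $e_i$ be the CRT idempotent (the element congruent to $1$ modulo $f_i$ and to $0$ modulo every $f_j$, $j\ne i$), coprimality gives $\hat{f_i}=\hat{f_i}\,e_i$ and $e_i=w_i\hat{f_i}$ for a suitable $w_i$, so $\id{\hat{f_i}+\id{f}}=\id{e_i+\id{f}}=e_i\Rf$, which is ring-isomorphic to $\frac{\R}{\id{f_i(x)}}$ via the restriction of the CRT map. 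Summing over $i$ gives the direct-sum decomposition.

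For (\ref{prop.princ_code.three})--(\ref{prop.princ_code.five}) the work is to check that each explicit generator reduces in every component to either a unit or the expected local generator. The key observation is that for $j\ne i$ the polynomial $h_i(x)$ is a unit modulo $f_j(x)$: reducing $\id{f_i,f_j}=\id{1}$ modulo $p$ shows $\overline{f_i}=h_i^{t_i}$ and $\overline{f_j}=h_j^{t_j}$ are coprime, so the distinct basic irreducibles $h_i,h_j$ are coprime and $h_i\notin\id{p,h_j}$. With this in hand, write $u_i$ for the unit class of $\hat{f_i}(x)$ modulo $f_i(x)$: the elements $p\hat{f_i}+f_i$ and $h_i\hat{f_i}+f_i$ reduce to the unit $f_i$ modulo $f_j$ for $j\ne i$, and to $p\,u_i,\ h_i u_i$ modulo $f_i$, so the ideal they generate is all of $\frac{\R}{\id{f_j(x)}}$ in every slot $j\ne i$ and $\id{p,h_i}=\mathfrak m_i$ in slot $i$; the same component computation shows $\id{p+\id{f},h_i(x)+\id{f}}$ equals this maximal ideal, proving (\ref{prop.princ_code.three}). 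Intersecting over $i$ gives $J(\Rf)=\prod_i\mathfrak m_i$, and since $\prod_i h_i(x)$ reduces modulo $f_i$ to $h_i$ times a unit, $\id{p+\id{f},\prod_i h_i(x)+\id{f}}$ also has $i$-th component $\mathfrak m_i$, giving (\ref{prop.princ_code.four}). Likewise $p^{a-1}\prod_i h_i(x)^{t_i-1}$ reduces modulo $f_i$ to $p^{a-1}h_i^{t_i-1}$ times a unit, so its ideal has $i$-th component $soc\!\left(\frac{\R}{\id{f_i(x)}}\right)$; transporting this socle through the isomorphism of (\ref{prop.princ_code.one}) sends the generator $p^{a-1}h_i^{t_i-1}$ to $p^{a-1}h_i^{t_i-1}\hat{f_i}$, yielding both forms in (\ref{prop.princ_code.five}).

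The routine part is the CRT machinery together with the product-ring facts. The main obstacle, and really the only place where care is needed, is the bookkeeping that translates between the intrinsic CRT components and the explicit polynomial generators in the statement: verifying the unit status of $h_i$ (and of the various products of the $h_j$) in each off-diagonal factor, and checking that pushing the socle and the maximal ideals through the isomorphism $\frac{\R}{\id{f_i(x)}}\cong\id{\hat{f_i}+\id{f}}$ reproduces exactly the generators $h_i\hat{f_i}+f_i$ and $p^{a-1}h_i^{t_i-1}\hat{f_i}$, rather than merely ideals equal to them.
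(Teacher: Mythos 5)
Your proposal is correct and follows essentially the same route as the paper: the Chinese Remainder Theorem decomposition into the local factors $\frac{\R}{\id{f_i(x)}}$, the idempotent identification of these factors with $\id{\hat{f_i}(x)+\id{f}}$, and Lemmas \ref{Local.Subambients.Lemma.R.Local} and \ref{Local.Subambients.Lemma.R.Simple.Socle} applied componentwise. Your explicit verification that each stated generator reduces to a unit in the off-diagonal slots (via the coprimality of the $h_i$) is exactly the bookkeeping the paper's proof leaves implicit in ``the results follow.''
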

\begin{proof}
  \noindent(\ref{prop.princ_code.one}) It is not hard to see that
  since the $f_i(x)$ are co-prime,
  $\cap\id{f_i(x)}=\prod\id{f_i(x)}=\id{f(x)}$ (see discussion on pg. 94 in
  \cite{McDonald_Book}). By the Chinese Remainder Theorem,\\
  \[
  \Rf\cong\bigoplus_{i=1}^{s}\frac{\R}{\id{f_i(x)}}.
  \]
  Define $\phi_i:\Rf\rightarrow\frac{\R}{\id{f_i(x)}}$ via
  $\phi_i:a(x)+\id{f}\mapsto a(x)+\id{f_i}$. Since
  $\id{\hat{f_i}(x)+\id{f}}=\{a(x)\hat{f_i}(x)+\id{f}|\deg{a(x)}<\deg{f_i(x)}\}$
  we have that $\id{\hat{f_i}(x)+\id{f}}\cong \frac{\R}{\id{f_i(x)}}$.

  \noindent(\ref{prop.princ_code.three})-(\ref{prop.princ_code.five})
  There exists idempotents
  $\hat{e_i}(x)+\id{f}\in\id{\hat{f_i}(x)+\id{f}}$ for $1\leq i\leq s$
  such that $\id{\hat{e_i}(x)+\id{f}}=\id{\hat{f_i}(x)+\id{f}}$ and
  $1+\id{f}=\sum_{i=1}^s\hat{e_i}(x)+\id{f}$. So,
  \begin{eqnarray*}
    \id{f_j(x)+\id{f}}&=&\id{(f_j(x)+\id{f})\sum_{i=1}^s\hat{e_i}(x)+\id{f}}\\
    &=&\id{(f_j(x)+\id{f})\sum_{i=1,i\neq
    j}^s\hat{e_i}(x)+\id{f}}\\
    &=&\id{\sum_{i=1,i\neq
    j}^s\hat{e_i}(x)+\id{f}}\\
    &=&\id{\sum_{i=1,i\neq
    j}^s\hat{f_i}(x)+\id{f}}.\\
  \end{eqnarray*}
  Using (\ref{prop.princ_code.one}) and Lemmas
  \ref{Local.Subambients.Lemma.R.Local} and
  \ref{Local.Subambients.Lemma.R.Simple.Socle}, the results follow.
\end{proof}
\Details{
\begin{proof}
 This is a rather detailed proof to make sure that Theorem \ref{prop.princ_code}
 is correct.

 (1) This has been proved above.

 (2) Let $\hat{e}_i(x)$\ be as above.
 Define $\sR_i = \frac{GR(p^a,m)[x]}{ \id{f_i(x)} }$.
 We showed that $\sR = \bigoplus_{i=1}^s \id{\hat{f}_i(x) + \id{f} }$\ and
 $\id{\hat{e}_i(x) + \id{f} } = \id{\hat{f}_i(x) + \id{f} } \cong \sR_i$.
 The map
 \be
  \phi_i:  \sR_i & \rightarrow & \id{\hat{e}_i }  \nn \\
   g(x) + \id{f_i} & \mapsto & g(x)\hat{e}_i(x) + \id{f}   \nn
 \ee
 is a ring isomorphism as $\hat{e}_i(x) + \id{f}$\ is idempotent.
 Therefore the map
 \be
  \phi:  \bigoplus_{i=1}^s\sR_i & \rightarrow & \sR  \nn \\
    \phi(g_1(x) + \id{f_i}, \dots , g_s(x) + \id{f_s}) & \mapsto & \sum_{i=1}^s g_i(x)\hat{e}_i(x) + \id{f} \nn
 \ee
 is also a ring isomorphism.
 The maximal ideals of $\sR$\ are $M_1, \dots, M_s$\ and
 \be
  \label{prop.princ_code.Mi}
   \phi^{-1}(M_i) = \sR_1 \oplus \cdots \oplus \sR_{i-1} \oplus m_i \oplus \sR_{i+1} \oplus \cdots \sR_{s}
 \ee
 where $m_i$\ is the maximal ideal of $\sR_i$.
 By Lemma \ref{Local.Subambients.Lemma.R.Local}, we know that $m_i = \id{p + \id{f_i}, h_i(x) + \id{f_i}}$.
  Using $\phi$ and (\ref{prop.princ_code.Mi}), we see that
  $M_i = \id{p\hat{e}_i(x) + \sum_{i \neq j}\hat{e}_j(x) + \id{f} , h_i(x)\hat{e}_i(x) + \sum_{i \neq j}\hat{e}_j(x) + \id{f}} $.
  Since $\hat{e}_i(x)$'s and $\hat{f}_i(x)$'s differ by units, this also implies $M_i= \id{p\hat{f_i}(x)+f_i(x)+\id{f},h_i\hat{f_i}(x)+f_i(x)+\id{f}}$.
  By (\ref{prop.princ_code.Mi}), we get that $p = p\sum_{i=1}^s \hat{e}_i(x), h_i(x) = h_i(x)\sum_{i=1}^s \hat{e}_i(x) \in M_i $.
  So $\id{p,h_i(x)} \subset \id{p\hat{e}_i(x) + \sum_{i \neq j}\hat{e}_j(x) , h_i(x)\hat{e}_i(x) + \sum_{i \neq j}\hat{e}_j(x) }$.
  Since $f_i(x)$\ and $\hat{e}_j(x)$\ are coprime modulo $f$ for every $i \neq j$, we have
  $\id{\hat{e}_j(x) + \id{f} } = \id{ f_i(x)\hat{e}_j(x) + \id{f} }$.
  So $\id{f_i(x) + \id{f} } = \id{f_i(x)\sum_{j=1}^s\hat{e}_j(x) + \id{f} } = \id{\sum_{i\neq j} \hat{e}_j(x) + \id{f} }$.
  Since $f_i(x) = h_i(x)^{t_i} + p \beta_i(x) \in \id{p + \id{f}, h_i(x) + \id{f} }$, we get
  $p\hat{e}_i(x) + \sum_{i \neq j} \hat{e}_j(x) + \id{f}, p\hat{e}_i(x) + \sum_{i \neq j} \hat{e}_j(x) + \id{f} \in \id{p + \id{f},h_i(x) + \id{f}} $.
  Hence
  $$ \id{p + \id{f} ,h_i(x) + \id{f}} =  \id{p\hat{e}_i(x) + \sum_{i \neq j}\hat{e}_j(x) + \id{f} , h_i(x)\hat{e}_i(x) + \sum_{i \neq j}\hat{e}_j(x) + \id{f} } = M_i.  $$

  (3) Since $f_i(x)$\ and $f_j(x)$\ are coprime for every $i \neq j$, the polynomials $h_i(x)$\ and $h_j(x)$\ are also coprime.
  So $ \bigcap_{i=1}^s \id{ h_i(x) + \id{f} } = \id{ \prod_i=1^s h_i(x)  + \id{f}}$\ and the claim  follows.

  (4) It is not hard to see that $h_j(x)^{t_j-1}$\ is a unit in $\sR_i$\ for all $i \neq j$.
  As a result of this, $\prod_{i \neq j}h_{j}(x)^{t_j-1}$\ is a unit in $\sR_j$.
  So
  \be
     \bigoplus_{i=1}^s\id{p^{a-1}h_i(x)^{t_i-1}\hat{f}_i(x)+\id{f}} & = & \bigoplus_{i=1}^s\id{p^{a-1}h_i(x)^{t_i-1}\hat{e}_i(x)+\id{f}}\nn\\
    & = & \bigoplus_{i=1}^s\id{p^{a-1}h_i(x)^{t_i-1} \prod_{i \neq j}h_{j}^{t_j-1}(x)\hat{e}_i(x)+\id{f}}\nn \\
     & = & \id{p^{a-1} \prod_{i = 1}^sh_{i}(x)^{t_i-1}\hat{e}_i(x)+\id{f}}\nn \\
     & = & \id{p^{a-1} \prod_{i = 1}^sh_{i}(x)^{t_i-1}\hat{f_i}(x)+\id{f}}.\nn
  \ee
\end{proof}
}

\begin{theorem}
  \label{theo.pir} The following are equivalent:
  \begin{enumerate}
    \item\label{theo.pir.one} $\Rf$ is not a principal ideal ring.
    \item\label{theo.pir.two} $a>1$ and there exists a factor from a primary co-prime
      factorization of $f(x)$, $g(x)$, where $g(x)=h(x)^t+p\beta(x)$ and
      $h(x)$ is basic irreducible, $t>1$ and $\beta(x)\in\id{p,h(x)}$.
    \item\label{theo.pir.three} $a>1$, $\bar{f}(x)$ is not square free and if $\bar{f'}(x)$
      is the square free part of $\bar{f}(x)$, and we write
      $f(x)=f'(x)\alpha(x)+p\gamma(x)$ then $\bar{\gamma}(x)=0$ or $\bar{\alpha}(x)$ and
      $\bar{\gamma}(x)$ are not co-prime.
  \end{enumerate}
\end{theorem}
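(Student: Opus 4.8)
The plan is to reduce everything to the primary factors of $f(x)$ via the Chinese Remainder decomposition already established in Theorem \ref{prop.princ_code}(\ref{prop.princ_code.one}), namely $\Rf \cong \bigoplus_{i=1}^{s}\frac{\R}{\id{f_i(x)}}$, and then to detect the principal-ideal property factor by factor. Writing $\sR_i=\frac{\R}{\id{f_i(x)}}$, the first step is to record the elementary fact that a finite direct product of rings is a principal ideal ring if and only if each factor is. Indeed, every ideal of a product splits as a product $I_1\times\cdots\times I_s$ of component ideals, and $I_1\times\cdots\times I_s$ is principal precisely when each $I_i$ is principal: if $I_i=\id{a_i}$ then the tuple $(a_1,\dots,a_s)$ generates the product, and conversely each coordinate projection of a principal ideal is principal. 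Consequently $\Rf$ fails to be a principal ideal ring exactly when some $\sR_i$ fails to be one.

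Next I would identify when a single factor $\sR_i$ is a principal ideal ring. By Lemma \ref{Local.Subambients.Lemma.R.Local} each $\sR_i$ is local, so the equivalence of (1) and (2) in Lemma \ref{Algebraic.Beackground.Finite.Chain.Ring.Criterion} shows that $\sR_i$ is a principal ideal ring if and only if it is a chain ring. Applying Theorem \ref{Local.Subambients.Theorem.Chain.Condition} to $f_i(x)=h_i(x)^{t_i}+p\beta_i(x)$, the factor $\sR_i$ is \emph{not} a chain ring exactly when $a>1$, $t_i>1$ and $\beta_i(x)\in\id{p,h_i(x)}$. Combining this with the previous paragraph, $\Rf$ is not a principal ideal ring if and only if $a>1$ and some primary factor $g(x)=f_i(x)$ satisfies $t_i>1$ and $\beta_i(x)\in\id{p,h_i(x)}$; this is exactly condition (\ref{theo.pir.two}), so (\ref{theo.pir.one}) $\Leftrightarrow$ (\ref{theo.pir.two}) is done.

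The heart of the argument is the translation (\ref{theo.pir.two}) $\Leftrightarrow$ (\ref{theo.pir.three}), which recasts the intrinsic condition on the $\beta_i$ into a condition computable from $f(x)$ and its reduction. Since $\bar f(x)=\prod_i h_i(x)^{t_i}$ with the $h_i$ distinct monic irreducibles, $\bar f$ is non-square-free iff some $t_i>1$, its square-free part is $\bar{f'}(x)=\prod_i h_i(x)$, and reducing $f=f'\alpha+p\gamma$ modulo $p$ forces $\bar\alpha(x)=\prod_i h_i(x)^{t_i-1}$, whose radical is $\prod_{i:\,t_i>1}h_i(x)$. Hence $\gcd(\bar\alpha,\bar\gamma)\neq 1$ is equivalent to $h_{i_0}(x)\mid\bar\gamma(x)$ for some $i_0$ with $t_{i_0}>1$. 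The key computation is to take the natural lifts $f'=\prod_i h_i$ and $\alpha=\prod_i h_i^{t_i-1}$, expand $f=\prod_i\bigl(h_i^{t_i}+p\beta_i\bigr)$ modulo $p^2$, and read off
\[
  \bar\gamma(x)\equiv\sum_{i=1}^{s}\bar\beta_i(x)\prod_{j\neq i}h_j(x)^{t_j}\pmod p .
\]
Reducing this modulo a fixed $h_{i_0}$ annihilates every summand with $i\neq i_0$ (each carries the factor $h_{i_0}^{t_{i_0}}$), while the surviving summand is $\bar\beta_{i_0}$ times a unit of $\F_{p^m}[x]/\id{h_{i_0}}$; therefore $h_{i_0}\mid\bar\gamma\iff h_{i_0}\mid\bar\beta_{i_0}\iff \beta_{i_0}\in\id{p,h_{i_0}}$. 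This matches the two sides of (\ref{theo.pir.two}) $\Leftrightarrow$ (\ref{theo.pir.three}).

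The main obstacle I anticipate is checking that condition (\ref{theo.pir.three}) is genuinely well posed, i.e.\ independent of the chosen lifts $f'(x),\alpha(x)$ (and hence $\gamma(x)$). Replacing $f'\mapsto f'+pu$ and $\alpha\mapsto\alpha+pw$ changes $\bar\gamma$ by $\bar u\bar\alpha+\bar{f'}\bar w$, but both $\bar\alpha$ and $\bar{f'}$ are divisible by every $h_{i_0}$ with $t_{i_0}>1$, so $\bar\gamma\bmod h_{i_0}$ --- the only quantity that drives the equivalence above --- is unchanged. The degenerate case $\bar\gamma=0$ must be folded in by hand: because $\bar f$ is non-square-free there is at least one $i_0$ with $t_{i_0}>1$, and $h_{i_0}\mid 0$ trivially, so $\bar\gamma=0$ already forces $\gcd(\bar\alpha,\bar\gamma)\neq 1$ and places the instance correctly on the side where (\ref{theo.pir.two}) holds. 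Once these bookkeeping points are settled the three conditions are seen to be equivalent.
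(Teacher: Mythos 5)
Your proof is correct and follows essentially the same route as the paper: the Chinese Remainder decomposition into primary factors combined with the chain-ring criterion of Theorem \ref{Local.Subambients.Theorem.Chain.Condition} yields (\ref{theo.pir.one})$\Leftrightarrow$(\ref{theo.pir.two}), and (\ref{theo.pir.two})$\Leftrightarrow$(\ref{theo.pir.three}) reduces to whether some $h_{i_0}$ with $t_{i_0}>1$ divides $\bar{\gamma}$. Your explicit mod-$p^2$ formula for $\bar{\gamma}$ and the verification that condition (\ref{theo.pir.three}) does not depend on the chosen lifts $f'(x),\alpha(x)$ are details the paper leaves implicit, but they do not change the underlying argument.
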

\begin{proof}
  (\ref{theo.pir.one})$\iff$(\ref{theo.pir.two}) 
  By Theorem \ref{Theorem.McDonald.Factorization.and.Uniqueness}, 
  there exists a primary coprime decomposition
  of $g(x)$. Then the result follows from Theorems \ref{prop.princ_code} and
   \ref{Local.Subambients.Theorem.Chain.Condition}.

  (\ref{theo.pir.two})$\Rightarrow$(\ref{theo.pir.three}) Since $t>1$,
  $\bar{f}(x)$ is not square free. This also shows $h(x)|\bar{f'}(x)$ and
  $h(x)|\bar{\alpha}(x)$. Since $\beta(x)\in\id{p,h(x)}$, we have
  $\bar{\beta}(x)\in\id{h}$. This implies $h(x)|(g(x) \pmod{p^2})$. Since
  $g(x)|f(x)$, we see $h(x)|\bar{\gamma}(x)$. So, $\bar{\alpha}(x)$ and
  $\bar{\gamma}(x)$ are not co-prime.

  (\ref{theo.pir.three})$\Rightarrow$(\ref{theo.pir.two}) Since
  $\bar{f}(x)$ is not square free and $\bar{\alpha}(x)$ and $\bar{\gamma}(x)$
  are not co-prime there exists a basic irreducible polynomial $h(x)$
  such that $h(x)^t|\bar{f}(x)$ for some $t>1$ and $h(x)|\bar{\gamma}(x)$.
  So there exists a factor $g(x)$ of $f(x)$ such that $g(x)=h(x)^t+p\beta(x)$
  for some $\beta(x)$. Since $h(x)|\bar{\gamma}(x)$, we have that
  $h(x)|\bar{\beta}(x)$. Hence, $\beta(x)\in\id{p,h(x)}$.
\end{proof}

\begin{remark}
The equivalence in Theorem \ref{theo.pir} of (\ref{theo.pir.one})
and (\ref{theo.pir.three}) was presented in
\cite{Salagean_Disc_2006} with an alternative proof.
\end{remark}

\begin{lemma}
\label{lemma.kgen} Let $R$ be a ring with direct sum decomposition
$R=\oplus_{i=1}^{n} R_i$. Assume, for any positive integer $i$, that
$I_i\lhd R_i$ is at most $k$-generated. Then $I\lhd R$ is at most
$k$-generated.
\end{lemma}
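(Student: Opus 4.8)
The plan is to reduce everything to the orthogonal central idempotents $e_1,\dots,e_n$ determined by the decomposition $R=\oplus_{i=1}^n R_i$, where $e_i$ is the identity of the summand $R_i$, so that $e_ie_j=0$ for $i\neq j$, $\sum_{i=1}^n e_i=1$, and $e_i$ restricts to the identity on $R_i$ while annihilating $R_j$ for $j\neq i$. First I would decompose the ideal $I$ accordingly: setting $I_i=e_iI=I\cap R_i$, one checks that $I=\oplus_{i=1}^n I_i$ and that each $I_i$ is an ideal of $R_i$. A small but essential observation is that for $a\in R_i$ we have $Ra=R_ia$ (the remaining summands kill $a$), so the ideal generated by a subset of $R_i$ is the same whether it is computed in $R$ or in $R_i$; hence the hypothesis that $I_i$ is at most $k$-generated as an ideal of $R_i$ means precisely $I_i=\id{a_{i,1},\dots,a_{i,k}}$ for suitable $a_{i,1},\dots,a_{i,k}\in R_i$, where we pad each list with zeros so that every summand uses exactly $k$ elements.

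The main step is then to build $k$ generators for $I$ by amalgamating the components across the summands. I would set $g_j=\sum_{i=1}^n a_{i,j}$ for $1\le j\le k$; each $g_j$ lies in $I$ because every $a_{i,j}\in I_i\subseteq I$. The key identity is $e_ig_j=a_{i,j}$, valid since $e_i$ fixes $a_{i,j}\in R_i$ and annihilates $a_{i',j}$ for $i'\neq i$. As $e_i\in R$ and $g_j\in I$, this exhibits $a_{i,j}\in\id{g_1,\dots,g_k}$ for all $i,j$. Thus each generating set of $I_i$ is contained in $\id{g_1,\dots,g_k}$, whence $I=\sum_{i=1}^n I_i\subseteq\id{g_1,\dots,g_k}$; the reverse inclusion is immediate from $g_j\in I$. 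Therefore $I=\id{g_1,\dots,g_k}$ is at most $k$-generated.

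I do not anticipate a genuine difficulty here; the only point deserving care is the identification of ``at most $k$-generated as an ideal of $R_i$'' with ``at most $k$-generated as an ideal of $R$'' for subsets of $R_i$, which is exactly what the idempotents secure. In the intended application each $R_i$ is a local primary subambient $\R/\id{f_i(x)}$ occurring in the decomposition of Theorem \ref{prop.princ_code}, and the hypothesis on the $I_i$ is supplied by the earlier bound that every ideal of such a ring is at most $\min\{a,t_i\}$-generated, hence at most $a$-generated; taking $k=a$ then yields that every ideal of $\Rf$ is at most $a$-generated.
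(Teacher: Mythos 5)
Your proof is correct and follows essentially the same route as the paper's: decompose $I=\oplus_i I_i$, choose $k$ generators for each component (padding with zeros), and take $g_j=\sum_i a_{i,j}$ as the generators of $I$. The only difference is that you spell out, via the central idempotents, the identity $\id{a_{1,j},\dots,a_{n,j}}=\id{g_j}$ that the paper simply asserts.
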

\begin{proof}
Let $I\lhd R$. Then $I=\oplus_{i=1}^n I_i$ for $I_i\in R_i$. Then
$I_i$ is generated by some $f_{i1},\dots,f_{ik}\in R_i$. Let
$g_j=f_{1j}+\dots+f_{nj}$ for $1\leq j\leq k$. Then
$\id{f_{1j},\dots,f_{nj}}=\id{g_j}$ and hence
$I=\id{g_1,\dots,g_k}$.
\end{proof}

Now we generalize Proposition \ref{prop.ideal} to the case where $f(x)$\
is an arbitrary regular polynomial.

\begin{theorem}
\label{theo.main1} Let $C\lhd\Rf$. Then
\[
  C=\id{p^{j_0}g_0(x),\dots,p^{j_r}g_{r}(x)}
\]
where  ${0\leq r \leq a-1}$ and
\begin{enumerate}
\item $0\leq j_0<\dots<j_{r}\leq a-1$ \label{theo.main1.one}
\item $g_i(x)$ monic for $i=0,\dots,r$,
\item $\deg{f(x)}>\deg{g_0(x)}>\dots>\deg{g_r(x)}$,
\item $p^{j_{i+1}}g_i(x)\in\id{p^{j_{i+1}}g_{i+1}(x),\dots,p^{j_{r}}g_{r}(x)}$
\label{theo.main1.four}
\item $p^{j_0}f(x)\in\id{p^{j_0}g_0(x),\dots,p^{j_r}g_{r}(x)}$ in $GR(p^a,m)[x]$. \label{theo.main1.five}
\end{enumerate}
\end{theorem}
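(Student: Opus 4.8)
The plan is to reduce to the primary case treated in Section~\ref{Local.Subambients} by means of the Chinese Remainder Theorem decomposition of Theorem~\ref{prop.princ_code}, while observing that the torsional machinery of that section carries over almost verbatim once one notices that the residue ring $\bar{\Rf}=\F_{p^m}[x]/\id{\bar f(x)}$ is a principal ideal ring. Indeed, $\bar{\Rf}$ is a quotient of the PID $\F_{p^m}[x]$, so every torsion code $Tor_j(C)\lhd\bar{\Rf}$ is principal, say $Tor_j(C)=\id{\bar G_j(x)}$ for a unique monic divisor $\bar G_j(x)$ of $\bar f(x)$. Equivalently, under $\bar{\Rf}\cong\bigoplus_i \F_{p^m}[x]/\id{h_i(x)^{t_i}}$ one has $\bar G_j(x)=\prod_i h_i(x)^{T_j(C_i)}$, where $C_i$ is the image of $C$ in $\frac{\R}{\id{f_i(x)}}$; this is legitimate because $\prod_{\ell\neq i}h_\ell(x)^{T_j(C_\ell)}$ is a unit modulo $h_i(x)^{t_i}$. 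The nesting $Tor_0(C)\subseteq Tor_1(C)\subseteq\cdots$ forces $\bar G_{j+1}(x)\mid\bar G_j(x)$, so the degrees $\deg\bar G_j(x)$ form a non-increasing sequence, playing exactly the role that $T_i\deg h(x)$ played in Section~\ref{Local.Subambients}.

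First I would prove the analogue of Theorem~\ref{Local.Subambients.Theorem.Generating.Set.Existence}: choosing, for each $j$, a lift $g_j(x)\in\R$ with $p^jg_j(x)\in C$ and $\mu(g_j(x))=\bar G_j(x)$, the code satisfies $C=\id{g_0(x),pg_1(x),\dots,p^{a-1}g_{a-1}(x)}$. This follows from the same successive-reduction argument used there: given $g(x)\in C$, since $\mu(g(x))\in Tor_0(C)=\id{\bar G_0(x)}$ I can subtract a multiple of $g_0(x)$ to land in $\id{p}\cap C$, divide by $p$, and repeat against $\bar G_1(x),\dots$; after $a$ steps the process terminates because $p^a=0$. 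The only input needed is the principality of $Tor_j(C)$ just secured, so no appeal to the (now absent) chain-ring hypothesis is required.

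Next I would normalize the generators to the form demanded by the theorem, mimicking the reduction in the proof of Theorem~\ref{Local.Subambients.Theorem.Unique.Generating.Set} and the redundancy-removal of Corollary~\ref{prop.ideal}. By subtracting appropriate multiples of the higher generators $p^{j'}g_{j'}(x)$ with $j'>j$, I can arrange that $g_j(x)-\tilde G_j(x)$ has degree strictly below $\deg\bar G_j(x)$, where $\tilde G_j(x)$ is the monic Teichm\"uller lift of $\bar G_j(x)$; this makes each $g_j(x)$ monic of degree $\deg\bar G_j(x)<\deg f(x)$, giving condition (2) and the bound in (3). Wherever two consecutive torsion codes coincide, i.e. $\bar G_j(x)=\bar G_{j+1}(x)$, the generator $p^{j+1}g_{j+1}(x)$ is redundant and may be discarded exactly as in Corollary~\ref{prop.ideal}; relabelling the survivors as $p^{j_0}g_0(x),\dots,p^{j_r}g_r(x)$ yields (1), the strict inequalities in (3), and the containment (4), which is precisely the redundancy relation (the overall bound $r\le a-1$ being also guaranteed by Lemma~\ref{lemma.kgen}).

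The main obstacle is condition (5), namely $p^{j_0}f(x)\in\id{p^{j_0}g_0(x),\dots,p^{j_r}g_r(x)}$ computed in $\R$ rather than in $\Rf$. I would prove it by the divisibility-and-degree argument at the end of the proof of Corollary~\ref{prop.ideal}: starting from $g_{j_0}(x)=p^{j_0}f(x)-q(x)p^{j_0}g_0(x)$ for a suitable $q(x)$, one repeatedly strips off the leading $p$-adic layer using the generator of the next-higher $p$-order, producing an element of $\id{p^{j_r}}\cap C$ of degree below $\deg g_r(x)$, which must vanish by the definition of the last torsional degree. The delicate point, and where the general case genuinely diverges from Section~\ref{Local.Subambients}, is that the canonical form~(\ref{Local.Subambients.Equation.gx.in.terms.of.hx}) built from powers of a single irreducible $h(x)$ must be replaced by a $p$-adic expansion whose mod-$p$ layers are honest polynomial multiples of the monic divisors $\bar G_j(x)$; the degree comparisons that previously pitted powers of $h(x)$ against one another must now be read off from divisibility among the $\bar G_j(x)$, and verifying that these still force the required contradiction is the crux of the argument.
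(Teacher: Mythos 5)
Your proposal is correct in outline, but it executes the argument along a genuinely different route from the paper's. The paper's proof of Theorem \ref{theo.main1} is a one-line reduction: decompose $\Rf$ into its local primary components via Theorem \ref{prop.princ_code}, invoke Corollary \ref{prop.ideal} in each component $\frac{\R}{\id{f_i(x)}}$, and recombine the componentwise standard-form generators into generators of $C$ using Lemma \ref{lemma.kgen} (summing the $j$-th generators across components). You instead run the torsion-code machinery \emph{globally} in $\Rf$: the only structural input you need is that $\bar{\sR}=\F_{p^m}[x]/\id{\bar f(x)}$ is a principal ideal ring, so that $Tor_j(C)=\id{\bar G_j(x)}$ for monic divisors $\bar G_j(x)\mid\bar f(x)$ with $\bar G_{j+1}(x)\mid\bar G_j(x)$, and these play the role of $h(x)^{T_j}$ throughout Section \ref{Local.Subambients}. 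This trades brevity for explicitness in a useful way: the paper's recombination step leaves implicit why the summed generators $\sum_i \hat e_i(x)g_j^{(i)}(x)$ can be normalized to be monic with strictly decreasing degrees and why property (\ref{theo.main1.five}) survives the passage from the components to $\Rf$, whereas your layer-by-layer reduction (reducing the $p^{\ell}$-layer of $g_j$ modulo $\bar G_{j+\ell}$, discarding $g_{j+1}$ when $\bar G_{j+1}=\bar G_j$, and dividing $p^{j_0}f(x)$ by the monic $g_0(x)$ and annihilating the remainder by successive subtraction against the $p^{j_i}g_{j_i}(x)$) produces conditions (2), (3) and (5) directly. The step you flag as the crux does go through: after dividing $f(x)$ by $g_0(x)$ the remainder $\rho_0(x)$ satisfies $p^{j_0}\rho_0(x)\in C$ with $\deg\rho_0(x)<\deg\bar G_{j_0}(x)$, and writing $p^{j_0}\rho_0(x)=p^{\ell}\sigma(x)$ with $p\nmid\sigma(x)$ one has $\mu(\sigma)\in Tor_\ell(C)=\id{\bar G_{j_i}}$ for the appropriate $i$, so subtracting $p^{\ell-j_i}u(x)\cdot p^{j_i}g_{j_i}(x)$ strictly increases $\ell$ while keeping the degree below $\deg g_0(x)$; after at most $a$ steps the element vanishes, giving (\ref{theo.main1.five}) as a direct membership rather than by contradiction. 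The one point worth making explicit in a write-up is the identification $Tor_\ell(C)=Tor_{j_i}(C)$ for $j_i\le \ell<j_{i+1}$, which in your construction holds because the discarded indices are exactly those where consecutive torsion codes coincide.
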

\begin{proof}
Follows from Proposition \ref{prop.ideal}, Theorem
\ref{prop.princ_code} and Lemma \ref{lemma.kgen}.
\end{proof}


%

The structure of the ambient space of cyclic codes over finite chain
rings was studied in \cite{Salagean_IEEE}, \cite{Salagean_Bulletin},
\cite{Salagean_AAECC} and \cite{Salagean_Disc_2006}. For any ideal
of the ambient space, the authors of those papers came up with a
special generating set called \textit{strong Groebner basis (SGB)}.
They showed that SGB can be used to determine the Hamming distance
of the corresponding code. It is easy to see that their results also
hold for the ideals of $\Rf$. 
So we have the following result.

\begin{theorem}
  \label{Groebner.Corollary.Hamming.Distance.Using.SGB}
  Let \Code\
  where  $C = \id{p^{j_0}g_{j_0}(x),\dots,p^{j_r}g_{j_r}(x) }$\ is as in Theorem
\ref{theo.main1}.
    Then $d_H(C) = d_H(\id{ p^{a-1}g_{j_r}(x) }) = d_H( \overline{ \id{g_{j_r}(x)} })$.
\end{theorem}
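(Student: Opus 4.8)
The plan is to factor the claim through the top torsion code $Tor_{a-1}(C)$ and to exploit the weight-preserving isomorphism of Lemma \ref{Preliminaries.Lemma.Hamming.Weight.Projection}. I would reduce the theorem to two assertions: (A) $d_H(C)=d_H(Tor_{a-1}(C))$ for an arbitrary nonzero $C\lhd\sR$, and (B) $Tor_{a-1}(C)=\overline{\id{g_{j_r}(x)}}$. The third equality in the statement is then free: the $\sR$-module isomorphism $\Phi\colon\bar\sR\to p^{a-1}\sR$, $\bar s\mapsto p^{a-1}s$, of Lemma \ref{Preliminaries.Lemma.Hamming.Weight.Projection} preserves Hamming weight and carries $\overline{\id{g_{j_r}(x)}}$ onto $\id{p^{a-1}g_{j_r}(x)}$, so those two codes are weight-isometric and $d_H(\id{p^{a-1}g_{j_r}(x)})=d_H(\overline{\id{g_{j_r}(x)}})$.

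For (A) the key structural input is $\mathrm{Ann}_\sR(p)=p^{a-1}\sR$. I would prove this by lifting to $\R$: if $pc\in\id f$ then $pc=fq$, and reducing modulo $p$ gives $\bar f\,\bar q=0$ in $\F_{p^m}[x]$; since $\bar f\neq0$ and $\F_{p^m}[x]$ is a domain, $\bar q=0$, so $q=pq_1$ and $p(c-fq_1)=0$ in $\R$, forcing $c\equiv p^{a-1}(\cdots)$ modulo $\id f$. Granting this, both bounds follow from two elementary facts: multiplication by $p$ cannot increase Hamming weight (on degree-$<\deg f$ representatives it acts coefficientwise), and $\Phi$ is a weight isometry. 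For $d_H(C)\le d_H(Tor_{a-1}(C))$, lift a minimal-weight word $\bar v\in Tor_{a-1}(C)$ to $p^{a-1}v\in C$, so that $d_H(C)\le w_H(p^{a-1}v)=w_H(\bar v)=d_H(Tor_{a-1}(C))$. For the reverse, take $0\neq c\in C$ and let $k$ be largest with $p^kc\neq0$; then $p\cdot(p^kc)=0$ puts $p^kc=p^{a-1}w\in p^{a-1}\sR\cap C$ with $\bar w\neq0$ (by injectivity of $\Phi$) and $\bar w\in Tor_{a-1}(C)$, whence $w_H(c)\ge w_H(p^kc)=w_H(\bar w)\ge d_H(Tor_{a-1}(C))$.

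For (B), the inclusion $\overline{\id{g_{j_r}(x)}}\subseteq Tor_{a-1}(C)$ is immediate, since $p^{a-1}g_{j_r}(x)=p^{\,a-1-j_r}\,p^{j_r}g_{j_r}(x)\in C$ gives $\bar g_{j_r}(x)\in Tor_{a-1}(C)$. The reverse inclusion is the main obstacle, and it is exactly where the standard-form hypotheses of Theorem \ref{theo.main1} enter: I must show every element of $p^{a-1}\sR\cap C$ is a multiple of $p^{a-1}g_{j_r}(x)$. Because $p^{j_r}g_{j_r}(x)$ is the generator of highest $p$-level and lowest degree, properties \ref{theo.main1}(\ref{theo.main1.four}) and \ref{theo.main1}(\ref{theo.main1.five}) say precisely that reduction of any codeword against the basis, tracking leading terms level by level, terminates at a multiple of $p^{a-1}g_{j_r}(x)$; this is the S\u{a}l\u{a}gean--Norton reduction argument of \cite{Salagean_Disc_2006,Salagean_FFA}, which uses only the strong Groebner basis structure already established.

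To make (B) airtight without re-deriving the reduction theory, I would descend to the primary case of Section \ref{Local.Subambients}. When $f(x)$ is regular primary, Theorem \ref{theorem.gen.set.std.form.gives.tor.degrees} gives $T_{a-1}(C)=k_{j_r}$, and since $g_{j_r}(x)=h(x)^{k_{j_r}}+p\beta_{j_r}(x)$ has $\bar g_{j_r}(x)=h(x)^{k_{j_r}}$, Lemma \ref{Local.Subambients.Lemma.Torsion.Ideal} yields $Tor_{a-1}(C)=\id{h(x)^{T_{a-1}(C)}}=\overline{\id{g_{j_r}(x)}}$. The general case then follows from the Chinese Remainder decomposition $\sR\cong\bigoplus_i \R/\id{f_i(x)}$ of Theorem \ref{prop.princ_code}, under which both $Tor_{a-1}(C)$ and $\overline{\id{g_{j_r}(x)}}$ decompose componentwise, reducing the identity to the primary components. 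The one point requiring care is that the standard-form generators in Theorem \ref{theo.main1} are assembled from componentwise generators via Lemma \ref{lemma.kgen}, so I would need to check that the smallest-degree generator $g_{j_r}(x)$ restricts, in each primary component, to the corresponding smallest-degree primary generator; this compatibility is the only nontrivial bookkeeping in the argument.
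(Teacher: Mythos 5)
Your proposal is correct and follows essentially the same route as the paper: pass to a minimal-weight codeword, multiply by the largest power of $p$ that does not annihilate it so as to land in $C\cap p^{a-1}\sR$, identify that intersection with $\id{p^{a-1}g_{j_r}(x)}$, and transfer weights through the isomorphism of Lemma \ref{Preliminaries.Lemma.Hamming.Weight.Projection}. The only difference is that you supply justifications for steps the paper simply asserts --- the computation of the annihilator of $p$ and the identity $C\cap p^{a-1}\sR=\id{p^{a-1}g_{j_r}(x)}$ via torsional degrees of the primary components --- and the componentwise compatibility you flag as remaining bookkeeping is exactly what conditions (\ref{theo.main1.four})--(\ref{theo.main1.five}) of Theorem \ref{theo.main1} are there to guarantee.
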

\begin{proof}
  For $v(x)\in C$, if $p^kv(x)\neq 0$ then $w_H(v(x)) \ge w_H(p^kv(x))$. Let
  $c(x)\in C$ such that $d_H(I)=w_H(c(x))$. Let $\ell$ be the largest
  integer such that $p^\ell c(x)\neq 0$. Hence, $p^\ell c(x)\in
  C\bigcap\id{p^{a-1}}=\id{p^{a-1}g_{j_r}}$.
  Also $w_H(c(x)) = w_H( p^kc(x) ) $\ by the minimality of $c(x)$.
  Hence,
  $d_H( \id{ p^{a-1}g_{j_r}(x)} )= w_H(pc(x)) = d_H(C)$.
  The equality
  $d_H(\id{ p^{a-1}g_{j_r}(x) }) = d_H(\overline{ \id{g_{j_r}(x)} } )$
  follows from Lemma \ref{Preliminaries.Lemma.Hamming.Weight.Projection}.
\end{proof}
%
%
%


\section{On the Hamming weight of $(x^n + \gamma)^N$}
\label{Preliminaries.For.Computations}
We develop some tools, that we use in
Section \ref{Section.Irreducible.Case.Finite.Fields} and
Section \ref{Section.Reducible.Case.Finite.Fields},
to compute the Hamming distance of some constacyclic codes over finite  fields.

We begin by partitioning the set $\{ 1,2,\dots,p^s-1 \}$\ into three subsets.
These subsets arise naturally from the technicalities of our computations as described in
Section \ref{Section.Irreducible.Case.Finite.Fields} and
Section \ref{Section.Reducible.Case.Finite.Fields}.
If $i$\ is an integer satisfying $1 \le i \le (p-1)p^{s-1}$, then there exists a uniquely determined
integer $\beta$\ such that $0 \le \beta \le p-2$\ and
\be\nn
    \beta p^{s-1}+1 \le i \le (\beta + 1)p^{s-1}.
\ee
Moreover since
\be
    p^s-p^{s-1} < p^s-p^{s-2}< \cdots < p^s-p^{s-s}=p^s-1,
\nn\ee
for an integer $i$ satisfying $(p-1)p^{s-1}+1 = p^s-p^{s-1}+1 \le i \le p^s-1$, there exists a uniquely determined
integer $k$ such that $1 \le k \le s-1$ and
\be \label{condition.on.k}
    p^s-p^{s-k}+1 \le i \le p^s-p^{s-k-1}.
\ee
Besides if $i$ is an integer as above and $k$ is the integer satisfying $1 \le k \le s-1$ and (\ref{condition.on.k}), then we have
\be
    && p^s-p^{s-k} < p^s-p^{s-k}+p^{s-k-1} < p^s-p^{s-k}+2p^{s-k-1} < \cdots \nn \\
    && < p^s-p^{s-k}+(p-1)p^{s-k-1}
\nn\ee
and $p^s-p^{s-k}+(p-1)p^{s-k-1}=p^s-p^{s-k-1}$. So for such integers $i$ and $k$, there exists a uniquely determined integer $\tau$ with $1 \le \tau \le p-1$ such that
\be
    p^s-p^{s-k} + (\tau -1) p^{s-k-1} + 1 \le i \le  p^s - p^{s-k} + \tau p^{s-k-1}.
\nn\ee
Thus
\be\label{Preliminaries.Partition}
\begin{array}{rl}
        &\{1,2,\dots,p^{s-1}\} \sqcup \displaystyle\bigsqcup_{\beta = 1}^{p-2}\{i: \quad \beta p^{s-1} + 1 \le i \le (\beta + 1)p^{s-1} \}\\
        &\sqcup \displaystyle\bigsqcup_{k=1}^{s-1}\displaystyle\bigsqcup_{\tau = 1}^{p - 1}\{i:\quad  p^s-p^{s-k} + (\tau -1) p^{s-k-1} + 1 \le i \le  p^s - p^{s-k} + \tau p^{s-k-1} \}
    \end{array}
\ee
gives us a partition of the set $\{1,2,\dots , p^{s}-1\}$.

Throughout this section $q$\ denotes a power of $p$.
Let $N$\ be a positive integer and $\myConstant \in \F_{q} \setminus \{ 0 \}$.
Our computations in Section \ref{Section.Irreducible.Case.Finite.Fields} and
Section \ref{Section.Reducible.Case.Finite.Fields} are
based on expressing the Hamming weight of an arbitrary nonzero codeword in terms of
$w_H ((x^\newn + \myConstant)^N)$. In \cite{CMSS2}, the Hamming weight of the polynomial
$(x^\newn + \myConstant)^N$\ is given as described below.
Let $e,\newn,N$\ and $0\le b_0,b_1,\dots,b_{e-1} \le p-1$\
be positive integers such that $N < p^e$\ and let $\myConstant \in \F_{q} \setminus \{ 0 \}$.
Let $N = b_{e-1}p^{e-1} + \cdots + b_1p + b_0$, $ 0 \le b_i < p$,
be the p-adic expansion of $N$. Then, by \cite[Lemma 1]{CMSS2}, we have
\be\label{Equality.weight.x+c.N}
    w_H( (x + \myConstant)^N ) = \prod_{d=0}^{e-1}(b_d + 1).
\ee
As suggested in \cite{CMSS2}, identifying $x$\ with $x^\newn$ in (\ref{Equality.weight.x+c.N}), we obtain
\be\label{Equality.weight.xn+c.N}
    w_H( (x^\newn + \myConstant)^N ) = \prod_{d=0}^{e-1}(b_d + 1).
\ee

The following two lemmas are consequences of (\ref{Equality.weight.xn+c.N}) and we will
use them in our computations frequently.

\begin{lemma}\label{Preliminaries.Lemma.weight.beta}
    Let $m,\newn, 1 \le \beta \le p-2$\ be positive integers and $\myConstant \in \F_{q} \setminus \{0\}$.
    If $m < p^s - \beta p^{s-1} -1$, then $w_H( (x^\newn + \myConstant)^{m + \beta p^{s-1} + 1} ) \ge \beta + 2$.
\end{lemma}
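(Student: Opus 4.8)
The goal is to prove Lemma~\ref{Preliminaries.Lemma.weight.beta}: under the hypothesis $m < p^s - \beta p^{s-1} - 1$ with $1 \le \beta \le p-2$, we have $w_H\big((x^{\newn}+\myConstant)^{m+\beta p^{s-1}+1}\big) \ge \beta+2$. The natural approach is to apply the weight formula~(\ref{Equality.weight.xn+c.N}), which tells us that if $N = m+\beta p^{s-1}+1$ has $p$-adic expansion $N = \sum_d b_d p^d$, then $w_H((x^{\newn}+\myConstant)^N) = \prod_d (b_d+1)$. So the entire problem reduces to extracting enough information about the $p$-adic digits of $N = m + \beta p^{s-1} + 1$ to force the product $\prod_d(b_d+1)$ to be at least $\beta+2$.

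\textbf{Analyzing the $p$-adic digit in position $s-1$.}
The key step is to understand the digit $b_{s-1}$ of $N$ in position $p^{s-1}$. The hypothesis $m < p^s - \beta p^{s-1} - 1$ rearranges to $m + \beta p^{s-1} + 1 < p^s$, i.e. $N < p^s$, so $N$ has no digits in positions $\ge s$ and $b_{s-1}$ is its top potential digit. First I would handle the extreme case $\beta = p-2$ separately if needed, but more cleanly: I want to show the digit $b_{s-1} \ge \beta$ (or that $b_{s-1}+1$ together with some other factor suffices). Write $m+1 = c_{s-1}p^{s-1} + \rho$ with $0 \le \rho < p^{s-1}$ the remainder of $m+1$ modulo $p^{s-1}$ and $c_{s-1} = \lfloor (m+1)/p^{s-1}\rfloor$. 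Then $N = (\beta + c_{s-1})p^{s-1} + \rho$. The hypothesis $m+1 < p^s - \beta p^{s-1} = (p-\beta)p^{s-1}$ gives $c_{s-1} \le p - \beta - 1$, so $\beta + c_{s-1} \le p-1$, meaning there is no carry out of position $s-1$ and the digit there is exactly $b_{s-1} = \beta + c_{s-1}$, while the lower digits of $N$ are just the digits of $\rho = $ (the part of $m+1$ below $p^{s-1}$).

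\textbf{Concluding via the product formula.}
From $b_{s-1} = \beta + c_{s-1} \ge \beta$ we immediately get the factor $(b_{s-1}+1) \ge \beta+1$. To reach $\beta + 2$ I need one more unit somewhere. The cleanest route is to observe that $N = m+\beta p^{s-1}+1 \ge \beta p^{s-1} + 1 > \beta p^{s-1}$, and since $m+1 \ge 1$ the remainder part $\rho$ is not forced to vanish; concretely, because $N \equiv m+1 \pmod{p^{s-1}}$ and also $N > \beta p^{s-1}$, I split into cases: if $c_{s-1} \ge 1$ then $b_{s-1} \ge \beta+1$ already gives $(b_{s-1}+1)\ge \beta+2$; otherwise $c_{s-1}=0$, so $b_{s-1}=\beta$ and $N = \beta p^{s-1} + (m+1)$ with $1 \le m+1 < p^{s-1}$, whence $m+1$ contributes at least one nonzero low-order digit $b_d$ with $d < s-1$, giving a factor $(b_d+1) \ge 2$, and thus $\prod(b_d+1) \ge (\beta+1)\cdot 2 \ge \beta+2$ (as $\beta\ge 1$). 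Either way the product is $\ge \beta+2$.

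\textbf{Anticipated obstacle.}
The only delicate point is the bookkeeping of the no-carry argument and the case split on whether $m+1 < p^{s-1}$ or $m+1 \ge p^{s-1}$ (equivalently $c_{s-1}=0$ versus $c_{s-1}\ge 1$); one must be careful that the hypothesis $1 \le \beta \le p-2$ (so that $p - \beta - 1 \ge 1$ leaves room) is exactly what prevents a carry from pushing $b_{s-1}$ out of range and simultaneously guarantees $\beta + 1 \le p-1$ so the digit is legitimately $\beta$ or larger. I expect this carry-control step to be the crux; once the digits are pinned down, the weight formula~(\ref{Equality.weight.xn+c.N}) finishes the argument mechanically.
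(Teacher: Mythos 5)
Your proof is correct and follows essentially the same route as the paper's: both reduce to the digit-product formula (\ref{Equality.weight.xn+c.N}) and show that the digit of $N = m+\beta p^{s-1}+1$ in position $s-1$ is at least $\beta$, with either that digit being at least $\beta+1$ or a second nonzero digit supplying the extra factor of $2$. Your bookkeeping via $m+1 = c_{s-1}p^{s-1}+\rho$ is a marginally tidier way to control the carry than the paper's case split on whether all lower digits of $m$ equal $p-1$, but the substance is identical.
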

\begin{proof}
    Since
    \be\nn
        m < p^s - \beta p^{s-1} -1 = (p - \beta -1)p^{s-1} + (p-1)p^{s-2} + \cdots + (p-1)p + p-1,
    \ee
    either
    \be\nn
        m & = & Lp^{s-1} + (p-1)p^{s-2} + \cdots + (p-1)p + p-1\quad \mbox{or}\\
        m & = & a_{s-1}p^{s-1} + \cdots + a_1p + a_0\nn
    \ee
    holds, where $0 \le L \le p- \beta - 2$, $0 \le a_0,a_1,\dots,a_{s-2} \le p-1$\
    and $0 \le a_{s-1} \le p - \beta -1$\ are integers such that $a_{\ell} < p-1$\ for some $0 \le \ell < s-1$.
    According to the p-adic expansion of $m$,
    we consider the following two cases.

    First, we assume that $m = Lp^{s-1} + (p-1)p^{s-2} + \cdots + (p-1)p + p-1$.
    Then $m + \beta p^{s-1} + 1 = (L + \beta +1)p^{s-1}$. So using (\ref{Equality.weight.xn+c.N}), we get
    $w_H( (x^\newn + \myConstant)^{m + \beta p^{s-1} + 1} ) = L + \beta + 2 \ge \beta + 2 $.

    Second, we assume that $m = a_{s-1}p^{s-1} + \cdots + a_1p + a_0$. Then the p-adic expansion of
    $m + \beta p^{s-1} + 1$\ is of the form $ m + \beta p^{s-1} + 1 = b_{s-1}p^{s-1} + \cdots + b_1p + b_0$
    where $0 \le b_0,b_1,\dots,b_{s-2} \le p-1$\ and
    \be\label{Preliminaries.Lemma.weight.beta.bs1}
        b_{s-1} = a_{s-1} + \beta.
    \ee
    Let $k$\ be the least nonnegative integer with $a_k < p-1$. Then it follows that
    \be\label{Preliminaries.Lemma.weight.beta.bk}
        0 < b_k \le p-1.
    \ee
    So, using (\ref{Equality.weight.xn+c.N}), (\ref{Preliminaries.Lemma.weight.beta.bs1}) and
    (\ref{Preliminaries.Lemma.weight.beta.bk}), we get
    \be\nn
        w_H( (x^\newn + \myConstant)^{m + \beta p^{s-1} + 1} )\ge (\beta + a_{s-1} + 1)(b_k + 1)
                                \ge (\beta + 1)2
                                > \beta + 2.
    \ee
\end{proof}

\begin{lemma}\label{Preliminaries.Lemma.weight.tau.k}
    Let $m,\newn, 1 \le \tau \le p-1,1\le k \le s-1 $\ be positive integers and
    $\myConstant \in \F_{q} \setminus \{0\} $. If $m < p^{s-k} - (\tau - 1)p^{s-k-1} - 1$, then
    $w_H( (x^{2\newn} + \myConstant)^{ m + p^s -p^{s-k} + (\tau -1)p^{s-k-1} + 1} ) \ge (\tau + 1)p^k$.
\end{lemma}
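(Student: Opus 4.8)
The plan is to reduce the weight computation to the digit formula (\ref{Equality.weight.x+c.N})--(\ref{Equality.weight.xn+c.N}). Since substituting the monomial $x^{2\newn}$ for $x$ sends distinct powers to distinct powers, it does not change the number of nonzero terms, so $w_H((x^{2\newn}+\myConstant)^N) = \prod_{d=0}^{e-1}(b_d+1)$ whenever $N = \sum_{d=0}^{e-1} b_d p^d$ is the $p$-adic expansion of $N$ with $N < p^e$. Thus everything comes down to reading off the $p$-adic digits of the exponent $N := m + p^s - p^{s-k} + (\tau-1)p^{s-k-1} + 1$ and bounding the associated product from below.

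First I would record the digit pattern of the fixed part. Writing $p^s - p^{s-k} = \sum_{j=s-k}^{s-1}(p-1)p^j$, this contributes the digit $p-1$ in each of the $k$ positions $s-k,\dots,s-1$ and zero in all lower positions. Set $L := (m+1) + (\tau-1)p^{s-k-1}$. The hypothesis $m < p^{s-k} - (\tau-1)p^{s-k-1} - 1$ is exactly the statement $L < p^{s-k}$, so $L$ has digits only in positions $0,\dots,s-k-1$ and no carry propagates into the block $s-k,\dots,s-1$. Consequently $N = L + (p^s - p^{s-k}) < p^s$, the top $k$ digits of $N$ are all $p-1$, and these contribute a factor $\prod_{d=s-k}^{s-1}(b_d+1) = p^k$. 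It remains to show that the lower block contributes a factor at least $\tau + 1$.

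For that I would analyze the digit of $L$ at position $s-k-1$. Dividing, write $m+1 = q\,p^{s-k-1} + r$ with $0 \le r < p^{s-k-1}$, so that $L = (\tau - 1 + q)p^{s-k-1} + r$; since $L < p^{s-k}$ the quantity $\tau-1+q$ is a legitimate digit (i.e. $\le p-1$), and the digits below position $s-k-1$ are those of $r$. If $q \ge 1$, then already $(b_{s-k-1}+1) = \tau + q \ge \tau + 1$, and the remaining factors are $\ge 1$. If $q = 0$, then $r = m+1 \ge 2$ (as $m$ is a positive integer), so $r$ has a nonzero digit in positions $0,\dots,s-k-2$ contributing a factor $\ge 2$, while $(b_{s-k-1}+1) = \tau$; hence the lower block contributes at least $2\tau \ge \tau + 1$. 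In either case the lower block gives $\ge \tau+1$, and multiplying by $p^k$ yields $w_H((x^{2\newn}+\myConstant)^N) \ge (\tau+1)p^k$.

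The argument is essentially bookkeeping, so the only real obstacle is getting the carry analysis and the boundary cases exactly right: one must verify that the hypothesis precisely rules out a carry out of position $s-k-1$, and that the degenerate case $k = s-1$ (where the block of positions $0,\dots,s-k-2$ is empty) is covered --- there $r < p^0 = 1$ forces $r = 0$ and $q = m+1 \ge 1$, so it falls under the first case. Once these edge checks are in place the lower bound follows directly from (\ref{Equality.weight.xn+c.N}).
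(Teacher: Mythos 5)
Your proof is correct and takes essentially the same route as the paper's: both reduce to the digit-product formula (\ref{Equality.weight.xn+c.N}) and bound the weight by a carry analysis of the $p$-adic digits of the exponent, showing the top $k$ digits are all $p-1$ (factor $p^k$) and the remaining digits contribute at least $\tau+1$. Your case split via $m+1=qp^{s-k-1}+r$ is just a cleaner repackaging of the paper's two cases (digit at position $s-k-1$ already $\ge\tau$, versus digit $\tau-1$ plus a nonzero lower digit giving $2\tau$), and your edge-case check for $k=s-1$ is sound.
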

\begin{proof}
    Since
    \be
        m & < & p^{s-k} - (\tau - 1)p^{s-k-1} -1 \nn\\
        & = & (p - \tau +1)p^{s - k - 1} - 1\nn\\
        & = & (p - \tau)p^{s-k-1} + (p-1)p^{s-k-2}+ \cdots + (p-1)p + p-1,\nn
    \ee
    either
    \be
        m & = & Lp^{s-k-1} + (p-1)p^{s-k-2} + \cdots + (p-1)p + p-1\quad \mbox{or}\nn\\
        m & = & a_{s-k-1}p^{s-k-1} + \cdots + a_1p + a_0\nn
    \ee
    holds, where $0 \le L \le p - \tau -1$, $0 \le a_0,a_1,\dots,a_{s-k-2}\le p-1$\ and
    $ 0 \le a_{s-k-1} \le p - \tau$\ are some integers such that $0 \le a_{\ell} < p-1$\
    for some $0 \le \ell < s-k-1$. According to the p-adic expansion of $m$,
    we consider the following two cases.

    First, we assume that $m = Lp^{s-k-1} + (p-1)p^{s-k-2} + \cdots + (p-1)p + p-1$.
    Then the p-adic expansion of $m + p^s - p^{s-k} + (\tau -1)p^{s-k-1} + 1$\ is of the form
    \be\nn
        m + p^s - p^{s-k} + (\tau -1)p^{s-k-1} + 1 = (p-1)p^{s-1} + \cdots + (p-1)p^{s-k} + (L + \tau)p^{s-k-1}.
    \ee
    So, using (\ref{Equality.weight.xn+c.N}), we get
    $ w_H( (x^\newn + \myConstant)^{m+ p^s - p^{s-k} + (\tau -1)p^{s-k-1}+1} ) \ge (\tau +1)p^k $.

    Second, we assume that $m= a_{s-k-1}p^{s-k-1} + \cdots + a_1p + a_0$.
    Then the p-adic expansion of $m + p^s - p^{s-k} + (\tau -1)p^{s-k-1} + 1$\ is of the form
    \be\nn
        m + p^s - p^{s-k} + (\tau -1)p^{s-k-1} + 1 & = &
            (p-1)p^{s-1} + \cdots + (p-1)p^{s-k}\\
            & & + b_{s-k-1}p^{s-k-1}+ \cdots + b_1p + b_0\nn
    \ee
    where $0 \le b_0, b_1,\dots ,b_{s-k-1}\le p-1 $\ are integers.
    It is easy to see that
    \be\label{Preliminaries.Lemma.weight.tau.k.bsk1}
        b_{s-k-1} = a_{s-k-1}  + \tau -1.
    \ee
    Let $\ell_{0}$\ be the least nonnegative  integer with $0 \le a_{\ell_{0}} < p-1$. Then
    \be\label{Preliminaries.Lemma.weight.tau.k.bell0}
        0 < b_{\ell_0} \le  p-1.
    \ee
    Using (\ref{Preliminaries.Lemma.weight.tau.k.bsk1}), (\ref{Preliminaries.Lemma.weight.tau.k.bell0})
    and (\ref{Equality.weight.xn+c.N}), we get
    \be
        w_H( (x^\newn + \myConstant)^{m + p^s - p^{s-k} (\tau - 1)p^{s-k-1} + 1 } )
             & \ge &  p^k(b_{s-k-1} + 1)(b_{\ell_0} +1) \nn \\
            & \ge & 2 \tau p^k\nn\\
            & \ge & (\tau + 1)p^k.\nn
    \ee
\end{proof}

In \cite{CMSS2}, the authors have shown that the polynomial $(x^\newn + \myConstant)^N$\
has the so-called \textit{``weight retaining property''} (see \cite[Theorem 1.1]{CMSS2}).
As a result of this, they gave a lower bound for the Hamming weight of the polynomial
$g(x)(x^\newn + \myConstant)^N$\ where $g(x)$\ is any element of $\F_{q}[x]$.
Let $\newn,N,\myConstant $\ and $g(x)$\ be as above. Then, by  \cite[Theorem 1.3 and Theorem 6.3]{CMSS2}, the Hamming weight of $g(x)(x^\newn + \myConstant)^N$\
satisfies
\be\label{Inequality.Lower.Bound.g(x).xn+c.N}
    w_H(g(x)(x^\newn + \myConstant )^N) \ge w_H( g(x)\mod x^\newn + \myConstant )\cdot w_H((x^\newn + \myConstant )^N).
\ee

Now we examine the Hamming weight of the polynomials
$(x^\newn + \myConstant_1)^{p^s}(x^\newn + \myConstant_2)^i$, over $\F_{q}[x]$, where $0 < i < p^s$.
Let $0 < i < p^s$\ be an integer and
$\myConstant_1, \myConstant_2 \in \F_{q} \setminus\{ 0 \}$. Let
\be\nn
    (x^\newn + \myConstant_2)^i = a_ix^{\newn i} + a_{i-1}x^{\newn (i-1)} + \cdots + a_0\myConstant_2^i
\ee
where $a_0,a_1,\dots ,a_i $\ are the binomial coefficients.
Note that
\be
    (x^\newn + \myConstant_1)^{p^s}(x^\newn + \myConstant_2)^i
        & = & (x^{\newn p^s} + \myConstant_1^{p^s})(a_ix^{\newn i} +  a_{i-1}x^{\newn (i-1)}\myConstant_2 + \cdots + a_0 \myConstant_{2}^{i}) \nn \\
        & = & a_ix^{\newn (i+p^s)} + a_{i-1}x^{\newn (i-1+p^s)}\myConstant_2 + \cdots + a_0x^{\newn p^s}\myConstant_2^i  \nn\\
        & & + a_i\myConstant_1^{p^s}x^{\newn i} + a_{i-1}\myConstant_1^{p^s}x^{\newn (i-1)} + \cdots + a_0 \myConstant_1^{p^s}\myConstant_2^i. \nn
\ee
Therefore
$w_H( (x^\newn + \myConstant_1)^{p^s}(x^\newn + \myConstant_2)^i ) = 2w_H( (x^\newn + \myConstant_2)^i )$.


\section{Certain constacyclic codes of length $\newn p^s$}
\label{Section.Irreducible.Case.Finite.Fields}

Let $\newn$\ and $s$\ be positive integers.
Let $\myConstant, \lambda\in \F_{p^m} \setminus\{ 0 \}$\ such that $\myConstant^{p^s} = - \lambda$.
All $\lambda$-cyclic codes, of length $\newn p^s$, over $\F_{p^m}$ correspond
to the ideals of the finite ring
\be\nn
    \sR = \frac{\F_{p^m}[x]}{\langle x^{\newn p^s} - \lambda \rangle}.
\ee
Suppose that $x^\newn + \myConstant $\ is irreducible over $\F_{p^m}$.
Then the monic divisors of $x^{\newn p^s} - \lambda = (x^\newn + \myConstant)^{p^s}$\
are exactly the elements of the set
$
\{ (x^\newn + \myConstant)^i: \quad 0 \le i \le p^s \}
$.
So if $x^\newn + \lambda$\ is irreducible over $\F_{p^m}$,
then the $\lambda$-cyclic codes, of length $\newn p^s$,
over $\F_{p^m}$, are of the form $\langle (x^\newn + \myConstant)^i\rangle$\
where $0  \le i \le p^s$.
In this section, we determine the Hamming distance of all $\lambda$-cyclic
codes of length $\newn p^s$\ over $\F_{p^m}$\ and $GR(p^a,m)$.
In Theorem \ref{Irreducible.Theorem.Main}, we determine the Hamming distance of
$\id{(x^\newn + \myConstant)^i}$.
As a particular case, we obtain the Hamming distance of negacyclic codes of length $2p^s$
over $\F_{p^m}$\ where $x^2 + 1$\ is irreducible over $\F_{p^m}[x]$.
Using  Theorem \ref{Irreducible.Theorem.Main} together with the results of
Section \ref{Local.Subambients} and Section \ref{sect.main},
we determine the Hamming distance of a cyclic code of length $p^s$\ over $GR(p^a,m)$.

Let $C = \langle (x^\newn + \myConstant)^i \rangle$\ where $0 \le i \le p^s$\ is an integer
and $x^\newn + \myConstant \in \F_{p^m}[x]$\ is irreducible.
Obviously if $i=0$, then $C = \sR$, i.e., $C$\ is the whole space $\F_{p^m}^{\newn p^s}$,
and if $i = p^s$, then $C = \{ 0 \}$. For the remaining values of $i$,
we consider the partition of the set $\{1,2,\dots,p^s-1\}$ given in (\ref{Preliminaries.Partition}).

If $0 < i \le p^{s-1}$, then $d_H(C)$\ is 2 as shown in Lemma \ref{Irreducible.Lemma.Weight.Exactly.Two}.

For $p^{s-1} < i < p^s$, we first find a lower bound on the Hamming weight of an arbitrary
nonzero codeword of $C$\ in Lemma \ref{Irreducible.Lemma.Weight.Greater.Than.beta.plus.two} and
Lemma \ref{Irreducible.Lemma.Weight.Greater.Than.tau.k}.
Next in Corollary \ref{Irreducible.Corollary.Weight.Exactly.beta.plus.two}
and Corollary \ref{Irreducible.Corollary.Weight.Greater.Than.tau.k}, we show that
there exist codewords in $C$, achieving these previously found lower bounds.
This gives us the Hamming distance of $C$.

We summarize our results on $\sR$ in Theorem \ref{Irreducible.Theorem.Main}.
We observe that Theorem \ref{Irreducible.Theorem.Main}
gives the Hamming distance of negacyclic codes, of length $2p^s$, over
$\F_{p^m}$\ where $p \equiv 3 \mod 4$\ and $m$\ is an odd number.
We close this section by describing how to determine the Hamming distance of
certain polycyclic codes, and in particular constacyclic codes, of length
$\newn p^s$\ over $GR(p^a,m)$.

\begin{lemma}\label{Irreducible.Lemma.Weight.Exactly.Two}
    Let $1 \le i \le p^{s-1}$\ be an integer and let $C = \langle (x^\newn + \myConstant)^i \rangle$.
    Then $d_H(C) = 2$.
\end{lemma}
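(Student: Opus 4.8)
The plan is to prove the two inequalities $d_H(C) \ge 2$ and $d_H(C) \le 2$ separately. For the lower bound I would appeal to Lemma \ref{Preliminaries.Lemma.Hamming.Weight.Projection}: since $1 \le i \le p^{s-1} < p^s$, the code $C = \id{(x^\newn + \myConstant)^i}$ is a nonzero proper ideal of $\sR$ (it is neither $\{0\}$ nor the whole ring) and it is $\lambda$-cyclic of length $\newn p^s > 1$; hence $d_H(C) \ge 2$. Alternatively, one notes directly that a weight-one codeword has the form $c\,x^j$ with $c \ne 0$, and since $x$ is a unit in $\sR$ (because $x \cdot x^{\newn p^s - 1} = \lambda$ is a unit) such an element would itself be a unit, forcing $C = \sR$, a contradiction.

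For the upper bound it suffices to exhibit a single nonzero codeword of weight $2$, and the natural candidate is $(x^\newn + \myConstant)^{p^{s-1}}$. Working in characteristic $p$, the Frobenius (``freshman's dream'') identity gives
\[
    (x^\newn + \myConstant)^{p^{s-1}} = x^{\newn p^{s-1}} + \myConstant^{p^{s-1}},
\]
which plainly has Hamming weight $2$; equivalently this follows from the weight formula (\ref{Equality.weight.xn+c.N}), since the $p$-adic expansion of $p^{s-1}$ has a single nonzero digit, equal to $1$. This element lies in $C$ because $i \le p^{s-1}$, so that
\[
    (x^\newn + \myConstant)^{p^{s-1}} = (x^\newn + \myConstant)^{i}\,(x^\newn + \myConstant)^{p^{s-1}-i} \in \id{(x^\newn + \myConstant)^i} = C,
\]
the exponent $p^{s-1}-i$ being nonnegative. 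It then remains to check that this codeword is nonzero in $\sR$: since $x^{\newn p^s} - \lambda = (x^\newn + \myConstant)^{p^s}$ and $p^{s-1} < p^s$, the polynomial $(x^\newn + \myConstant)^{p^{s-1}}$ is not divisible by $x^{\newn p^s} - \lambda$, so it represents a nonzero element of the quotient. Therefore $d_H(C) \le 2$.

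Combining the two bounds yields $d_H(C) = 2$. There is no genuine obstacle here; the only points requiring care are confirming that the candidate codeword actually lies in $C$ (which uses $i \le p^{s-1}$) and that it does not vanish in the quotient (which uses $p^{s-1} < p^s$). Both are immediate consequences of the factorization $x^{\newn p^s} - \lambda = (x^\newn + \myConstant)^{p^s}$ coming from the irreducibility of $x^\newn + \myConstant$.
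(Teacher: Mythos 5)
Your proposal is correct and follows essentially the same route as the paper: the lower bound $d_H(C)\ge 2$ comes from Lemma \ref{Preliminaries.Lemma.Hamming.Weight.Projection}, and the upper bound from the weight-two codeword $(x^\newn+\myConstant)^{p^{s-1}}=x^{\newn p^{s-1}}+\myConstant^{p^{s-1}}\in C$. The extra verifications you include (membership in $C$ and nonvanishing in the quotient) are sound but are left implicit in the paper's one-line proof.
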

\begin{proof}
    The claim follows from Lemma \ref{Preliminaries.Lemma.Hamming.Weight.Projection}
    and the fact that
    $$(x^\newn + \myConstant)^{p^{s-1}-i}  (x^\newn + \myConstant)^i = (x^\newn + \myConstant)^{p^{s-1}} =
        x^{\newn p^{s-1}} + \myConstant ^{p^{s-1}} \in C.$$
\end{proof}

Let $C = \langle (x^\newn + \myConstant)^i\rangle $ for some integer $0 < i < p^s$. For any $0 \neq c(x) \in C$,
there exists a $0 \neq f(x) \in \F_{q}[x]$\ such that $c(x) \equiv f(x)(x^\newn + \myConstant)^i \mod (x^\newn+ \myConstant)^{p^s}$.
Dividing $f(x)$\ by $(x^\newn + \myConstant)^{p^s-i}$, we get
$f(x) = q(x)(x^\newn + \myConstant)^{p^s-i} + r(x)$
where $q(x), r(x) \in \F_{q}[x]$\ and $0 \le \deg (r(x)) < \newn p^s - \newn i$\ or $r(x) = 0$ .
We observe that
\be\nn
    c(x) & \equiv & f(x)(x^\newn + \myConstant)^i\\
        & \equiv & (q(x)(x^\newn + \myConstant)^{p^s-i} + r(x))(x^\newn + \myConstant)^i \nn \\
        & \equiv & q(x)(x^\newn + \myConstant )^{p^s} + r(x)(x^\newn + \myConstant )^i \nn \\
        & \equiv & r(x)(x^\newn + \myConstant )^i \mod (x^\newn + \myConstant )^{p^s}.\nn
\ee
Consequently, for any $0\neq c(x) \in C$, there exists $0 \neq r(x) \in \F_{p^m}[x]$\ with
$\deg (r(x)) < \newn p^s-\newn i$\ such that $c(x) = r(x)(x^\newn + \myConstant)^i$,
where we consider this equality in $\F_{p^m}[x]$.
Therefore the Hamming weight of
$c \in C$\ is equal to the nonzero coefficients of $r(x)(x^\newn + \myConstant )^i \in \F_{q}[x]$,
i.e., $w_H(c) = w_H(r(x)(x^\newn + \myConstant)^i)$.

In the following lemma, we give a lower bound on $d_H(C)$\ when $p^{s-1} < i $.
\begin{lemma}\label{Irreducible.Lemma.Weight.Greater.Than.beta.plus.two}
    Let $1 \le \beta \le p-2$\ be an integer and let $C = \langle (x^\newn + \myConstant)^{\beta p^{s-1} + 1} \rangle $.
    Then $d_H(C) \ge \beta + 2 $.
\end{lemma}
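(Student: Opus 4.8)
The plan is to reduce every nonzero codeword to a pure power of $(x^\newn + \myConstant)$ times a cofactor prime to $x^\newn + \myConstant$, and then combine the weight-retaining inequality (\ref{Inequality.Lower.Bound.g(x).xn+c.N}) with Lemma \ref{Preliminaries.Lemma.weight.beta}. Write $i = \beta p^{s-1} + 1$. By the discussion preceding the lemma, any nonzero $c(x) \in C$ equals $r(x)(x^\newn + \myConstant)^i$ as an honest polynomial in $\F_{p^m}[x]$, the degree bound $\deg(r(x)) < \newn(p^s - i)$ ensuring that reduction modulo $x^{\newn p^s} - \lambda = (x^\newn + \myConstant)^{p^s}$ is vacuous. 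In particular $(x^\newn + \myConstant)^i$ divides $c(x)$ in $\F_{p^m}[x]$.

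First I would let $j \ge i$ be the exact multiplicity of $x^\newn + \myConstant$ in $c(x)$, so that $c(x) = g(x)(x^\newn + \myConstant)^j$ with $x^\newn + \myConstant \nmid g(x)$. Since $c(x) \neq 0$ and its representative has degree $< \newn p^s$, the estimate $\newn j \le \deg(c(x)) < \newn p^s$ forces $j \le p^s - 1$. Passing to the exact power is the crucial maneuver: it guarantees $g(x) \bmod (x^\newn + \myConstant) \neq 0$, hence $w_H(g(x) \bmod (x^\newn + \myConstant)) \ge 1$, which is exactly what makes the weight-retaining bound non-vacuous.

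Applying (\ref{Inequality.Lower.Bound.g(x).xn+c.N}) then gives $w_H(c) = w_H(g(x)(x^\newn + \myConstant)^j) \ge w_H(g(x) \bmod (x^\newn + \myConstant)) \cdot w_H((x^\newn + \myConstant)^j) \ge w_H((x^\newn + \myConstant)^j)$, so it remains to verify $w_H((x^\newn + \myConstant)^j) \ge \beta + 2$ for every $j$ with $i \le j \le p^s - 1$. For $j = i$ this is a direct application of (\ref{Equality.weight.xn+c.N}): the $p$-adic expansion of $i = \beta p^{s-1} + 1$ has digits $\beta$ and $1$ with zeros elsewhere, so $w_H((x^\newn + \myConstant)^i) = (\beta + 1)\cdot 2 = 2\beta + 2 \ge \beta + 2$. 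For $i < j \le p^s - 1$ I would set $m = j - i$, observe that $1 \le m < p^s - i = p^s - \beta p^{s-1} - 1$, and apply Lemma \ref{Preliminaries.Lemma.weight.beta} verbatim to get $w_H((x^\newn + \myConstant)^{m + \beta p^{s-1} + 1}) \ge \beta + 2$. As $c(x)$ was arbitrary, this yields $d_H(C) \ge \beta + 2$.

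The only genuine obstacle is the degenerate case in which the naive representative $r(x)$ is itself divisible by $x^\newn + \myConstant$; then $w_H(r(x) \bmod (x^\newn + \myConstant)) = 0$ and applying (\ref{Inequality.Lower.Bound.g(x).xn+c.N}) directly with exponent $i$ says nothing. Factoring out the full power of $x^\newn + \myConstant$ repairs this, but one must then check that the larger exponent $j$ still lies strictly below $p^s$, so that Lemma \ref{Preliminaries.Lemma.weight.beta} remains applicable with a positive parameter $m$ satisfying its hypothesis — and this is precisely where the degree bound on the codeword representative is used. The non-monotonicity of $j \mapsto w_H((x^\newn + \myConstant)^j)$ is the reason one cannot simply bound by the value at $j = i$, and is why the full range $i \le j \le p^s - 1$ must be covered uniformly.
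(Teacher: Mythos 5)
Your proof is correct and follows essentially the same route as the paper's: factor out the exact power of $x^\newn + \myConstant$ dividing the codeword, invoke the weight-retaining inequality (\ref{Inequality.Lower.Bound.g(x).xn+c.N}), and finish with Lemma \ref{Preliminaries.Lemma.weight.beta}. Your separate treatment of the case $j=i$ (i.e.\ $m=0$) via (\ref{Equality.weight.xn+c.N}) is in fact slightly more careful than the paper, which applies Lemma \ref{Preliminaries.Lemma.weight.beta} even though that lemma is stated only for positive $m$.
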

\begin{proof}
    Let $0 \neq c(x) \in C$, then there exists $0 \neq f(x) \in \F_{q}[x]$\ such that
    \be\nn
        c(x) \equiv f(x)(x^\newn + \myConstant)^{\beta p ^{s-1} + 1} \mod (x^\newn + \myConstant)^{p^s}.
    \ee
    We may assume that
    $\deg (f(x)) < \newn p^s - \newn \beta p^{s-1} - \newn = (p - \beta)\newn p^{s-1} - \newn$.
    We choose $m$\ to be the largest nonnegative integer with $(x^\newn + \myConstant)^m | f(x)$.
    Clearly $\deg (f(x)) < (p - \beta)\newn p^{s-1} -\newn$\ implies $m < (p - \beta)p^{s-1} -1$.
    So, by Lemma \ref{Preliminaries.Lemma.weight.beta}, we get
    \be\label{Irreducible.Lemma.Weight.Greater.Than.beta.plus.two.Weight.xn.Myconstant}
        w_H( (x^\newn + \myConstant )^{m + \beta p^{s-1}+1} )  \ge \beta + 2.
    \ee
    For $f(x) = g(x)(x^\newn + \myConstant)^m$, we have $g(x)\mod x^\newn + \myConstant \neq 0$
    by our choice of $m$, so
    \be\label{Irreducible.Lemma.Weight.Greater.Than.beta.plus.two.Modulo.Weight.Positive}
        w_H( g(x) \mod (x^\newn + \myConstant) ) > 0.
    \ee
    Now using
    (\ref{Irreducible.Lemma.Weight.Greater.Than.beta.plus.two.Weight.xn.Myconstant}),
    (\ref{Irreducible.Lemma.Weight.Greater.Than.beta.plus.two.Modulo.Weight.Positive})
    and
    (\ref{Inequality.Lower.Bound.g(x).xn+c.N}),
    we obtain
    \be\nn
        w_H(c(x)) & = & w_H(g(x)(x^\newn + \myConstant)^{m + \beta p^{s-1}+1})\\
            & \ge & w_H( g(x)\mod (x^\newn + \myConstant ) ) w_H( (x^\newn + \myConstant)^m )\nn \\
            & \ge & \beta + 2 \nn .
    \ee
\end{proof}

Next we show that the lower bound given in Lemma \ref{Irreducible.Lemma.Weight.Greater.Than.beta.plus.two}
is achieved
when $p^{s-1} < i \le (p-1)p^{s-1}$\
and this gives us the exact value of $d_H(C)$.
\begin{corollary}\label{Irreducible.Corollary.Weight.Exactly.beta.plus.two}
    Let $1 \le \beta \le p-2$, $\beta p^{s-1} + 1 \le i \le (\beta + 1)p^{s-1} $\
    be integers and
    let $C = \langle (x^\newn + \myConstant)^i \rangle $.
    Then $d_H(C) = \beta + 2$.
\end{corollary}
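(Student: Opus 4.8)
The plan is to establish the two inequalities $d_H(C) \ge \beta+2$ and $d_H(C) \le \beta+2$ separately and combine them. The key structural observation is that the codes $\id{(x^\newn + \myConstant)^i}$ form a descending chain of ideals as $i$ increases, so the lower bound can be imported from the endpoint case already handled in Lemma \ref{Irreducible.Lemma.Weight.Greater.Than.beta.plus.two}, while the upper bound only requires producing a single explicit witness of the right weight.

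For the lower bound, I would use that $\beta p^{s-1}+1 \le i$ forces $(x^\newn + \myConstant)^{\beta p^{s-1}+1}$ to divide $(x^\newn + \myConstant)^i$, hence $\id{(x^\newn + \myConstant)^i} \subseteq \id{(x^\newn + \myConstant)^{\beta p^{s-1}+1}}$. Every nonzero codeword of $C$ is then a nonzero codeword of the larger code, and minimizing the Hamming weight over the smaller set can only give a value at least as large. Invoking Lemma \ref{Irreducible.Lemma.Weight.Greater.Than.beta.plus.two}, which gives $d_H(\id{(x^\newn + \myConstant)^{\beta p^{s-1}+1}}) \ge \beta+2$, I conclude $d_H(C) \ge \beta+2$.

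For the upper bound, I would exhibit the codeword $(x^\newn + \myConstant)^{(\beta+1)p^{s-1}}$. Since $i \le (\beta+1)p^{s-1}$ and, because $\beta \le p-2$, we have $(\beta+1)p^{s-1} \le (p-1)p^{s-1} < p^s$, this element is a genuine multiple of $(x^\newn + \myConstant)^i$ in $\sR$ and is nonzero, so it lies in $C$. Its weight follows directly from (\ref{Equality.weight.xn+c.N}): the $p$-adic expansion of $N = (\beta+1)p^{s-1}$ has the single nonzero digit $b_{s-1} = \beta+1$, whence $w_H((x^\newn + \myConstant)^{(\beta+1)p^{s-1}}) = (b_{s-1}+1)\prod_{d \ne s-1}(b_d+1) = \beta+2$. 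This yields $d_H(C) \le \beta+2$, and combining the two bounds gives $d_H(C) = \beta+2$.

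The argument is short precisely because the substantive weight estimate is already contained in Lemma \ref{Irreducible.Lemma.Weight.Greater.Than.beta.plus.two}. I do not anticipate a genuine obstacle; the only points requiring care are verifying that the witness genuinely lies in the correct ideal (which rests on the two-sided bound $\beta p^{s-1}+1 \le i \le (\beta+1)p^{s-1}$ together with $\beta \le p-2$) and that the single-digit $p$-adic expansion of $(\beta+1)p^{s-1}$ feeds correctly into the weight formula (\ref{Equality.weight.xn+c.N}). Keeping these index inequalities straight is the whole of the work.
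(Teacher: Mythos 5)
Your proposal is correct and follows the paper's own argument exactly: the lower bound comes from the containment $C \subset \langle (x^\newn + \myConstant)^{\beta p^{s-1}+1}\rangle$ together with Lemma \ref{Irreducible.Lemma.Weight.Greater.Than.beta.plus.two}, and the upper bound from the witness $(x^\newn + \myConstant)^{(\beta+1)p^{s-1}} \in C$ whose weight $\beta+2$ is read off from (\ref{Equality.weight.xn+c.N}). Your extra check that the witness is nonzero (via $(\beta+1)p^{s-1} < p^s$) is a small but welcome addition the paper leaves implicit.
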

\begin{proof}
    Lemma \ref{Irreducible.Lemma.Weight.Greater.Than.beta.plus.two} and
     $C \subset \langle (x^\newn + \myConstant)^{\beta p^{s-1} + 1} \rangle$\ imply
     $d_H(C) \ge \beta + 2$.
    We know, by (\ref{Equality.weight.xn+c.N}), that
    $w_H( (x^\newn + \myConstant)^{(\beta + 1) p^{s-1}} ) = \beta + 2$.
    Clearly $(x^\newn + \myConstant)^{(\beta + 1)p^{s-1}} \in C $\ as $(\beta + 1)p^{s-1} \ge i$.
    Thus
    $d_H(C) \le \beta + 2.$
    Hence
    $d_H(C) = \beta + 2$.

\end{proof}

Having covered the range $p^{s-1} < i \le (p-1)p^{s-1}$, now
we give a lower bound on $d_H(C)$ when $(p-1)p^{s-1} < i < p^s $\ in the following lemma.
\begin{lemma}\label{Irreducible.Lemma.Weight.Greater.Than.tau.k}
    Let $1 \le \tau \le p-1$, $1 \le k \le s-1$\ be
    integers and let $C = \langle (x^\newn + \myConstant)^{ p^s - p^{s-k} + (\tau -1)p^{s-k-1} + 1} \rangle $.
    Then $d_H(C) \ge (\tau + 1)p^k$.
\end{lemma}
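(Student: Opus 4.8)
The plan is to prove this lower bound by exactly the mechanism already used for Lemma~\ref{Irreducible.Lemma.Weight.Greater.Than.beta.plus.two}, replacing the elementary weight estimate of Lemma~\ref{Preliminaries.Lemma.weight.beta} with its companion Lemma~\ref{Preliminaries.Lemma.weight.tau.k}. Write $i_0 = p^s - p^{s-k} + (\tau-1)p^{s-k-1} + 1$ for the exponent defining $C$. First I would take an arbitrary $0 \neq c(x) \in C$ and invoke the reduction established in the paragraph preceding Lemma~\ref{Irreducible.Lemma.Weight.Greater.Than.beta.plus.two}: there exists a nonzero $f(x) \in \F_{p^m}[x]$ with $\deg(f(x)) < \newn p^s - \newn i_0$ such that $c(x) = f(x)(x^\newn + \myConstant)^{i_0}$ holds as an equality in $\F_{p^m}[x]$.

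The key arithmetic step is to convert this degree bound into the hypothesis needed for Lemma~\ref{Preliminaries.Lemma.weight.tau.k}. Since $p^s - i_0 = p^{s-k} - (\tau-1)p^{s-k-1} - 1$, the bound reads $\deg(f(x)) < \newn\big(p^{s-k} - (\tau-1)p^{s-k-1} - 1\big)$. Letting $m$ be the largest nonnegative integer with $(x^\newn + \myConstant)^m \mid f(x)$, we have $\newn m \le \deg(f(x))$, whence $m < p^{s-k} - (\tau-1)p^{s-k-1} - 1$, which is precisely the hypothesis of Lemma~\ref{Preliminaries.Lemma.weight.tau.k}. That lemma then yields $w_H\big((x^\newn + \myConstant)^{m+i_0}\big) \ge (\tau+1)p^k$.

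To finish, I would factor $f(x) = g(x)(x^\newn + \myConstant)^m$, where by maximality of $m$ the factor $g(x)$ is not divisible by $x^\newn + \myConstant$, so that $w_H\big(g(x) \bmod (x^\newn + \myConstant)\big) \ge 1$. Writing $c(x) = g(x)(x^\newn + \myConstant)^{m+i_0}$ and applying the weight-retaining inequality (\ref{Inequality.Lower.Bound.g(x).xn+c.N}) with $N = m + i_0$ gives
\[
 w_H(c(x)) \ge w_H\big(g(x) \bmod (x^\newn + \myConstant)\big)\cdot w_H\big((x^\newn + \myConstant)^{m+i_0}\big) \ge 1\cdot (\tau+1)p^k = (\tau+1)p^k.
\]
Since $c(x)$ was an arbitrary nonzero codeword, $d_H(C) \ge (\tau+1)p^k$. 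The only genuinely delicate point is the index bookkeeping: one must verify that the subtraction $p^s - i_0$ lands exactly at $p^{s-k}-(\tau-1)p^{s-k-1}-1$, so that the strict inequality on $m$ matches the hypothesis of Lemma~\ref{Preliminaries.Lemma.weight.tau.k} with no off-by-one error. Everything else is a direct transcription of the $\beta$-case and rests on the weight-retaining property of $(x^\newn+\myConstant)^N$ from \cite{CMSS2}.
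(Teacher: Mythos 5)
Your proposal is correct and follows essentially the same route as the paper's own proof: reduce to $c(x) = g(x)(x^\newn+\myConstant)^{m+i_0}$ with $x^\newn+\myConstant \nmid g(x)$, verify $m < p^{s-k}-(\tau-1)p^{s-k-1}-1$ from the degree bound, apply Lemma \ref{Preliminaries.Lemma.weight.tau.k}, and conclude via the weight-retaining inequality (\ref{Inequality.Lower.Bound.g(x).xn+c.N}). The index bookkeeping you flag as the delicate point checks out exactly as you state it.
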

\begin{proof}
    Let $0 \neq c(x) \in C$, then there is $0 \neq f(x) \in \F_{p^m}[x]$\ such that
    \be \nn
        c(x) \equiv f(x)(x^\newn + \myConstant)^{p^s - p^{s-k} + (\tau -1)p^{s-k-1} + 1} \mod (x^\newn + \myConstant)^{p^s}.
    \ee
    We may assume that
    \be \label{Irreducible.Lemma.Weight.Greater.Than.tau.k.deg.f}
        \deg (f(x)) < \newn p^{s-k} -\newn(\tau -1)p^{s-k-1} - \newn .
    \ee
    Let $m$\ be the largest nonnegative integer with $(x^\newn + \myConstant)^m | f(x)$.
    Then there exists $g(x) \in \F_{p^m}[x]$\ such that $f(x) = g(x)(x^\newn + \myConstant)^m$.
    By (\ref{Irreducible.Lemma.Weight.Greater.Than.tau.k.deg.f}), we have $m < p^{s-k} - (\tau -1)p^{s-k-1} -1$.
    So, by Lemma \ref{Preliminaries.Lemma.weight.tau.k}, we get
    \be\label{Irreducible.Lemma.Weight.Greater.Than.tau.k.Weight.x^n.myConstant}
        w_H( (x^\newn + \myConstant)^{m + p^s - p^{s-k} + (\tau -1)p^{s-k-1} + 1} )
        \ge p^k(\tau + 1).
    \ee
    The maximality of $m$\ implies $x^\newn + \myConstant \nmid g(x)$\ and therefore
    $g(x) \mod x^\newn + \myConstant \neq 0$. So we have
    \be\label{Irreducible.Lemma.Weight.Greater.Than.tau.k.Modulo.Positive}
        w_H( g(x) \mod x^\newn + \myConstant ) > 0.
    \ee
    Now using (\ref{Inequality.Lower.Bound.g(x).xn+c.N}),
    (\ref{Irreducible.Lemma.Weight.Greater.Than.tau.k.Weight.x^n.myConstant}) and
    (\ref{Irreducible.Lemma.Weight.Greater.Than.tau.k.Modulo.Positive}),
    we obtain
    \be
        w_H( c(x) ) & = & w_H( g(x)(x^\newn + \myConstant)^{m + p^s - p^{s-k} + (\tau -1)p^{s-k-1} + 1} ) \nn \\
        & \ge & w_H( g(x) \mod x^\newn + \myConstant )w_H( (x^\newn + \myConstant)^{p^s - p^{s-k} + (\tau -1)p^{s-k-1} + 1 + m} ) \nn \\
        & \ge & p^k (\tau + 1)\nn.
    \ee
    This completes the proof.
\end{proof}

For $(p-1)p^{s-1} < i < p^s$, we determine $d_H(C)$\ in
Corollary \ref{Irreducible.Corollary.Weight.Greater.Than.tau.k} where we show the existence
of a codeword that achieves the lower bound given in
Lemma \ref{Irreducible.Lemma.Weight.Greater.Than.tau.k}.
\begin{corollary}\label{Irreducible.Corollary.Weight.Greater.Than.tau.k}
    Let $1 \le \tau \le p-1$, $1 \le k \le s-1$\ and $i$\ be integers such that
    \be
        p^s-p^{s-k}+(\tau -1)p^{s-k-1}+1 \le i \le p^s-p^{s-k}+\tau p^{s-k-1}.\nn
    \ee
    Let $C = \langle (x^\newn + \myConstant)^i \rangle$. Then $d_H(C) = (\tau + 1)p^k$.
\end{corollary}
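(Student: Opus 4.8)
The plan is to prove the two matching inequalities $d_H(C)\ge(\tau+1)p^k$ and $d_H(C)\le(\tau+1)p^k$ separately, in exact parallel with the proof of Corollary \ref{Irreducible.Corollary.Weight.Exactly.beta.plus.two}.

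For the lower bound, I would first observe that the hypothesis forces $i\ge p^s-p^{s-k}+(\tau-1)p^{s-k-1}+1$, so $(x^\newn+\myConstant)^i$ is divisible by $(x^\newn+\myConstant)^{p^s-p^{s-k}+(\tau-1)p^{s-k-1}+1}$ and hence $C\subset\langle(x^\newn+\myConstant)^{p^s-p^{s-k}+(\tau-1)p^{s-k-1}+1}\rangle$. Since every nonzero codeword of $C$ is a nonzero codeword of the larger code, the minimum weight can only increase, so Lemma \ref{Irreducible.Lemma.Weight.Greater.Than.tau.k} applied to the larger code immediately yields $d_H(C)\ge(\tau+1)p^k$.

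For the upper bound I would exhibit an explicit codeword achieving this weight. The natural choice is the exponent $N=p^s-p^{s-k}+\tau p^{s-k-1}$, the right endpoint of the range for $i$. Because $i\le N$, the polynomial $(x^\newn+\myConstant)^N$ lies in $C$. Writing $p^s-p^{s-k}=(p-1)(p^{s-1}+p^{s-2}+\cdots+p^{s-k})$, the $p$-adic expansion of $N$ has digit $p-1$ in each of the $k$ positions $s-1,\dots,s-k$, digit $\tau$ in position $s-k-1$, and $0$ in every other position. Feeding this into (\ref{Equality.weight.xn+c.N}) gives $w_H((x^\newn+\myConstant)^N)=(p-1+1)^k(\tau+1)=(\tau+1)p^k$, whence $d_H(C)\le(\tau+1)p^k$. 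Combining the two bounds finishes the proof.

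There is no serious obstacle here; the only delicate point is identifying the right exponent $N$ and reading off its $p$-adic digits. The constraint $1\le\tau\le p-1$ is what guarantees that adding $\tau p^{s-k-1}$ to $p^s-p^{s-k}$ produces no carry, so the digit in position $s-k-1$ is exactly $\tau$ and the weight formula yields the clean factor $(\tau+1)$ rather than a larger value. This carry-free observation is precisely the arithmetic already packaged into Lemma \ref{Preliminaries.Lemma.weight.tau.k} and hence into Lemma \ref{Irreducible.Lemma.Weight.Greater.Than.tau.k}, so both halves of the argument reduce to citing those results together with the containment of codes.
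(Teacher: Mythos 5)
Your proposal is correct and follows essentially the same route as the paper: the lower bound via the containment $C \subset \langle (x^\newn + \myConstant)^{p^s-p^{s-k}+(\tau - 1) p^{s-k-1} + 1} \rangle$ together with Lemma \ref{Irreducible.Lemma.Weight.Greater.Than.tau.k}, and the upper bound by exhibiting the codeword $(x^\newn + \myConstant)^{p^s-p^{s-k}+ \tau p^{s-k-1}}$ and evaluating its weight with (\ref{Equality.weight.xn+c.N}). Your explicit carry-free digit computation is just a spelled-out version of what the paper leaves implicit.
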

\begin{proof}
    Lemma \ref{Irreducible.Lemma.Weight.Greater.Than.tau.k} and
    $C \subset \langle (x^\newn + \myConstant)^{p^s-p^{s-k}+(\tau - 1) p^{s-k-1} + 1} \rangle $\ implies
    $d_H(C) \ge (\tau + 1)p^k$.
    We know, by (\ref{Equality.weight.xn+c.N}), that
    $w_H( (x^\newn + \myConstant)^{p^s-p^{s-k}+ \tau p^{s-k-1}} ) = (\tau + 1)p^k$.
    Clearly $(x^\newn + \myConstant)^{p^s-p^{s-k}+ \tau p^{s-k-1}} \in C$\ as
    $p^s-p^{s-k} + \tau p^{s-k-1} \ge i$.
    So $d_H(C) \le (\tau + 1)p^k$.
    Thus we have shown
    $d_H(C) = (\tau+1)p^k$.
\end{proof}

We summarize our results in the following theorem.

\begin{theorem}\label{Irreducible.Theorem.Main}
    Let $p$\ be a prime number, $\F_{p^m}$\ a finite field of characteristic $p$,
    $\myConstant \in \F_{q}\setminus \{ 0 \}$\ and $\newn$\ be a positive integer.
    Suppose that $x^\newn + \myConstant \in \F_{q}[x]$\ is irreducible. Then the $\lambda$-cyclic codes
    over $\F_{q}$, of length $\newn p^s$, are of the form $C[i] = \langle (x^\newn + \myConstant)^i \rangle$,
    where $0 \le i \le p^s$\ and $\lambda = - \myConstant ^{p^{s}}  $.
    If $i = 0$, then $C$\ is the whole space $\F_{p^m}^{\newn p^s}$\ and if
    $i = p^s$, then $C$\ is the zero space $\{ \mathbf{0} \}$.
    For the remaining values of $i$, if $p = 2$, then
    $$ 
    \begin{array}{l}
      \dd d_H(C[i])
        = \left\lbrace
      \begin{array}{ll}
        1, &  \mbox{if}\ \ i=0,\\
        2, &  \mbox{if}\ \ 1 \le i \le 2^{s-1},\\
        2^{k+1}, & \mbox{if}\ \ 2^s-2^{s-k}+1 \le i \le 2^s-2^{s-k}+\tau 2^{s-k-1}\\
                & \mbox{where}\ \ 1 \le k \le s-1,
      \end{array} \right.
    \end{array}
    $$
    if $p$\ is odd, then
     $$ 
    \begin{array}{l}
            \dd d_H(C[i])
             = \left\lbrace
            \begin{array}{ll}
                2, &  \mbox{if}\ \ 1 \le i \le p^{s-1},\\
                \beta+2, & \mbox{if}\ \ \beta p ^{s-1} +1 \le i \le (\beta+1)p^{s-1}\ \mbox{where}\ \ 1 \le \beta \le p-2, \\
                (\tau + 1)p^k, & \mbox{if}\ \ p^s-p^{s-k}+(\tau -1)p^{s-k-1}+1 \le i \le p^s-p^{s-k}+\tau p^{s-k-1}\\
                    & \mbox{where}\ \ 1 \leq \tau \le p-1\ \ \mbox{and}\ \ 1 \le k \le s-1.
            \end{array} \right.
    \end{array}
    $$
\end{theorem}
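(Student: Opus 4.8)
The plan is to assemble the theorem entirely from results already proven in the preceding lemmas and corollaries, since Theorem \ref{Irreducible.Theorem.Main} is a summary statement rather than a fresh computation. I would begin by recalling that $x^{\newn p^s} - \lambda = (x^\newn + \myConstant)^{p^s}$ (using $\myConstant^{p^s} = -\lambda$ and the Frobenius identity in characteristic $p$), so that, under the irreducibility hypothesis on $x^\newn + \myConstant$, the monic divisors of $x^{\newn p^s} - \lambda$ are exactly the powers $(x^\newn + \myConstant)^i$ for $0 \le i \le p^s$. This forces every ideal of $\sR$ to have the form $C[i] = \id{(x^\newn + \myConstant)^i}$, which establishes the claimed classification of $\lambda$-cyclic codes. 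The two boundary cases $i=0$ and $i=p^s$ are immediate: the former gives the unit ideal (the whole space, Hamming distance $1$), the latter the zero ideal.

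Next I would dispose of the odd-$p$ formula by partitioning the range $1 \le i \le p^s - 1$ according to the partition \eqref{Preliminaries.Partition} of $\{1,\dots,p^s-1\}$ into the block $\{1,\dots,p^{s-1}\}$, the blocks indexed by $\beta$, and the blocks indexed by $(\tau,k)$. For the first block, Lemma \ref{Irreducible.Lemma.Weight.Exactly.Two} gives $d_H(C[i]) = 2$ directly. For $\beta p^{s-1}+1 \le i \le (\beta+1)p^{s-1}$ with $1 \le \beta \le p-2$, Corollary \ref{Irreducible.Corollary.Weight.Exactly.beta.plus.two} gives $d_H(C[i]) = \beta + 2$. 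For the final blocks, Corollary \ref{Irreducible.Corollary.Weight.Greater.Than.tau.k} yields $d_H(C[i]) = (\tau+1)p^k$. Since these three families of index-ranges exhaust $\{1,\dots,p^s-1\}$ by the partition, the odd-$p$ case is complete once I verify that the stated inequalities match the partition blocks verbatim.

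For the even case $p=2$, the same three corollaries apply, but the middle family (the $\beta$-blocks) collapses: when $p=2$ the range $1 \le \beta \le p-2$ is empty, so only the block $\{1,\dots,2^{s-1}\}$ and the $(\tau,k)$-blocks survive. Here $\tau$ ranges over $1 \le \tau \le p-1 = 1$, forcing $\tau = 1$, so Corollary \ref{Irreducible.Corollary.Weight.Greater.Than.tau.k} reduces to $d_H(C[i]) = 2 \cdot 2^k = 2^{k+1}$ on the range $2^s - 2^{s-k}+1 \le i \le 2^s - 2^{s-k} + 2^{s-k-1}$. I would note that with $\tau=1$ the interval $p^s - p^{s-k} + (\tau-1)p^{s-k-1}+1 \le i \le p^s-p^{s-k}+\tau p^{s-k-1}$ specializes exactly to the interval stated in the $p=2$ formula, so no separate argument is needed.

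The whole proof is thus purely organizational: the only real content to check is that the index intervals in the theorem statement are a faithful transcription of the partition blocks and of the hypotheses of the cited corollaries, and that the specialization $p=2$ (which empties the $\beta$-family and fixes $\tau=1$) is handled correctly. I do not anticipate a genuine obstacle, since all the Hamming-weight estimates have already been carried out; the one point requiring care is confirming that the three ranges partition $\{1,\dots,p^s-1\}$ with no overlap and no gap, which follows from the decomposition \eqref{Preliminaries.Partition} established in Section \ref{Preliminaries.For.Computations}. I would close by remarking that the negacyclic application (length $2p^s$, $p \equiv 3 \bmod 4$, $m$ odd) is the special case $\newn = 2$, $\myConstant = 1$, where $x^2 + 1$ is irreducible over $\F_{p^m}$ under those arithmetic conditions.
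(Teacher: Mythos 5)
Your proposal matches the paper exactly: the paper offers no separate proof of Theorem \ref{Irreducible.Theorem.Main}, explicitly presenting it as a summary assembled from Lemma \ref{Irreducible.Lemma.Weight.Exactly.Two}, Corollary \ref{Irreducible.Corollary.Weight.Exactly.beta.plus.two}, and Corollary \ref{Irreducible.Corollary.Weight.Greater.Than.tau.k}, together with the classification of ideals via the factorization $x^{\newn p^s}-\lambda=(x^\newn+\myConstant)^{p^s}$ and the partition (\ref{Preliminaries.Partition}). Your handling of the $p=2$ specialization (empty $\beta$-range, $\tau=1$ forced, $(\tau+1)p^k=2^{k+1}$) is the correct and only nontrivial bookkeeping step.
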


\begin{remark}
    If we replace $\newn$\ with $1$ and $\myConstant$\ with $-1$\ in Theorem \ref{Irreducible.Theorem.Main},
    then we obtain the main results of \cite{D2008} and \cite{OZOZ_1}.
    Namely, we obtain \cite[Theorem 4.11]{D2008} and \cite[Theorem 3.4]{OZOZ_1}.
\end{remark}

Theorem \ref{Irreducible.Theorem.Main} is still useful
when the polynomial $x^\newn + \myConstant $\ is reducible over the alphabet $\F_{p^m}$.
\begin{remark}\label{Remark.nps.Results.Hold.Even.Reducible}
  Note that  $\id{(x^\newn + \myConstant )^i}$, $0 \le i \le p^s$\ are ideals of $\sR$\ independent
  of the fact that $x^\newn + \myConstant $\ is irreducible.
  So our results from
  Lemma \ref{Irreducible.Lemma.Weight.Exactly.Two} to
  Corollary \ref{Irreducible.Corollary.Weight.Greater.Than.tau.k} hold even when
  the polynomial $x^\newn + \myConstant $\ is reducible over $\F_{p^m}$.
  But then, the cases considered above do not cover all the $\lambda$-cyclic codes of length $p^s$.
  In other words, if $x^\newn + \myConstant $\ is reducible, then there are $\lambda$-cyclic codes
  other than $\id{(x^\newn + \myConstant )^i}$, $0 \le i \le p^s$ and their Hamming distance is not determined here.
\end{remark}

Now we will apply Theorem \ref{Irreducible.Theorem.Main} to a particular case.
Namely, we will consider the negacyclic codes
over $\F_{p^m}$ of length $2p^s$\ where $p$\ is an odd prime. In order to apply Theorem \ref{Irreducible.Theorem.Main},
the polynomial $x^2  + 1$\ must be irreducible over $\F_{p^m}$. A complete irreducibility criterion
for $x^2+1$\ is given in the following lemma.

\begin{lemma}\label{Irreducible.Lemma.Irreducibility.Criterion}
    Let $p$\ be an odd prime and $m$\ be a positive integer.
    The polynomial $x^2 + 1 \in \F_{p^m}[x]$\ is irreducible
    if and only if $p = 4k + 3$\ for some $k \in \N $\ and
    $m$\ is odd.
\end{lemma}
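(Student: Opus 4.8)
The plan is to translate the irreducibility of $x^2+1$ into a statement about whether $-1$ is a square in $\F_{p^m}$, and then to read off the answer by analyzing $p^m \pmod 4$ through the cyclic structure of the multiplicative group. First I would note that a quadratic polynomial over a field is irreducible exactly when it has no root, since any nontrivial factorization of a degree-two polynomial produces linear factors, hence roots. A root of $x^2+1$ is an element $y \in \F_{p^m}$ with $y^2 = -1$, i.e. a square root of $-1$. Thus $x^2+1$ is irreducible if and only if $-1$ is a nonsquare in $\F_{p^m}$.

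Next I would use that $\F_{p^m}^{*}$ is cyclic of order $p^m-1$. Since $p$ is odd we have $-1 \neq 1$, so $-1$ is the unique element of order $2$ in this group, and an element $y$ satisfies $y^2 = -1$ precisely when $y$ has multiplicative order $4$. Such a $y$ exists in a cyclic group of order $p^m-1$ if and only if $4 \mid p^m - 1$. Consequently $x^2+1$ is irreducible if and only if $p^m \not\equiv 1 \pmod 4$; and because $p^m$ is odd, this is the same as requiring $p^m \equiv 3 \pmod 4$.

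Finally I would determine $p^m \bmod 4$ from $p \bmod 4$. As $p$ is an odd prime, either $p \equiv 1 \pmod 4$ or $p \equiv 3 \pmod 4$. In the first case $p^m \equiv 1 \pmod 4$ for every $m$, so $x^2+1$ is reducible. In the second case, writing $p = 4k+3$ gives $p \equiv -1 \pmod 4$, whence $p^m \equiv (-1)^m \pmod 4$; this is $\equiv 3$ when $m$ is odd and $\equiv 1$ when $m$ is even. Combining the two steps, $p^m \equiv 3 \pmod 4$ holds exactly when $p = 4k+3$ and $m$ is odd, which is precisely the stated criterion.

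I do not anticipate a genuine obstacle: the argument is a short chain of equivalences. The only point deserving slight care is the reduction in the first step, where I rely on $p$ being odd (so that the characteristic is not $2$ and the question is honestly whether $-1$ is a square rather than $x^2+1$ collapsing to a perfect square $(x+1)^2$); after that, everything is a routine computation of $p^m$ modulo $4$.
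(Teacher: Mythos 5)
Your proof is correct and is exactly the argument the paper's one-line proof (``follows from the order of the multiplicative group of $\F_{p^m}$'') is alluding to: $x^2+1$ is irreducible iff $-1$ is a nonsquare, iff the cyclic group $\F_{p^m}^{*}$ of order $p^m-1$ has no element of order $4$, iff $p^m \not\equiv 1 \pmod 4$. The paper simply omits the routine details that you have written out.
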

\begin{proof}
 Follows from the order of the multiplicative group of $\F_{p^m}$.
\end{proof}

Let $C$\ be a negacyclic code of length $2p^s$\ over $\F_{p^m}$.
If $x^2 + 1$ is irreducible over $\F_{p^m}$,
then the Hamming distance of $C$\ is given in the following theorem.

\begin{theorem}\label{Irreducible.Theorem.Particular}
    Let $p = 4k + 3$\ be a prime for some $k \in \N$ and let $m\in \N$\ be an odd number.
    Then the negacyclic codes
    over $\F_{p^m}$, of length $2p^s$, are of the form $C[i] = \langle (x^2 + 1)^i \rangle$,
    where $0 \le i \le p^s$, and
     $$ 
    \begin{array}{l}
            \dd d_H(C[i])
             = \left\lbrace
            \begin{array}{ll}
                2, &  \mbox{if}\ \ 1 \le i \le p^{s-1},\\
                \beta+2, & \mbox{if}\ \ \beta p ^{s-1} +1 \le i \le (\beta+1)p^{s-1}\ \mbox{where}\ \ 1 \le \beta \le p-2, \\
                (\tau + 1)p^k, & \mbox{if}\ \ p^s-p^{s-k}+(\tau -1)p^{s-k-1}+1 \le i \le p^s-p^{s-k}+\tau p^{s-k-1}\\
                    & \mbox{where}\ \ 1 \leq \tau \le p-1\ \ \mbox{and}\ \ 1 \le k \le s-1.
            \end{array} \right.
    \end{array}
    $$
\end{theorem}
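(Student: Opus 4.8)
The plan is to realize this statement as the special case $\newn = 2$, $\myConstant = 1$, $\lambda = -1$ of Theorem \ref{Irreducible.Theorem.Main}. The first thing I would check is that the hypotheses of that theorem are actually met. The only nontrivial requirement there is that $x^2 + 1$ be irreducible over $\F_{p^m}$, and this is exactly the content of Lemma \ref{Irreducible.Lemma.Irreducibility.Criterion}, which guarantees irreducibility precisely when $p = 4k+3$ and $m$ is odd---the hypotheses we are handed.

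Next I would line up the constacyclic setup. Negacyclic codes of length $2p^s$ over $\F_{p^m}$ are by definition the $\lambda$-cyclic codes with $\lambda = -1$, hence they correspond to the ideals of $\F_{p^m}[x]/\langle x^{2p^s}+1\rangle$. Taking $\myConstant = 1$ we have $\myConstant^{p^s} = 1 = -\lambda$, which is the compatibility condition $\myConstant^{p^s} = -\lambda$ demanded in Theorem \ref{Irreducible.Theorem.Main}; and since we work in characteristic $p$, the Frobenius gives $(x^2+1)^{p^s} = x^{2p^s}+1$. Thus $x^{2p^s}+1 = (x^2+1)^{p^s}$ with $x^2+1$ irreducible, so the monic divisors of $x^{2p^s}+1$ are exactly the powers $(x^2+1)^i$ for $0 \le i \le p^s$. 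Consequently the negacyclic codes of this length are precisely the ideals $C[i] = \langle (x^2+1)^i \rangle$, which establishes the classification part of the statement.

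Finally, substituting $\newn = 2$, $\myConstant = 1$, $\lambda = -1$ into the odd-$p$ branch of the distance formula in Theorem \ref{Irreducible.Theorem.Main} yields the stated values of $d_H(C[i])$ verbatim, the three ranges of $i$ being governed by the partition of $\{1,\dots,p^s-1\}$ recorded in (\ref{Preliminaries.Partition}). I do not expect a genuine obstacle: this theorem is a direct specialization, and the single step that requires a separate argument---irreducibility of $x^2+1$---has already been isolated as Lemma \ref{Irreducible.Lemma.Irreducibility.Criterion}. The proof is therefore essentially a verification that the data $(\newn,\myConstant,\lambda)=(2,1,-1)$ satisfies the running hypotheses, followed by an appeal to Theorem \ref{Irreducible.Theorem.Main}.
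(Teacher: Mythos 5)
Your proposal is correct and is precisely how the paper obtains this result: Theorem \ref{Irreducible.Theorem.Particular} is stated there as a direct specialization of Theorem \ref{Irreducible.Theorem.Main} with $(\newn,\myConstant,\lambda)=(2,1,-1)$, the only hypothesis needing verification being the irreducibility of $x^2+1$, which Lemma \ref{Irreducible.Lemma.Irreducibility.Criterion} supplies under exactly the assumptions $p=4k+3$ and $m$ odd. Your checks of the compatibility condition $\myConstant^{p^s}=-\lambda$ and of the Frobenius identity $(x^2+1)^{p^s}=x^{2p^s}+1$ are exactly the verifications required.
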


For the other values of $p$\ and $m$, $x^2 + 1$\
is reducible over $\F_{p^m}$\ and in this case, we determine the minimum
Hamming distance of $C$\ in Section \ref{Section.Reducible.Case.Finite.Fields}.

Now we describe how to determine the Hamming distance of certain
polycyclic codes of length $\newn p^s$\ over $GR(p^a,m)$ and, in particular, this gives us the Hamming distance of
certain constacyclic codes of length $\newn p^s$.
Let $\gamma_0, \lambda_0 \in GR(p^a,m)$\ be units such that
$\overline{\gamma}_0 = \gamma$, $\overline{\lambda}_0 = \lambda$\ and $\gamma_0^{p^s} = -\lambda_0$.
According to our assumption in the beginning of this section, we have that $x^\newn + \overline{\gamma}_0$\
is irreducible.

Let $f(x) = (x^\newn + \myConstant_0 )^{p^s} + p\beta(x) \in GR(p^a,m)[x]$\ with $\deg( \beta(x) ) < \newn p^s$.
Note that $f(x)$\ in this form is a primary regular polynomial so the techniques of
Section \ref{Local.Subambients} can be applied.

Let
$\sR_0 = \frac{GR(p^a,m)[x]}{\id{f(x)}}$.
Let $C = \id{p^{j_0}g_0(x), \dots, p^{j_r}g_r(x)} \vartriangleleft \sR_0$\
 where the generators are as in Theorem \ref{theo.main1}.
As was done in (\ref{Local.Subambients.Equation.gx.in.terms.of.hx}),
we can express $g_r(x)$\ in the canonical form
\be\nn
  g_r(x) = p^0(x^\newn + \gamma_0)^{e_0}\alpha_0(x) + \cdots + p^{a-1}(x^\newn + \gamma_0)^{e_{a-1}}\alpha_{a-1}(x)
\ee
where each $\alpha_i(x)$ is either a unit or 0.
For $0 \neq g_r(x)$, we have $\alpha_0(x) \neq 0$\ since $p \nmid g_r(x)$.
Therefore $\alpha_0(x)$\ is a unit.
So, by Theorem \ref{Groebner.Corollary.Hamming.Distance.Using.SGB}, we deduce that
$d_H(C) = d_H( \overline{ \id{g_r(x)} }  ) = d_H(\overline{ \id{( x^\newn + \gamma )^{e_0}}}  )$.
Now $d_H( \overline{ \id{( x^\newn + \gamma )^{e_0}}} )$\ can be determined using Theorem \ref{Irreducible.Theorem.Main}.
  \begin{remark}
    Let $\myConstant, \myConstant_0, \lambda, \lambda_0$\ be as above.
    The $\lambda_0$-cyclic codes of length $\newn p^s$\ over $GR(p^a,m)$\ are
    the ideals of the ring $\frac{GR(p^a,m)[x]}{ \id{ x^{\newn p^s} - \lambda_0 } }$.
    Since $x^{\newn p^s} - \lambda_0 =  (x^\newn + \myConstant_0 )^{p^s} + p\beta^{'}(x)$,
    for some $\beta(x) \in GR(p^a,m)[x]$\ with $\deg (\beta^{'}(x)) <  \newn p^s$,
    we can determine the Hamming  distance of the
    $\lambda_0$-cyclic codes of length $\newn p^s$\ over $GR(p^a,m)$ as described above.
  \end{remark}


\section{Certain constacyclic codes of length $2\newn p^s$}
\label{Section.Reducible.Case.Finite.Fields}
We assume that $p$\ is an odd prime number, $\newn$\ and $s$\ are positive integers,
$\F_{p^m}$\ is a finite field of characteristic $p$\ and
$\lambda, \xi, \psi \in \F_{p^m} \setminus \{ 0 \}$ throughout this section.

Suppose that $\psi ^{p^s} = \lambda$\ and $x^{2\newn } - \psi$\ factors into two irreducible
polynomials over $\F_{p^m}$\ as
\be\label{Reducible.Equation.Factorization}
    x^{2\newn } - \psi = (x^\newn - \xi)(x^\newn + \xi).
\ee

In this section, we compute the Hamming distance of
$\lambda$-cyclic codes, of length $2\newn p^s$, over $\F_{p^m}$\ where
(\ref{Reducible.Equation.Factorization}) is satisfied.
Next, we determine the Hamming distance of certain  polycyclic codes,
and in particular certain constacyclic codes, of length
$\newn p^s$\ over $GR(p^a,m)$.
We know that $\lambda$-cyclic codes of length $2\newn p^s$\ over $\F_{p^m}$ correspond to the ideals
of the finite ring

\be
    \sR = \frac{\F_{p^m}[x]}{ \langle x^{2\newn p^s} - \lambda \rangle }.\nn
\ee
Note that, by Proposition \ref{prop.princ_code}, we have
$  \sR =  \id{x^{\newn p^s} + \xi^{p^s}} \oplus \id{x^{\newn p^s} - \xi^{p^s}}   $
and $\id{x^{\newn p^s} + \xi^{p^s}} \cong \frac{\F_{p^m}[x]}{ \id{x^{\newn p^s} - \xi^{p^s} }}$,
$\id{x^{\newn p^s} - \xi^{p^s}} \cong \frac{\F_{p^m}[x]}{ \id{x^{\newn p^s} + \xi^{p^s} }}$.
Moreover, by Proposition \ref{prop.princ_code},
the maximal ideals of $\sR$\ are $\id{x^\newn - \xi}$\ and $\id{x^\newn + \xi}$.
Since the monic polynomials dividing $x^{2\newn p^s} - \lambda$\ are exactly
the elements of the set $\{ (x^\newn - \xi)^i(x^\newn + \xi)^j: \quad 0 \le i,j \le p^s \}$,
the $\lambda$-cyclic codes, of length $2\newn p^s$, over $\F_{p^m}$\ are of the form
$\langle (x^\newn  - \xi)^i(x^\newn  + \xi)^j \rangle$, where $0 \le i,j \le p^s$\ are integers.

Let $C = \langle (x^\newn  - \xi)^i(x^\newn  + \xi)^j \rangle$.
If $(i,j) = (0,0)$, then $C = \sR$. If $(i,j) = (p^s, p^s)$, then
$C = \{ 0 \}$. For the remaining values of $(i,j)$, we consider the partition of the set
$\{ 1,2,\dots,p^s-1 \}$\ given in (\ref{Preliminaries.Partition}).

In order to simplify and improve the presentation of our results,
from Lemma \ref{Reducible.Lemma.Weight.3} till Corollary \ref{Reducible.Corollary.ips.j.tau.K},
we consider only the cases where $i \ge j$ explicitly.
We do so because the cases where $j > i$\ can be treated similarly as the
corresponding case of $i> j$.

Now we give an overview of the results in this section.
If $i = 0$, or $j= 0$, or $0 \le i,j \le p^{s-1}$, then the Hamming distance of
$C$\ can easily found to be 2 as shown in Lemma \ref{Reducible.Lemma.Weight.i.is.0.or.j.is.0}
and Lemma \ref{Reducible.Lemma.Weight.i.and.j.small}.

If $0 < j \le p^{s-1}$\ and $p^{s-1} + 1 \le i \le p^s$, then $d_H(C)$\ is computed in
Lemma \ref{Reducible.Lemma.Weight.3}, Corollary \ref{Reducible.Corollary.Weight.3},
Lemma \ref{Reducible.Lemma.Weight.4}\ and Corollary \ref{Reducible.Corollary.Weight.4}.

If $p^{s-1} + 1 \le j \le i \le (p-1)p^{s-1}$, then $d_H(C)$\ is computed in
Lemma \ref{Reducible.Lemma.beta.beta.prime} and Corollary \ref{Reducible.Corollary.beta.beta.prime}.

If $p^{s-1} + 1 \le j \le (p-1)p^{s-1} < i \le p^s -1$, then $d_H(C)$\ is computed
in Lemma \ref{Reducible.Lemma.beta.tauK} and Corollary \ref{Reducible.Corollary.beta.tauK}.

If $(p-1)p^{s-1} + 1 \le j \le i \le p^s -1$, then $d_H(C)$\ is computed in
Lemma \ref{Reducible.Lemma.tauK.same.k}, Corollary \ref{Reducible.Corollary.tauK.same.k},
Lemma \ref{Reducible.Lemma.tauK.different.k} and Corollary \ref{Reducible.Corollary.tauK.different.k}.

Finally if $i=p^s$\ and $0 < j < p^s-1$, then $d_H(C)$\ is computed from
Lemma \ref{Reducible.Lemma.ips.j.small}
till Corollary \ref{Reducible.Corollary.ips.j.tau.K}.

At the end of this section, we summarize our results in Theorem \ref{Reducible.Theorem.Main}.

We begin our computations with the case where $i = 0$\ or $j=0$.

\begin{lemma}\label{Reducible.Lemma.Weight.i.is.0.or.j.is.0}
    Let $0 < i,j \le p^s$\ be integers, let $C = \langle (x^\newn  - \xi)^i \rangle $\ and
    $D = \langle (x^\newn  + \xi)^j \rangle $. Then $d_H(C) = d_H(D) = 2$.
\end{lemma}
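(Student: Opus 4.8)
The plan is to establish the two inequalities $d_H(C)\ge 2$ and $d_H(C)\le 2$ separately, and then obtain the statement for $D$ by symmetry. First I would record the structural identity underlying everything. Since $\psi^{p^s}=\lambda$ and we work in characteristic $p$, we have $x^{2\newn p^s}-\lambda=(x^{2\newn}-\psi)^{p^s}$, so by the factorization (\ref{Reducible.Equation.Factorization}),
$x^{2\newn p^s}-\lambda=(x^\newn-\xi)^{p^s}(x^\newn+\xi)^{p^s}$.
In particular, for each $0<i\le p^s$ the element $(x^\newn-\xi)^i$ is neither zero nor a unit in $\sR$: it is nonzero because $(x^\newn+\xi)^{p^s}$ does not divide $(x^\newn-\xi)^i$ in $\F_{p^m}[x]$ (the two factors are coprime), and it is a zero divisor because it annihilates the nonzero element $(x^\newn-\xi)^{p^s-i}(x^\newn+\xi)^{p^s}$. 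Hence $C$ is a proper nonzero constacyclic code.

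For the lower bound I would show $C$ contains no word of Hamming weight one. Any such word is a monomial $cx^k$ with $c\neq 0$, and since $x^{2\newn p^s}=\lambda$ in $\sR$, the indeterminate $x$ is a unit with inverse $\lambda^{-1}x^{2\newn p^s-1}$; thus every nonzero monomial is a unit. A proper ideal cannot contain a unit, so $d_H(C)\ge 2$. This is exactly Lemma \ref{Preliminaries.Lemma.Hamming.Weight.Projection} specialized to the field case $a=1$, where the residue code coincides with $C$.

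For the upper bound I would exhibit an explicit weight-two codeword. Because $i\le p^s$, the element $(x^\newn-\xi)^{p^s}=(x^\newn-\xi)^{p^s-i}(x^\newn-\xi)^i$ lies in $C$, and in characteristic $p$ it equals $x^{\newn p^s}-\xi^{p^s}$. This representative has degree $\newn p^s<2\newn p^s$, so it is already reduced modulo $x^{2\newn p^s}-\lambda$ and has exactly two nonzero coefficients; it is therefore a nonzero codeword of Hamming weight two, giving $d_H(C)\le 2$. Combining the bounds yields $d_H(C)=2$. The identical argument with $\xi$ replaced by $-\xi$, using that (\ref{Reducible.Equation.Factorization}) is symmetric in the two factors, gives $d_H(D)=2$.

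There is essentially no serious obstacle here; every step is a short verification. The only points deserving care are confirming that the candidate weight-two element is genuinely nonzero in $\sR$ and that $C$ is a proper ideal, and both follow immediately from the coprimality of $(x^\newn-\xi)$ and $(x^\newn+\xi)$ recorded in the first paragraph.
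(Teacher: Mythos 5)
Your proof is correct and follows essentially the same route as the paper: the upper bound comes from the same explicit weight-two codeword $(x^\newn-\xi)^{p^s-i}(x^\newn-\xi)^i = x^{\newn p^s}-\xi^{p^s}$ (and its analogue for $D$), and the lower bound is exactly the content of Lemma \ref{Preliminaries.Lemma.Hamming.Weight.Projection}, which you simply unfold (monomials are units since $x$ is invertible modulo $x^{2\newn p^s}-\lambda$, and a proper ideal contains no unit). Your extra verification that $C$ is a proper nonzero ideal is a harmless, slightly more careful version of what the paper leaves implicit.
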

\begin{proof}
    Since
    \be
        (x^\newn  - \xi)^{p^s - i}(x^\newn  - \xi)^{i} & = & x^{\newn p^s} - \xi^{p^s} \in C \quad \mbox{and} \nn\\
        (x^\newn  + \xi)^{p^s - j}(x^\newn  + \xi)^{j} & = & x^{\newn p^s} + \xi^{p^s} \in D, \nn
    \ee
    we have $d_H(C), d_H(D) \le 2$. On the other hand, $d_H(C), d_H(D) \ge 2$\ by Lemma
    \ref{Preliminaries.Lemma.Hamming.Weight.Projection}.
    Hence $d_H(C) = d_H(D) = 2$.
\end{proof}

\begin{lemma}\label{Reducible.Lemma.Weight.i.and.j.small}
    Let $C = \langle (x^\newn  - \xi)^i(x^\newn  + \xi)^j \rangle$, for some integers $0 \le i,j \le p^{s-1}$\
    with $(i,j) \neq (0,0)$. Then $d_H(C) = 2$.
\end{lemma}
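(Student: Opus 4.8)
The plan is to establish the two inequalities $d_H(C)\ge 2$ and $d_H(C)\le 2$ separately; the first is immediate from an earlier lemma, and the second amounts to exhibiting a single explicit weight-$2$ codeword that works uniformly across the whole admissible range of $(i,j)$.

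For the lower bound, I would first note that the hypotheses force $C$ to be a proper nonzero ideal of $\sR$. It is nonzero because $i,j\le p^{s-1}<p^s$ makes $(x^\newn-\xi)^i(x^\newn+\xi)^j$ a proper divisor of $x^{2\newn p^s}-\lambda=(x^\newn-\xi)^{p^s}(x^\newn+\xi)^{p^s}$, hence it does not vanish in $\sR$; it is proper because its generator is a non-unit, as $(i,j)\ne(0,0)$. Since the length $2\newn p^s$ exceeds $1$, Lemma~\ref{Preliminaries.Lemma.Hamming.Weight.Projection} then gives $d_H(C)\ge 2$.

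For the upper bound, the key is the Frobenius identity in characteristic $p$. Because $p$ is odd, $p^{s-1}$ is odd, so $(-\psi)^{p^{s-1}}=-\psi^{p^{s-1}}$ and
\[
(x^\newn-\xi)^{p^{s-1}}(x^\newn+\xi)^{p^{s-1}}=\big((x^\newn-\xi)(x^\newn+\xi)\big)^{p^{s-1}}=(x^{2\newn}-\psi)^{p^{s-1}}=x^{2\newn p^{s-1}}-\psi^{p^{s-1}}.
\]
Since $\psi\ne 0$ forces $\psi^{p^{s-1}}\ne 0$, this polynomial has exactly two nonzero terms, so its Hamming weight is $2$. I would then check that it lies in $C$: as $i\le p^{s-1}$ and $j\le p^{s-1}$, the exponents $p^{s-1}-i$ and $p^{s-1}-j$ are nonnegative, whence
\[
x^{2\newn p^{s-1}}-\psi^{p^{s-1}}=\big[(x^\newn-\xi)^i(x^\newn+\xi)^j\big]\,(x^\newn-\xi)^{p^{s-1}-i}(x^\newn+\xi)^{p^{s-1}-j}\in C.
\]

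Combining the two bounds yields $d_H(C)=2$. There is no real obstacle here; the only delicate point is the sign $(-1)^{p^{s-1}}$ in the Frobenius expansion, which is resolved by the oddness of $p$. I would emphasize that, unlike the preceding Lemma~\ref{Reducible.Lemma.Weight.i.is.0.or.j.is.0}, no case split on whether $i=0$ or $j=0$ is needed, since the single codeword $x^{2\newn p^{s-1}}-\psi^{p^{s-1}}$ belongs to $C$ for every pair $(i,j)$ with $0\le i,j\le p^{s-1}$.
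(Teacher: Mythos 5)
Your proof is correct and follows essentially the same route as the paper's: the lower bound $d_H(C)\ge 2$ comes from Lemma \ref{Preliminaries.Lemma.Hamming.Weight.Projection}, and the upper bound from the weight-two codeword $(x^\newn-\xi)^{p^{s-1}}(x^\newn+\xi)^{p^{s-1}}=x^{2\newn p^{s-1}}-\xi^{2p^{s-1}}$, which is exactly your $x^{2\newn p^{s-1}}-\psi^{p^{s-1}}$ since $\psi=\xi^2$. Your extra remarks (verifying the hypotheses of the cited lemma, and the sign $(-1)^{p^{s-1}}=-1$ because $p$ is odd) are sound details the paper leaves implicit.
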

\begin{proof}
    By Lemma \ref{Preliminaries.Lemma.Hamming.Weight.Projection}, we have $d_H(C) \ge 2$
    and
    \be\nn
         (x^\newn  - \xi)^{i} (x^\newn  + \xi)^{j}(x^\newn  - \xi)^{p^{s-1} - i} (x^\newn  + \xi)^{p^{s-1} - j}
         = x^{2\newn p^{s-1}} - \xi^{2p^{s-1}} \in C
    \ee
    implies that $d_H(C) \le 2$. Hence $d_H(C) = 2$.
\end{proof}

Let $C = \langle(x^\newn  - \xi)^i(x^\newn  + \xi)^j \rangle$\ for some integers $0 \le i,j \le p^s$\ with $(0,0)\neq (i,j) \neq (p^s,p^s) $.
Let $0 \neq c(x) \in C$, then there exists $0 \neq f(x) \in \F_{p^m}[x]$\ such that
$c(x) \equiv f(x)(x^\newn  - \xi)^i(x^\newn  + \xi)^j \mod x^{2\newn p^s} - \lambda $.
Dividing $f(x)$\ by $(x^\newn  - \xi)^{p^{s}-i}(x^\newn  + \xi)^{p^s -j}$, we get
\be\nn
    f(x) = q(x)(x^\newn  - \xi)^{p^s - i}(x^\newn  + \xi)^{p^s -j} + r(x)
\ee
where $q(x)$, $r(x) \in \F_{q}[x]$\ and, either $r(x) = 0$\ or $\deg (r(x)) < 2\newn p^s - \newn i - \newn j$.
Since
\be
    c(x) & \equiv & f(x)(x^\newn  - \xi)^i(x^\newn  + \xi)^j \nn \\
    & \equiv & (q(x)(x^\newn  - \xi)^{p^s - i}(x^\newn + \xi)^{p^s - j} + r(x) )(x^\newn  - \xi)^i(x^\newn  + \xi)^j \nn \\
    & \equiv & q(x)(x^\newn  - \xi)^{p^s}(x^\newn + \xi)^{p^s} + r(x)(x^\newn  - \xi)^i(x^\newn  + \xi)^j \nn \\
    & \equiv & r(x)(x^\newn  - \xi)^i(x^\newn  + \xi)^j \mod x^{2\newn p^s} - \lambda , \nn
\ee
we may assume, without loss of generality, that $\deg (f(x)) < 2\newn p^s - \newn i - \newn j$.
Moreover $w_H( r(x)(x^\newn  - \xi)^i(x^\newn  + \xi)^j ) = w_H(c) $\ as
$\deg (r(x)(x^\newn  - \xi)^i(x^\newn  + \xi)^j) < 2\newn p^s$.

Let $i_0$\ and $j_0$\ be the largest integers with $(x^\newn  - \xi)^{i_0} | f(x)$\ and
$(x^\newn  + \xi)^{j_0} | f(x)$. Then there exists $g(x) \in  \F_{p^m}[x]$\ such that
$f(x) = (x^\newn  - \xi)^{i_0}(x^\newn  + \xi)^{j_0}g(x)$\ and $(x^\newn  - \xi) \nmid g(x)$, $(x^\newn  + \xi) \nmid g(x)$.
Clearly $\deg (f(x)) < 2\newn p^s - \newn i - \newn j $\ implies $i_0 + j_0 < 2p^s - i -j$.
Therefore $i_0 < p^s - i$\ or $j_0 < p^s - j$\ must hold.

So if $i_0 \ge p^s - i $, then $j_0 < p^s - j$. For such cases,
the following lemma will be used in our computations.

\begin{lemma}\label{Reducible.Lemma.Weight.Bound.When.i0.large}
    Let $i,j,i_0,j_0$\ be nonnegative integers such that $i \ge j$, $i_0 \ge p^s - i$\ and
    $j_0 < p^s -j$. Let $c(x) = (x^\newn  - \xi)^{i_0 + i} (x^\newn  + \xi)^{j_0 + j} g(x)$\ with
    $x^\newn  - \xi \nmid g(x)$\ and $x^\newn  + \xi \nmid g(x)$.
    Then $w_H( c(x) ) \ge 2w_H( (x^{2\newn } - \xi ^2)^{j_0 + j} )$.
\end{lemma}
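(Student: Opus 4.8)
The plan is to exploit the fact that the hypothesis $i_0\ge p^s-i$ forces the exponent $i_0+i$ to be at least $p^s$, so that a $p^s$-th power can be split off to produce the factor of $2$, and then to reduce the remaining estimate to the weight-retaining inequality (\ref{Inequality.Lower.Bound.g(x).xn+c.N}). First I would peel off a $p^s$-th power: writing $a=i_0+i-p^s\ge 0$ and using the characteristic-$p$ identity $(x^\newn-\xi)^{p^s}=x^{\newn p^s}-\xi^{p^s}$, I obtain
$$
c(x)=(x^{\newn p^s}-\xi^{p^s})\,Q(x),\qquad Q(x)=(x^\newn-\xi)^{a}(x^\newn+\xi)^{j_0+j}g(x).
$$
Expanding gives $c(x)=x^{\newn p^s}Q(x)-\xi^{p^s}Q(x)$. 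As recorded in the discussion preceding the lemma, we may assume $\deg c(x)<2\newn p^s$, hence $\deg Q(x)<\newn p^s$; the supports of $x^{\newn p^s}Q(x)$ and of $\xi^{p^s}Q(x)$ are then disjoint, so $w_H(c(x))=2\,w_H(Q(x))$. This is the step that produces the factor of $2$.

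It then remains to show $w_H(Q(x))\ge w_H\big((x^{2\newn}-\xi^2)^{j_0+j}\big)$. The key observation is that the weight given by (\ref{Equality.weight.xn+c.N}) depends only on the $p$-adic digits of the exponent, not on $\newn$ or $\myConstant$, so
$$
w_H\big((x^\newn+\xi)^{j_0+j}\big)=w_H\big((x^{2\newn}-\xi^2)^{j_0+j}\big).
$$
Thus I only need $w_H(Q(x))\ge w_H\big((x^\newn+\xi)^{j_0+j}\big)$, which lets me work with the single factor $(x^\newn+\xi)^{j_0+j}$ and avoids having to extract a full $(x^{2\newn}-\xi^2)^{j_0+j}$ from $Q$ (impossible when $a<j_0+j$). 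Writing $Q(x)=R(x)(x^\newn+\xi)^{j_0+j}$ with $R(x)=(x^\newn-\xi)^{a}g(x)$ and applying (\ref{Inequality.Lower.Bound.g(x).xn+c.N}) with $\myConstant=\xi$ and $N=j_0+j$ yields
$$
w_H(Q(x))\ge w_H\big(R(x)\bmod(x^\newn+\xi)\big)\cdot w_H\big((x^\newn+\xi)^{j_0+j}\big).
$$
Since $x^\newn+\xi$ is irreducible and divides neither $g(x)$ (by hypothesis) nor the coprime factor $(x^\newn-\xi)^{a}$, it does not divide $R(x)$, so $R(x)\bmod(x^\newn+\xi)\neq 0$ and the first factor is $\ge 1$. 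Combining the displays gives $w_H(c(x))=2\,w_H(Q(x))\ge 2\,w_H\big((x^{2\newn}-\xi^2)^{j_0+j}\big)$.

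The part that needs the most care is the weight-doubling step: the identity $w_H(c(x))=2\,w_H(Q(x))$ fails without the degree control $\deg Q(x)<\newn p^s$, since otherwise the two shifted copies $x^{\newn p^s}Q(x)$ and $\xi^{p^s}Q(x)$ can overlap and cancel (for instance $(x^{\newn p^s}-\xi^{p^s})(x^{\newn p^s}+\xi^{p^s})$ has weight $2$, not $4$). So I must be careful to invoke the normalization $\deg c(x)<2\newn p^s$ coming from the reduction of $f(x)$ established just before the lemma. Everything else is routine once one notices that the weight-retaining estimate may be applied to the single factor $(x^\newn+\xi)^{j_0+j}$ and that its weight already equals that of $(x^{2\newn}-\xi^2)^{j_0+j}$.
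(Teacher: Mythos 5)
Your proof is correct (under the section's standing assumption that $x^\newn-\xi$ and $x^\newn+\xi$ are irreducible), but it takes a genuinely different route from the paper's. The paper observes that the hypotheses force $i_0+i\ge j_0+j+1$, rewrites $c(x)=(x^{2\newn}-\xi^2)^{j_0+j}\,(x^\newn-\xi)^{i_0+i-j_0-j}g(x)$, and applies the weight-retaining inequality with base polynomial $x^{2\newn}-\xi^2$; its factor of $2$ comes from showing that the residue of the cofactor modulo $x^{2\newn}-\xi^2$ vanishes at the roots of $x^\newn-\xi$ but not at those of $x^\newn+\xi$, hence is nonzero and nonconstant and has weight at least $2$. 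You instead peel off $(x^\newn-\xi)^{p^s}=x^{\newn p^s}-\xi^{p^s}$, get the factor of $2$ from the disjointness of the supports of the two shifted copies of $Q(x)$, and then apply weight-retaining with base $x^\newn+\xi$ together with the observation that the digit formula (\ref{Equality.weight.xn+c.N}) gives $w_H((x^\newn+\xi)^{j_0+j})=w_H((x^{2\newn}-\xi^2)^{j_0+j})$. The trade-off is that your doubling step requires $\deg c(x)<2\newn p^s$, which is not part of the lemma's hypotheses (the paper's proof needs no degree restriction and so proves the statement as literally written), whereas your argument establishes it only for reduced codewords; you correctly flag this, and the normalization does hold at every point where the lemma is invoked, so nothing downstream is affected. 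What your route buys is the avoidance of the evaluation-at-roots argument in an extension field, and a completely explicit source for the factor of $2$ that parallels the identity $w_H\bigl((x^\newn+\myConstant_1)^{p^s}(x^\newn+\myConstant_2)^i\bigr)=2\,w_H\bigl((x^\newn+\myConstant_2)^i\bigr)$ already recorded at the end of Section \ref{Preliminaries.For.Computations}.
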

\begin{proof}
    Since $i_0 \ge p^s - i$\ and $-j_0 \ge -p^s + j + 1$, we have $i_0 - j_0 \ge j - i + 1$\ or equivalently $i_0 - j_0 + i - j \ge 1$.
    So
    $c(x) = (x^{2\newn } - \xi^2)^{j_0 + j}(x^\newn  - \xi)^{i_0 - j_0 + i - j}g(x)$.
    Dividing $(x^\newn  - \xi)^{i_0 - j_0 + i - j}g(x)$\ by $x^{2\newn }- \xi^2$, we get
    \be\label{Reducible.Lemma.Weight.Bound.When.i0.large.Division}
        (x^\newn  - \xi)^{i_0 - j_0 + i - j}g(x) = (x^{2\newn } - \xi^2)q(x) + r(x)
    \ee
    for some $q(x), r(x) \in \F_{q}[x]$\ with $r(x) = 0$\ or $\deg (r(x)) < 2\newn $.
    Let $\theta_1$\ and $\theta_2$\ be any roots of $x^\newn  - \xi$\ and $x^\newn  + \xi$,
    respectively, in some extension of $\F_{p^m}$.
    Obviously $\theta_1$\ and $\theta_2$\ are roots of $(x^{2\newn } - \xi^2)q(x)$.
    First we observe that $r(\theta _1) = 0$\ as $\theta_1$\ is a root of LHS of
    (\ref{Reducible.Lemma.Weight.Bound.When.i0.large.Division}).
    Second we observe that $r(\theta_2) \neq 0$\ as $\theta_2$\ is not a root of
    LHS of (\ref{Reducible.Lemma.Weight.Bound.When.i0.large.Division}).
    So it follows that $r(x)$\ is a nonzero and nonconstant polynomial implying
    $w_H(r(x)) \ge 2$. Therefore
    \be\label{Reducible.Lemma.Weight.Bound.When.i0.large.Bound}
        w_H( (x^\newn  - \xi)^{i_0 - j_0 + i - j}g(x)\mod x^{2\newn } - \xi^2 )
        = w_H( r(x) ) \ge 2.
    \ee
    Using (\ref{Inequality.Lower.Bound.g(x).xn+c.N}) and
    (\ref{Reducible.Lemma.Weight.Bound.When.i0.large.Bound}), we obtain
    \be\nn
        w_H( c(x) ) & = & w_H( (x^{2\newn } - \xi^2)^{j_0 + j}(x^\newn  - \xi)^{i_0 - j_0 + i - j}g(x) )\\
            & \ge & w_H( (x^{2\newn } - \xi^2)^{j_0 + j} )
                w_H( (x^\newn  - \xi)^{i_0 - j_0 + i - j}g(x)\mod x^{2\newn } - \xi^2 )\nn \\
            & \ge & 2w_H( (x^{2\newn } - \xi^2)^{j_0 + j} ).\nn
    \ee
\end{proof}

Now we have the machinery to obtain the Hamming distance of $C$\
for the ranges $p^{s-1} < i \le p^s$\ and $0 < j\le p^s$.

In what follows, for a particular range of $i$\ and $j$, we first give a lower bound on $d_H(C)$\
in the related lemma. Then in the next corollary, we determine $d_H(C)$\ by showing the
existence of a codeword that achieves the previously found lower bound.

We compute $d_H(C)$\ when $0 < j \le p^{s-1} < i \le 2p^{s-1}$\ in the following lemma and corollary.

\begin{lemma}\label{Reducible.Lemma.Weight.3}
    Let $C = \langle (x^\newn  - \xi)^{p^{s-1} + 1}(x^\newn  + \xi) \rangle$.
    Then $d_H(C) \ge 3$.
\end{lemma}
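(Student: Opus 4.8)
The plan is to show directly that every nonzero codeword of $C$ has Hamming weight at least $3$, using the weight-retaining machinery developed earlier in this section rather than merely citing the bound $d_H(C)\ge 2$ from Lemma \ref{Preliminaries.Lemma.Hamming.Weight.Projection}. Fix $0\neq c(x)\in C$. By the reduction carried out in the paragraphs preceding this lemma, applied with $i=p^{s-1}+1$ and $j=1$, I may write
\[
c(x)=(x^\newn-\xi)^{i_0+p^{s-1}+1}(x^\newn+\xi)^{j_0+1}g(x)
\]
as an identity in $\F_{p^m}[x]$ of degree less than $2\newn p^s$, where $x^\newn-\xi\nmid g(x)$, $x^\newn+\xi\nmid g(x)$, and $i_0+j_0<2p^s-(p^{s-1}+1)-1$. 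This last inequality forces either $i_0\ge p^s-(p^{s-1}+1)$ or $j_0<p^s-1$, and these two alternatives organize the argument into two cases. Note that here $i=p^{s-1}+1\ge 1=j$, so the hypothesis $i\ge j$ of Lemma \ref{Reducible.Lemma.Weight.Bound.When.i0.large} is available when needed.

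First I would dispose of the case $i_0\ge p^s-(p^{s-1}+1)$, which forces $j_0<p^s-1$. This is exactly the configuration of Lemma \ref{Reducible.Lemma.Weight.Bound.When.i0.large}, so $w_H(c(x))\ge 2\,w_H\big((x^{2\newn}-\xi^2)^{j_0+1}\big)$, and since $j_0+1\ge 1$ the weight formula (\ref{Equality.weight.xn+c.N}) (with $x^{2\newn}$ in place of $x^\newn$) gives $w_H\big((x^{2\newn}-\xi^2)^{j_0+1}\big)\ge 2$, hence $w_H(c(x))\ge 4$. The main case is $i_0<p^s-(p^{s-1}+1)$. Here I set $N=i_0+p^{s-1}+1$ and write $c(x)=h(x)(x^\newn-\xi)^N$ with $h(x)=(x^\newn+\xi)^{j_0+1}g(x)$. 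Since $x^\newn-\xi$ is coprime to both $x^\newn+\xi$ and $g(x)$, it does not divide $h(x)$, so $w_H\big(h(x)\bmod(x^\newn-\xi)\big)\ge 1$; the weight-retaining inequality (\ref{Inequality.Lower.Bound.g(x).xn+c.N}) then yields $w_H(c(x))\ge w_H\big((x^\newn-\xi)^N\big)$. The crucial point is that the constraint on $i_0$ places $N$ strictly between consecutive powers of $p$, namely $p^{s-1}<N<p^s$, so $N$ is not itself a power of $p$; consequently its $p$-adic expansion either has two or more nonzero digits or a single digit at least $2$, and in either case the product formula (\ref{Equality.weight.xn+c.N}) gives $w_H\big((x^\newn-\xi)^N\big)\ge 3$. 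Combining the two cases gives $w_H(c(x))\ge 3$ for every nonzero $c(x)\in C$, i.e.\ $d_H(C)\ge 3$.

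I expect the only delicate point to be the bookkeeping that pins $N$ strictly inside the open interval $(p^{s-1},p^s)$ in the main case: the lower bound $N>p^{s-1}$ is immediate from $N=i_0+p^{s-1}+1$, while the upper bound $N<p^s$ is precisely the case hypothesis $i_0<p^s-(p^{s-1}+1)$. It is this interval membership, rather than any explicit value of $N$, that rules out $N$ being a power of $p$ and thereby forces the weight of $(x^\newn-\xi)^N$ up to $3$; one should also check the small cases (e.g.\ $s=1$, where $N=i_0+2$ with $2\le N\le p-1$) are genuinely covered, which they are since $p$ is odd.
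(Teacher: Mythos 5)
Your proposal is correct and follows essentially the same route as the paper's proof: the same reduction to $c(x)=(x^\newn-\xi)^{i_0+p^{s-1}+1}(x^\newn+\xi)^{j_0+1}g(x)$, the same case split on whether $i_0<p^s-p^{s-1}-1$, the same appeal to Lemma \ref{Reducible.Lemma.Weight.Bound.When.i0.large} (giving weight $\ge 4$) in the large-$i_0$ case, and the same use of the weight-retaining inequality (\ref{Inequality.Lower.Bound.g(x).xn+c.N}) in the main case. The only (valid) variation is that where the paper cites Lemma \ref{Preliminaries.Lemma.weight.beta} with $\beta=1$, you verify $w_H\bigl((x^\newn-\xi)^{N}\bigr)\ge 3$ directly from the digit-product formula (\ref{Equality.weight.xn+c.N}) by noting that $p^{s-1}<N<p^s$ excludes $N$ from being a power of $p$, which amounts to the same computation.
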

\begin{proof}
    Pick $0 \neq c(x) \in C$\ where $c(x) \equiv f(x)(x^\newn  - \xi)^{p^{s-1} + 1}(x^\newn  + \xi) \mod x^{2\newn p^s} - \lambda$\
    for some $0 \neq f(x) \in \F_{p^m}[x]$\ with $\deg (f(x)) < 2\newn p^s - \newn p^{s-1} - 2\newn $.
    Let $i_0$\ and $j_0$\ be the largest integers with $(x^\newn  - \xi)^{i_0} | f(x)$\ and
    $ (x^\newn  + \xi)^{j_0} | f(x)$. Then $f(x)$\ is of the form
    $f(x) = (x^\newn  - \xi)^{i_0}(x^\newn  + \xi)^{j_0}g(x)$\ for some $g(x) \in \F_{p^m}[x]$\ with
    $ x^\newn  - \xi \nmid g(x)$\ and $ x^\newn  + \xi \nmid g(x)$.
    Note that $i_0 < p^s - p^{s-1} -1$\ or $j_0 < p^s - 1$\ holds.

    If $i_0 < p^s -p^{s-1} -1$, then, by Lemma \ref{Preliminaries.Lemma.weight.beta},
    \be\label{Reducible.Lemma.Weight.3.Case.1.xn.xi.gt3}
        w_H( (x^\newn  - \xi)^{i_0 + p^{s-1} + 1} ) \ge 3.
    \ee
    Moreover the inequality
    \be\label{Reducible.Lemma.Weight.3.Case1.mod.Ineq}
        w_H( g(x)(x^\newn  + \xi)^{j_0 + 1}\mod x^\newn  - \xi ) > 0
    \ee
    holds since $x^\newn  - \xi \nmid g(x)$.
    Now using (\ref{Inequality.Lower.Bound.g(x).xn+c.N}), (\ref{Reducible.Lemma.Weight.3.Case.1.xn.xi.gt3}) and
    (\ref{Reducible.Lemma.Weight.3.Case1.mod.Ineq}), we obtain

    \be\label{Reducible.Lemma.Weight.3.Case.1}
        \begin{array}{rcl}
            w_H(c(x)) & = & w_H( f(x)(x^\newn  - \xi)^{p^{s-1}+1}(x^\newn  + \xi) )\\
                & = & w_H( (x^\newn  - \xi)^{i_0 + p^{s-1}  + 1} (x^\newn  + \xi)^{j_0 + 1}g(x))\\
                & \ge & w_H( (x^\newn  - \xi)^{i_0 + p^{s-1} + 1} )
                    w_H( (x^\newn  + \xi)^{j_0 + 1}g(x)\mod x^\newn  - \xi )\\
                & \ge& 3.
        \end{array}
    \ee

    If $i_0 \ge p^s - p^{s-1} -1$, then $j_0 < p^s -1$.
    Clearly $w_H( (x^{2\newn } - \xi ^2)^{j_0 + j} ) \ge 2$.
    So, by Lemma \ref{Reducible.Lemma.Weight.Bound.When.i0.large}, we have
    \be\label{Reducible.Lemma.Weight.3.Case.2}
        w_H( c(x) ) \ge 2 w_H( (x^{2\newn } - \xi ^2)^{j_0 + j} ) \ge 4.
    \ee

    Now combining (\ref{Reducible.Lemma.Weight.3.Case.1}) and
    (\ref{Reducible.Lemma.Weight.3.Case.2}), we obtain
    $w_H(c(x)) \ge 3$, and hence $d_H(C) \ge 3$.
\end{proof}
\begin{corollary}\label{Reducible.Corollary.Weight.3}
    Let $i,j$\ be integers with $2p^{s-1} \ge i > p^{s-1} \ge j > 0$\ and
    let $C = \langle (x^\newn  - \xi)^{i}(x^\newn  + \xi)^{j} \rangle$. Then $d_H(C) = 3$.
\end{corollary}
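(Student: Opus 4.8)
The plan is to follow the same two-step template used in the other corollaries of this section: bound $d_H(C)$ from below by a containment together with Lemma \ref{Reducible.Lemma.Weight.3}, and bound it from above by writing down a single explicit codeword and reading off its weight from the formula (\ref{Equality.weight.xn+c.N}). For the lower bound I would first note that the hypotheses $i>p^{s-1}$ and $j>0$ give $i\geq p^{s-1}+1$ and $j\geq 1$, so the generator of $C$ factors as
\[
(x^\newn-\xi)^i(x^\newn+\xi)^j=(x^\newn-\xi)^{\,i-p^{s-1}-1}(x^\newn+\xi)^{\,j-1}\cdot(x^\newn-\xi)^{p^{s-1}+1}(x^\newn+\xi),
\]
where both exponents in the first factor are nonnegative. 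Hence $C\subseteq\langle(x^\newn-\xi)^{p^{s-1}+1}(x^\newn+\xi)\rangle$, and since passing to a subcode can only increase the minimum weight, Lemma \ref{Reducible.Lemma.Weight.3} yields $d_H(C)\geq 3$.

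For the upper bound the key is to exhibit one codeword of weight exactly $3$. I would take
\[
c(x)=(x^\newn-\xi)^{2p^{s-1}}(x^\newn+\xi)^{2p^{s-1}}=(x^{2\newn}-\xi^2)^{2p^{s-1}}.
\]
Because $i\leq 2p^{s-1}$ and $j\leq p^{s-1}\leq 2p^{s-1}$, this is a polynomial multiple of the generator $(x^\newn-\xi)^i(x^\newn+\xi)^j$ and therefore lies in $C$. Applying (\ref{Equality.weight.xn+c.N}) with $x$ replaced by $x^{2\newn}$ and $N=2p^{s-1}$, whose $p$-adic expansion has the single nonzero digit $b_{s-1}=2$, gives $w_H(c(x))=2+1=3$. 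Combining the two bounds gives $d_H(C)=3$.

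The only point that needs care — and it is where the standing assumption that $p$ is odd gets used — is the upper-bound codeword. Oddness guarantees $2p^{s-1}<p^s$, so that $c(x)$ has degree $4\newn p^{s-1}<2\newn p^s$, is its own reduced representative, and is genuinely nonzero (neither exponent reaches $p^s$). Oddness also ensures that $(x^{2\newn}-\xi^2)^{2p^{s-1}}$ really splits into three nonzero terms rather than collapsing, the relevant coefficient being $\binom{2}{1}=2\not\equiv 0\pmod p$; this is exactly what the weight formula records. I do not anticipate any genuine obstacle: the containment is immediate from the exponent inequalities, and the weight computation is a direct instance of the already-established formula (\ref{Equality.weight.xn+c.N}).
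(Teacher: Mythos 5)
Your proof is correct and follows essentially the same route as the paper: the same containment $C\subseteq\langle(x^\newn-\xi)^{p^{s-1}+1}(x^\newn+\xi)\rangle$ combined with Lemma \ref{Reducible.Lemma.Weight.3} for the lower bound, and the same weight-$3$ codeword $(x^{2\newn}-\xi^2)^{2p^{s-1}}$ for the upper bound. The only (cosmetic) difference is that you read the weight off formula (\ref{Equality.weight.xn+c.N}), whereas the paper expands the trinomial $x^{4\newn p^{s-1}}-2\xi^{2p^{s-1}}x^{2\newn p^{s-1}}+\xi^{4p^{s-1}}$ explicitly; both rely on $p$ being odd, as you correctly note.
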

\begin{proof}
    Since $C \subset \langle (x^\newn  - \xi)^{p^{s-1} + 1}(x^\newn  + \xi) \rangle$, we know,
    by Lemma \ref{Reducible.Lemma.Weight.3}, that $d_H(C) \ge 3$.
    For $(x^\newn  - \xi)^{2p^{s-1}}(x^\newn  + \xi)^{2p^{s-1}} \in C$, we have
    \be\nn
        (x^\newn  - \xi)^{2p^{s-1}}(x^\newn  + \xi)^{2p^{s-1}} = (x^{2\newn } - \xi^2)^{2p^{s-1}}
        = x^{4\newn p^{s-1}} - 2\xi^{2p^{s-1}} x^{2\newn p^{s-1}} + \xi^{4p^{s-1}} .
    \ee
    So $d_H(C) \le 3$\ and hence $d_H(C) = 3$.
\end{proof}

For $2p^{s-1} < i < p^s$\ and $0 < j \le p^{s-1}$, $d_H(C)$\ is computed in the following lemma
and corollary.
\begin{lemma}\label{Reducible.Lemma.Weight.4}
    Let $C = \langle (x^\newn  - \xi)^{2p^{s-1} + 1}(x^\newn  + \xi) \rangle $.
    Then $d_H(C) \ge 4$.
\end{lemma}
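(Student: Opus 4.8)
The plan is to follow the proof of Lemma \ref{Reducible.Lemma.Weight.3} almost verbatim, replacing the exponent $p^{s-1}+1$ by $2p^{s-1}+1$ so that the relevant instance of Lemma \ref{Preliminaries.Lemma.weight.beta} is now the case $\beta=2$. Here $i=2p^{s-1}+1$ and $j=1$. First I would reduce an arbitrary nonzero codeword to standard form exactly as in the paragraph preceding Lemma \ref{Reducible.Lemma.Weight.3}: given $0\neq c(x)\in C$ there is $0\neq f(x)\in\F_{p^m}[x]$ with $c(x)\equiv f(x)(x^\newn-\xi)^{2p^{s-1}+1}(x^\newn+\xi)\bmod x^{2\newn p^s}-\lambda$, and after dividing by $(x^\newn-\xi)^{p^s-i}(x^\newn+\xi)^{p^s-j}$ we may assume $\deg(f(x))<2\newn p^s-2\newn p^{s-1}-2\newn$. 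Writing $f(x)=(x^\newn-\xi)^{i_0}(x^\newn+\xi)^{j_0}g(x)$ with $x^\newn-\xi\nmid g(x)$ and $x^\newn+\xi\nmid g(x)$, the degree bound gives $i_0+j_0<2p^s-2p^{s-1}-2$, and $c(x)=(x^\newn-\xi)^{i_0+2p^{s-1}+1}(x^\newn+\xi)^{j_0+1}g(x)$.

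The key point is to split on the same threshold $i_0=p^s-i=p^s-2p^{s-1}-1$ that appears in the hypothesis of Lemma \ref{Reducible.Lemma.Weight.Bound.When.i0.large}. In the first case $i_0<p^s-2p^{s-1}-1$, Lemma \ref{Preliminaries.Lemma.weight.beta} (with $\beta=2$ and $m=i_0$) yields $w_H((x^\newn-\xi)^{i_0+2p^{s-1}+1})\ge 4$. Since $x^\newn-\xi$ is coprime to $(x^\newn+\xi)^{j_0+1}g(x)$, this factor is nonzero modulo $x^\newn-\xi$, so the weight-retaining inequality (\ref{Inequality.Lower.Bound.g(x).xn+c.N}) gives $w_H(c(x))\ge w_H\big((x^\newn+\xi)^{j_0+1}g(x)\bmod(x^\newn-\xi)\big)\cdot w_H((x^\newn-\xi)^{i_0+2p^{s-1}+1})\ge 4$.

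In the second case $i_0\ge p^s-2p^{s-1}-1=p^s-i$; then the bound $i_0+j_0<2p^s-2p^{s-1}-2$ forces $j_0<p^s-1=p^s-j$, so the hypotheses of Lemma \ref{Reducible.Lemma.Weight.Bound.When.i0.large} are met (with $i\ge j$ immediate). That lemma gives $w_H(c(x))\ge 2\,w_H((x^{2\newn}-\xi^2)^{j_0+1})\ge 4$, since $j_0+1\ge 1$ forces $w_H((x^{2\newn}-\xi^2)^{j_0+1})\ge 2$. Combining the two cases yields $w_H(c(x))\ge 4$ for every nonzero $c(x)\in C$, hence $d_H(C)\ge 4$.

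Everything reduces to two already-proved estimates, so the only genuine step is the bookkeeping that aligns the case split with the hypothesis $i_0\ge p^s-i$ of Lemma \ref{Reducible.Lemma.Weight.Bound.When.i0.large}; the main thing to verify is that the threshold $p^s-2p^{s-1}-1$ equals $p^s-i$ and that the complementary bound is exactly $j_0<p^s-j$, which is what lets Case~2 go through. I also expect to need $\beta=2\le p-2$ for the invocation of Lemma \ref{Preliminaries.Lemma.weight.beta}, i.e.\ $p\ge 5$; for $p=3$ the exponent $2p^{s-1}+1$ falls in the range treated later by the $\tau,k$ lemmas, so no generality is lost.
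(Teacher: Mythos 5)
Your proof is correct and is essentially the paper's own: the same reduction to $c(x)=(x^\newn-\xi)^{i_0+2p^{s-1}+1}(x^\newn+\xi)^{j_0+1}g(x)$, the same case split at $i_0=p^s-2p^{s-1}-1$, Lemma \ref{Preliminaries.Lemma.weight.beta} with $\beta=2$ combined with (\ref{Inequality.Lower.Bound.g(x).xn+c.N}) in the first case, and Lemma \ref{Reducible.Lemma.Weight.Bound.When.i0.large} in the second. Your closing caveat about $p=3$ is a sharp observation (the paper's proof also invokes Lemma \ref{Preliminaries.Lemma.weight.beta} with $\beta=2>p-2$ in that case), but your proposed escape is not quite right: the later $\tau,k$ lemmas of the reducible section all require $j>(p-1)p^{s-1}$ and so never cover $j=1$; the clean repair is to replace Lemma \ref{Preliminaries.Lemma.weight.beta} by Lemma \ref{Preliminaries.Lemma.weight.tau.k} with $\tau=k=1$ inside Case~1, noting that for $p=3$ the threshold $p^s-2p^{s-1}-1$ equals $p^{s-1}-1$ and the resulting bound $(\tau+1)p^{k}=2p\ge 4$ suffices, while Case~2 is unchanged.
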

\begin{proof}
    Pick $0 \neq c(x) \in C$\ where $c(x) \equiv f(x)(x^\newn  - \xi)^{2p^{s-1} + 1}(x^\newn  + \xi) \mod x^{2\newn p^s} - \lambda$\
    for some $0 \neq f(x) \in \F_{p^m}[x]$\ with $\deg (f(x)) < 2\newn p^s - 2\newn p^{s-1} - 2\newn $.
    Let $i_0$\ and $j_0$\ be the largest integers with $(x^\newn  - \xi)^{i_0} | f(x)$\ and
    $(x^\newn  + \xi)^{j_0} | f(x)$. Then $f(x)$\ is of the form
    $f(x) = (x^\newn  - \xi)^{i_0}(x^\newn  + \xi)^{j_0}g(x)$\ for some $g(x) \in \F_{p^m}[x]$\ with $x^\newn  - \xi \nmid g(x)$\
    and $x^\newn  + \xi \nmid g(x)$. Note that $i_0 < p^s - 2p^{s-1} -1$\ or $j_0 < p^s - 1$\ holds since
    $\deg (f(x)) < 2\newn p^s - 2\newn p^{s-1} - 2\newn $.

    If $i_0 < p^s - 2p^{s-1} - 1$, then, by Lemma \ref{Preliminaries.Lemma.weight.beta}, we have
    \be\label{Reducible.Lemma.Weight.4.Ineq1}
        w_H( (x^\newn  - \xi)^{i_0 + 2p^{s-1} + 1} ) \ge 4.
    \ee
    Since $x^\newn  - \xi \nmid g(x)$,
    \be\label{Reducible.Lemma.Weight.4.Ineq2}
        w_H( g(x)(x^\newn  + \xi)^{j_0 + 1} \mod x^\newn  -\xi ) > 0
    \ee
    holds. Now using (\ref{Reducible.Lemma.Weight.4.Ineq1}), (\ref{Reducible.Lemma.Weight.4.Ineq2}) and
    (\ref{Inequality.Lower.Bound.g(x).xn+c.N}), we obtain
    \be
        w_H( c(x) ) & = & w_H( f(x)(x^\newn  - \xi)^{2p^{s-1}+1}(x^\newn  + \xi) )\nn\\
            & = & w_H( (x^\newn  - \xi)^{i_0 + 2p^{s-1} + 1}(x^\newn  + \xi)^{j_0 + 1}g(x) )\nn\\
            & \ge & w_H( (x^\newn  + \xi)^{j_0 + 1}g(x) \mod x^\newn  - \xi )w_H( (x^\newn  - \xi)^{i_0 + 2p^{s-1} +1} ) \nn\\
            & \ge & 4.\nn
    \ee

    If $i_0 \ge p^s - 2p^{s-1} -1$, then $j_0 < p^s -1$.
    Clearly $w_H( (x^{2\newn }- \xi^2)^{j_0 + 1} )\ge 2$.
    So, by Lemma \ref{Reducible.Lemma.Weight.Bound.When.i0.large}, we have
    $w_H( c(x) ) \ge 2w_H( (x^{2\newn } - \xi^2)^{j_0 + 1} )\ge 4$.
    Hence $d_H(C) \ge 4$.
\end{proof}

\begin{corollary}\label{Reducible.Corollary.Weight.4}
    Let $2p^{s-1} < i < p^s$\ and $0 < j \le p^{s-1}$\ be integers, and let
    $C = \langle (x^\newn  - \xi)^i(x^\newn  + \xi)^j \rangle$. Then $d_H(C) = 4$.
\end{corollary}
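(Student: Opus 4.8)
The plan is to bracket $d_H(C)$ between $4$ and $4$ using the now-standard two-sided argument: a containment to invoke the lower bound of Lemma \ref{Reducible.Lemma.Weight.4}, and an explicit low-weight codeword for the upper bound.

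First I would establish $d_H(C) \ge 4$. Since $2p^{s-1} < i$ forces $i \ge 2p^{s-1}+1$ and $0 < j$ forces $j \ge 1$, the generator factors as
\[
(x^\newn - \xi)^i (x^\newn + \xi)^j = (x^\newn - \xi)^{2p^{s-1}+1}(x^\newn + \xi)\cdot (x^\newn - \xi)^{\,i-2p^{s-1}-1}(x^\newn + \xi)^{\,j-1},
\]
with both exponents in the trailing factor nonnegative. Hence $C \subseteq \langle (x^\newn - \xi)^{2p^{s-1}+1}(x^\newn + \xi)\rangle$, and since a subcode has Hamming distance no smaller than the containing code, Lemma \ref{Reducible.Lemma.Weight.4} gives $d_H(C) \ge 4$.

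For the upper bound I would exhibit a single codeword of weight $4$. Because $i < p^s$ and $j \le p^{s-1}$, the element $(x^\newn - \xi)^{p^s}(x^\newn + \xi)^{p^{s-1}}$ is a multiple of the generator (the exponents $p^s$ and $p^{s-1}$ dominate $i$ and $j$ respectively), so it lies in $C$. Using the Frobenius identities $(x^\newn - \xi)^{p^s} = x^{\newn p^s} - \xi^{p^s}$ and $(x^\newn + \xi)^{p^{s-1}} = x^{\newn p^{s-1}} + \xi^{p^{s-1}}$ (valid since $p$ is odd), this codeword expands as
\[
x^{\newn(p^s+p^{s-1})} + \xi^{p^{s-1}}x^{\newn p^s} - \xi^{p^s}x^{\newn p^{s-1}} - \xi^{p^s+p^{s-1}},
\]
whose four monomials have the pairwise distinct degrees $\newn(p^s+p^{s-1})$, $\newn p^s$, $\newn p^{s-1}$, $0$ and nonzero coefficients; moreover its degree $\newn(p^s+p^{s-1}) < 2\newn p^s$, so no reduction modulo $x^{2\newn p^s}-\lambda$ occurs. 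Thus this codeword has Hamming weight exactly $4$, giving $d_H(C) \le 4$ and hence $d_H(C) = 4$.

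I expect no real obstacle here; the argument is routine once Lemma \ref{Reducible.Lemma.Weight.4} is in hand, and it parallels the proof of Corollary \ref{Reducible.Corollary.Weight.3} almost verbatim (only the exhibited codeword changes). The sole points needing care are verifying that the chosen weight-$4$ element genuinely lies in $C$ (which reduces to the exponent inequalities $p^s \ge i$ and $p^{s-1} \ge j$) and confirming that its degree stays below $2\newn p^s$, so that its Hamming weight is read off directly from the polynomial rather than from its reduction in $\sR$.
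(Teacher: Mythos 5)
Your proposal is correct and matches the paper's own proof essentially verbatim: the lower bound comes from the containment $C \subseteq \langle (x^\newn - \xi)^{2p^{s-1}+1}(x^\newn + \xi)\rangle$ together with Lemma \ref{Reducible.Lemma.Weight.4}, and the upper bound from the same weight-$4$ codeword $(x^\newn - \xi)^{p^s}(x^\newn + \xi)^{p^{s-1}}$. The only difference is that you spell out the Frobenius expansion and the degree check explicitly, which the paper leaves implicit.
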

\begin{proof}
    Since $C \subset \langle (x^\newn  - \xi)^{2p^{s-1} + 1}(x^\newn  + \xi) \rangle $,
    we know, by Lemma \ref{Reducible.Lemma.Weight.4}, that $d_H(C) \ge 4$.
    For $(x^\newn  - \xi)^{p^s}(x^\newn  + \xi)^{p^{s-1}}\in C$, we have
    $w_H( (x^\newn  - \xi)^{p^s}(x^\newn  + \xi)^{p^{s-1}} ) = 4$. Thus $d_H(C) \le 4$\ and hence $d_H(C) = 4$.
\end{proof}

Next we consider the cases where $p^{s-1} < j \le i \le p^s$. We begin with computing
$d_H(C)$\ when $p^{s-1} < j \le i \le (p-1)p^{s-1}$\ in the following lemma and corollary.

\begin{lemma}\label{Reducible.Lemma.beta.beta.prime}
    Let $1 \le \beta^{'} \le \beta \le p-2$\ be integers and
    $C = \langle (x^\newn  - \xi)^{\beta p^{s-1} + 1}(x^\newn  + \xi)^{\beta^{'}p^{s-1} + 1}  \rangle$.
    Then $d_H(C) \ge \min \{ \beta + 2, 2(\beta^{'} + 2) \}$.
\end{lemma}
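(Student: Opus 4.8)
The plan is to bound the Hamming weight of an arbitrary nonzero codeword $c(x) \in C$ from below by $\min\{\beta+2,\, 2(\beta'+2)\}$, following the pattern already used in Lemma \ref{Reducible.Lemma.Weight.3} and Lemma \ref{Reducible.Lemma.Weight.4}. Using the normal form established in the paragraph preceding Lemma \ref{Reducible.Lemma.Weight.Bound.When.i0.large}, I would write $c(x) = (x^\newn - \xi)^{i_0 + i}(x^\newn + \xi)^{j_0 + j}g(x)$, with $i = \beta p^{s-1}+1$ and $j = \beta' p^{s-1}+1$, where $i_0$ (resp.\ $j_0$) is the largest power of $x^\newn - \xi$ (resp.\ $x^\newn + \xi$) dividing the cofactor $f(x)$, so that $x^\newn - \xi \nmid g(x)$, $x^\newn + \xi \nmid g(x)$, and $i_0 + j_0 < 2p^s - i - j$. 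This last inequality forces either $i_0 < p^s - i$ or $j_0 < p^s - j$, which produces the two cases of the argument.

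In the case $i_0 < p^s - i = p^s - \beta p^{s-1} - 1$, I would treat $c(x)$ as $h(x)(x^\newn - \xi)^{i_0 + i}$ with $h(x) = (x^\newn + \xi)^{j_0 + j}g(x)$ and invoke the weight-retaining bound (\ref{Inequality.Lower.Bound.g(x).xn+c.N}) with $\myConstant = -\xi$. Since $\beta \ge \beta' \ge 1$ (hence $i \ge j$) and $i_0 < p^s - \beta p^{s-1} - 1$, Lemma \ref{Preliminaries.Lemma.weight.beta} gives $w_H\bigl((x^\newn - \xi)^{i_0 + \beta p^{s-1} + 1}\bigr) \ge \beta + 2$. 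Because $x^\newn - \xi$ is irreducible and divides neither $(x^\newn + \xi)^{j_0+j}$ (by coprimality, as $p$ is odd and $\xi \neq 0$) nor $g(x)$, the reduction $h(x) \bmod (x^\newn - \xi)$ is nonzero, so $w_H(c(x)) \ge \beta + 2$.

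In the case $i_0 \ge p^s - i$, the constraint $i_0 + j_0 < 2p^s - i - j$ forces $j_0 < p^s - j$, so Lemma \ref{Reducible.Lemma.Weight.Bound.When.i0.large} applies and yields $w_H(c(x)) \ge 2\, w_H\bigl((x^{2\newn} - \xi^2)^{j_0 + j}\bigr)$. The key observation is that $j_0 < p^s - j = p^s - \beta' p^{s-1} - 1$ is exactly the hypothesis needed to apply Lemma \ref{Preliminaries.Lemma.weight.beta} to $(x^{2\newn} - \xi^2)^{j_0 + \beta' p^{s-1}+1}$, after identifying $x$ with $x^{2\newn}$ and taking $\myConstant = -\xi^2$ and $\beta = \beta'$; this gives $w_H\bigl((x^{2\newn}-\xi^2)^{j_0+j}\bigr) \ge \beta' + 2$, whence $w_H(c(x)) \ge 2(\beta'+2)$.

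Combining the two cases, every nonzero $c(x) \in C$ satisfies $w_H(c(x)) \ge \min\{\beta+2,\, 2(\beta'+2)\}$, which is the asserted bound on $d_H(C)$. I expect the only delicate point to be the bookkeeping in the second case: checking that $i_0 \ge p^s - i$ genuinely forces $j_0 < p^s - j$, and that this is precisely the range in which Lemma \ref{Preliminaries.Lemma.weight.beta} applies with parameter $\beta'$ (governing the $+\xi$ factor) rather than $\beta$ (governing the $-\xi$ factor). Everything else reduces to the weight-retaining machinery of Section \ref{Preliminaries.For.Computations}, so no genuinely new estimate should be required.
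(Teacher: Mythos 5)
Your proof is correct and follows essentially the same route as the paper's: the same dichotomy on whether $i_0 < p^s - i$ or not, the same use of Lemma \ref{Preliminaries.Lemma.weight.beta}, Lemma \ref{Reducible.Lemma.Weight.Bound.When.i0.large} and the weight-retaining inequality (\ref{Inequality.Lower.Bound.g(x).xn+c.N}). The only (harmless) difference is that the paper first isolates the case $\beta = \beta'$ and works directly with powers of $x^{2\newn}-\xi^2$ there, whereas you handle that case uniformly within the same $i_0/j_0$ dichotomy, which is valid since Lemma \ref{Reducible.Lemma.Weight.Bound.When.i0.large} only requires $i \ge j$.
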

\begin{proof}
    Let $0 \neq c(x) \in C$. Then there exists $0 \neq f(x) \in \F_{p^m}[x]$\ such that
    $c(x) \equiv f(x)(x^\newn  - \xi)^{\beta p^{s-1} + 1}(x^\newn  + \xi)^{\beta^{'} p^{s-1} + 1} \mod x^{2\newn p^s} - \lambda$.
    We may assume that $\deg (f(x)) < 2\newn p^s - \newn \beta p^{s-1} - \newn \beta^{'}p^{s-1} -2\newn $.
    We consider the cases $\beta = \beta^{'}$\ and $\beta < \beta^{'}$\ separately.

    First, we assume that $\beta = \beta^{'}$.
    Then $C = \langle (x^\newn  - \xi)^{\beta p^{s-1} + 1}(x^\newn  + \xi)^{\beta^{'}p^{s-1} + 1} \rangle =
        \langle (x^{2\newn } - \xi^2)^{\beta p^{s-1} + 1}\rangle$.
    Let $m$\ be the largest nonnegative integer with $(x^{2\newn } - \xi^2)^m | f(x)$.
    We have $m < p^s - \beta p^{s-1} - 1$\ as $\deg (f(x)) < 2\newn p^s - 2\newn \beta p^{s-1}- 2\newn $.
    So, by Lemma \ref{Preliminaries.Lemma.weight.beta}, we get
    \be\label{Reducible.Lemma.beta.beta.prime.Weight.beta.beta.prime.equal}
        w_H( (x^{2\newn } - \xi^2)^{\beta p^{s-1} + 1 + m} ) \ge \beta + 2.
    \ee
    Clearly $f(x)$\ is of the form $f(x) = (x^{2\newn } - \xi^2)^m g(x)$\ for some $g(x) \in \F_{p^m}[x]$\
    where $x^{2\newn } - \xi^2 \nmid g(x)$. So $ g(x) \mod x^{2\newn } - \xi^2 \neq 0$\ and therefore
    \be\label{Reducible.Lemma.beta.beta.prime.Weight.Modulo.beta.beta.prime.equal}
        w_H (g(x) \mod x^{2\newn } - \xi^2) > 0.
    \ee
    So if $\beta = \beta^{'}$, then using
    (\ref{Reducible.Lemma.beta.beta.prime.Weight.beta.beta.prime.equal}),
    (\ref{Reducible.Lemma.beta.beta.prime.Weight.Modulo.beta.beta.prime.equal}) and
    (\ref{Inequality.Lower.Bound.g(x).xn+c.N}), we get
    \be\nn\label{Reducible.Lemma.beta.beta.prime.Case.beta.beta.prime.equal}
        w_H( c(x) ) & = & w_H( (x^{2\newn } - \xi^2)^{m + \beta p^{s-1} + 1} g(x) )\\
            & \ge &  w_H (g(x) \mod x^{2\newn } - \xi^2) w_H( (x^{2\newn } - \xi^2)^{m + \beta p^{s-1} + 1} )\nn\\
            & \ge & \beta + 2 \nn.
    \ee

    Second, we assume that $\beta^{'} < \beta$.
    For $c(x) \equiv f(x)(x^\newn  - \xi)^{\beta p^{s-1} + 1}(x^\newn  + \xi)^{\beta^{'}p^{s-1} + 1} \mod x^{2\newn p^s} - \lambda$,
    let $i_0$\ and $j_0$\ be the largest integers with $(x^\newn  - \xi)^{i_0} | f(x)$\ and $(x^\newn  + \xi)^{j_0} | f(x)$.
    Since $\deg (f(x)) < 2\newn p^s - \newn \beta p^{s-1} - \newn \beta^{'}p^{s-1} - 2\newn $, we have
    $i_0 + j_0 < 2p^s - \beta p^{s-1} - \beta^{'}p^{s-1} -2$.
    Thus $i_0 < p^s - \beta p^{s-1} - 1$\ or $j_0 < p^s - \beta^{'}p^{s-1} - 1$\ holds.

    If $i_0 < p^s - \beta p^{s-1} - 1$, then, by Lemma \ref{Preliminaries.Lemma.weight.beta}, we have
    \be\label{Reducible.Lemma.beta.beta.prime.weight.i0.Small}
        w_H( (x^\newn  - \xi)^{i_0 + \beta p^{s-1} + 1} ) \ge \beta + 2.
    \ee
    Note that $(x^\newn  + \xi)^{j_0 + \beta^{'}p^{s-1} + 1}g(x) \mod x^\newn  - \xi \neq 0$\ since
    $x^\newn  - \xi \nmid (x^\newn  + \xi)^{j_0 + \beta^{'}p^{s-1} + 1}g(x)$. Therefore
    \be\label{Reducible.Lemma.beta.beta.prime.Weight.Modulo.beta.large}
        w_H ((x^\newn  + \xi)^{j_0 + \beta^{'}p^{s-1} + 1}g(x) \mod x^\newn  - \xi ) > 0.
    \ee
    Using (\ref{Inequality.Lower.Bound.g(x).xn+c.N}),
    (\ref{Reducible.Lemma.beta.beta.prime.weight.i0.Small}) and
    (\ref{Reducible.Lemma.beta.beta.prime.Weight.Modulo.beta.large}), we obtain
    \be\label{Reducible.Lemma.beta.beta.prime.Case2.1}
        \begin{array}{rcl}
            w_H( c(x) ) & = & w_H( (x^\newn  - \xi)^{i_0 + \beta p^{s-1} + 1}(x^\newn  + \xi)^{j_0 + \beta^{'} p^{s-1} + 1}g(x) )\\
            & \ge & w_H( (x^\newn  + \xi)^{j_0 + \beta^{'}p^{s-1} + 1}g(x) \mod x^\newn  - \xi )w_H( (x^\newn  - \xi)^{i_0 + \beta p^{s-1} + 1} )\\
            & \ge &  \beta + 2.
        \end{array}
    \ee

    If $i_0 \ge p^s - \beta p^{s-1} - 1$, then $j_0 < p^s - \beta^{'}p^{s-1} - 1$.
    By Lemma \ref{Reducible.Lemma.Weight.Bound.When.i0.large}, we get
    \be\label{Reducible.Lemma.beta.beta.prime.weight.PreBound.i0.Large}
        w_H( c(x) )\ge 2w_H( (x^{2\newn } - \xi^2)^{j_0 + \beta^{'}p^{s-1} + 1} ).
    \ee
    For $w_H( (x^{2\newn } - \xi^2)^{j_0 + \beta^{'}p^{s-1} + 1} )$,
    we use Lemma \ref{Preliminaries.Lemma.weight.beta} and get
    \be\label{Reducible.Lemma.beta.beta.prime.weight.beta.Prime}
        w_H( (x^{2\newn } - \xi^2)^{j_0 + \beta^{'}p^{s-1} + 1} ) \ge \beta^{'} + 2.
    \ee
    Combining (\ref{Reducible.Lemma.beta.beta.prime.weight.PreBound.i0.Large}) and
    (\ref{Reducible.Lemma.beta.beta.prime.weight.beta.Prime}), we obtain
    \be\label{Reducible.Lemma.beta.beta.prime.Weight.i0Large}
        w_H( c(x) ) \ge 2(\beta^{'} + 2).
    \ee
    So if $\beta^{'} < \beta$, then, by (\ref{Reducible.Lemma.beta.beta.prime.Case2.1}) and
    (\ref{Reducible.Lemma.beta.beta.prime.Weight.i0Large}), we get that
    $w_H( c(x) ) \ge \min \{ \beta + 2, 2(\beta^{'} + 2) \}$.
    In both cases, namely $\beta = \beta^{'}$\ and $\beta^{'} < \beta $,
    we have shown that
    $d_H(C) \ge \min \{ \beta + 2, 2(\beta^{'} + 2) \}$.
\end{proof}

\begin{corollary}\label{Reducible.Corollary.beta.beta.prime}
    Let $j \le i $, $1\le \beta^{'} \le \beta \le p - 2$\ be integers
    such that
    \be\nn
        \begin{array}{rcccl}
            \beta p^{s-1} + 1 & \le & i & \le & (\beta + 1 )p^{s-1}\quad \mbox{and}\\
            \beta^{'}p^{s-1} + 1 & \le & j & \le & (\beta^{'} + 1 )p^{s-1}.
        \end{array}
    \ee
    Let $C = \langle (x^\newn  - \xi)^i(x^\newn  + \xi)^j \rangle$.
    Then $d_H(C) = \min \{ \beta + 2, 2(\beta^{'} + 2) \}$.
\end{corollary}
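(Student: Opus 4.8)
The plan is to prove the equality by matching bounds, exactly as in Corollaries \ref{Reducible.Corollary.Weight.3} and \ref{Reducible.Corollary.Weight.4}: first invoke the preceding lemma for the lower bound $d_H(C)\ge\min\{\beta+2,2(\beta'+2)\}$, and then exhibit two explicit codewords that pin the distance down from above.

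For the lower bound I would note that the hypotheses $\beta p^{s-1}+1\le i$ and $\beta'p^{s-1}+1\le j$ force $(x^\newn-\xi)^i(x^\newn+\xi)^j$ to be a multiple of $(x^\newn-\xi)^{\beta p^{s-1}+1}(x^\newn+\xi)^{\beta'p^{s-1}+1}$, whence $C\subseteq\langle(x^\newn-\xi)^{\beta p^{s-1}+1}(x^\newn+\xi)^{\beta'p^{s-1}+1}\rangle$. Since the minimum nonzero weight of a subcode is at least that of the ambient code, Lemma \ref{Reducible.Lemma.beta.beta.prime} gives $d_H(C)\ge\min\{\beta+2,2(\beta'+2)\}$ at once.

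For the upper bound I would produce one codeword of weight $\beta+2$ and one of weight $2(\beta'+2)$; taken together they yield $d_H(C)\le\min\{\beta+2,2(\beta'+2)\}$. For the first, since $(\beta+1)p^{s-1}\ge i$ and (because $\beta'\le\beta$) also $(\beta+1)p^{s-1}\ge j$, the element $(x^{2\newn}-\xi^2)^{(\beta+1)p^{s-1}}=(x^\newn-\xi)^{(\beta+1)p^{s-1}}(x^\newn+\xi)^{(\beta+1)p^{s-1}}$ lies in $C$; as $(\beta+1)p^{s-1}<p^s$ has the single nonzero $p$-adic digit $\beta+1$, formula (\ref{Equality.weight.xn+c.N}) with $x^{2\newn}$ in place of $x$ gives weight $\beta+2$. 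For the second, since $p^s\ge i$ and $(\beta'+1)p^{s-1}\ge j$, the element $(x^\newn-\xi)^{p^s}(x^\newn+\xi)^{(\beta'+1)p^{s-1}}$ lies in $C$, and by the identity $w_H((x^\newn+\myConstant_1)^{p^s}(x^\newn+\myConstant_2)^i)=2w_H((x^\newn+\myConstant_2)^i)$ established at the end of Section \ref{Preliminaries.For.Computations} (applied with $\myConstant_1=-\xi$, $\myConstant_2=\xi$, $i=(\beta'+1)p^{s-1}$), its weight is $2w_H((x^\newn+\xi)^{(\beta'+1)p^{s-1}})=2(\beta'+2)$ via (\ref{Equality.weight.xn+c.N}).

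Combining the two bounds gives $d_H(C)=\min\{\beta+2,2(\beta'+2)\}$. The only point demanding care—the main obstacle—is the upper bound: one must check that each proposed polynomial is a genuine nonzero codeword, i.e. that it is divisible by the generator yet has degree strictly below $2\newn p^s$ (so it does not reduce to $0$ modulo $x^{2\newn p^s}-\lambda$), and that its Hamming weight is read off from the correct $p$-adic digits. Both checks are routine once one uses $1\le\beta'\le\beta\le p-2$, which keeps every relevant exponent strictly below $p^s$ and hence below the nilpotency indices of $x^\newn-\xi$, $x^\newn+\xi$, and $x^{2\newn}-\xi^2$ in $\sR$.
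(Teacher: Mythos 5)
Your proposal is correct and follows essentially the same route as the paper's proof: the lower bound via the containment in $\langle (x^\newn-\xi)^{\beta p^{s-1}+1}(x^\newn+\xi)^{\beta' p^{s-1}+1}\rangle$ together with Lemma \ref{Reducible.Lemma.beta.beta.prime}, and the upper bound from the same two codewords $(x^{2\newn}-\xi^2)^{(\beta+1)p^{s-1}}$ and $(x^\newn-\xi)^{p^s}(x^\newn+\xi)^{(\beta'+1)p^{s-1}}$, with their weights computed from (\ref{Equality.weight.xn+c.N}) and the weight-doubling identity. The extra checks you flag (membership in $C$, degree below $2\newn p^s$, the use of $\beta'\le\beta\le p-2$) are exactly the routine verifications the paper leaves implicit.
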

\begin{proof}
    We know, by Lemma \ref{Reducible.Lemma.beta.beta.prime},
    that $d_H(C) \ge \min \{ \beta + 2, 2(\beta^{'} + 2) \}$.
    So it suffices to show $d_H(C) \le \min \{ \beta + 2, 2(\beta^{'} + 2) \}$.

    First, $(\beta + 1)p^{s-1} \ge i,j$\ implies that
    $(x^\newn  - \xi)^{(\beta + 1)p^{s-1}}(x^\newn  + \xi)^{(\beta + 1)p^{s-1}} =
     (x^{2\newn } - \xi^2)^{(\beta + 1)p^{s-1}} \in C$.
    By (\ref{Equality.weight.xn+c.N}), we get
    $w_H((x^{2\newn } - \xi^2)^{(\beta + 1)p^{s-1}}) = \beta + 2$.
    Therefore
    \be\label{Reducible.Corollary.beta.beta.prime.Bound.beta}
        d_H(C) \le \beta + 2.
    \ee

    Second, we consider $(x^\newn  - \xi)^{p^s}(x^\newn  + \xi)^{(\beta^{'} + 1)p^{s-1}} \in C$.
    Using (\ref{Equality.weight.xn+c.N}) and the fact that $p^s > (\beta^{'} + 1)p^{s-1}$,
    we get
    \be\nn
        w_H( (x^\newn  - \xi)^{p^s}(x^\newn  + \xi)^{(\beta^{'} + 1)p^{s-1}} )
        = 2w_H( (x^\newn  + \xi)^{(\beta^{'} + 1)p^{s-1}} ) = 2(\beta^{'} + 2).
    \ee
    So
    \be\label{Reducible.Corollary.beta.beta.prime.Bound.beta.prime}
        d_H(C) \le 2(\beta^{'} + 2).
    \ee
    Combining (\ref{Reducible.Corollary.beta.beta.prime.Bound.beta}) and
    (\ref{Reducible.Corollary.beta.beta.prime.Bound.beta.prime}),
    we deduce that $d_H(C) \le \min \{ \beta + 2, 2(\beta^{'} + 2) \}$.
    Therefore
        $d_H(C) = \min \{ \beta + 2, 2(\beta^{'} + 2) \}$.
\end{proof}

The following lemma and corollary deal with the case where $p^{s-1} < j \le (p-1)p^{s-1} < i < p^s$.

\begin{lemma}\label{Reducible.Lemma.beta.tauK}
    Let $1 \le \tau \le p-1$, $1\le \beta \le p-2$, $1\le k \le s-1$\ be integers and
    $C = \langle (x^\newn  - \xi)^{p^s - p^{s-k} + (\tau - 1)p^{s-k-1} + 1}(x^\newn  + \xi)^{\beta p^{s-1} +1} \rangle$. Then $d_H(C) \ge 2 (\beta + 2)$.
\end{lemma}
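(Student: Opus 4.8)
The plan is to reproduce, for this mixed $\tau$-$k$ / $\beta$ situation, the same two-case template used in the proof of Lemma \ref{Reducible.Lemma.beta.beta.prime}, splitting on which of the two irreducible factors of $f(x)$ is deficient. Here $i = p^s - p^{s-k} + (\tau-1)p^{s-k-1} + 1$ is of $\tau$-$k$ type while $j = \beta p^{s-1} + 1$ is of $\beta$ type, and I would first record that $i > j$: since $\beta \le p-2$ we have $j = \beta p^{s-1} + 1 \le (p-2)p^{s-1} + 1 < (p-1)p^{s-1} < i$. Next, pick $0 \neq c(x) \in C$; by the reduction recorded just before Lemma \ref{Reducible.Lemma.Weight.Bound.When.i0.large}, there is $0 \neq f(x) \in \F_{p^m}[x]$ with $c(x) \equiv f(x)(x^\newn - \xi)^i(x^\newn + \xi)^j \pmod{x^{2\newn p^s} - \lambda}$, with $\deg f(x) < 2\newn p^s - \newn i - \newn j$, and with $w_H(c(x)) = w_H\!\left(f(x)(x^\newn-\xi)^i(x^\newn+\xi)^j\right)$ computed in $\F_{p^m}[x]$. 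Let $i_0,j_0$ be maximal with $(x^\newn - \xi)^{i_0}\mid f(x)$ and $(x^\newn + \xi)^{j_0}\mid f(x)$, and write $f(x) = (x^\newn - \xi)^{i_0}(x^\newn + \xi)^{j_0}g(x)$ with $g(x)$ divisible by neither factor. Comparing degrees forces $i_0 + j_0 < 2p^s - i - j$, hence $i_0 < p^s - i$ or $j_0 < p^s - j$.

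In the first case I would use $i_0 < p^s - i = p^{s-k} - (\tau-1)p^{s-k-1} - 1$, so that Lemma \ref{Preliminaries.Lemma.weight.tau.k} (applied with the constant $-\xi$ and $m = i_0$) gives $w_H\!\left((x^\newn - \xi)^{i_0+i}\right) \ge (\tau+1)p^k$. Because $p$ is odd and $\xi \neq 0$, the factors $x^\newn - \xi$ and $x^\newn + \xi$ are coprime, and since $g(x)$ is also not divisible by $x^\newn - \xi$, the polynomial $(x^\newn + \xi)^{j_0+j}g(x)$ is nonzero modulo $x^\newn - \xi$; the weight-retaining inequality (\ref{Inequality.Lower.Bound.g(x).xn+c.N}) then yields $w_H(c(x)) \ge (\tau+1)p^k$. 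I would close this case with the numerical observation that $(\tau+1)p^k \ge 2p \ge 2(\beta+2)$, using $\tau \ge 1$ and $k \ge 1$ (so $p^k \ge p$) together with $\beta \le p-2$ (so $\beta+2 \le p$).

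In the second case $j_0 < p^s - j$, which forces $i_0 \ge p^s - i$; combined with $i > j$ this places us exactly in the hypotheses of Lemma \ref{Reducible.Lemma.Weight.Bound.When.i0.large}, giving $w_H(c(x)) \ge 2\,w_H\!\left((x^{2\newn} - \xi^2)^{j_0+j}\right)$. Since $j_0 < p^s - j = p^s - \beta p^{s-1} - 1$ and $j_0 + j = j_0 + \beta p^{s-1} + 1$, Lemma \ref{Preliminaries.Lemma.weight.beta} (applied with $m = j_0$ and constant $-\xi^2$, treating $x^{2\newn}$ as the variable) gives $w_H\!\left((x^{2\newn} - \xi^2)^{j_0+j}\right) \ge \beta+2$, whence $w_H(c(x)) \ge 2(\beta+2)$. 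Combining the two cases shows every nonzero codeword has weight at least $2(\beta+2)$, so $d_H(C) \ge 2(\beta+2)$.

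The individual steps are routine once the setup is in place; the only genuine care needed is bookkeeping with the three-way interval partition of (\ref{Preliminaries.Partition}), namely confirming that the degree bounds $i_0 < p^s - i$ and $j_0 < p^s - j$ translate precisely into the hypotheses $m < p^{s-k} - (\tau-1)p^{s-k-1} - 1$ and $m < p^s - \beta p^{s-1} - 1$ of the two weight lemmas, and verifying $i > j$ so that Lemma \ref{Reducible.Lemma.Weight.Bound.When.i0.large} is applicable in the second case. The main (mild) obstacle I anticipate is checking the clean inequality $(\tau+1)p^k \ge 2(\beta+2)$, which guarantees that the first case is never the binding one and that the bound $2(\beta+2)$ comes entirely from the $\beta$-type factor in the second case.
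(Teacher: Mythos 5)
Your proposal is correct and follows essentially the same two-case argument as the paper: the case $i_0 < p^{s-k}-(\tau-1)p^{s-k-1}-1$ handled via Lemma \ref{Preliminaries.Lemma.weight.tau.k} and the weight-retaining inequality with the closing chain $(\tau+1)p^k \ge 2p \ge 2(\beta+2)$, and the complementary case handled via Lemma \ref{Reducible.Lemma.Weight.Bound.When.i0.large} together with Lemma \ref{Preliminaries.Lemma.weight.beta}. Your explicit check that $i>j$ (needed for Lemma \ref{Reducible.Lemma.Weight.Bound.When.i0.large}) is a small point the paper leaves implicit, but the route is the same.
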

\begin{proof}
    Let $0 \neq c(x) \in C$. Then there exists $0 \neq f(x) \in \F_{q}[x]$\
    such that $c(x) \equiv (x^\newn  - \xi)^{p^s - p^{s-k} + (\tau -1)p^{s-k-1} + 1}(x^\newn  + \xi)^{\beta p^{s-1} + 1}f(x) \mod x^{2\newn p^s}- \lambda$\ and $\deg (f(x)) < \newn p^s + \newn p^{s-k} - \newn(\tau - 1)p^{s-k-1} - \newn \beta p^{s-1} -2\newn $. Let $i_0$\ and $j_0$\ be the largest integers with $(x^\newn  - \xi)^{i_0} | f(x)$\ and
    $(x^\newn  + \xi)^{j_0} | f(x)$. Then $f(x)$\ is of the form
    $f(x) = (x^\newn  - \xi)^{i_0}(x^\newn  + \xi)^{j_0}g(x)$\ for some $g(x) \in \F_{p^m}[x]$\ such that
    $x^\newn  - \xi \nmid g(x)$\ and $x^\newn  + \xi \nmid g(x)$.
    Clearly $i_0 + j_0 < p^s + p^{s-k} - (\tau - 1)p^{s-k-1} -  \beta p^{s-1} -2$.
    So $i_0 < p^{s-k} - (\tau -1)p^{s-k-1} - 1$\ or
    $j_0 < p^s - \beta p^{s-1} -1$\ holds.

    If $i_0 < p^{s-k} - (\tau -1)p^{s-k-1} - 1$, then, by Lemma
    \ref{Reducible.Lemma.Weight.Bound.When.i0.large},
    we have
    \be\label{Reducible.Lemma.beta.tauK.i0.small.Ineq1}
        w_H( (x^\newn  - \xi)^{i_0 + p^s - p^{s-k} + (\tau -1)p^{s-k-1} + 1} ) \ge (\tau + 1)p^k.
    \ee
    Since $x^\newn  - \xi \nmid g(x)$,
    \be\label{Reducible.Lemma.beta.tauK.i0.small.Ineq2}
        w_H( (x^\newn  + \xi)^{j_0 + \beta p^{s-1} + 1}g(x) \mod x^\newn  - \xi) > 0.
    \ee
    Using (\ref{Reducible.Lemma.beta.tauK.i0.small.Ineq1}),
    (\ref{Reducible.Lemma.beta.tauK.i0.small.Ineq2}) and
    (\ref{Inequality.Lower.Bound.g(x).xn+c.N}), we obtain
    \be\nn
        w_H( c(x) ) & = & w_H( (x^\newn  - \xi)^{i_0 + p^s - p^{s-k} + (\tau -1)p^{s-k-1} + 1}
                (x^\newn  + \xi)^{j_0 + \beta p^{s-1} + 1}g(x) )\nn\\
            & \ge & w_H( (x^\newn  + \xi)^{j_0 + \beta p^{s-1} + 1}g(x)\mod x^\newn  - \xi )
                w_H((x^\newn  - \xi)^{i_0 + p^s - p^{s-k} + (\tau -1)p^{s-k-1} + 1})\nn\\
            & \ge & (\tau + 1)p^k\nn\\
            & \ge & 2p\nn\\
            & \ge & 2(\beta + 2).\nn
    \ee

    If $i_0 \ge p^{s-k} - (\tau -1)p^{s-k-1} -1$, then
    $j_0 < p^s - \beta p^{s-1} -1$. So, by Lemma \ref{Reducible.Lemma.Weight.Bound.When.i0.large},
    we get
    \be\label{Reducible.Lemma.beta.tauK.io.Large.Ineq}
        w_H( c(x) ) \ge 2w_H( (x^{2\newn} - \xi^2)^{j_0 + \beta p^{s-1} + 1} ).
    \ee
    For $w_H( (x^{2\newn} - \xi^2)^{j_0 + \beta p^{s-1} + 1} )$,
    we use Lemma \ref{Preliminaries.Lemma.weight.beta} and get
    \be\label{Reducible.Lemma.beta.tauK.io.Large.Eq}
        w_H( (x^{2\newn } - \xi^2)^{j_0 + \beta p^{s-1} + 1} ) = \beta + 2.
    \ee
    Combining (\ref{Reducible.Lemma.beta.tauK.io.Large.Ineq})\ and
    (\ref{Reducible.Lemma.beta.tauK.io.Large.Eq}), we obtain
    $w_H( c(x) ) \ge 2(\beta + 2)$.
    So $d_H(C) \ge 2(\beta + 2)$.
\end{proof}

\begin{corollary}\label{Reducible.Corollary.beta.tauK}
    Let $i,j,1 \le \tau \le p-1, 1\le \beta \le p -2$\ and
    $1 \le k \le s-1$\ be integers such that
    \be\nn
        \begin{array}{rcccl}
            p^s - p^{s-k} + (\tau -1)p^{s-k-1} + 1 & \le & i & \le & p^s - p^{s-k} + \tau p^{s-k-1}\quad \mbox{and}\\
            \beta p^{s-1} + 1 & \le & j & \le & (\beta + 1)p^{s-1}.
        \end{array}
    \ee
    Let $C = \langle (x^\newn  - \xi)^i(x^\newn  + \xi)^j \rangle $.
    Then $d_H(C) = 2 (\beta + 2)$.
\end{corollary}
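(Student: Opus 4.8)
The plan is to follow the same two-sided strategy used in the preceding corollaries of this section: obtain the lower bound by an inclusion into the code treated in Lemma \ref{Reducible.Lemma.beta.tauK}, and obtain the matching upper bound by exhibiting an explicit codeword of weight $2(\beta+2)$.

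For the lower bound, I would first observe that the hypotheses $i \ge p^s - p^{s-k} + (\tau-1)p^{s-k-1} + 1$ and $j \ge \beta p^{s-1} + 1$ mean that the generator $(x^\newn - \xi)^i (x^\newn + \xi)^j$ of $C$ is divisible by $(x^\newn - \xi)^{p^s - p^{s-k} + (\tau-1)p^{s-k-1} + 1}(x^\newn + \xi)^{\beta p^{s-1} + 1}$. Hence $C$ is contained in the code $\langle (x^\newn - \xi)^{p^s - p^{s-k} + (\tau-1)p^{s-k-1} + 1}(x^\newn + \xi)^{\beta p^{s-1} + 1} \rangle$, and because passing to a subcode can only increase the minimum distance, Lemma \ref{Reducible.Lemma.beta.tauK} immediately yields $d_H(C) \ge 2(\beta+2)$.

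For the upper bound, I would produce the codeword $(x^\newn - \xi)^{p^s}(x^\newn + \xi)^{(\beta+1)p^{s-1}}$. The key point is to verify this element genuinely lies in $C$: the exponent of $(x^\newn + \xi)$ satisfies $(\beta+1)p^{s-1} \ge j$ by hypothesis, while for the exponent of $(x^\newn - \xi)$ I would use $\tau \le p-1$ and $k \le s-1$ to estimate $i \le p^s - p^{s-k} + (p-1)p^{s-k-1} = p^s - p^{s-k-1} < p^s$, so that $p^s \ge i$. Thus the displayed polynomial equals $(x^\newn - \xi)^{p^s - i}(x^\newn + \xi)^{(\beta+1)p^{s-1} - j}$ times the generator and belongs to $C$. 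Its weight is computed by combining the doubling identity from the end of Section \ref{Preliminaries.For.Computations}, namely $w_H((x^\newn - \xi)^{p^s}(x^\newn + \xi)^N) = 2\, w_H((x^\newn + \xi)^N)$, with (\ref{Equality.weight.xn+c.N}) applied to $N = (\beta+1)p^{s-1}$; since $\beta+1 \le p-1$, the $p$-adic expansion of $N$ has the single nonzero digit $\beta+1$, giving $w_H((x^\newn + \xi)^{(\beta+1)p^{s-1}}) = \beta+2$ and hence total weight $2(\beta+2)$. This forces $d_H(C) \le 2(\beta+2)$, and the two bounds combine to the claimed equality.

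The argument is essentially routine given Lemma \ref{Reducible.Lemma.beta.tauK} and the tools of Section \ref{Preliminaries.For.Computations}; the only place demanding care is the membership check for the witness codeword, specifically the inequality $i < p^s$, which relies on the \emph{upper} bounds $\tau \le p-1$ and $k \le s-1$ rather than on the lower bounds used for the distance estimate. Everything else — the divisibility giving the inclusion and the weight evaluation — is a direct application of results already established.
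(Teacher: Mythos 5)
Your proposal is correct and follows exactly the paper's own argument: the lower bound comes from the inclusion $C \subset \langle (x^\newn - \xi)^{p^s - p^{s-k}+(\tau-1)p^{s-k-1}+1}(x^\newn + \xi)^{\beta p^{s-1}+1}\rangle$ together with Lemma \ref{Reducible.Lemma.beta.tauK}, and the upper bound from the same witness codeword $(x^\newn - \xi)^{p^s}(x^\newn + \xi)^{(\beta+1)p^{s-1}}$ of weight $2(\beta+2)$. Your explicit membership check (in particular $i \le p^s - p^{s-k-1} < p^s$) is a welcome detail the paper leaves implicit.
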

\begin{proof}
    Since $\langle (x^\newn  - \xi)p^{p^s - p^{s-k}+ (\tau -1)p^{s-k-1} +1}(x^\newn  + \xi)^{\beta p^{s-1} + 1} \rangle \supset C$, we know, by Lemma \ref{Reducible.Lemma.beta.tauK}, that
    $d_H(C) \ge 2(\beta + 2)$. So it suffices to show $d_H(C) \le 2(\beta + 2)$.
    We consider $(x^\newn  - \xi)^{p^s}(x^\newn  + \xi)^{(\beta + 1) p^{s-1}} \in C$.
    Note that
    $w_H( (x^\newn  - \xi)^{(\beta+1) p^{s-1}} ) = \beta + 2$
    by (\ref{Equality.weight.xn+c.N}). So, using the fact that
    $p^s > (\beta + 1) p^{s-1} $, we obtain
    $w_H( (x^\newn  - \xi)^{p^s}(x^\newn  + \xi)^{(\beta + 1) p^{s-1}} ) = 2 (\beta + 2)$.
    So $d_H(C) \le 2(\beta + 2)$, and hence $d_H(C) = 2(\beta + 2)$.
\end{proof}

From Lemma \ref{Reducible.Lemma.tauK.same.k} till Corollary \ref{Reducible.Corollary.tauK.different.k},
we compute $d_H(C)$\ when $(p-1)p^{s-1} < j \le i < p^s$.

\begin{lemma}\label{Reducible.Lemma.tauK.same.k}
    Let $1 \le k \le s-1, 1 \le \tau^{'} \le \tau \le p-1$,
    \be
        i & = & p^s - p^{s-k} + (\tau - 1)p^{s-k-1} + 1 \quad \mbox{and}\nn\\
        j & = & p^s - p^{s-k} + (\tau^{'} - 1)p^{s-k-1} + 1 \nn
    \ee
    be integers and $C = \langle (x^\newn  - \xi)^i(x^\newn  + \xi)^j \rangle$.
    Then $d_H(C) \ge \min \{ 2(\tau^{'}+1)p^k, (\tau + 1)p^k \}$.
\end{lemma}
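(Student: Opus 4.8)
The plan is to follow the same template as the proofs of Lemma \ref{Reducible.Lemma.beta.beta.prime} and Lemma \ref{Reducible.Lemma.beta.tauK}: bound the weight of an arbitrary nonzero codeword from below by splitting into two cases according to which of the two irreducible factors carries enough multiplicity. First I would take $0\neq c(x)\in C$ and write $c(x)\equiv f(x)(x^\newn-\xi)^i(x^\newn+\xi)^j \bmod x^{2\newn p^s}-\lambda$ for some $0\neq f(x)\in\F_{p^m}[x]$, where, after dividing out $(x^\newn-\xi)^{p^s-i}(x^\newn+\xi)^{p^s-j}$ as in the discussion preceding Lemma \ref{Reducible.Lemma.Weight.3}, we may assume $\deg f(x)<2\newn p^s-\newn i-\newn j$. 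Letting $i_0,j_0$ be the largest integers with $(x^\newn-\xi)^{i_0}\mid f(x)$ and $(x^\newn+\xi)^{j_0}\mid f(x)$, I write $f(x)=(x^\newn-\xi)^{i_0}(x^\newn+\xi)^{j_0}g(x)$ with $x^\newn-\xi\nmid g(x)$ and $x^\newn+\xi\nmid g(x)$. The degree bound forces $i_0+j_0<2p^s-i-j$, hence $i_0<p^s-i$ or $j_0<p^s-j$. Note that $p^s-i=p^{s-k}-(\tau-1)p^{s-k-1}-1$ and $p^s-j=p^{s-k}-(\tau'-1)p^{s-k-1}-1$, and that $\tau\ge\tau'$ gives $i\ge j$.

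In the first case $i_0<p^s-i$, I would apply Lemma \ref{Preliminaries.Lemma.weight.tau.k} with $m=i_0$ to the factor $(x^\newn-\xi)^{i_0+i}=(x^\newn-\xi)^{i_0+p^s-p^{s-k}+(\tau-1)p^{s-k-1}+1}$, obtaining $w_H((x^\newn-\xi)^{i_0+i})\ge(\tau+1)p^k$. Since $x^\newn-\xi\nmid(x^\newn+\xi)^{j_0+j}g(x)$ (the two factors being coprime), the residue $(x^\newn+\xi)^{j_0+j}g(x)\bmod x^\newn-\xi$ is nonzero, so the weight-retaining inequality (\ref{Inequality.Lower.Bound.g(x).xn+c.N}) yields $w_H(c(x))=w_H((x^\newn-\xi)^{i_0+i}(x^\newn+\xi)^{j_0+j}g(x))\ge(\tau+1)p^k$. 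In the second case $i_0\ge p^s-i$, which forces $j_0<p^s-j$, I would invoke Lemma \ref{Reducible.Lemma.Weight.Bound.When.i0.large}, whose hypotheses $i\ge j$, $i_0\ge p^s-i$, $j_0<p^s-j$ are exactly met, to get $w_H(c(x))\ge 2\,w_H((x^{2\newn}-\xi^2)^{j_0+j})$. Applying Lemma \ref{Preliminaries.Lemma.weight.tau.k} once more, now with the indeterminate $x^{2\newn}$ in place of $x^\newn$ and with $m=j_0<p^{s-k}-(\tau'-1)p^{s-k-1}-1$, gives $w_H((x^{2\newn}-\xi^2)^{j_0+j})\ge(\tau'+1)p^k$, whence $w_H(c(x))\ge 2(\tau'+1)p^k$. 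Combining the two cases, every nonzero codeword has weight at least $\min\{(\tau+1)p^k,\,2(\tau'+1)p^k\}$, which is the asserted bound.

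The routine parts are the divisor bookkeeping and the degree estimate; the one point that needs care is the second application of Lemma \ref{Preliminaries.Lemma.weight.tau.k} to $(x^{2\newn}-\xi^2)$ rather than to a factor linear in $x^\newn$. This is legitimate because the weight formula (\ref{Equality.weight.xn+c.N}) depends only on the $p$-adic digits of the exponent and is insensitive to the actual power of $x$ used as the base, so the bound transfers verbatim with $\newn$ replaced by $2\newn$ and the constant replaced by $-\xi^2$. The main conceptual obstacle, shared with the earlier lemmas, is verifying that the two cases $i_0<p^s-i$ and $i_0\ge p^s-i$ genuinely exhaust all possibilities and that in each case the surviving factor lands in precisely the exponent range where Lemma \ref{Preliminaries.Lemma.weight.tau.k} applies; the hypothesis $\tau\ge\tau'$ (equivalently $i\ge j$) is what makes the second case fall under Lemma \ref{Reducible.Lemma.Weight.Bound.When.i0.large} and thereby produces the factor of $2$ in the $2(\tau'+1)p^k$ term.
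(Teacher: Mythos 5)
Your proposal is correct and follows essentially the same route as the paper's own proof: the same reduction of $f(x)$ modulo $(x^\newn-\xi)^{p^s-i}(x^\newn+\xi)^{p^s-j}$, the same case split on $i_0<p^s-i$ versus $i_0\ge p^s-i$, the same use of Lemma \ref{Preliminaries.Lemma.weight.tau.k} together with the weight-retaining inequality in the first case, and the same appeal to Lemma \ref{Reducible.Lemma.Weight.Bound.When.i0.large} followed by a second application of Lemma \ref{Preliminaries.Lemma.weight.tau.k} to $(x^{2\newn}-\xi^2)^{j_0+j}$ in the second. Your closing remark that the weight formula depends only on the $p$-adic digits of the exponent, so the bound transfers with $x^\newn$ replaced by $x^{2\newn}$, is exactly the justification the paper relies on.
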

\begin{proof}
    Let $0 \neq c(x) \in C$. Then there exists $0 \neq f(x) \in \F_{p^m}[x]$\ such that
    $c(x) \equiv f(x)(x^\newn  - \xi)^i(x^\newn  + \xi)^j \mod x^{2\newn p^s} - \lambda$\ and
    $\deg (f(x)) < 2\newn p^s - i\newn -j \newn$. Let $i_0$\ and $j_0$\ be the largest integers
    with $(x^\newn  - \xi)^{i_0} | f(x)$\ and $(x^\newn  + \xi)^{j_0} | f(x)$.
    Then $f(x)$\ is of the form
    $f(x) = (x^\newn  - \xi)^{i_0}(x^\newn  + \xi)^{j_0}g(x)$\ for some $g(x) \in \F_{p^m}[x]$\
    with $x^\newn  - \xi \nmid g(x)$\ and $x^\newn  + \xi \nmid g(x)$.
    Clearly $i_0 + j_0 < 2p^{s} - i - j$\ and therefore $i_0 < p^s -i$\ or $j_0 < p^s -j$
    holds.

    If $i_0 < p^s -i$, then by Lemma \ref{Preliminaries.Lemma.weight.tau.k}, we have
    \be\label{Reducible.Lemma.tauK.same.k.small.io.Ineq1}
        w_H( (x^\newn  - \xi)^{i_0 + i} ) \ge (\tau + 1)p^k.
    \ee
    Since $x^\newn  - \xi \nmid g(x)$, we have
    $g(x)(x^\newn  + \xi)^{j_0 + j} \not \equiv 0 \mod x^\newn  - \xi$\ and therefore
    \be\label{Reducible.Lemma.tauK.same.k.small.io.Ineq2}
        w_H( g(x)(x^\newn  + \xi)^{j + j_0} \mod x^\newn  - \xi ) > 0.
    \ee
    Using (\ref{Reducible.Lemma.tauK.same.k.small.io.Ineq1}),
    (\ref{Reducible.Lemma.tauK.same.k.small.io.Ineq2}) and
    (\ref{Inequality.Lower.Bound.g(x).xn+c.N}), we obtain
    \be\label{Reducible.Lemma.tauK.same.k.small.io}
        \begin{array}{rcl}
            w_H( c(x) ) & = & w_H( (x^\newn  - \xi)^{i + i_0}(x^\newn  + \xi)^{j + j_0}g(x) )\\
                & \ge & w_H( g(x)(x^\newn  + \xi)^{j + j_0} \mod x^\newn  - \xi )
                    w_H( (x^\newn  - \xi)^{i + i_0} ) \\
                & \ge & (\tau + 1)p^k.
        \end{array}
    \ee

    If $i_0 \ge p^s -i$, then $j_0 < p^s - j$.
    So, by Lemma \ref{Reducible.Lemma.Weight.Bound.When.i0.large},
    we have
    \be\label{Reducible.Lemma.tauK.same.k.large.io.Ineq1}
        w_H( c(x) ) \ge 2w_H( (x^{2\newn } - \xi^2)^{j_0 + j} ).
    \ee
    For $w_H( (x^{2\newn } - \xi^2)^{j_0 + j} )$,
    we use Lemma \ref{Preliminaries.Lemma.weight.tau.k} and get
    \be\label{Reducible.Lemma.tauK.same.k.large.io.Ineq2}
        w_H( (x^{2\newn } - \xi^2)^{j_0 + j} ) \ge (\tau^{'} + 1)p^{k}.
    \ee
    Combining (\ref{Reducible.Lemma.tauK.same.k.large.io.Ineq1}) and
    (\ref{Reducible.Lemma.tauK.same.k.large.io.Ineq2}), we obtain
    \be\label{Reducible.Lemma.tauK.same.k.large.io}
        w_H( c(x) ) \ge 2(\tau^{'} + 1)p^{k}.
    \ee
    Now, using (\ref{Reducible.Lemma.tauK.same.k.small.io}) and
    (\ref{Reducible.Lemma.tauK.same.k.large.io}), we deduce that
    $w_H(c(x)) \ge \min\{ 2(\tau^{'} + 1)p^{k}, (\tau + 1)p^k \}$.
    Hence $d_H(C) \ge \min \{ 2(\tau^{'}+1)p^k, (\tau + 1)p^k \}$.
\end{proof}

\begin{corollary}\label{Reducible.Corollary.tauK.same.k}
    Let $j \le i $, $1\le k \le s-1$, $1\le \tau^{'} \le \tau \le p-1 $\
    be integers such that
    \be\nn
        \begin{array}{rcccl}
            p^s - p^{s-k} + (\tau -1)p^{s-k-1} + 1 & \le & i & \le & p^s - p^{s-k} + \tau p^{s-k-1} \quad \mbox{and} \\
            p^s - p^{s-k} + (\tau^{'} -1)p^{s-k-1} + 1 & \le & j & \le & p^s - p^{s-k} + \tau^{'} p^{s-k-1}.
        \end{array}
    \ee
    Let $C = \langle (x^\newn  - \xi)^i(x^\newn  + \xi)^j \rangle$. Then
    $d_H( C ) = \min \{ 2(\tau^{'} +1)p^{k}, (\tau + 1)p^k \}$.
\end{corollary}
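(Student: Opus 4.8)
The plan is to follow the same two-step template used in the preceding corollaries of this section: obtain the lower bound from containment in the ideal already treated in Lemma \ref{Reducible.Lemma.tauK.same.k}, and obtain the matching upper bound by exhibiting explicit codewords of the two required weights.

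First I would establish the lower bound. Writing $i^* = p^s - p^{s-k} + (\tau-1)p^{s-k-1}+1$ and $j^* = p^s - p^{s-k} + (\tau'-1)p^{s-k-1}+1$ for the left endpoints of the two ranges, the hypotheses give $i \ge i^*$ and $j \ge j^*$, so $C = \langle (x^\newn - \xi)^i (x^\newn + \xi)^j \rangle \subseteq \langle (x^\newn - \xi)^{i^*}(x^\newn + \xi)^{j^*}\rangle$. Since the nonzero elements of a subideal form a subset of those of the larger ideal, $d_H(C) \ge \min\{2(\tau'+1)p^k,(\tau+1)p^k\}$ follows at once from Lemma \ref{Reducible.Lemma.tauK.same.k}.

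The substance is the upper bound, for which I would produce one codeword for each term of the minimum. For the term $(\tau+1)p^k$, set $m = p^s - p^{s-k}+\tau p^{s-k-1}$ and consider $(x^{2\newn}-\xi^2)^m = (x^\newn-\xi)^m(x^\newn+\xi)^m$. Because $\tau \le p-1$ one checks $m < p^s$, and since $i \le m$ and $j \le p^s - p^{s-k}+\tau'p^{s-k-1}\le m$ (here $\tau'\le\tau$ is used), this element lies in $C$. Its weight is read off from (\ref{Equality.weight.xn+c.N}) with $x$ identified with $x^{2\newn}$: the base-$p$ digits of $m$ are $\tau$ in position $s-k-1$ and $p-1$ in positions $s-k,\dots,s-1$, with no carries, so $w_H((x^{2\newn}-\xi^2)^m) = (\tau+1)p^k$, giving $d_H(C)\le(\tau+1)p^k$. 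For the term $2(\tau'+1)p^k$, I would take $(x^\newn-\xi)^{p^s}(x^\newn+\xi)^{b}$ with $b = p^s - p^{s-k}+\tau'p^{s-k-1}$; since $p^s\ge i$ and $b\ge j$ this is in $C$, and by the identity $w_H((x^\newn+\myConstant_1)^{p^s}(x^\newn+\myConstant_2)^b)=2w_H((x^\newn+\myConstant_2)^b)$ established at the end of Section \ref{Preliminaries.For.Computations} together with (\ref{Equality.weight.xn+c.N}), its weight is $2(\tau'+1)p^k$, giving $d_H(C)\le 2(\tau'+1)p^k$. Combining the two yields $d_H(C)\le\min\{2(\tau'+1)p^k,(\tau+1)p^k\}$, and with the lower bound the equality follows.

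The only genuinely delicate points are bookkeeping: confirming that the chosen exponents stay within $[0,p^s]$ and dominate both $i$ and $j$, so that the test elements really lie in $C$, and reading off the $p$-adic digit expansions correctly so that no carries occur and the weight formula returns exactly $(\tau+1)p^k$ and $(\tau'+1)p^k$. I expect the main obstacle to be nothing deeper than keeping these inequalities straight, in particular invoking $\tau'\le\tau$ at the single place where it is needed to guarantee $(x^{2\newn}-\xi^2)^m\in C$; the rest is a direct application of the weight identities already developed in Section \ref{Preliminaries.For.Computations}.
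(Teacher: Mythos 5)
Your proposal is correct and follows essentially the same route as the paper: the lower bound by containment in the ideal of Lemma \ref{Reducible.Lemma.tauK.same.k}, and the upper bound via the two test codewords $(x^\newn-\xi)^{p^s}(x^\newn+\xi)^{p^s-p^{s-k}+\tau'p^{s-k-1}}$ and a power of $x^{2\newn}-\xi^2$. Your choice of exponent $p^s-p^{s-k}+\tau p^{s-k-1}$ for the second codeword is in fact the correct one (it dominates both $i$ and $j$ and has weight exactly $(\tau+1)p^k$ by (\ref{Equality.weight.xn+c.N})), whereas the paper writes $p^s-p^{s-k}+(\tau-1)p^{s-k-1}+1$, which appears to be a slip since that exponent need not exceed $i$ and its digit expansion does not give the stated weight.
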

\begin{proof}
    Since $\langle (x^\newn  - \xi)^{p^s - p^{s-k} + (\tau -1)p^{s-k-1} + 1}(x^\newn  + \xi)^{p^s - p^{s-k} + (\tau^{'} -1)p^{s-k-1} + 1} \rangle \supset C$, we have,
    by Lemma \ref{Reducible.Lemma.tauK.same.k}, that
    $d_H(C) \ge \min \{ 2(\tau^{'} +1)p^{k}, (\tau + 1)p^k \}$.
    So it suffices to show $d_H(C) \le \min \{ 2(\tau^{'} +1)p^{k}, (\tau + 1)p^k \}$.

    First, we consider $(x^\newn  - \xi)^{p^s}(x^\newn  + \xi)^{p^s - p^{s-k} + \tau^{'}p^{s-k-1}}\in C$.
    Since
    \be\nn
        w_H( (x^\newn  + \xi)^{p^s - p^{s-1} + \tau^{'} p^{s-k-1}} ) = (\tau^{'} + 1)p^{k},
    \ee
    we have
    $w_H( (x^\newn  - \xi)^{p^s}(x^\newn  + \xi)^{p^s - p^{s-1} + (\tau^{'} -1)p^{s-k-1}} ) = 2(\tau ^{'} + 1)p^{k}$.
    So
    \be\label{Reducible.Corollary.tauK.same.k.Ineq1}
        d_H(C) \le 2(\tau^{'} + 1)p^{k}
    \ee

    Second, we consider $(x^{2\newn } - \xi^2)^{p^s - p^{s-k} + (\tau -1)p^{s-k-1} + 1} \in C$.
    By Lemma \ref{Equality.weight.xn+c.N}, we get
    \be\nn
        w_H( (x^{2\newn } - \xi^2)^{p^s - p^{s-k} + (\tau -1)p^{s-k-1} + 1} ) = (\tau + 1)p^k.
    \ee
    Thus
    \be\label{Reducible.Corollary.tauK.same.k.Ineq2}
        d_H(C) \le (\tau + 1)p^k.
    \ee
    Now combining (\ref{Reducible.Corollary.tauK.same.k.Ineq1}) and
    (\ref{Reducible.Corollary.tauK.same.k.Ineq2}), we deduce that
    $d_H(C) \le \min \{ 2(\tau^{'} +1)p^{k}, (\tau + 1)p^k \}$.
    Hence $d_H(C) = \min \{ 2(\tau^{'} +1)p^{k}, (\tau + 1)p^k \}$.
\end{proof}

\begin{lemma}\label{Reducible.Lemma.tauK.different.k}
    Let $1 \le k^{'} < k \le s-1$, $1\le \tau^{'},\tau < p-1$,
    \be
        i & = & p^s - p^{s-k} + (\tau -1)p^{s-k-1} + 1 \quad \mbox{and} \nn \\
        j & = & p^s - p^{s-k^{'}} + (\tau^{'} -1)p^{s-k^{'}-1} + 1 \nn
    \ee
    be integers and $C = \langle (x^\newn  - \xi)^i(x^\newn  + \xi)^j \rangle $.
    Then $d_H(C) \ge 2(\tau^{'} + 1)p^{k^{'}}$.
\end{lemma}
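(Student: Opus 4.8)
The plan is to follow the same two-case template used in Lemma \ref{Reducible.Lemma.tauK.same.k}, now keeping track of the fact that the exponents $i$ and $j$ lie in ranges governed by different parameters $k$ and $k'$ with $k'<k$. First I would take an arbitrary $0\neq c(x)\in C$ and write $c(x)\equiv f(x)(x^\newn-\xi)^i(x^\newn+\xi)^j \bmod x^{2\newn p^s}-\lambda$ for some $0\neq f(x)\in\F_{p^m}[x]$, using the reduction carried out just before Lemma \ref{Reducible.Lemma.Weight.3} to assume $\deg(f(x))<2\newn p^s-\newn i-\newn j$. Writing $f(x)=(x^\newn-\xi)^{i_0}(x^\newn+\xi)^{j_0}g(x)$ with $i_0,j_0$ maximal and $x^\newn\pm\xi\nmid g(x)$, the degree bound forces $i_0+j_0<2p^s-i-j$, hence $i_0<p^s-i$ or $j_0<p^s-j$.

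In the case $i_0<p^s-i$, I would note that $p^s-i=p^{s-k}-(\tau-1)p^{s-k-1}-1$, so Lemma \ref{Preliminaries.Lemma.weight.tau.k} (with parameters $\tau,k$ and $m=i_0$, applied to $x^\newn-\xi$ via the identification underlying (\ref{Equality.weight.xn+c.N})) gives $w_H((x^\newn-\xi)^{i_0+i})\ge(\tau+1)p^k$. Since $x^\newn-\xi\nmid g(x)$, the residue $g(x)(x^\newn+\xi)^{j_0+j}\bmod x^\newn-\xi$ is nonzero, so the weight-retaining inequality (\ref{Inequality.Lower.Bound.g(x).xn+c.N}) yields $w_H(c(x))\ge(\tau+1)p^k$. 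The elementary estimate needed here is $(\tau+1)p^k\ge 2(\tau'+1)p^{k'}$: because $\tau\ge 1$ and $k-k'\ge 1$ one has $(\tau+1)p^k\ge 2p^k\ge 2p^{k'+1}=2p\cdot p^{k'}$, and $\tau'+1\le p$ finishes it.

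In the remaining case $i_0\ge p^s-i$ we automatically have $j_0<p^s-j$, and since $i\ge j$ throughout this range, Lemma \ref{Reducible.Lemma.Weight.Bound.When.i0.large} gives $w_H(c(x))\ge 2\,w_H((x^{2\newn}-\xi^2)^{j_0+j})$. Here $p^s-j=p^{s-k'}-(\tau'-1)p^{s-k'-1}-1$, so applying Lemma \ref{Preliminaries.Lemma.weight.tau.k} with parameters $\tau',k'$ and $m=j_0$ (whose hypothesis $j_0<p^{s-k'}-(\tau'-1)p^{s-k'-1}-1$ is exactly $j_0<p^s-j$) gives $w_H((x^{2\newn}-\xi^2)^{j_0+j})\ge(\tau'+1)p^{k'}$, hence $w_H(c(x))\ge 2(\tau'+1)p^{k'}$. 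Combining the two cases shows $w_H(c(x))\ge 2(\tau'+1)p^{k'}$ for every nonzero codeword, which is the claimed bound.

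The routine but most error-prone step is verifying the hypotheses of Lemma \ref{Preliminaries.Lemma.weight.tau.k} in each branch, that is, checking that the strict inequalities on $i_0$ and $j_0$ produced by the degree bound match precisely the thresholds $p^{s-k}-(\tau-1)p^{s-k-1}-1$ and $p^{s-k'}-(\tau'-1)p^{s-k'-1}-1$; together with the comparison $(\tau+1)p^k\ge 2(\tau'+1)p^{k'}$, which is exactly where the hypothesis $k'<k$ enters and without which the first-case bound would be too weak to reach the asserted conclusion.
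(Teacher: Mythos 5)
Your proposal is correct and follows essentially the same two-case argument as the paper's proof: the same degree reduction, the same dichotomy $i_0<p^s-i$ or $j_0<p^s-j$, Lemma \ref{Preliminaries.Lemma.weight.tau.k} plus the weight-retaining inequality in the first case, and Lemma \ref{Reducible.Lemma.Weight.Bound.When.i0.large} plus Lemma \ref{Preliminaries.Lemma.weight.tau.k} in the second. The only difference is that you spell out the verification of $(\tau+1)p^k\ge 2(\tau'+1)p^{k'}$, which the paper asserts without comment.
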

\begin{proof}
    Let $0 \neq c(x) \in C$. Then there exists $0 \neq f(x) \in \F_{p^m}[x]$\ such that
    $c(x) \equiv (x^\newn  - \xi)^i(x^\newn  + \xi)^jf(x) \mod x^{2\newn p^s} - \lambda$\ and
    $\deg (f(x)) < 2\newn p^s -i \newn - j \newn$.
    Let $i_0$\ and $j_0$\ be the largest integers with $(x^\newn  - \xi)^{i_0} | f(x)$\
    and $(x^\newn  + \xi)^{j_0} | f(x)$. Then $f(x)$\ is of the form
    $f(x) = (x^\newn  - \xi)^{i_0}(x^\newn  + \xi)^{j_0}g(x)$\ for some $g(x) \in \F_{p^m}[x]$\ with
    $x^\newn  - \xi \nmid g(x)$\ and $x^\newn  + \xi \nmid g(x)$.
    Clearly $i_0 + j_0 < 2p^s - i- j$. So $i_0 < p^s - i$\ or $j_0 < p^s - j$\ holds.

    If $i_0 < p^s - i$, then, by Lemma \ref{Preliminaries.Lemma.weight.tau.k}, we have
    \be\label{Reducible.Lemma.tauK.different.k.Small.i0.Ineq1}
        w_H( (x^\newn  - \xi)^{i + i_0} ) \ge (\tau + 1)p^k \ge 2(\tau^{'} + 1)p^{k^{'}}.
    \ee
    Since $x^\newn  - \xi \nmid g(x)$, we have
    $(x^\newn  + \xi)^{j_0 + j}g(x) \mod x^\newn  - \xi \neq 0$\ and therefore
    \be\label{Reducible.Lemma.tauK.different.k.Small.i0.Ineq2}
        w_H( (x^\newn  + \xi)^{j_0 + j}g(x) \mod x^\newn  - \xi ) > 0.
    \ee
    Using (\ref{Reducible.Lemma.tauK.different.k.Small.i0.Ineq1}),
    (\ref{Reducible.Lemma.tauK.different.k.Small.i0.Ineq2}) and
    (\ref{Inequality.Lower.Bound.g(x).xn+c.N}), we obtain
    \be
        w_H( c(x) ) & = & w_H( (x^\newn  - \xi)^{i_0 + i}(x^\newn  + \xi)^{j_0 + j}g(x) )\nn \\
            & \ge & w_H( (x^\newn  + \xi)^{j_0 + j}g(x) \mod x^\newn  - \xi )
                w_H( (x^\newn  - \xi)^{i_0 + i} )\nn \\
            & \ge & 2(\tau^{'} + 1)p^{k^{'}}.\nn
    \ee

    If $i_0 \ge p^s - i$, then $j_0 < p^s - j$.
    So, by Lemma \ref{Reducible.Lemma.Weight.Bound.When.i0.large}, we have
    \be\label{Reducible.Lemma.tauK.different.k.Large.i0.Ineq1}
        w_H( c(x) ) \ge 2w_H( (x^{2\newn } - \xi^2)^{j_0 + j} ).
    \ee
    For $w_H( (x^{2\newn } - \xi^2)^{j_0 + j} )$,
    we use Lemma \ref{Preliminaries.Lemma.weight.tau.k} and get
    \be\label{Reducible.Lemma.tauK.different.k.Large.i0.Ineq2}
        w_H( (x^{2\newn } - \xi^2)^{j_0 + j} ) \ge (\tau^{'} + 1)p^{k^{'}}.
    \ee
    Now combining (\ref{Reducible.Lemma.tauK.different.k.Large.i0.Ineq1})
    and (\ref{Reducible.Lemma.tauK.different.k.Large.i0.Ineq2}), we obtain
    $ w_H( c(x) ) \ge 2(\tau ^ {'} + 1)p^{k^{'}}$.
    Hence $d_H(C) \ge 2 (\tau^{'} + 1)p^{k^{'}}$.
\end{proof}

\begin{corollary}\label{Reducible.Corollary.tauK.different.k}
    Let $i,j,1\le k^{'} < k \le s-1, 1\le \tau^{'},\tau \le p-1$\ be integers
    such that
    \be\nn
        \begin{array}{rcccl}
             p^s - p^{s-k} + (\tau - 1)p^{s-k-1} + 1 & \le & i & \le & p^s - p^{s-k} + \tau p^{s-k-1} \quad \mbox{and}\\
             p^s - p^{s-k^{'}} + (\tau^{'} - 1)p^{s-k^{'}-1} + 1&\le & j & \le & p^s - p^{s-k^{'}} + \tau^{'} p^{s-k^{'}-1}.
        \end{array}
    \ee
    Let $C = \langle (x^\newn  - \xi)^i (x^\newn  + \xi)^j \rangle$.
    Then $d_H(C) = 2 (\tau^{'} + 1)p^{k^{'}}$.
\end{corollary}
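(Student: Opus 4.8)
The plan is to prove $d_H(C)=2(\tau'+1)p^{k'}$ by a two-sided squeeze, exactly mirroring the argument of Corollary \ref{Reducible.Corollary.tauK.same.k}. For the lower bound, I would exploit that the hypotheses place $i$ and $j$ at or above the left endpoints of their respective blocks of the partition (\ref{Preliminaries.Partition}). Consequently there is an ideal inclusion
\[
C=\langle (x^\newn-\xi)^i(x^\newn+\xi)^j\rangle\subset
\langle (x^\newn-\xi)^{p^s-p^{s-k}+(\tau-1)p^{s-k-1}+1}(x^\newn+\xi)^{p^s-p^{s-k'}+(\tau'-1)p^{s-k'-1}+1}\rangle,
\]
and the right-hand ideal is precisely the one whose Hamming distance is bounded below by $2(\tau'+1)p^{k'}$ in Lemma \ref{Reducible.Lemma.tauK.different.k}. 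Since a subcode has Hamming distance no smaller than that of the code containing it, this immediately gives $d_H(C)\ge 2(\tau'+1)p^{k'}$.

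For the matching upper bound I would exhibit an explicit codeword of weight $2(\tau'+1)p^{k'}$. The natural candidate is
\[
c(x)=(x^\newn-\xi)^{p^s}(x^\newn+\xi)^{\,p^s-p^{s-k'}+\tau'p^{s-k'-1}}.
\]
First I would check membership in $C$: one has $p^s\ge i$ (because $i\le p^s-p^{s-k}+\tau p^{s-k-1}<p^s$) and $p^s-p^{s-k'}+\tau'p^{s-k'-1}\ge j$ (this is exactly the upper constraint on $j$), so $c(x)$ is the product of the generator $(x^\newn-\xi)^i(x^\newn+\xi)^j$ with $(x^\newn-\xi)^{p^s-i}(x^\newn+\xi)^{p^s-p^{s-k'}+\tau'p^{s-k'-1}-j}$, hence lies in $C$. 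Its degree is below $2\newn p^s$, so it is a genuine nonzero codeword. Then I would compute its weight: the $p$-adic expansion of $p^s-p^{s-k'}+\tau'p^{s-k'-1}$ has digit $p-1$ in each of the $k'$ positions $s-1,\dots,s-k'$ and digit $\tau'$ in position $s-k'-1$, so by (\ref{Equality.weight.xn+c.N}) we get $w_H\!\big((x^\newn+\xi)^{p^s-p^{s-k'}+\tau'p^{s-k'-1}}\big)=(p-1+1)^{k'}(\tau'+1)=(\tau'+1)p^{k'}$. Finally, invoking the doubling identity $w_H\!\big((x^\newn-\xi)^{p^s}(x^\newn+\xi)^{m}\big)=2\,w_H\!\big((x^\newn+\xi)^{m}\big)$ established at the close of Section \ref{Preliminaries.For.Computations} (valid since $0<m<p^s$ here), we obtain $w_H(c(x))=2(\tau'+1)p^{k'}$, whence $d_H(C)\le 2(\tau'+1)p^{k'}$.

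Combining the two bounds yields $d_H(C)=2(\tau'+1)p^{k'}$. This argument is essentially routine; the only points needing care, which I would verify explicitly, are the correct reading of the $p$-adic digits of $p^s-p^{s-k'}+\tau'p^{s-k'-1}$ feeding into (\ref{Equality.weight.xn+c.N}), and the membership check for the test codeword—the latter is where the asymmetry $k'<k$ enters, since it is the block index $k'$ of the \emph{smaller} exponent $j$ that controls both the weight computation and the constraint $j\le p^s-p^{s-k'}+\tau'p^{s-k'-1}$ guaranteeing $c(x)\in C$.
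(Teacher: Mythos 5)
Your proposal is correct and follows essentially the same route as the paper: the lower bound via the inclusion of $C$ in the ideal of Lemma \ref{Reducible.Lemma.tauK.different.k}, and the upper bound via the test codeword $(x^\newn-\xi)^{p^s}(x^\newn+\xi)^{p^s-p^{s-k'}+\tau'p^{s-k'-1}}$ whose weight is computed from (\ref{Equality.weight.xn+c.N}) and the doubling identity. Your explicit verification of the membership condition and of the $p$-adic digits is a welcome (and accurate) elaboration of steps the paper leaves implicit.
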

\begin{proof}
    Since $\langle (x^\newn  - \xi)^{p^s - p^{s-k} + (\tau -1)p^{s-k-1}+1} (x^\newn  + \xi)^{p^s - p^{s-k^{'}} + (\tau^{'} -1)p^{s-k^{'}-1}+1} \rangle \supset C$, we know, by Lemma
    \ref{Reducible.Lemma.tauK.different.k}, that $d_H(C) \ge 2(\tau^{'} + 1)p^{k^{'}}$.
    So it suffices to show $d_H(C) \le 2(\tau^{'} + 1)p^{k^{'}}$.
    We consider $(x^\newn  - \xi)^{p^s}(x^\newn  + \xi)^{p^s - p^{s-k^{'}} +  \tau^{'} p^{s-k^{'}-1}} \in C$.
    By (\ref{Equality.weight.xn+c.N}), we have
    \be\nn
        w_H( (x^\newn  + \xi)^{p^s - p^{s-k^{'}} + \tau^{'} p^{s-k^{'}-1}} ) = (\tau ^{'} + 1)p^{k^{'}}.
    \ee
    Moreover since $(x^\newn  - \xi)^{p^s} = x^{\newn p^s} - \xi ^{p^s}$\ and
    $p^s > p^s - p^{s-k^{'}} + \tau^{'} p^{s-k^{'}-1}$, we get
    \be\nn
        w_H( (x^\newn  - \xi)^{p^s}(x^\newn  + \xi)^{p^s - p^{s-k^{'}} + \tau^{'} p^{s-k^{'}-1}} ) = 2(\tau ^{'} + 1)p^{k^{'}}.
    \ee
    So $d_H(C) \le 2(\tau ^{'} + 1)p^{k^{'}}$\ and therefore $d_H(C) = 2(\tau ^{'} + 1)p^{k^{'}}$.
\end{proof}

Finally it remains to consider the cases where $i = p^s$\ and $0 < j < p^s$.

\begin{lemma}\label{Reducible.Lemma.ips.j.small}
    Let $C = \langle (x^\newn  - \xi)^{p^s}(x^\newn  + \xi) \rangle$.
    Then $d_H(C) \ge 4$.
\end{lemma}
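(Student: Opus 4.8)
The plan is to follow the same template as the earlier weight estimates of this section (for instance Lemma~\ref{Reducible.Lemma.Weight.4}) and to reduce the entire argument to a single invocation of Lemma~\ref{Reducible.Lemma.Weight.Bound.When.i0.large}. Here $C=\langle(x^\newn-\xi)^{i}(x^\newn+\xi)^{j}\rangle$ with $i=p^s$ and $j=1$, so $(i,j)\neq(p^s,p^s)$ and the normalization set up in the paragraph following Lemma~\ref{Reducible.Lemma.Weight.i.and.j.small} applies. First I would take an arbitrary $0\neq c(x)\in C$ and write $c(x)\equiv f(x)(x^\newn-\xi)^{p^s}(x^\newn+\xi)\bmod x^{2\newn p^s}-\lambda$ for some $0\neq f(x)\in\F_{p^m}[x]$, where we may assume $\deg(f(x))<2\newn p^s-\newn p^s-\newn=\newn(p^s-1)$.

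Next I would extract the maximal powers of the two irreducible factors of $x^{2\newn}-\psi$ from $f(x)$, writing $f(x)=(x^\newn-\xi)^{i_0}(x^\newn+\xi)^{j_0}g(x)$ with $i_0,j_0$ maximal, so that $x^\newn-\xi\nmid g(x)$ and $x^\newn+\xi\nmid g(x)$. The degree bound then forces $i_0+j_0<2p^s-p^s-1=p^s-1$. The one genuinely new observation --- and the step on which the whole proof turns --- is that, because $i=p^s$ is as large as it can be, the alternative $i_0<p^s-i=0$ is vacuous. Hence the dichotomy ``$i_0<p^s-i$ or $j_0<p^s-j$'' that drove the earlier case analyses collapses: every codeword automatically satisfies $i_0\geq p^s-i=0$ and $j_0<p^s-j=p^s-1$, and so falls squarely under the hypotheses of Lemma~\ref{Reducible.Lemma.Weight.Bound.When.i0.large} (with $i=p^s\geq 1=j$). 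There is therefore no case split to carry out, which is what makes this case shorter than its predecessors.

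Finally I would apply that lemma directly to $c(x)=(x^\newn-\xi)^{i_0+p^s}(x^\newn+\xi)^{j_0+1}g(x)$ to obtain $w_H(c(x))\geq 2\,w_H\big((x^{2\newn}-\xi^2)^{j_0+1}\big)$. Since $j_0+1\geq 1$, the polynomial $(x^{2\newn}-\xi^2)^{j_0+1}$ retains both its leading monomial $x^{2\newn(j_0+1)}$ and its nonzero constant term $(-\xi^2)^{j_0+1}$, so $w_H\big((x^{2\newn}-\xi^2)^{j_0+1}\big)\geq 2$ and thus $w_H(c(x))\geq 4$. As $c(x)$ was arbitrary, this yields $d_H(C)\geq 4$. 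I do not anticipate a real obstacle; the only point requiring care is verifying, via the degree bound, that the large-$i_0$ regime is truly unavoidable here, so that no analogue of the ``$i_0<p^s-i$'' branch from the earlier lemmas needs to be treated.
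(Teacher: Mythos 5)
Your proposal is correct and follows essentially the same route as the paper's own proof: the same normalization of $f(x)$, the same extraction of the maximal powers $i_0, j_0$, the observation that the degree bound forces $j_0 < p^s-1$ while $i_0 \ge p^s - p^s = 0$ holds automatically, and then a single application of Lemma~\ref{Reducible.Lemma.Weight.Bound.When.i0.large} together with $w_H\bigl((x^{2\newn}-\xi^2)^{j_0+1}\bigr) \ge 2$. No gaps.
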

\begin{proof}
    Pick $0 \neq c(x) \in C$. Then there exists $0 \neq f(x) \in \F_{p^m}[x]$\
    such that $c(x) \equiv f(x)(x^\newn  - \xi)^{p^s}(x^\newn  + \xi) \mod x^{2\newn p^s} - \lambda$\ and
    $\deg (f(x)) < 2\newn p^s - \newn p^s - \newn = \newn p^s - \newn$. Let $i_0$\ and $j_0$\ be the largest
    nonnegative integers such that $(x^\newn  - \xi)^{i_0} | f(x)$\ and $(x^\newn  + \xi)^{j_0} | f(x)$.
    Clearly $i_0 + j_0 < p^s -1$\ as $\deg (f(x)) < \newn p^s - \newn$.
    So, since $i_0 \ge p^s - p^s = 0$\ and $j_0 < p^s -1$,
    by Lemma \ref{Reducible.Lemma.Weight.Bound.When.i0.large},
    we get
      $w_H( c(x) ) \ge 2 w_H( (x^{2\newn } - \xi^2)^{j_0 + 1} )$.
    Obviously $w_H((x^{2\newn } - \xi^2)^{j_0 + 1}) \ge 2$\ and therefore
    $w_H( c(x) ) \ge 4$. Hence $d_H( C ) \ge 4$.
\end{proof}
\begin{corollary}\label{Reducible.Corollary.ips.j.small}
    Let $0 < j \le p^{s-1}$\ be an integer and
    $C= \langle (x^\newn  - \xi)^{p^s}(x^\newn  + \xi)^j \rangle$.
    Then $d_H(C) = 4$.
\end{corollary}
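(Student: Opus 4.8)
The plan is to establish the two inequalities $d_H(C)\ge 4$ and $d_H(C)\le 4$ separately, exactly in the spirit of the preceding corollaries. For the lower bound, I would first record the containment $C\subset\langle(x^\newn-\xi)^{p^s}(x^\newn+\xi)\rangle$, which holds because $j\ge 1$ forces
$(x^\newn-\xi)^{p^s}(x^\newn+\xi)^j=(x^\newn-\xi)^{p^s}(x^\newn+\xi)\cdot(x^\newn+\xi)^{j-1}$. A subideal can only have a larger or equal minimum Hamming distance, so Lemma \ref{Reducible.Lemma.ips.j.small} immediately gives $d_H(C)\ge d_H(\langle(x^\newn-\xi)^{p^s}(x^\newn+\xi)\rangle)\ge 4$.

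For the upper bound, I would exhibit an explicit codeword of weight $4$. Since $j\le p^{s-1}$, the element $(x^\newn-\xi)^{p^s}(x^\newn+\xi)^{p^{s-1}}=(x^\newn-\xi)^{p^s}(x^\newn+\xi)^j\cdot(x^\newn+\xi)^{p^{s-1}-j}$ lies in $C$. Its degree is $\newn p^s+\newn p^{s-1}<2\newn p^s$, so it is already reduced modulo $x^{2\newn p^s}-\lambda$ and its weight in $\sR$ coincides with its weight as a polynomial. By the doubling identity computed at the end of Section \ref{Preliminaries.For.Computations} (with $\myConstant_1=-\xi$, $\myConstant_2=\xi$, $i=p^{s-1}$),
$$
w_H\big((x^\newn-\xi)^{p^s}(x^\newn+\xi)^{p^{s-1}}\big)=2\,w_H\big((x^\newn+\xi)^{p^{s-1}}\big).
$$
Applying (\ref{Equality.weight.xn+c.N}) to $N=p^{s-1}<p^s$, whose $p$-adic expansion has the single nonzero digit $b_{s-1}=1$, yields $w_H((x^\newn+\xi)^{p^{s-1}})=\prod_{d}(b_d+1)=2$. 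Hence the displayed codeword has weight $4$, giving $d_H(C)\le 4$.

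Combining the two bounds yields $d_H(C)=4$. I do not anticipate a genuine obstacle here: the lower bound is already supplied by Lemma \ref{Reducible.Lemma.ips.j.small}, and the upper bound reduces to the closed-form weight formula (\ref{Equality.weight.xn+c.N}) together with the weight-doubling identity for $(x^\newn+\myConstant_1)^{p^s}(x^\newn+\myConstant_2)^i$. The only point warranting a moment's care is verifying that the exhibited element is nonzero and already in reduced form, i.e.\ that its degree stays strictly below $2\newn p^s$; this holds precisely because $p^{s-1}<p^s$.
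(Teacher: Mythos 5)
Your proposal is correct and follows exactly the same route as the paper: the lower bound via the containment $C\subset\langle(x^\newn-\xi)^{p^s}(x^\newn+\xi)\rangle$ together with Lemma \ref{Reducible.Lemma.ips.j.small}, and the upper bound via the explicit weight-$4$ codeword $(x^\newn-\xi)^{p^s}(x^\newn+\xi)^{p^{s-1}}$. The only difference is that you justify the weight computation in detail (the doubling identity plus the formula (\ref{Equality.weight.xn+c.N})), where the paper simply asserts it.
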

\begin{proof}
    Since $\langle (x^\newn  - \xi)^{p^s}(x^\newn  + \xi) \rangle \supset C$,
    we know, by Lemma \ref{Reducible.Lemma.ips.j.small}, that $d_H(C) \ge 4$.
    So it suffices to show $d_H(C) \le 4$.
    We consider $(x^\newn  - \xi)^{p^s}(x^\newn  + \xi)^{p^{s-1}} \in C$.
    Clearly $w_H( (x^\newn  - \xi)^{p^s}(x^\newn  + \xi)^{p^{s-1}} ) = 4$.
    So $d_H(C) \le 4$\ and hence $d_H(C) = 4$.
\end{proof}

For $i = p^s$\ and $p^{s-1} < j < p^s$, the Hamming distance of $C$\ is
computed in the following lemmas and corollaries. Their proofs are similar to those of
Lemma \ref{Reducible.Lemma.ips.j.small} and Corollary \ref{Reducible.Lemma.ips.j.small}.

\begin{lemma}\label{Reducible.Lemma.ips.j.beta}
    Let $1 \le \beta \le p-2$\ be an integer and
    $C = \langle (x^\newn  - \xi)^{p^s}(x^\newn  + \xi)^{\beta p^{s-1} +1} \rangle$.
    Then $d_H(C) \ge 2(\beta + 2)$.
\end{lemma}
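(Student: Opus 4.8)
The plan is to follow the template of Lemma~\ref{Reducible.Lemma.ips.j.small} almost verbatim, upgrading its crude final weight estimate (which gave $4$) to the sharper bound $\beta+2$ supplied by Lemma~\ref{Preliminaries.Lemma.weight.beta}. First I would fix an arbitrary nonzero codeword $c(x)\in C$. Since $C=\langle (x^\newn-\xi)^{p^s}(x^\newn+\xi)^{\beta p^{s-1}+1}\rangle$, there is a nonzero $f(x)\in\F_{p^m}[x]$ with
$$c(x)\equiv f(x)(x^\newn-\xi)^{p^s}(x^\newn+\xi)^{\beta p^{s-1}+1}\pmod{x^{2\newn p^s}-\lambda},$$
and, by the reduction argument carried out just before Lemma~\ref{Reducible.Lemma.Weight.3}, I may assume $\deg(f(x))<2\newn p^s-\newn p^s-\newn(\beta p^{s-1}+1)=\newn p^s-\newn\beta p^{s-1}-\newn$. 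Writing $i_0$ and $j_0$ for the largest exponents with $(x^\newn-\xi)^{i_0}\mid f(x)$ and $(x^\newn+\xi)^{j_0}\mid f(x)$, I factor $f(x)=(x^\newn-\xi)^{i_0}(x^\newn+\xi)^{j_0}g(x)$ with $x^\newn-\xi\nmid g(x)$ and $x^\newn+\xi\nmid g(x)$, so that $c(x)=(x^\newn-\xi)^{i_0+p^s}(x^\newn+\xi)^{j_0+\beta p^{s-1}+1}g(x)$ as a polynomial of degree less than $2\newn p^s$; the degree bound on $f(x)$ then forces $i_0+j_0<p^s-\beta p^{s-1}-1$.

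Next I would invoke Lemma~\ref{Reducible.Lemma.Weight.Bound.When.i0.large} with $i=p^s$ and $j=\beta p^{s-1}+1$, noting that $i\ge j$ holds because $\beta\le p-2$ gives $\beta p^{s-1}+1<p^s$. Its two remaining hypotheses are automatic here: $i_0\ge 0=p^s-i$ for every $i_0$, and $j_0\le i_0+j_0<p^s-\beta p^{s-1}-1=p^s-j$. The lemma yields $w_H(c(x))\ge 2\,w_H\bigl((x^{2\newn}-\xi^2)^{j_0+\beta p^{s-1}+1}\bigr)$. To conclude I would feed the nonnegative integer $m=j_0$ into Lemma~\ref{Preliminaries.Lemma.weight.beta}, identifying $x$ with $x^{2\newn}$ and $\myConstant$ with $-\xi^2$; the inequality $j_0<p^s-\beta p^{s-1}-1$ is precisely the hypothesis $m<p^s-\beta p^{s-1}-1$ required there, whence $w_H\bigl((x^{2\newn}-\xi^2)^{j_0+\beta p^{s-1}+1}\bigr)\ge\beta+2$. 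Combining the two inequalities gives $w_H(c(x))\ge 2(\beta+2)$, and since $c(x)\in C$ was arbitrary, $d_H(C)\ge 2(\beta+2)$.

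I expect no genuine obstacle beyond careful bookkeeping. The choice $i=p^s$ is what makes this case clean: the dichotomy ``$i_0<p^s-i$ or $j_0<p^s-j$'' that splits the earlier lemmas collapses, because $i_0\ge p^s-i=0$ always holds, so only the single branch governed by Lemma~\ref{Reducible.Lemma.Weight.Bound.When.i0.large} ever occurs and there is no case analysis to manage. The one point deserving attention is the propagation of the degree bound to $j_0<p^s-\beta p^{s-1}-1$, which is exactly what licenses the application of Lemma~\ref{Preliminaries.Lemma.weight.beta}; here the standing hypothesis $1\le\beta\le p-2$ is used twice, both to guarantee $i\ge j$ and to place $\beta$ in the admissible range of that lemma.
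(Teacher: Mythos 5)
Your proof is correct and is exactly the argument the paper intends: the paper gives no separate proof of this lemma, stating only that it is "similar to" Lemma~\ref{Reducible.Lemma.ips.j.small}, and your write-up fills in that template in the expected way — reduce to $\deg(f)<\newn p^s-\newn\beta p^{s-1}-\newn$, note $i_0\ge 0=p^s-i$ always and $j_0<p^s-\beta p^{s-1}-1$, apply Lemma~\ref{Reducible.Lemma.Weight.Bound.When.i0.large}, and then sharpen the weight of $(x^{2\newn}-\xi^2)^{j_0+\beta p^{s-1}+1}$ to $\beta+2$ via Lemma~\ref{Preliminaries.Lemma.weight.beta}, just as the paper does in the analogous branch of Lemma~\ref{Reducible.Lemma.beta.tauK}. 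No gaps.
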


\begin{corollary}\label{Reducible.Corollary.ips.j.beta}
    Let $1 \le \beta \le p-2$, $\beta p^{s-1} + 1 \le j \le (\beta + 1)p^{s-1}$\
    be integers. Let $C = \langle (x^\newn  - \xi)^{p^s}(x^\newn  + \xi)^{j} \rangle $.
    Then $d_H(C) = 2(\beta + 2)$.
\end{corollary}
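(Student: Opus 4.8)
The plan is to follow the two-sided squeeze used throughout this section: establish the lower bound $d_H(C)\ge 2(\beta+2)$ by a containment argument, and then exhibit an explicit codeword of weight exactly $2(\beta+2)$ to obtain the matching upper bound.

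For the lower bound, since $\beta p^{s-1}+1\le j$ we have the containment $C=\langle (x^\newn-\xi)^{p^s}(x^\newn+\xi)^j\rangle \subset \langle (x^\newn-\xi)^{p^s}(x^\newn+\xi)^{\beta p^{s-1}+1}\rangle$, so every nonzero codeword of $C$ is a nonzero codeword of the larger ideal and hence has weight at least the Hamming distance of that larger ideal. Applying Lemma \ref{Reducible.Lemma.ips.j.beta} to $\langle (x^\newn-\xi)^{p^s}(x^\newn+\xi)^{\beta p^{s-1}+1}\rangle$ then gives $d_H(C)\ge 2(\beta+2)$ directly.

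For the upper bound, I would use the hypothesis $j\le(\beta+1)p^{s-1}$ to place the codeword $c(x)=(x^\newn-\xi)^{p^s}(x^\newn+\xi)^{(\beta+1)p^{s-1}}$ inside $C$, which is legitimate since $(\beta+1)p^{s-1}\ge j$. The weight of this codeword is computed in two steps. First, because $1\le\beta\le p-2$ we have $2\le\beta+1\le p-1$, so the $p$-adic expansion of $(\beta+1)p^{s-1}$ is the single digit $\beta+1$ in position $s-1$; hence by (\ref{Equality.weight.xn+c.N}) we get $w_H((x^\newn+\xi)^{(\beta+1)p^{s-1}})=\beta+2$. Second, since $\beta+1\le p-1$ forces $0<(\beta+1)p^{s-1}<p^s$, the weight-doubling identity $w_H((x^\newn+\myConstant_1)^{p^s}(x^\newn+\myConstant_2)^i)=2\,w_H((x^\newn+\myConstant_2)^i)$ established at the end of Section \ref{Preliminaries.For.Computations} (with $\myConstant_1=-\xi$, $\myConstant_2=\xi$, $i=(\beta+1)p^{s-1}$) applies and yields $w_H(c(x))=2(\beta+2)$. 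This gives $d_H(C)\le 2(\beta+2)$, and combining the two bounds completes the proof.

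There is essentially no hard obstacle here, since the result is a direct analogue of Corollary \ref{Reducible.Corollary.ips.j.small} and Corollary \ref{Reducible.Corollary.beta.tauK}. The only points requiring care are verifying that the single-digit $p$-adic expansion claim holds under the constraint $\beta\le p-2$ (so that no carry propagates into position $s$) and confirming $(\beta+1)p^{s-1}<p^s$ so that the weight-doubling identity is applicable; both follow at once from $\beta+1\le p-1$.
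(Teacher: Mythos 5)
Your proof is correct and is exactly the argument the paper intends: the paper omits the proof of this corollary, remarking only that it is analogous to Corollary \ref{Reducible.Corollary.ips.j.small}, and your squeeze (containment into $\langle (x^\newn-\xi)^{p^s}(x^\newn+\xi)^{\beta p^{s-1}+1}\rangle$ plus Lemma \ref{Reducible.Lemma.ips.j.beta} for the lower bound, and the codeword $(x^\newn-\xi)^{p^s}(x^\newn+\xi)^{(\beta+1)p^{s-1}}$ of weight $2(\beta+2)$ via (\ref{Equality.weight.xn+c.N}) and the weight-doubling identity for the upper bound) is precisely the pattern used in Corollaries \ref{Reducible.Corollary.beta.tauK} and \ref{Reducible.Corollary.ips.j.small}. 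No gaps.
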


\begin{lemma}\label{Reducible.Lemma.ips.j.tau.K}
    Let $1 \le \tau \le p-1, 1\le k \le s-1,j$\ be integers and
    $C = \langle (x^\newn  - \xi)^{p^s}(x^\newn  + \xi)^{p^s - p^{s-k} + (\tau -1)p^{s-k-1} + 1} \rangle$.
    Then $d_H(C) \ge 2 (\tau + 1)p^k $.
\end{lemma}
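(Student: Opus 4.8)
The plan is to follow the same template as the proof of Lemma~\ref{Reducible.Lemma.ips.j.small}, exploiting the fact that here the first exponent is maximal, $i = p^s$. Write $j = p^s - p^{s-k} + (\tau-1)p^{s-k-1} + 1$ for brevity. Given any $0 \neq c(x) \in C$, I would first produce a nonzero cofactor $0 \neq f(x) \in \F_{p^m}[x]$ with
\[
c(x) \equiv f(x)(x^\newn - \xi)^{p^s}(x^\newn + \xi)^j \pmod{x^{2\newn p^s} - \lambda},
\]
and then, using the reduction described before Lemma~\ref{Reducible.Lemma.Weight.3} (dividing $f(x)$ by the complementary factor $(x^\newn + \xi)^{p^s - j}$, since $(x^\newn-\xi)^{p^s-i}=(x^\newn-\xi)^0=1$ when $i=p^s$), assume without loss of generality that $\deg(f(x)) < 2\newn p^s - \newn p^s - \newn j = \newn(p^s - j)$.

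Next I would factor out the irreducible factors: let $i_0, j_0$ be the largest integers with $(x^\newn - \xi)^{i_0} \mid f(x)$ and $(x^\newn + \xi)^{j_0} \mid f(x)$, and write $f(x) = (x^\newn - \xi)^{i_0}(x^\newn + \xi)^{j_0}g(x)$ with $x^\newn - \xi \nmid g(x)$ and $x^\newn + \xi \nmid g(x)$. The degree bound forces $i_0 + j_0 < p^s - j$. The key simplification relative to the earlier lemmas (e.g.\ Lemma~\ref{Reducible.Lemma.tauK.same.k}) is that the usual case split collapses: since $i = p^s$ the hypothesis $i_0 \ge p^s - i = 0$ of Lemma~\ref{Reducible.Lemma.Weight.Bound.When.i0.large} holds automatically, and $i_0 \ge 0$ together with $i_0 + j_0 < p^s - j$ gives $j_0 < p^s - j$. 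Hence Lemma~\ref{Reducible.Lemma.Weight.Bound.When.i0.large} (with its $i$ taken to be $p^s$, so that $i \ge j$) applies directly to $c(x) = (x^\newn - \xi)^{i_0 + p^s}(x^\newn + \xi)^{j_0 + j}g(x)$ and yields $w_H(c(x)) \ge 2\,w_H\big((x^{2\newn} - \xi^2)^{j_0 + j}\big)$.

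Finally I would bound the weight on the right by Lemma~\ref{Preliminaries.Lemma.weight.tau.k} applied with $m = j_0$ and constant $-\xi^2$. The only thing to verify is the exponent arithmetic, which is precisely where the shape of $j$ is used: $p^s - j = p^{s-k} - (\tau-1)p^{s-k-1} - 1$, so the bound $j_0 < p^s - j$ is exactly the hypothesis $m < p^{s-k} - (\tau-1)p^{s-k-1} - 1$, while $j_0 + j = j_0 + p^s - p^{s-k} + (\tau-1)p^{s-k-1} + 1$ is exactly the exponent occurring in that lemma. Thus $w_H\big((x^{2\newn} - \xi^2)^{j_0 + j}\big) \ge (\tau + 1)p^k$, whence $w_H(c(x)) \ge 2(\tau + 1)p^k$; since $c(x)$ was an arbitrary nonzero codeword, $d_H(C) \ge 2(\tau + 1)p^k$. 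There is no genuine obstacle here beyond checking that the two applicability conditions of the auxiliary lemmas coincide with the single degree inequality $j_0 < p^s - j$; the one point worth stating carefully is the automatic satisfaction of the condition $i_0 \ge p^s - i$, which is forced by $i = p^s$ and removes the alternative branch present in the analogous proofs.
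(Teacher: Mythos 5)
Your proposal is correct and is exactly the argument the paper intends: the paper omits this proof, remarking only that it is ``similar to'' that of Lemma~\ref{Reducible.Lemma.ips.j.small}, and your adaptation---reducing $\deg(f)$ below $\newn(p^s-j)$, noting that $i=p^s$ makes the hypothesis $i_0\ge p^s-i$ of Lemma~\ref{Reducible.Lemma.Weight.Bound.When.i0.large} automatic so that $j_0<p^s-j$, and then bounding $w_H\bigl((x^{2\newn}-\xi^2)^{j_0+j}\bigr)$ via Lemma~\ref{Preliminaries.Lemma.weight.tau.k}---is precisely that adaptation. The exponent arithmetic you check ($p^s-j=p^{s-k}-(\tau-1)p^{s-k-1}-1$) is the right verification, and the argument matches the pattern used in Lemmas~\ref{Reducible.Lemma.tauK.same.k} and~\ref{Reducible.Lemma.tauK.different.k}.
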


\begin{corollary}\label{Reducible.Corollary.ips.j.tau.K}
    Let $1 \le \tau \le p-1, 1\le k \le s-1,j$\ be integers such that
    \be\nn
        p^s - p^{s-k} + (\tau -1)p^{s-k-1} + 1 \le j \le p^s - p^{s-k} + \tau p^{s-k-1}.
    \ee
    Let $C = \langle (x^\newn  - \xi)^{p^s}(x^\newn  + \xi)^j \rangle$.
    Then $d_H(C) = 2 (\tau + 1)p^k$.
\end{corollary}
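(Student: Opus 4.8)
The plan is to prove Corollary \ref{Reducible.Corollary.ips.j.tau.K} by combining the lower bound from Lemma \ref{Reducible.Lemma.ips.j.tau.K} with an explicit codeword that realizes it, exactly mirroring the structure of the earlier corollaries (such as Corollary \ref{Reducible.Corollary.tauK.same.k} and Corollary \ref{Reducible.Corollary.ips.j.beta}). First I would observe that the hypothesis $p^s - p^{s-k} + (\tau-1)p^{s-k-1} + 1 \le j$ gives the ideal inclusion
\[
    \langle (x^\newn - \xi)^{p^s}(x^\newn + \xi)^{p^s - p^{s-k} + (\tau-1)p^{s-k-1} + 1} \rangle \supset C,
\]
so that Lemma \ref{Reducible.Lemma.ips.j.tau.K} immediately yields the lower bound $d_H(C) \ge 2(\tau+1)p^k$. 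This reduces everything to producing a single codeword of weight exactly $2(\tau+1)p^k$.

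For the matching upper bound, I would exhibit the codeword $(x^\newn - \xi)^{p^s}(x^\newn + \xi)^{p^s - p^{s-k} + \tau p^{s-k-1}}$. The key points are that this element lies in $C$ because the second exponent $p^s - p^{s-k} + \tau p^{s-k-1} \ge j$ by the upper hypothesis on $j$, and that its weight can be computed from the factored form. Since $(x^\newn - \xi)^{p^s} = x^{\newn p^s} - \xi^{p^s}$ and the inequality $p^s > p^s - p^{s-k} + \tau p^{s-k-1}$ holds, the two summands of $(x^\newn - \xi)^{p^s}$ shift the support of $(x^\newn + \xi)^{p^s - p^{s-k} + \tau p^{s-k-1}}$ into disjoint degree ranges, so the weight doubles; this is precisely the mechanism recorded at the end of Section \ref{Preliminaries.For.Computations}. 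Thus I would invoke the formula $w_H((x^\newn + \xi)^{p^s - p^{s-k} + \tau p^{s-k-1}}) = (\tau+1)p^k$, which follows from (\ref{Equality.weight.xn+c.N}) since the $p$-adic expansion of $p^s - p^{s-k} + \tau p^{s-k-1}$ has digit pattern contributing the factor $(\tau+1)p^k$, to conclude
\[
    w_H\big( (x^\newn - \xi)^{p^s}(x^\newn + \xi)^{p^s - p^{s-k} + \tau p^{s-k-1}} \big) = 2(\tau+1)p^k.
\]

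This gives $d_H(C) \le 2(\tau+1)p^k$, and combining with the lower bound yields $d_H(C) = 2(\tau+1)p^k$. The proof is essentially routine given the scaffolding already in place; the only point requiring a little care is verifying that the digit-counting formula (\ref{Equality.weight.xn+c.N}) indeed produces the value $(\tau+1)p^k$ for the exponent $p^s - p^{s-k} + \tau p^{s-k-1}$, which I expect to be the main (though minor) obstacle. One checks that this exponent has $p$-adic digits equal to $p-1$ in positions $s-1$ down through $s-k$, the digit $\tau$ in position $s-k-1$, and $0$ elsewhere, so the product $\prod_d (b_d+1)$ equals $p^k \cdot (\tau+1)$, as needed. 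The disjointness of supports ensuring the factor of $2$ is guaranteed by the strict inequality $p^s > p^s - p^{s-k} + \tau p^{s-k-1}$, which holds because $\tau p^{s-k-1} < p^{s-k}$ for $\tau \le p-1$.
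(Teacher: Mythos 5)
Your proof is correct and follows exactly the template the paper intends: the paper omits the argument for this corollary, stating only that it is ``similar to'' the proofs of Lemma \ref{Reducible.Lemma.ips.j.small} and Corollary \ref{Reducible.Corollary.ips.j.small}, and your combination of the lower bound from Lemma \ref{Reducible.Lemma.ips.j.tau.K} with the explicit codeword $(x^\newn - \xi)^{p^s}(x^\newn + \xi)^{p^s - p^{s-k} + \tau p^{s-k-1}}$ of weight $2(\tau+1)p^k$ is precisely that argument. The digit computation via (\ref{Equality.weight.xn+c.N}) and the doubling mechanism from the end of Section \ref{Preliminaries.For.Computations} are both applied correctly.
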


We summarize our results in the following theorem.

\begin{theorem}\label{Reducible.Theorem.Main}
    Let $p$\ be an odd prime, $a,s,n$\ be arbitrary positive integers.
    Let $\lambda, \xi, \psi \in \F_{p^m} \setminus \{ 0 \}$\ such that
    $\lambda = \psi^{p^s}$. Suppose that the polynomial $x^{2\newn } - \psi$\ factors
    into two irreducible polynomials as
    $x^{2\newn } - \psi = (x^\newn  - \xi)(x^\newn  + \xi)$.
    Then all $\lambda$-cyclic codes, of length $2\newn p^s$, over $\F_{p^m}$\
    are of the form $\id{ (x^\newn  - \xi)^i(x^\newn  + \xi)^j } \subset
    \F_{p^m}[x] / \langle x^{2\newn p^s} - \lambda \rangle$, where
    $0 \le i,j \le p^s$\ are integers.
    Let $C = \langle (x^\newn  - \xi)^i(x^\newn  + \xi)^j \rangle \subset
    \F_{p^m}[x] / \langle x^{2\newn p^s} - \lambda \rangle$.
    If $(i,j) = (0,0)$, then $C$\ is the whole space $\F_{p^m}^{2\newn p^s}$,
    and if $(i,j) = (p^s,p^s)$, then $C$\ is the zero space $\{\mathbf{0}\}$.
    For the remaining values of $(i,j)$, the Hamming distance of $C$\
    is given in Table \ref{table.Reducible.Hamming.Distance}.
\end{theorem}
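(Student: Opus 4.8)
The plan is to obtain this theorem as a collation of the structural decomposition of $\sR$ with the weight estimates established in Lemmas \ref{Reducible.Lemma.Weight.i.is.0.or.j.is.0}--\ref{Reducible.Lemma.ips.j.tau.K} and their corollaries, so very little genuinely new work is required. First I would justify the claimed form of the codes. Since $\lambda=\psi^{p^s}$ and the characteristic is $p$, the Frobenius (``freshman's dream'') identity gives
\[
x^{2\newn p^s}-\lambda=(x^{2\newn})^{p^s}-\psi^{p^s}=(x^{2\newn}-\psi)^{p^s}=(x^\newn-\xi)^{p^s}(x^\newn+\xi)^{p^s},
\]
using the hypothesised factorisation (\ref{Reducible.Equation.Factorization}). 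As $p$ is odd and $\xi\neq 0$, the two factors $x^\newn-\xi$ and $x^\newn+\xi$ are distinct, and by hypothesis they are irreducible, hence coprime. Since $\F_{p^m}[x]$ is a principal ideal domain, the monic divisors of $x^{2\newn p^s}-\lambda$ are exactly $(x^\newn-\xi)^i(x^\newn+\xi)^j$ with $0\le i,j\le p^s$; consequently, as recorded via Proposition \ref{prop.princ_code} at the start of the section, the ideals of $\sR$ — the $\lambda$-cyclic codes — are precisely the $C=\id{(x^\newn-\xi)^i(x^\newn+\xi)^j}$.

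The two degenerate pairs are immediate: $(i,j)=(0,0)$ gives $C=\id{1}=\sR$, the whole space, while $(i,j)=(p^s,p^s)$ gives $C=\id{(x^\newn-\xi)^{p^s}(x^\newn+\xi)^{p^s}}=\id{x^{2\newn p^s}-\lambda}=\id{0}$. For every remaining pair I would reduce to $i\ge j$ by the evident symmetry between the two primary factors: since $\xi$ and $-\xi$ are both nonzero and $x^\newn\mp\xi$ are both irreducible, the whole setup is invariant under interchanging $\xi$ and $-\xi$, and the proof of each lemma applies verbatim after this interchange; hence $d_H(\id{(x^\newn-\xi)^i(x^\newn+\xi)^j})=d_H(\id{(x^\newn-\xi)^j(x^\newn+\xi)^i})$. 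This is exactly the reduction used implicitly when the lemmas restricted attention to $i\ge j$.

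It then remains to read off $d_H(C)$ from the corollaries according to where $i$ and $j$ fall in the partition (\ref{Preliminaries.Partition}) of $\{1,\dots,p^s-1\}$: the boundary cases $i=0$, $j=0$ or $0\le i,j\le p^{s-1}$ give $d_H=2$ (Lemmas \ref{Reducible.Lemma.Weight.i.is.0.or.j.is.0}, \ref{Reducible.Lemma.Weight.i.and.j.small}); the mixed ranges with one exponent in $(0,p^{s-1}]$ and the other larger are handled by Corollaries \ref{Reducible.Corollary.Weight.3} and \ref{Reducible.Corollary.Weight.4}; the ranges with both exponents of ``$\beta p^{s-1}$-type'' by Corollary \ref{Reducible.Corollary.beta.beta.prime}; the mixed $\beta$/$\tau$ ranges by Corollary \ref{Reducible.Corollary.beta.tauK}; the ``$\tau p^{s-k-1}$-type'' ranges by Corollaries \ref{Reducible.Corollary.tauK.same.k} and \ref{Reducible.Corollary.tauK.different.k}; and the top row $i=p^s$ by Corollaries \ref{Reducible.Corollary.ips.j.small}, \ref{Reducible.Corollary.ips.j.beta} and \ref{Reducible.Corollary.ips.j.tau.K} (together with their $j=p^s$ mirror images). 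The main obstacle here is organisational rather than mathematical: one must verify that, after the $i\ge j$ reduction, these corollary hypotheses partition the rectangle $\{0\le i,j\le p^s\}\setminus\{(0,0),(p^s,p^s)\}$ without gaps or overlaps and that the resulting values agree entry-by-entry with Table \ref{table.Reducible.Hamming.Distance}. The genuine analytic content — the weight-retaining inequality (\ref{Inequality.Lower.Bound.g(x).xn+c.N}) and the counting Lemmas \ref{Preliminaries.Lemma.weight.beta} and \ref{Preliminaries.Lemma.weight.tau.k} — has already been expended in the preceding results, so the theorem itself is essentially a bookkeeping statement.
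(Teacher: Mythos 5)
Your proposal is correct and matches the paper's treatment: the paper offers no separate argument for Theorem \ref{Reducible.Theorem.Main} beyond "we summarize our results," so the proof is exactly the collation you describe — the factorization $x^{2\newn p^s}-\lambda=(x^\newn-\xi)^{p^s}(x^\newn+\xi)^{p^s}$ identifying the ideals, the two degenerate pairs, the $i\ge j$ reduction via the $\xi\leftrightarrow-\xi$ symmetry (stated in the paper as Remark \ref{Reducible.Remark.Hamming.Distance.table}), and the case-by-case appeal to Lemmas \ref{Reducible.Lemma.Weight.i.is.0.or.j.is.0}--\ref{Reducible.Corollary.ips.j.tau.K}. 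Your explicit justification of why the exponent ranges cover the whole rectangle is, if anything, slightly more careful than what the paper writes down.
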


\begin{remark}\label{Reducible.Remark.Hamming.Distance.table}
    There are some symmetries in most of the cases, so we made the following simplification
    in Table \ref{table.Reducible.Hamming.Distance}.
    For the cases with *, i.e., the cases except 2 and 7,
    we gave the Hamming distance of $C$\ when $i \ge j$. The corresponding case with $j \ge i$
    has the same Hamming distance. For example in 1*, the corresponding case is $i= 0 $\ and
    $0 \le j \le p^s$, and the Hamming distance is $2$. Similarly in 6*, the corresponding case is
    $\beta p^{s-1} + 1 \le i \le (\beta + 1)p^{s-1}$\ and
    $p^{s}-p^{s-k}+(\tau-1)p^{s-k-1} + 1 \le j \le p^{s}-p^{s-k}+\tau p^{s-k-1}  $,
    and the Hamming distance is $2(\beta + 2)$.
\end{remark}

\begin{table}\caption{The Hamming distance of all non-trivial constacyclic codes, of the form $\langle (x^\newn -\xi)^i(x^\newn +\xi)^j \rangle$, of length $2\newn p^s$\ over $\F_{p^m}$.
The polynomials $x^\newn -\xi$\ and $x^\newn  + \xi$\ are assumed to be irreducible.
The parameters $1 \le \beta^{'} \le \beta \le p-2$, $ 1 \le \tau^{(2)} < \tau^{(1)} \le p-1$,
$1 \le \tau, \tau^{(3)}, \tau^{(4)} \le p-1$ , $1 \le k \le s-1$,
    $1 \le k^{''} < k^{'} \le s-1$\ below are integers. For the cases with *, i.e., the cases except 2 and 7, see Remark \ref{Reducible.Remark.Hamming.Distance.table}}
\label{table.Reducible.Hamming.Distance}
\centering
\newcounter{ijCase}
\setcounter{ijCase}{1}
\begin{tabular}{|l|l|l|l|}
        \hline
        \textbf{Case} & \textbf{i} & $\textbf{j}$ & $\textbf{d}_{\textbf{H}}\textbf{(C)}$ \\
        \hline
        \arabic{ijCase}*\addtocounter{ijCase}{1} & $0 < i \le p^s$ & $j=0$ & $2$\\
        \hline
        \arabic{ijCase}\addtocounter{ijCase}{1} & $ 0 \le i \le p^{s-1}$ & $0 \le j \le p^{s-1}$ & $2$\\
        \hline
        \arabic{ijCase}*\addtocounter{ijCase}{1} & $p^{s-1} < i \le 2p^{s-1} $ & $0 < j \le p^{s-1}$ & $3$\\
         \hline \arabic{ijCase}*\addtocounter{ijCase}{1} &
        $ 2p^{s-1} < i \le p^s$ & $0 < j \le p^{s-1} $ & $4$\\
         \hline
        \arabic{ijCase}*\addtocounter{ijCase}{1} & $\beta p^{s-1} + 1 \le i \le (\beta + 1)p^{s-1}$
        &  $\beta^{'} p^{s-1} + 1 \le j \le (\beta^{'} + 1)p^{s-1}$ &
                        $ \begin{array}{l}
                                               \min\{\beta+2,\\
                                               2(\beta^{'}+2)\}
                                               \end{array} $\\
        \hline \arabic{ijCase}*\addtocounter{ijCase}{1} &  $
            \begin{array}{l}
                p^{s}-p^{s-k}+(\tau-1)p^{s-k-1} \\
                + 1 \le i \le p^{s}-
                p^{s-k}+\tau p^{s-k-1}
            \end{array}$
        & $\beta p^{s-1} + 1 \le j \le (\beta + 1)p^{s-1}$ & $2(\beta + 2)$ \\
        \hline \arabic{ijCase}\addtocounter{ijCase}{1} &
        $
            \begin{array}{l}
                p^{s}-p^{s-k}+(\tau-1)p^{s-k-1}\\
                + 1 \le i
                \le p^{s}-p^{s-k}+\tau p^{s-k-1}
            \end{array}$
        & $
            \begin{array}{l}
                p^{s}-p^{s-k}+(\tau-1)p^{s-k-1} \\
                + 1 \le j
                \le p^{s}-p^{s-k}+ \tau p^{s-k-1}
            \end{array}$
        & $(\tau+1)p^{k}$\\
        \hline \arabic{ijCase}*\addtocounter{ijCase}{1} &
        $
            \begin{array}{l}
                p^{s}-p^{s-k}+(\tau^{(1)}-1)p^{s-k-1}\\
                \begin{array}{ll}
                    + 1 \le i
                    \le & p^{s}-p^{s-k}\\
                    & + \tau^{(1)} p^{s-k-1}
                \end{array}
            \end{array}$
        & $
            \begin{array}{l}
                p^{s}-p^{s-k}+(\tau^{(2)}-1)p^{s-k-1} \\
                \begin{array}{ll}
                    + 1 \le j
                    \le & p^{s}-p^{s-k}\\
                    & + \tau^{(2)} p^{s-k-1}
                \end{array}
            \end{array}$
        & $\begin{array}{l}
        \min\{\\
        2(\tau^{(2)}+1)p^{k},\\
         (\tau^{(1)} + 1)p^k \}
           \end{array}$\\
        \hline \arabic{ijCase}*\addtocounter{ijCase}{1} &
        $
            \begin{array}{l}
                p^{s}-p^{s-k^{'}}+(\tau^{(3)}-1)p^{s-k^{'}-1}\\
                \begin{array}{ll}
                    + 1 \le i
                    \le & p^{s}-p^{s-k^{'}}\\
                    & + \tau^{(3)} p^{s-k^{'}-1}
                \end{array}
            \end{array}$
        & $
            \begin{array}{l}
                 p^{s}-p^{s-k^{''}}+(\tau^{(4)}-1)p^{s-k^{''}-1}\\
                 \begin{array}{ll}
                     + 1 \le j
                    \le  & p^{s}-p^{s-k^{''}}\\
                    & + \tau^{(4)} p^{s-k^{''}-1}
                 \end{array}
            \end{array}$
        & $2(\tau^{(4)}+1)p^{k^{''}} $\\
        \hline \arabic{ijCase}*\addtocounter{ijCase}{1} &
            $i = p^s$ & $\beta p^{s-1} + 1 \le j \le (\beta + 1) p^{s-1} $ & $2(\beta + 2)$\\
        \hline \arabic{ijCase}*\addtocounter{ijCase}{1} &
            $i = p^s$ & $\begin{array}{l}
                p^{s}-p^{s-k}+(\tau-1)p^{s-k-1}\\
                \begin{array}{ll}
                    + 1 \le j
                    \le & p^{s}-p^{s-k}\\
                    & + \tau p^{s-k-1}
                \end{array}
            \end{array}$
        & $2(\tau + 1)p^k$\\
        \hline
\end{tabular}
\end{table}

The results in Table \ref{table.Reducible.Hamming.Distance}
still hold when the polynomials $x^\newn  + \xi$\ and $x^\newn  - \xi$ are reducible
except the fact that the cases
in Table \ref{table.Reducible.Hamming.Distance} do not cover
all the $\lambda$-cyclic codes of length $2\newn p^s$\ over $\F_{p^m}$.
\begin{remark}
  Note that $\id{ (x^\newn  - \xi)^i(x^\newn  + \xi)^j }, 0 \le i,j\le p^s$\ are ideals of $\sR$\
  independent of the fact that $x^\newn  - \xi$\ and $x^\newn  - \xi$\ are irreducible over $\F_{p^m}$.
  So the above results
 from Lemma \ref{Reducible.Lemma.Weight.3} till Corollary \ref{Reducible.Corollary.ips.j.tau.K}
 hold even when the polynomials $x^\newn  - \xi$\ and $x^\newn  + \xi$\ are reducible.
 But in this case, there are more $\lambda$-cyclic codes than the ones of the form
 $\id{ (x^\newn  - \xi)^i(x^\newn  + \xi)^j }, 0 \le i,j\le p^s$\ and their Hamming distance is not given in this paper.
\end{remark}


In the last part of this section, we determine the Hamming distance of some polycyclic codes
of length
$2\newn p^s$\ over $GR(p^a,m)$\ whose canonical images are as above.
In particular, this gives us the Hamming distance of certain constacyclic codes of length
$2\newn p^s$\ over $GR(p^a,m)$.
Let $\lambda_0$, $\xi_0 \in GR(p^a,m)$\ be units and $\overline{\lambda}_0 = \lambda$, $\overline{\xi}_0 = \xi$.
So $ \xi_0^{2p^s} = \lambda_0$\ and, $x^\newn  - \overline{\xi}_0$\ and $x^\newn  + \overline{\xi}_0$\ are irreducible.
The polynomial $x^{2\newn p^s} - \lambda_0$\ factors into two coprime polynomials as
$$ x^{2\newn p^s}-\lambda_0 = x^{2\newn p^s} - \xi_0^{2p^s} = (x^{\newn p^s} - \xi_0^{p^s})(x^{\newn p^s} + \xi_0^{p^s}). $$

Let $f_1(x) = (x^{\newn} - \xi_0)^{p^s} + p\beta_1(x) $\ and $f_2(x) = (x^{\newn} - \xi_0)^{p^s} + p\beta_2(x)$\
with $\deg( \beta_1(x) ), \deg( \beta_2(x) ) < \newn p^s$.
Let $f(x) = f_1(x)f_2(x)$\ and
$
  \sR_0 = \frac{GR(p^a,m)[x]}{ \id{f(x)} }.
$
Note that $f_1(x)$\ and $f_2(x)$\ are primary regular polynomials and therefore we can use the arguments of
Section \ref{sect.main}.

By Proposition \ref{prop.princ_code}, we get
$\sR_0 = \id{f_1(x)} \oplus \id{f_2(x)}$.
Additionally, by Proposition \ref{prop.princ_code}, we know that
$ \id{f_1(x)} \cong \frac{GR(p^a,m)[x]}{\id{f_2(x)}}$\ and
$ \id{f_2(x)} \cong \frac{GR(p^a,m)[x]}{\id{f_1(x)}}$\ are local rings
and the maximal ideals of $\sR_0$\ are
$ \id{ p, x^\newn  + \xi_0 }$\ and
$\id{ p, x^\newn  - \xi_0 }$.

Now given $g(x) \in \sR_0$, we will see how to determine $\overline{ \id{g(x)}}  \subset \sR$.
Since $ \overline{ \id{g(x)}} = \overline{ \id{(x^\newn  - \xi)^{j_0}(x^\newn  + \xi)^{j_1}} }$, we have
$ \bar{g}(x) = (x^\newn  - \bar{\xi})^{j_0}(x^\newn  + \bar{\xi})^{j_1}u(x)$\ where $u(x)$\
is a unit in $\sR$.
In order to determine $j_0$, we consider the substitution
$x^i = (x^\newn  - \xi_0 + \xi_0)^{d_i}x^{\ell_i}$\ for every $i \ge \newn$, we get
\be
    g(x) & = & a_Lx^L + \cdots + a_{\newn} x^{\newn}  + a_{\newn-1}x^{\newn-1} + \cdots + a_0\nn\\
        & = & (x^\newn - \xi_0)^{d_L}h_{d_L}(x) + (x^\newn - \xi_0)^{d_L -1}h_{d_L -1}(x) + \cdots + h_0(x)\nn
\ee
where $h_i(x)$\ are polynomials such that $\deg(h_i(x)) < \newn$\ for $d_L \ge i \ge 0$.
Then $j_0$\ is the least integer with the property $p \nmid h_{j_0}(x)$.
Similarly, via the substitution
$x^i = (x^\newn + \xi_0 - \xi_0)^{d_i}x^{\ell_i}$\ for every $i \ge \newn$,
the integer  $j_1$\ can be determined.

Let $ C = \id{g_1(x),\dots ,g_r(x)} \vartriangleleft \sR_0$\ be a polycyclic code,
where the generators are as in Theorem \ref{theo.main1}.
By Theorem \ref{Groebner.Corollary.Hamming.Distance.Using.SGB}, we have
$d_H(C) = d_H( \overline{\id{ g_r(x) }})$.
The canonical image $\overline{\id{g_r(x) }}$\ of $\id{ g_r(x) }$\ can be
determined as described above. Say
$ \overline{ \id{ g_r(x) } } = \overline{ \id{( x^\newn - \xi )^{\hat{i}}( x^\newn + \xi )^{\hat{j}}} }$\ for some
$0 \le \hat{i},\hat{j} \le p^s$. 
Then $d_H( \overline{ \id{( x^\newn - \xi )^{\hat{i}}( x^\newn + \xi )^{\hat{j}}} } )$\
can be determined using Theorem \ref{Reducible.Theorem.Main}.

\begin{remark}
  Note that $x^{\newn p^s} - \xi_0^{p^s} = (x^\newn - \xi_0)^{p^s} + p \hat{\beta}_1(x)$\ and
  $x^{\newn p^s} + \xi_0^{p^s} = (x^{\newn} + \xi_0)^{p^s} + p \hat{\beta}_2(x)$\ for some
  $\hat{\beta}_1(x),\hat{\beta}_2(x) \in \sR_0$.
 In the above setup, if we take $f_1(x) = (x^\newn - \xi_0)^{p^s} + p \hat{\beta}_1(x) $\ and
 $f_2(x) = (x^\newn + \xi_0)^{p^s} + p \hat{\beta}_2(x)$, then we obtain
 the Hamming distance of $\lambda$-cyclic codes of length $2\newn p^s$\ over $GR(p^a,m)$.
\end{remark}

\section*{Acknowledgments}

Ferruh \"{O}zbudak is partially supported by T\"{U}B\.{I}TAK under Grant No. TBAG-109T672.


\begin{thebibliography}{10}

\bibitem{Abulrub_Oehmke_IEEE_2003}
Taher Abualrub and Robert Oehmke.
\newblock On the generators of {$\Bbb Z_4$} cyclic codes of length {$2^e$}.
\newblock {\em IEEE Trans. Inform. Theory}, 49(9):2126--2133, 2003.

\bibitem{BRLKMP}
Elwyn~R. Berlekamp.
\newblock {\em Algebraic coding theory}.
\newblock McGraw-Hill Book Co., New York, 1968.

\bibitem{Bini_Flamini_Book}
Gilberto Bini and Flaminio Flamini.
\newblock {\em Finite commutative rings and their applications}.
\newblock The Kluwer International Series in Engineering and Computer Science,
  680. Kluwer Academic Publishers, Boston, MA, 2002.

\bibitem{Blackford_IEEE_2003}
Jason~Thomas Blackford.
\newblock Negacyclic codes over {$Z_4$} of even length.
\newblock {\em IEEE Trasns. Inform. Theory}, 49(6):1417--1424, 2003.

\bibitem{Blackford_Chaudri_IEEE_2000}
Jason~Thomas Blackford and Dwijendra~K. Ray-Chaudhuri.
\newblock A transform approach to permutation groups of cyclic codes over
  {G}alois rings.
\newblock {\em IEEE Trans. Inform. Theory}, 46(7):2350--2358, 2000.

\bibitem{Calderbank_Sloane_Modular_Cyclic_Codes}
A.~R. Calderbank and N.~J.~A. Sloane.
\newblock Modular and {$p$}-adic cyclic codes.
\newblock {\em Des. Codes Cryptogr.}, 6(1):21--35, 1995.

\bibitem{CMSS1}
Guy Castagnoli, James~L. Massey, Philipp~A. Schoeller, and Niklaus von Seemann.
\newblock On repeated-root cyclic codes.
\newblock {\em IEEE Trans. Inform. Theory}, 37(2):337--342, 1991.

\bibitem{D2005}
Hai~Q. Dinh.
\newblock Negacyclic codes of length {$2^s$} over {G}alois rings.
\newblock {\em IEEE Trans. Inform. Theory}, 51(12):4252--4262, 2005.

\bibitem{D2007}
Hai~Q. Dinh.
\newblock Complete distances of all negacyclic codes of length {$2^s$} over
  {$\Bbb Z_{2^a}$}.
\newblock {\em IEEE Trans. Inform. Theory}, 53(1):147--161, 2007.

\bibitem{D2008}
Hai~Q. Dinh.
\newblock On the linear ordering of some classes of negacyclic and cyclic codes
  and their distance distributions.
\newblock {\em Finite Fields Appl.}, 14(1):22--40, 2008.

\bibitem{Dinh_IEEE_2009}
Hai~Q. Dinh.
\newblock Constacyclic codes of length {$2^s$} over {G}alois extension rings of
  {$\Bbb F_2+u\Bbb F_2$}.
\newblock {\em IEEE Trans. Inform. Theory}, 55(4):1730--1740, 2009.

\bibitem{Dinh_Algebra_2010}
Hai~Q. Dinh.
\newblock Constacyclic codes of length {$p^s$} over {$\Bbb F_{p^m}+u\Bbb
  F_{p^m}$}.
\newblock {\em J. Algebra}, 324(5):940--950, 2010.

\bibitem{Dinh_Lopez_Szabo_Cimpa}
Hai~Q. Dinh, Sergio~R. L\'{o}pez-Permouth, and Steve Szabo.
\newblock On the structure of cyclic and negacyclic codes over finite chain
  rings.
\newblock In Patrick Sol\'{e}, editor, {\em Codes Over Rings}, volume~6 of {\em
  Series on Coding Theory and Cryptology}, pages 22--59. World Scientific Publ
  Co Pte Ltd, 2009.

\bibitem{Dinh-Lopez_2004}
Hai~Quang Dinh and Sergio~R. L{\'o}pez-Permouth.
\newblock Cyclic and negacyclic codes over finite chain rings.
\newblock {\em IEEE Trans. Inform. Theory}, 50(8):1728--1744, 2004.

\bibitem{Dougherty-Park_2007}
Steven~T. Dougherty and Young~Ho Park.
\newblock On modular cyclic codes.
\newblock {\em Finite Fields Appl.}, 13(1):31--57, 2007.

\bibitem{Granville_1997}
Andrew Granville.
\newblock Arithmetic properties of binomial coefficients. {I}. {B}inomial
  coefficients modulo prime powers.
\newblock In {\em Organic mathematics ({B}urnaby, {BC}, 1995)}, volume~20 of
  {\em CMS Conf. Proc.}, pages 253--276. Amer. Math. Soc., Providence, RI,
  1997.

\bibitem{Hammons_et_al_IEEE}
A.~Roger Hammons, Jr., P.~Vijay Kumar, A.~R. Calderbank, N.~J.~A. Sloane, and
  Patrick Sol{\'e}.
\newblock The {${\bf Z}_4$}-linearity of {K}erdock, {P}reparata, {G}oethals,
  and related codes.
\newblock {\em IEEE Trans. Inform. Theory}, 40(2):301--319, 1994.

\bibitem{zhu_kai_2010}
Xiaoshan Kai and Shixin Zhu.
\newblock On the distance of cyclic codes of length {$2^e$} over {$\Bbb Z_4$}.
\newblock {\em Discrete Math.}, 310(1):12--20, 2010.

\bibitem{Kanwar_Lopez_FFA_1997}
Pramod Kanwar and Sergio~R. L{\'o}pez-Permouth.
\newblock Cyclic codes over the integers modulo {$p^m$}.
\newblock {\em Finite Fields Appl.}, 3(4):334--352, 1997.

\bibitem{SanLing-etal_2008}
Han~Mao Kiah, Ka~Hin Leung, and San Ling.
\newblock Cyclic codes over {${\rm GR}(p^2,m)$} of length {$p^k$}.
\newblock {\em Finite Fields Appl.}, 14(3):834--846, 2008.

\bibitem{Kummar_et_al_IEEE_1996}
P.~Vijay Kumar, Tor Helleseth, A.~R. Calderbank, and A.~Roger Hammons, Jr.
\newblock Large families of quaternary sequences with low correlation.
\newblock {\em IEEE Trans. Inform. Theory}, 42(2):579--592, 1996.

\bibitem{Szabo-Lopez_2009_HW}
Sergio~R. L{\'o}pez-Permouth and Steve Szabo.
\newblock On the {H}amming weight of repeated root cyclic and negacyclic codes
  over {G}alois rings.
\newblock {\em Adv. Math. Commun.}, 3(4):409--420, 2009.

\bibitem{CMSS2}
James~L. Massey, Daniel~J. Costello, and J{\o}rn Justesen.
\newblock Polynomial weights and code constructions.
\newblock {\em IEEE Trans. Information Theory}, IT-19:101--110, 1973.

\bibitem{McDonald_Book}
Bernard~R. McDonald.
\newblock {\em Finite rings with identity}.
\newblock Marcel Dekker Inc., New York, 1974.
\newblock Pure and Applied Mathematics, Vol. 28.

\bibitem{Salagean_FFA}
G.~H. Norton and A.~S{\u{a}}l{\u{a}}gean.
\newblock Cyclic codes and minimal strong {G}r\"obner bases over a principal
  ideal ring.
\newblock {\em Finite Fields Appl.}, 9(2):237--249, 2003.

\bibitem{Salagean_IEEE}
Graham~H. Norton and Ana S{\u{a}}l{\u{a}}gean.
\newblock On the {H}amming distance of linear codes over a finite chain ring.
\newblock {\em IEEE Trans. Inform. Theory}, 46(3):1060--1067, 2000.

\bibitem{Salagean_AAECC}
Graham~H. Norton and Ana S{\u{a}}l{\u{a}}gean.
\newblock On the structure of linear and cyclic codes over a finite chain ring.
\newblock {\em Appl. Algebra Engrg. Comm. Comput.}, 10(6):489--506, 2000.

\bibitem{Salagean_Bulletin}
Graham~H. Norton and Ana S{\u{a}}l{\u{a}}gean.
\newblock Strong {G}r\"obner bases for polynomials over a principal ideal ring.
\newblock {\em Bull. Austral. Math. Soc.}, 64(3):505--528, 2001.

\bibitem{OZOZ_1}
Hakan {\"O}zadam and Ferruh {\"O}zbudak.
\newblock A note on negacyclic and cyclic codes of length {$p^s$} over a finite
  field of characteristic {$p$}.
\newblock {\em Adv. Math. Commun.}, 3(3):265--271, 2009.

\bibitem{Pless_IEEE_1996}
Vera~S. Pless and Zhongqiang Qian.
\newblock Cyclic codes and quadratic residue codes over {$Z_4$}.
\newblock {\em IEEE Trans. Inform. Theory}, 42(5):1594--1600, 1996.

\bibitem{Roth_Seroussi_IEEE_1986}
Ron~M. Roth and Gadiel Seroussi.
\newblock On cyclic {MDS} codes of length {$q$} over {${\rm GF}(q)$}.
\newblock {\em IEEE Trans. Inform. Theory}, 32(2):284--285, 1986.

\bibitem{Salagean_Disc_2006}
Ana S{\u{a}}l{\u{a}}gean.
\newblock Repeated-root cyclic and negacyclic codes over a finite chain ring.
\newblock {\em Discrete Appl. Math.}, 154(2):413--419, 2006.

\bibitem{Tang_et_al_IEEE}
Li-zhong Tang, Cheong~Boon Soh, and Erry Gunawan.
\newblock A note on the {$q$}-ary image of a {$q^m$}-ary repeated-root cyclic
  code.
\newblock {\em IEEE Trans. Inform. Theory}, 43(2):732--737, 1997.

\bibitem{Van_Lint_IEEE_Rep_Root}
J.~H. van Lint.
\newblock Repeated-root cyclic codes.
\newblock {\em IEEE Trans. Inform. Theory}, 37(2):343--345, 1991.

\bibitem{Vega_Wolfmann_FFA_2004}
Gerardo Vega and Jacques Wolfmann.
\newblock Some families of {${\Bbb Z}_4$}-cyclic codes.
\newblock {\em Finite Fields Appl.}, 10(4):530--539, 2004.

\bibitem{Wolfmann_IEEE_1999}
Jacques Wolfmann.
\newblock Negacyclic and cyclic codes over {$Z_4$}.
\newblock {\em IEEE Trans. Inform. Theory}, 45(7):2527--2532, 1999.

\bibitem{Wolfmann_IEEE_2001}
Jacques Wolfmann.
\newblock Binary images of cyclic codes over {$\Bbb Z_4$}.
\newblock {\em IEEE Trans. Inform. Theory}, 47(5):1773--1779, 2001.

\bibitem{Zimmermann_IEEE}
Karl-Heinz Zimmermann.
\newblock On generalizations of repeated-root cyclic codes.
\newblock {\em IEEE Trans. Inform. Theory}, 42(2):641--649, 1996.

\end{thebibliography}


\end{document}